\newtheorem{theo}{\sc Theorem}[section]
\newtheorem{lemme}[theo]{\sc Lemma}
\newtheorem{propo}[theo]{\sc Proposition}
\newtheorem{cor}[theo]{\sc Corollary}
\newtheorem{hyp}[theo]{\sc Assumption}
\newtheorem{nb}[theo]{\sc Remark}
\newtheorem{defi}[theo]{\sc Definition}
\theoremstyle{definition}
\def \leq {\leqslant}
\def \geq {\geqslant}
\numberwithin{equation}{section}
\def \x {\mathbf{x}}
\def \G {\mathcal{G}}
\def \d {\mathrm{d }}
\def \D {\mathscr{D}}
\def \R {\mathscr{R}}
\def \com {$C_0$-semigroup }
\def \Tt {(\T(t))_{t \geq 0}}
\def \Q {\mathcal{Q}}
\def \T {\mathcal{T}}
\def \B {\mathcal{B}}
\def \t {\tau}
\def \u {\mathcal{U}}
\def \U {\mathcal{U}}
\def \V {\mathcal{V}}
\def \O {\mathbf{\Omega}}
\def \X {\mathfrak{X}}
\def \la {\left \langle}
\def \ra {\right \rangle}
\def \l {\lambda}
\def \A {\mathcal{A}}
\def \cc {\mathfrak{a}}
\def \T {\mathcal{T}}
\def \e {\mathcal{E}}
\def \U {\mathbf{U}}
\def \V {\mathbf{V}}
\def \uts {\{\U(t,s)\}_{t\geq s}}
\def \Z {\mathcal{Z}}
\def \fB {\mathfrak{B}}
\def \vt {(\mathbf{V}(t,s))_{t \geq s}}
\def \R {\mathbb{R}}
\title[]{Non-autonomous Honesty
theory in abstract state spaces with applications to linear kinetic equations}
\author{L. Arlotti, B. Lods \& M. Mokhtar-Kharroubi}
\address{\textbf{{L. Arlotti}},  {Dipartimento di Ingegneria Civile}, Universit\`a di
Udine, via delle Scienze 208,  33100 Udine, Italy. \newline{\tt
luisa.arlotti@uniud.it}}
\address{\textbf{{B. Lods}}, Universit\`{a} degli
  Studi di Torino \& Collegio Carlo Alberto, Department of Statistics and Economics, Corso Unione Sovietica, 218/bis, 10134 Torino,
  Italy.\newline
  \noindent{\tt lods@econ.unito.it}}
\address{\textbf{{M. Mokhtar-Kharroubi}}, Universit\'e de Franche--Comt\'e, Equipe de Math\'ematiques, CNRS UMR
6623, 16, route de Gray, 25030 Besan\c con Cedex, France.\newline
\noindent{\tt mmokhtar@univ-fcomte.fr}}
\begin{document}

\medskip

\begin{abstract} We provide a honesty theory  of substochastic evolution families in real abstract state space, extending to an non-autonomous setting the result obtained for $C_0$-semigroups in our recent contribution \textit{[On perturbed substochastic semigroups in abstract state spaces, \textit{Z. Anal. Anwend.} \textbf{30}, 457--495, 2011]}. The link with the honesty theory of perturbed substochastic semigroups is  established. Several applications to non-autonomous linear kinetic equations (linear Boltzmann equation and fragmentation equation) are provided.
\end{abstract}
\date{}

\maketitle

%
%
%
%
\section{Introduction}

Perturbation theory in $L^1$ spaces goes back to a classical and seminal paper
by T. Kato \cite{kato} on Kolmogorov's differential equations and is strongly
tied to positivity and to convergence monotone arguments. This theory had
a renewal of interest in the 70's when E. B. Davies \cite{Da2} adapted it to
the (non-commutative) context of quantum dynamical semigroups. Later, a
new impetus came from various problems from kinetic theory \cite{arlotti1, voigt1} and gave rise to a general formalism, called "honesty theory" of perturbed substochatic semigroups \cite{banlach, thieme, BA1, fvdm}. Such a honesty theory
provides a "resolvent perturbation approach" characterizing the generator of the
underlying perturbed semigroup by means of a suitable functional; this theory
is described in the book \cite{arloban}. Afterwards, this honesty theory
was improved and extended in different directions in \cite{mkvo} while
a noncommutative version of \cite{mkvo} in the Banach space of trace
class operators was given in \cite{class}. In a recent paper  \cite{ALM}, we provided a general honesty theory of perturbed substochastic semigroup in  abstract state spaces (i.e. ordered Banach spaces with additive norm on
the positive cone), unifying in one hand the results of \cite{mkvo} and \cite{class}
and in another hand providing another (equivalent) approach to the honesty which relies on Dyson-Phillips expansions rather
than the usual resolvent approach; we refer to \cite{ALM} for more
precise information and also for much more references. Our aim here is to built
a \emph{non-autonomous version} of this honesty theory and to show how kinetic equations fit into
this theory. Dealing with such non-autonomous (two-parameters) families of operators leads to important technical difficulties and, in particular, prevents us to use (in a direct way) any approach based on resolvent estimates.  We will resort to several of the results developed in our previous contribution \cite{ALM} in which a crucial point was, indeed, the introduction of a perturbation approach based only on fine estimates of  Dyson-Phillips expansions which contrasts with the previous existing results, usually based on a resolvent approach. \smallskip

Before entering the details of the present paper, let us spend few words recalling briefly some properties of the class of Banach spaces we shall deal  with in this paper (which coincide with the one investigated in \cite{ALM}). In all this paper, we shall assume that $\mathcal{E}$ is a real ordered Banach space with a generating positive cone $%
\e_{+}$ (i.e. $\e=\e_{+}-\e_{+}$) on which the norm is
additive, i.e.
$$\left\| u+v\right\| =\left\| u\right\| +\left\| v\right\|\qquad \qquad u,v \in \e_{+}.$$
The additivity of the norm  implies
that the norm is monotone, i.e.
$$0 \leq u \leq v \Longrightarrow \|u\| \leq \|v\|.$$
In particular, the cone $\e_+$ is normal \cite[Proposition
1.2.1]{batty} and any bounded monotone sequence of $\e_+$ is convergent. Moreover, there exists a
linear positive functional $\mathbf{\Phi} $ on $\e$ which coincides with the norm on
the positive cone (see e.g. \cite[p. 30]{Da1}), i.e.
\begin{equation}\label{psi}
\mathbf{\Phi} \in \e^\star_+,\qquad \la\mathbf{\Phi}, u \ra
=\|u\|,\qquad u \in \e_+\end{equation}
We note also that by a Baire category argument there exists a constant $\gamma >0$ such that each $u \in \e$ has a decomposition
\begin{equation}\label{decomp} u=u_1-u_2 \text{ where } u_i \in \e_+ \text{ and } \|u_i\|\leq \gamma \|u\| \quad (i=1,2);\end{equation} i.e. the positive cone
$\X_+$ is {non-flat}, see \cite[Proposition 19.1]{depagter}.

In such abstract state space $\e$, the natural question addressed by honesty theory of subtochastic semigroups  is the following: let the generator  $\mathcal{A}$    of a positive semigroup $(\mathcal{U}(t))_{t\geq0}$ in $\e$ be given with $\|\mathcal{U}(t)u\| \leq \|u\|$ for any $u\in \e_+$ and let a nonnegative operator  $\mathcal{B}\::\:\D(\mathcal{B}) \subset \e \to \e$  with $\D(\mathcal{A}) \subset \D(\mathcal{B})$ be such that
\begin{equation}\label{intemu}\la \mathbf{\Psi}, (\mathcal{A}+\mathcal{B})u \ra \leq 0 \qquad \forall u \in \D(\mathcal{A}) \cap \e_+.\end{equation}
It is known (following the approach of a seminal paper of Kato \cite{kato}) that there exists an extension $\mathcal{K}$ of the sum $(\mathcal{A}+\mathcal{B},\D(\mathcal{A}))$ which generates a nonnegative semigroup $(\mathcal{V}(t))_t$ in $\e$. The scope of  honesty theory of substochastic semigroups is then to provide   criteria 
 in order to determine whether $\mathcal{K}$ is the closure of $\mathcal{A}+\mathcal{B}$ or not (notice that, in general, $\mathcal{K}$ may be a proper extension of $\mathcal{A}+\mathcal{B}$). We emphasize here that, for formally conservative equations, i.e. whenever inequality \eqref{intemu} is an equality,  the property $\mathcal{K}=\overline{\mathcal{A}+\mathcal{B}}$ is necessary and sufficient  to assert that
the corresponding semigroup is norm-preserving on the positive cone, i.e.
$$\mathcal{K}=\overline{\mathcal{A}+\mathcal{B}}\quad \Longleftrightarrow \quad \|\mathcal{ V}(t) u \|= \| u \|  \text{ for any } u \in \e_+ \text{ and any } t \geq 0.$$
Whenever \eqref{intemu} is 'only' an inequality, the concept of honesty of the semigroup $(\mathcal{V}(t))_{t \geq 0}$ is more involved and we refer to \cite{mkvo, ALM} for further details (see also Section \ref{reminder}). We notice only here that the above inequality \eqref{intemu} implies that
\begin{equation}\label{intBu} \int_0^t \|\mathcal{BU}(t)u\|\d t \leq \|u\| -\|\mathcal{U}(t)u\| \qquad \forall  u \in \D(\mathcal{A}) \cap \e_+\,,\:t \geq 0\end{equation}
and an identity in \eqref{intemu} would yield an identity in \eqref{intBu}. \medskip

As already said, in the present paper, we wish to extend this kind of problematic to the more delicate case of evolution families $(\U(t,s))_{t \geq s \geq 0}$. We recall here the following definition
(see, e.g., \cite{chicone, gulisashvili}):
\begin{defi}\label{evfam}
A family $(\U(t,s))_{t\geq s}$ of bounded operators  in $\e$
 is called an {\it evolutionary family} if the
following conditions are
fulfilled:
\begin{enumerate}[(i)]
\item  for each $u \in \e$ the function $(t,s) \mapsto
\U(t,s)u$ is continuous for
$t\geq s$;
\item $\U(t,s) =\U(t,r)\U(r,s)$, $\: \U(t,t) =\mathbf{I}$, $t\geq r \geq s$;
\item $\|\U(t,s)\|\leq Ce^{\beta (t-s)}$, $t\geq s$ for some
constants $C$, $\beta >0$.\end{enumerate}
\end{defi}

The starting point for our analysis will be then the following set of Assumptions that will hold for all the rest of the paper:
\begin{hyp}\label{hypo} Let  $(\U(t,s))_{t\geq s}$ be a given evolution family and let $(\mathbf{B}(t))_{t \geq 0}$ be a family of unbounded operators in $\e$. We assume the following
\begin{enumerate}[(i)\:]
\item $\U(t,s)u \in \e_+$ for any $u \in \e_+$ and $t \geq s \geq
0$;
\item $\|\U(t,s)u\|_\e \leq \|u\|_\e$ for any $u \in \e_+$ and any $t \geq
s \geq 0$;
\item for any $t\geq 0$, $\mathbf{B}(t)u \in \e_+$ for any $u \in \D(\mathbf{B}(t)) \cap \e_+$;
\item for any $u \in \e$ and any $s \geq 0$ one has
\begin{equation}\label{crucial}
\U(t,s)u \in \D(\mathbf{B}(t)) \qquad \text{ for a. e. } t \geq 0,\end{equation}  and the mapping $t \in [s,\infty)\mapsto \mathbf{B}(t)\U(t,s)u$ is measurable. Moreover, for any $t \geq s \geq 0$ and any $u \in  \e_+$  the following holds:
\begin{equation}\label{estimalpha}\int_s^{t}
\left\|\mathbf{B}(\tau)\U(\tau,s)u\right\|_\e \d \tau \leq
\|u\|_\e-\|\U(t,s)u\|_\e.
\end{equation}
\end{enumerate}
Whenever the inequality in \eqref{estimalpha} is an \textbf{\emph{equality}}, we shall say that we are in the \textbf{\emph{formally conservative case}}.
\end{hyp}
Clearly, points (i) and (ii) mean  that the unperturbed evolution family  $(\U(t,s))_{t \geq s}$ is substochastic. The family of unbounded nonnegative operators $(\mathbf{B}(t))_{t \geq 0}$ will then be seen as perturbation of $(\U(t,s))_{t\geq s}$ where inequality \eqref{estimalpha} will play the role, in this non-autonomous framework, of the above \eqref{intBu}. Assumption (iv) is crucial in our analysis and will be met in the various applications to linear kinetic equations we have in mind.

In Section 2, borrowing ideas to Kato's seminal paper \cite{kato}, we can prove (see Theorem \ref{generevol}) that, under Assumptions \ref{hypo}, there exists an evolution family $\vt$ in $\e$ such that
\begin{equation}\label{equ38-intro}
\V(t,s)u =\U(t,s)u + \int_s^t \V(t,r)\mathbf{B}(r)\U(r,s)u \d r,
\qquad \forall u \in \e.
\end{equation}
Moreover, the evolution family $\vt$ is substochastic, i.e. $\left\|\V(t,s)u\right\|_\e \leq \|u\|_\e$ for any $u \in \e_+$. Notice that, in order to the above identity to make sense, our assumption that $\U(r,s)u \in \D(\mathbf{B}(r))$ for a. e. $r \geq s$ plays a crucial role. The proof of such an existence result is \textit{constructive} and the evolution family $\vt$ satisfying \eqref{equ38-intro} is built as  a generalization of the Dyson--Phillips
expansion series:
$$\V(t,s)u=\sum_{n=0}^\infty \V_n(t,s)u \qquad \forall u \in \e, \:t \geq s \geq 0$$
with $\V_0(t,s)=\U(t,s)$ and
\begin{equation}\label{dyson-intro}
\V_{n+1}(t,s)u=\int_s^t \V_n(t,r)\mathbf{B}(r)\U(r,s)u \d r,\qquad
 t \geq s \geq 0,\:\:u \in
\e.\end{equation}
The above series converges in $\e$ and one can prove that it provides an evolution family satisfying the variation of constant formula \eqref{equ38-intro}.

Notice that perturbation theory of strongly continuous evolution families is already a well-developed research area which followed somehow the paths of the perturbation theory of $C_0$-semigoups. In particular, we refer the reader to the seminal work  \cite{miya} where a non-autonomous Miyadera perturbation theory has been developed. Our results in the present paper \emph{do not resort to such a Miyadera theory}, however some of the ideas implemented in \cite{miya} will play an important role in our analysis. More precisely, in \cite{miya} (see also \cite{liske, monniaux, rabiger}), the perturbation theory for the evolution family is based upon known perturbation results for $C_0$-semigroups via the following well-known feature of evolution family \cite{chicone}: to a given evolution family, say $(\U(t,s))_{t \geq s \geq 0}$, one can associate  a $C_0$-semigroup $(\T_0(t))_{t \geq 0}$ acting on the \textit{lifted space}
$$\X=L^1(\mathbb{R}_+,\mathcal{E})$$
endowed with its natural norm  $\|\cdot\|_{\X}$  given by
 $\|f\|_{\X}=\int_0^\infty \|f(s)\|_\e \d s$ for any $f \in \X$ (see Section \ref{sec:B}) for more details). The semigroup $(\T_0(t))_{t \geq 0}$ is given by:
$$\left[{\T}_0(t)f\right](r)=\chi_+(r-t) {\U}(r,r-t)f(r-t) \qquad r \geq 0$$
where $\chi_+(\cdot)$ stands for the indicator function of the set $\mathbb{R}^+$.

While the approach of \cite{miya} consists in applying, in the space $\X$, the now classical Miyadera perturbation theorem \cite{arloban} to that semigroup $(\T_0(t))_{t \geq 0}$, our approach in the present paper is different since the construction of $\vt$ is \textit{non-autonomous} in nature: we do not work in the space $\X$ to construct $\vt$. In this sense, our approach  is essentially self-contained and no use of the results of \cite{miya} is necessary for our analysis.\smallskip

However, we will borrow some of the  ideas of \cite{miya}  to deal with the honesty theory of $\vt$, exploiting in particular the results we obtained in \cite{ALM} for perturbed semigroup \textit{in the space} $\X$.  The fundamental observation is that the lifted space $\X$ is also an \emph{abstract state space} whose norm  $\|\cdot\|_\X$  can be
uniquely extended to a positive bounded linear functional
$\mathbf{\Psi}$ satisfying
\begin{equation*}
\mathbf{\Psi} \in \X^\star_+,\qquad \la\mathbf{\Psi},f \ra_\X
=\|f\|_\X,\qquad f \in \X_+.\end{equation*}
Now, to the perturbed evolution family $\vt$ corresponds a unique $C_0$-semigroup $\Tt$ in $\X$ and both the semigroups $(\T_0(t))_{t\geq 0}$ and $\Tt$ are substochastic semigroups in $\X$ and one can determine the link between the two semigroups and their respective generators $(\Z,\D(\Z))$ and $(\G,\D(\G))$.

Precisely, we construct in Section \ref{sec:B} and borrowing some ideas of \cite{voigt}, a perturbation operator  $\widehat{\fB}\::\:\D(\widehat{\fB}) \subset \X \to \X$ which is nonnegative in $\X$, such that $\D(\widehat{\fB}) \supset \D(\Z)$ and satisfying
$$\la \mathbf{\Psi}, \left(\Z+\widehat{\fB}\right)f\ra_\X \leq 0 \qquad \forall f \in \D(\Z) \cap \X_+.$$
In other words, the honesty theory for $C_0$-semigroup developed in \cite{ALM} applies in the space $\X$ and $\G$ is an extension of the sum $(\Z+\widehat{\fB},\D(\Z))$. It is moreover possible to prove that the semigroup $\Tt$ is exactly the one obtained by a direct application, in $\X$, of Kato's theorem \cite{ALM}. Notice that building the perturbation $\widehat{\fB}$ and recognizing that $\Tt$ coincide with the semigroup obtained thanks to Kato's theorem  will require several additional measurability assumptions, besides Assumptions \ref{hypo}, that will be introduced progressively in the text.

Clearly, the great interest of working in the lifted space $\X$ is that it allows us to exploit the results established in \cite{ALM} about honesty of $C_0$-semigroups and we shall adopt the following definition for the honesty of $\vt$:
\begin{defi} The evolution family $\vt$ is honest in $\e$ if and only if the associated semigroup $\Tt$ is honest in $\X$.
\end{defi}

As already said, several characterizations of the honesty of $\Tt$ have been established in our recent contribution \cite{ALM} but it is very challenging to be able to express such characterizations in explicit terms involving only the known objects in $\e$ that are $\mathbf{B}(t)$ and the Dyson-Phillips iterated $\V_n(t,s)$.

This is the object of the core of our paper, developed in Section \ref{sec:sec4}. The novelty of our approach lies in a constant interaction between results established in the space $\X$ and results typical to $\e$ while the previous existing approach \cite{miya,liske} essentially works in the lifted space $\X$ to deduce property in $\e$. Let us explain with more details the results of Section \ref{sec:sec4}: we first provide, under some additional assumptions on $(\mathbf{B}(t))_{t \geq 0}$, a practical interpretation of the perturbation $\widehat{\fB}$ as simply the \emph{multiplication operator} in $\X$ by $\mathbf{B}(\cdot)$. Such an additional assumption, see Assumption \ref{hypo2}, is clearly restrictive but turns out to be very well-suited to the case of non-integral operators that we have in mind for applications.

Second, we construct a second family of Dyson-Phillips iterated $(\overline{\V}_n(t,s))_{n \in \mathbb{N}}$ $(t \geq s\geq 0)$ which is such that, for any $u \in \e_+$ and any $n \in \mathbb{N}$,
\begin{equation}\label{eq:BV0intro}\overline{{\V}}_n(t,s)u \in \D({\mathbf{B}}(t)) \quad \text{ for almost every } t \geq s.\end{equation}
Finally, we show that this second family of iterated actually coincide with the original one \eqref{dyson-intro}, i.e. $\overline{\V}_n(t,s)=\V_n(t,s)$ for any $t \geq s\geq 0$ and any $n \in \mathbb{N}$. This allows to prove the main result of the paper (Theorem \ref{theo:main?}) that asserts that the evolution family $\vt$ is honest if and only if the following holds for any $u \in \e_+$
 $$\lim_{n \to \infty} \int_s^t \left\|\mathbf{B}(\t)\V_n(\t,s)u\right\|_\e \d\t=0 \qquad \forall t \geq s.$$
Notice that in  the formally conservative case for simplicity, one easily sees that the evolution family $\vt$ is conservative if and only if it is honest. \medskip

In Section 5, we deal with applications to kinetic theory in arbitrary geometry under the assumption that the cross-sections are space homogeneous. Non-autonomous kinetic equation have already been investigated in \cite{vdmee} underlying in particular the influence of \textit{time-dependent} forcing term. Our approach is different here and we are dealing rather with \textit{time-dependent} collision operators. The
main technical point is to show how Assumptions \ref{hypo} are
satisfied in the context of kinetic theory. In addition, as in the autonomous case
\cite{musAfrika}, we show also how the honesty of space homogeneous kinetic equations of the form
\begin{equation*}
\partial_t f(v,t)+\Sigma(v,t) f(v,t) =\int_{V} b(t,v,v')f(v',t)\d\mu(v') \qquad v  \in V
\end{equation*}
implies the honesty of full (transport) kinetic equations
\begin{equation*}\label{kineticintro}
\left\{\begin{split}
\partial_t f(x,v,t)+v \cdot \nabla_x f(x,v,t) &+ \Sigma(v,t) f(x,v,t)\\
&=\int_{V} b(t,v,v')f(x,v',t)\d\mu(v') \qquad x \in \mathbf{\Omega}, \qquad v \in V\\
f(x,v,t)&=0 \qquad  \text{ if } (x,v) \in \mathbf{\Gamma}_-
\end{split}\right.
\end{equation*}
where $\mathbf{\Omega} \subset \R^d$ is a open subset, $\mu$ is a Borel measure over $\R^d$ with support $V$ (we refer to Section \ref{sec:inhomo} for details on the various terms in the above equation). We investigate this problem in the state space $\e=L^1(\mathbf{\Omega}\times V,\d x\otimes\d\mu(v))$. Here, as in \cite{musAfrika}, we deal with \textit{singular} and \textit{subcritical} kinetic equation: namely, the kinetic equation is singular in the sense that the collision operator
$$f \in L^1(V,\d\mu(v)) \longmapsto \int_V b(t,v,v')f(v')\d\mu(v') \in L^1(V,\d\mu(v))$$
is an \textit{unbounded operator} while the subcritical assumption refers to
$$\int_V b(t,v,v')\d\mu(v') \leq \Sigma(t,v) \qquad \text{ for a. e. } (t,v) \in \mathbb{R}^+ \times V.$$
We finally  provide a practical criterion of honesty of space homogeneous kinetic equations in terms of existence
of a detailed balance condition, inspired by the results of \cite{musAfrika}.\medskip

In section 6, we revisit briefly some of the results of \cite{LB} for non-autonomous fragmentation equation
\begin{equation*}
\partial_t u(t,x)=-a(t,x)u(t,x) + \int_{x}^\infty a(t,y)b(t,x,y)u(t,y)\d y  \qquad x >0, \:t > s\end{equation*}
under some reasonable assumptions on $a(t,\cdot)$ and $b(t,\cdot,\cdot)$ (see Section \ref{sec:frag} for details). We show how the abstract theory developed in the first four sections of the present paper do fit to such a problem and allow to recover in a very simple way several results of \cite{LB}.

\section{Perturbation theory of substochastic evolution families}

 As it  is the case for autonomous
problem (i.e. for  $C_0$-semigroup), it is possible to obtain an
abstract generation result for evolution families.
 Under the above set of Assumptions, one can construct an  evolution family $\vt$ in $\e$.
\begin{theo}\label{generevol}
Assume that Assumptions \ref{hypo} hold true. Then, there exists an evolution family $\vt$ in $\e$ such that
\begin{equation}\label{equ38}
\V(t,s)u =\U(t,s)u + \int_s^t \V(t,r)\mathbf{B}(r)\U(r,s)u \d r,
\qquad \forall u \in \e.
\end{equation}
Moreover, $\left\|\V(t,s)u\right\|_\e \leq \|u\|_\e$ for any $u \in \e_+$ while $\left\|\V(t,s)u\right\|_\e \leq 2\gamma\|u  \|_\e$ for any $u \in \e.$
\end{theo}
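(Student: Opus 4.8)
The plan is to construct $\vt$ directly via the generalized Dyson--Phillips series
$$\V(t,s)u=\sum_{n\geq 0}\V_n(t,s)u,\qquad \V_0(t,s)=\U(t,s),\qquad \V_{n+1}(t,s)u=\int_s^t\V_n(t,r)\mathbf{B}(r)\U(r,s)u\,\d r,$$
and to show it converges, defines an evolution family, and satisfies \eqref{equ38}. I would proceed in four stages.

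First, one must make sense of each iterate $\V_{n+1}(t,s)u$. For $u\in\e_+$, Assumption \ref{hypo}(iv) gives that $r\mapsto\mathbf{B}(r)\U(r,s)u$ is measurable on $[s,\infty)$ with values in $\e_+$, and \eqref{estimalpha} gives $\int_s^t\|\mathbf{B}(r)\U(r,s)u\|\,\d r\leq\|u\|-\|\U(t,s)u\|\leq\|u\|$, so the integrand of $\V_1$ is Bochner integrable. For the positive cone, monotonicity of the norm and substochasticity of $\U$ combine to yield $\|\V_1(t,s)u\|\leq\int_s^t\|\mathbf{B}(r)\U(r,s)u\|\,\d r$. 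Inductively, using $\U(\tau,s)=\U(\tau,r)\U(r,s)$ one rewrites $\V_n$ as an $n$-fold iterated integral over the simplex $s\leq r_1\leq\cdots\leq r_n\leq t$ of $\U(t,r_n)\mathbf{B}(r_n)\U(r_n,r_{n-1})\cdots\mathbf{B}(r_1)\U(r_1,s)u$; measurability of these iterated maps follows from Assumption \ref{hypo}(iv) applied successively (this is where one checks $\U(r_1,s)u\in\D(\mathbf{B}(r_1))$ a.e., then $\U(r_2,r_1)[\mathbf{B}(r_1)\U(r_1,s)u]\in\D(\mathbf{B}(r_2))$ a.e., etc.), together with a Fubini argument on the simplex. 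The telescoping estimate
$$\sum_{k=0}^{n}\|\V_k(t,s)u\|\leq\|u\|,\qquad u\in\e_+,\ t\geq s,$$
then falls out by induction: $\|\V_{n+1}(t,s)u\|\leq\int_s^t\|\V_n(t,r)\mathbf{B}(r)\U(r,s)u\|\,\d r\leq\int_s^t\big(\|\mathbf{B}(r)\U(r,s)u\|-\sum_{k=1}^{n}\|\V_k(t,r)\mathbf{B}(r)\U(r,s)u\|\big)$-type bookkeeping, or more cleanly by observing that the partial sums $\sum_{k\leq n}\V_k(t,s)u$ are increasing in $n$ (each $\V_k(t,s)u\in\e_+$) and bounded in norm by $\|u\|$, hence convergent since bounded monotone sequences in $\e_+$ converge. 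This is the step I expect to be the main obstacle: pinning down the measurability of the iterated maps $r\mapsto\V_n(t,r)\mathbf{B}(r)\U(r,s)u$ rigorously (so that the Bochner integrals defining $\V_{n+1}$ exist) requires carefully propagating Assumption \ref{hypo}(iv) through the induction and invoking a measurable-selection/Fubini argument on each simplex, rather than any hard estimate.

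Second, extend to general $u\in\e$ by linearity using the non-flatness decomposition \eqref{decomp}: write $u=u_1-u_2$ with $u_i\in\e_+$, $\|u_i\|\leq\gamma\|u\|$, set $\V_n(t,s)u:=\V_n(t,s)u_1-\V_n(t,s)u_2$, check well-definedness (independence of the decomposition, by linearity of each $\V_n$ on $\e_+$ which is inherited from linearity of $\U(\cdot,\cdot)$ and $\mathbf{B}(\cdot)$), and obtain $\sum_n\|\V_n(t,s)u\|\leq 2\gamma\|u\|$, hence absolute convergence of the series in $\e$ for every $u\in\e$, defining the bounded operator $\V(t,s)$ with $\|\V(t,s)u\|\leq\|u\|$ on $\e_+$ and $\leq 2\gamma\|u\|$ on $\e$.

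Third, verify the variation-of-constants identity \eqref{equ38}: summing the recursion $\V_{n+1}(t,s)u=\int_s^t\V_n(t,r)\mathbf{B}(r)\U(r,s)u\,\d r$ over $n\geq 0$ and interchanging sum and integral (justified by the dominated convergence / monotone convergence bound $\sum_n\int_s^t\|\V_n(t,r)\mathbf{B}(r)\U(r,s)u\|\,\d r<\infty$ from the telescoping estimate) gives $\V(t,s)u-\U(t,s)u=\int_s^t\V(t,r)\mathbf{B}(r)\U(r,s)u\,\d r$. Fourth, check the evolution-family axioms of Definition \ref{evfam}: $\V(s,s)=\U(s,s)=\mathbf{I}$ is clear; strong continuity of $(t,s)\mapsto\V(t,s)u$ follows from uniform (in compact $(t,s)$-regions) convergence of the series together with continuity of each $\V_n$ (itself proved by induction from continuity of $\U$ and dominated convergence on the defining integral); and the cocycle property $\V(t,s)=\V(t,r)\V(r,s)$ is obtained by plugging \eqref{equ38} into itself — iterating the variation-of-constants formula and matching it against the series expansion of $\V(t,r)\V(r,s)$ term by term, using $\U(t,s)=\U(t,r)\U(r,s)$ to split each simplex integral at $r$. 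Positivity $\V(t,s)\e_+\subset\e_+$ is immediate since each $\V_n(t,s)$ maps $\e_+$ into $\e_+$ and the cone is closed. This completes the proof modulo the measurability bookkeeping flagged above.
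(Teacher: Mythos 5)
Your skeleton matches the paper's in its broad structure: the generalized Dyson--Phillips iterates, positivity of each $\V_n$, the telescoping bound $\sum_{k\leq n}\|\V_k(t,s)u\|_\e\leq\|u\|_\e$ on the positive cone, monotone convergence in the abstract state space, extension to all of $\e$ via the decomposition \eqref{decomp}, and dominated convergence to pass to the limit in \eqref{equ38}. You are also right to single out measurability as the crux. The route you propose to secure it, however, does not go through under Assumption \ref{hypo} alone. Unfolding $\V_n$ as an $n$-fold integral over the simplex of the kernel $\U(t,r_n)\mathbf{B}(r_n)\cdots\mathbf{B}(r_1)\U(r_1,s)u$ and invoking Fubini requires \emph{joint} measurability of that kernel in $(r_1,\dots,r_n)$; Assumption \ref{hypo}\textit{(iv)} only gives, for each fixed $v\in\e$ and $s\geq 0$, measurability of the single-variable map $t\mapsto\mathbf{B}(t)\U(t,s)v$, and separate measurability does not imply joint measurability. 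The paper in fact introduces the strictly stronger Assumptions \ref{hypo1} and \ref{hypo2} later (Sections \ref{sec:B} and \ref{sec:sec4}) precisely to obtain joint measurability when it is needed, and neither is available in Theorem \ref{generevol}. The paper never unfolds the simplex: Proposition \ref{luisanotes2} proves, by a \emph{single} induction, simultaneously that each $\V_n(t,s)$ is positive and bounded, that the Cauchy-product identity $\V_n(t,s)=\sum_{k=0}^n\V_k(t,r)\V_{n-k}(r,s)$ holds, and (items \textit{(5)} and \textit{(6)}) that $t\mapsto\V_n(t,s)$ and $s\mapsto\V_n(t,s)$ are strongly continuous. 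Item \textit{(6)} is what makes the recursion \eqref{V-n} well-posed: $r\mapsto\V_n(t,r)$ strongly continuous together with $r\mapsto\mathbf{B}(r)\U(r,s)u$ measurable makes $r\mapsto\V_n(t,r)\big[\mathbf{B}(r)\U(r,s)u\big]$ strongly measurable as the action of a strongly continuous operator family on a measurable vector function, with no product-space Fubini at all.

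Your strong-continuity argument for $\V$ is also circular as sketched: uniform convergence of the nondecreasing partial sums on compact $(t,s)$-sets would be a Dini-type conclusion, which needs the limit to be continuous in the first place. The paper instead reads continuity at the diagonal off the already-established identity \eqref{equ38}, via $\|\V(s+h,s)u-\U(s+h,s)u\|_\e\leq 2\gamma\int_s^{s+h}\|\mathbf{B}(r)\U(r,s)u\|_\e\,\d r\to 0$, propagates it to all $t\geq s$ using the cocycle identity and the diagonal-continuity of the individual $\V_n$ (Proposition \ref{luisanotes2}, items \textit{(4)--(6)}), and closes with \cite[Lemma 1.1]{liske}. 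Your treatment of the cocycle property by matching Cauchy-product terms at $r$ is, in spirit, Proposition \ref{luisanotes2}\textit{(3)}; the two points above are the ones that need repair.
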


The proof of the above result consists actually in constructing the
evolution family $\vt$ as a generalization of the Dyson--Phillips
expansion series. Precisely, define for any $n \in \mathbb{N}$:
\begin{equation}\label{V-n}\begin{split}
\V_0(t,s)&=\U(t,s),  \qquad t \geq s \geq 0, \qquad \qquad \text{
and }\\
\V_{n+1}(t,s)u&=\int_s^t \V_n(t,r)\mathbf{B}(r)\U(r,s)u \d r,\qquad
 t \geq s \geq 0,\:\:u \in
\e.\end{split}\end{equation} One has
\begin{propo}\label{luisanotes2}
For any $n \geq 0$ and any $t \geq s \geq 0$,  $\V_n(t,s) \in
\mathscr{B}(\mathcal{E})$ with the following properties:
\begin{enumerate}[(1)\:]
\item $\V_n(t,s)u \in \e_+$ for any $u  \in \e_+;$
\item $\sum_{k=0}^n \|\V_k(t,s)u\|_\e \leq 2\gamma\|u\|_\e$ for any  $u
\in \e$ while
\begin{equation}\label{vn++}\sum_{k=0}^n \|\V_k(t,s)u\|_\e \leq \|u\|_\e \quad \text{ for any } \quad  \:u
\in \e_+.\end{equation}
\item For any $0 \leq s \leq r \leq t$:
$$\V_n(t,s)u=\sum_{k=0}^n\V_k(t,r)\V_{n-k}(r,s)u,\qquad \forall u \in \e.$$
\item For any $n \geq 1$, $\lim_{h \to 0} \V_n(t+h,t)u=0$ for
any $u \in \e,$ $t \geq 0.$
\item For any $s \geq 0$, the mapping $ [s, \infty) \ni t \mapsto
\V_n(t,s)$ is strongly continuous.
\item For any $r \geq 0$ and any $t > r$,  the mapping
$ s \in (r,t) \longmapsto \V_n(t,s)$ is strongly continuous.
\end{enumerate}
\end{propo}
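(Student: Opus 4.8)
The plan is to establish the six properties \emph{simultaneously, by induction on $n$}. The base case $n=0$ is immediate: for $\V_0=\U$, (1)--(2) are the positivity and substochasticity of $\U$ assumed in Assumption \ref{hypo}(i)--(ii) (with the non-flatness decomposition \eqref{decomp} giving the factor $2\gamma$ for general $u$), (3) is the propagator identity $\U(t,s)=\U(t,r)\U(r,s)$ of Definition \ref{evfam}, (5)--(6) are the joint strong continuity of $(t,s)\mapsto\U(t,s)u$, and (4) is vacuous. Assuming the six statements for $\V_0,\dots,\V_n$, I would first check that $\V_{n+1}(t,s)u$ in \eqref{V-n} is a well-defined Bochner integral: by Assumption \ref{hypo}(iv), $r\mapsto\mathbf{B}(r)\U(r,s)u$ is defined for a.e.\ $r$ and measurable, and composing it with the strongly continuous (induction hypothesis (6)) and uniformly bounded (induction hypothesis (2): $\|\V_n(t,r)\|\leq2\gamma$) family $\V_n(t,\cdot)$ one gets, via approximation by simple functions, that $r\mapsto\V_n(t,r)\mathbf{B}(r)\U(r,s)u$ is measurable; for $u\in\e_+$ its norm is bounded by $\|\mathbf{B}(r)\U(r,s)u\|_\e$, which lies in $L^1([s,t])$ by \eqref{estimalpha}, and the case of general $u$ follows from \eqref{decomp}. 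This makes $\V_{n+1}(t,s)$ a well-defined linear operator; property (1) then follows because the integrand takes values in the \emph{closed} cone $\e_+$, by Assumption \ref{hypo}(i),(iii) and induction hypothesis (1).

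The conceptual core is the norm estimate (2). For $u\in\e_+$ all the vectors $\V_k(t,s)u$ lie in $\e_+$, so I would apply the functional $\mathbf{\Phi}$ of \eqref{psi} --- linear, bounded, and equal to the norm on $\e_+$ --- and commute it with the Bochner integral to get, for $1\leq k\leq n+1$,
\[
\|\V_k(t,s)u\|_\e=\la\mathbf{\Phi},\V_k(t,s)u\ra=\int_s^t\|\V_{k-1}(t,r)\mathbf{B}(r)\U(r,s)u\|_\e\,\d r .
\]
Summing over $k=1,\dots,n+1$, exchanging the finite sum with the integral, reindexing, and applying induction hypothesis (2) to the nonnegative vector $\mathbf{B}(r)\U(r,s)u$ yields $\sum_{k=1}^{n+1}\|\V_k(t,s)u\|_\e\leq\int_s^t\|\mathbf{B}(r)\U(r,s)u\|_\e\,\d r\leq\|u\|_\e-\|\U(t,s)u\|_\e$ by \eqref{estimalpha}; adding $\|\V_0(t,s)u\|_\e=\|\U(t,s)u\|_\e$ gives \eqref{vn++}, and the $2\gamma$--bound for arbitrary $u$ follows from \eqref{decomp} and the triangle inequality --- which also shows $\V_{n+1}(t,s)\in\mathscr{B}(\e)$. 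The same computation records, for all $m\geq1$, $\beta\geq\alpha\geq0$, $w\in\e_+$, the uniform estimate $\|\V_m(\beta,\alpha)w\|_\e\leq\int_\alpha^\beta\|\mathbf{B}(\sigma)\U(\sigma,\alpha)w\|_\e\,\d\sigma\leq\|w\|_\e-\|\U(\beta,\alpha)w\|_\e$, whose right-hand side tends to $0$ both when $\beta\downarrow\alpha$ and when $\alpha\uparrow\beta$ (by continuity of $\U$ and $\U(\tau,\tau)=\mathbf{I}$). This is precisely property (4) (extended to general $u$ by \eqref{decomp}), and it will also be the key ingredient for the continuity statements.

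For the Chapman--Kolmogorov identity (3) I would fix $0\leq s\leq r\leq t$ and split the defining integral of $\V_{n+1}(t,s)u$ at $r$. On $(r,t)$, using $\U(\sigma,s)=\U(\sigma,r)\U(r,s)$, that part of the integral equals $\V_{n+1}(t,r)\U(r,s)u=\V_{n+1}(t,r)\V_0(r,s)u$. On $(s,r)$, insert the induction hypothesis (3), $\V_n(t,\sigma)=\sum_{k=0}^n\V_k(t,r)\V_{n-k}(r,\sigma)$, pull the bounded operators $\V_k(t,r)$ out of the integral, and recognize $\int_s^r\V_{n-k}(r,\sigma)\mathbf{B}(\sigma)\U(\sigma,s)u\,\d\sigma=\V_{n-k+1}(r,s)u$; reindexing then gives $\V_{n+1}(t,s)u=\sum_{k=0}^{n+1}\V_k(t,r)\V_{n+1-k}(r,s)u$.

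Finally, the continuity statements. For (5) I would argue directly on \eqref{V-n}: for $t'>t\geq s$, write $\V_{n+1}(t',s)u-\V_{n+1}(t,s)u$ as the integral over $[s,t]$ of $[\V_n(t',\sigma)-\V_n(t,\sigma)]\mathbf{B}(\sigma)\U(\sigma,s)u$ plus the integral of $\V_n(t',\sigma)\mathbf{B}(\sigma)\U(\sigma,s)u$ over the short interval $[t,t']$; the first term tends to $0$ by dominated convergence (integrand $\to0$ a.e.\ by induction hypothesis (5), dominated by a fixed multiple of $\|\mathbf{B}(\sigma)\U(\sigma,s)u\|_\e\in L^1([s,t])$ thanks to (2) and \eqref{estimalpha}), the second by absolute continuity of the Lebesgue integral; the limit $t'\uparrow t$ is handled the same way. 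For (6) I would instead use the identity (3): for $s'\downarrow s$ write $\V_{n+1}(t,s)u=\sum_{k=0}^{n+1}\V_k(t,s')\V_{n+1-k}(s',s)u$, and for $s'\uparrow s$ write $\V_{n+1}(t,s')u=\sum_{k=0}^{n+1}\V_k(t,s)\V_{n+1-k}(s,s')u$; in either sum, every summand except the one carrying $\V_{n+1}(t,\cdot)$ tends to $0$ by the uniform estimate above, while that remaining summand is controlled using the strong continuity of $\U(\cdot,\cdot)u$ and the bound $\|\V_{n+1}(t,\cdot)\|\leq2\gamma$, yielding $\V_{n+1}(t,s')u\to\V_{n+1}(t,s)u$. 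I expect the main obstacle to be not any single estimate but the two-parameter bookkeeping --- the combinatorial reindexing in (3) and, above all, correctly deciding in each one-sided limit in (5)--(6) which time argument is held fixed so that property (4)/the uniform estimate applies; the estimate (2) itself, though the heart of the matter, is short once $\mathbf{\Phi}$ is used to linearize the norm on the cone.
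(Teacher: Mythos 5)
Your proof is correct and mirrors the paper's simultaneous induction very closely. The base case, well-definedness of the Bochner integral, the positivity argument for (1), the use of $\mathbf{\Phi}$ commuted past the integral to linearize the norm on the cone in (2), the derivation of (4) from the resulting uniform bound, and the treatment of (6) via the Chapman--Kolmogorov identity are all essentially identical to the paper's; your proof of (3) is the same computation read forwards (split the defining Duhamel integral at $r$ and insert the induction hypothesis) rather than backwards (telescope $\sum_k\V_k(t,r)\V_{n+1-k}(r,s)u$), which is an organizational rather than substantive difference. The one genuine deviation is in (5): you apply dominated convergence directly to $\int_s^t[\V_n(t',\sigma)-\V_n(t,\sigma)]\mathbf{B}(\sigma)\U(\sigma,s)u\,\d\sigma$ using the induction hypothesis (5) for $\V_n$ pointwise in $\sigma$, plus absolute continuity for the tail over $[t,t']$, whereas the paper deduces (5) from the already-established identity (3) together with property (4). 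Both routes are sound and of comparable length; yours has the minor advantage of making (5) independent of (3) and (4).
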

\begin{proof} We
prove all the above points \textit{(1)--(6)} by induction. All these
points are clearly satisfied for $n=0$ (except clearly point \textit{(4)}). Assume now they hold for any
$k \leq n$ for some given $n \in \mathbb{N}$ and let us prove they
still hold true for $k=n+1.$ In all the sequel, we assume $t \geq s \geq 0$ to be fixed.


\textit{ (1)} Let $u \in \e_+$ be given. Since $\V_n(t,s)$ is a bounded positive linear operator in $\e$, $\V_{n+1}(t,s)u$ is well-defined by Eq. \eqref{V-n} and it is clear from Assumptions \ref{hypo} \textit{(iii)} that $\V_{n+1}(t,s)u \in \e_+$.

\textit{ (2)} Let us first assume  $u \in
 \e_+$ to be given.  The induction assumption implies that
$$\|\V_n(t,r)v\|_\e \leq \|v\|_\e -\sum_{k=0}^{n-1} \|\V_k(t,r)v\|_\e$$ for any $t \geq r \geq 0$ and any $v \in \e_+$. Applying this to $v=\mathbf{B}(r)\U(r,s)u \in \e_+$ and using \eqref{estimalpha}, we
get:
\begin{equation*}\begin{split}
\|\V_{n+1}(t,s)u\|_\e &\leq \int_s^t \|\mathbf{B}(r)\U(r,s)u\|_\e \,
\d r -\sum_{k=0}^{n-1}\int_s^t \|\V_k(t,r)\mathbf{B}(r)\U(r,s)u\|_\e
\, \d r\\ &\leq \|u\|_\e -\|\U(t,s)u\|_\e -\sum_{k=0}^{n-1}\int_s^t
\|\V_k(t,r)\mathbf{B}(r)\U(r,s)u\|_\e \, \d r.
\end{split}
\end{equation*}
Now,  one has
\begin{equation*}\begin{split} \int_s^t
\|\V_k(t,r)&\mathbf{B}(r)\U(r,s)u\|_\e \, \d r =\int_s^t \la
\mathbf{\Phi},
\V_k(t,r)\mathbf{B}(r)\U(r,s)u  \ra_\e\d r\\
&=\la \mathbf{\Phi}, \int_s^t \V_k(t,r)\mathbf{B}(r)\U(r,s)u \d
r\ra_\e=\la \mathbf{\Phi}, \V_{k+1}(t,s)u\ra_\e\\
&=\left\| \V_{k+1}(t,s)u\right\|_\e
\end{split}\end{equation*} from which we deduce that
\begin{equation}\label{Vn1-Utu}
\|\V_{n+1}(t,s)u\|_\e \leq \|u\|_\e -
\sum_{k=0}^{n}\| \V_k(t,s)u\|_\e   \qquad \forall u \in
 \e_+, \qquad t \geq s \geq 0.\end{equation}
This proves \eqref{vn++} and we deduce from \eqref{decomp} that $\|\V_{n+1}(t,s)u\|_\e \leq 2\gamma\|u\|_\e$ for any $u \in
\e$ and any $t \geq s \geq 0.$

\textit{(3)} Let $t \geq r \geq s \geq 0$ be given and let $u \in
\e$ be fixed. Using the induction assumption, one has
\begin{multline}\label{sumn+1}
\sum_{k=0}^{n+1}\V_k(t,r)\V_{n+1-k}(r,s)u =\U(t,r)\V_{n+1}(r,s)u+
\V_{n+1}(t,r)\U(r,s)u\\+\sum_{k=1}^{n}\V_k(t,r)\V_{n+1-k}(r,s)u.\end{multline}
Now, $\U(t,r)\V_{n+1}(r,s)u$ is equal to
\begin{equation*}\begin{split}
\U(t,r)&\int_{s}^r\V_n(r,\t) \mathbf{B}(\t)\U(\t,s)u\d\t =\int_s^r\U(t,r)\V_n(r,\t)\mathbf{B}(\t)\U(\t,s)u\d\t\\
&=\int_s^r\V_n(t,\t)\mathbf{B}(\t)\U(\t,s)u\d \t
-\sum_{j=1}^n\int_s^r\V_j(t,r)\V_{n-j}(r,\t)\mathbf{B}(\t)\U(\t,s)u\d\t
\end{split}\end{equation*}
where we used the fact that the induction formula is true for any $k
\leq n$. In the same way,
\begin{equation*}\begin{split}\V_{n+1}(t,r)\U(r,s)u&=\int_r^t\V_n(t,\sigma)\mathbf{B}(\sigma)\U(\sigma,r)\U(r,s)u\d\sigma\\
&=\int_r^t\V_n(t,\sigma)\mathbf{B}(\sigma)\U(\sigma,s)u\d\sigma\end{split}\end{equation*}
from which we deduce that
\begin{multline}\label{sumn+11}
\U(t,r)\V_{n+1}(r,s)u+\V_{n+1}(t,r)\U(r,s)u=\int_s^t\V_n(t,\t)\mathbf{B}(\t)\U(\t,s)u\d
\t\\-\sum_{j=1}^n\int_s^r\V_j(t,r)\V_{n-j}(r,\t)\mathbf{B}(\t)\U(\t,s)u\d\t.\end{multline}
Now, $$\V_{n+1-k}(r,s)u =
\int_s^r\V_{n-k}(r,\t)\mathbf{B}(\t)\U(\t,s)u\d\t$$ so that
\begin{equation*}
\sum_{k=1}^{n}\V_k(t,r)\V_{n+1-k}(r,s)u=\sum_{k=1}^n\int_s^r\V_k(t,r)\V_{n-k}(r,\t)\mathbf{B}(\t)\U(\t,s)u\d\t,\end{equation*}
which, from \eqref{sumn+1} and \eqref{sumn+11} proves that
$$\sum_{k=0}^{n+1}\V_k(t,r)\V_{n+1-k}(r,s)u=\int_s^t\V_n(t,\t)\mathbf{B}(\t)\U(\t,s)u\d
\t=\V_{n+1}(t,s)u$$ and the result holds true for $n+1$.

\textit{(4)} Let $n \geq 0$ and $u \in \e_+$ be fixed and let $h \geq 0$. Then, according to \eqref{Vn1-Utu},
$$\|\V_{n+1}(t+h,t)u\| \leq \|u\| - \|\U(t+h,t)u\|$$
from which we easily deduce that
$$\lim_{h \to 0^+} \|\V_{n+1}(t+h,t)u\|=0 \qquad \forall u \in \e_+$$
and the result extends to any $u \in \e$ by linearity.

\textit{(5)} 
Given $t \geq 0$ and $h >0$, one has from \textit{(3)}
\begin{multline}\label{0.6}
\V_{n+1}(t+h,s)u-\V_{n+1}(t,s)u =\sum_{k=1}^{n+1}\V_k(t+h,t)\V_{n+1-k}(t,s)u +\\
\left(\U(t+h,t)-\mathbf{I}\right)\V_{n+1}(t,s)u.
\end{multline}
Therefore, since \textit{(4)} holds for any $1 \leq k \leq n+1$ and, from the strong continuity of $\U(\cdot,t)$ we clearly get from \eqref{0.6} that
$$\lim_{h \to 0} \left\|\V_{n+1}(t+h,s)u-\V_{n+1}(t,s)u\right\|=0.$$
One proves the result similarly for $h <0$. Therefore, the mapping $t \mapsto \V_{n+1}(t,s)u$ is continuous over $[s,\infty).$

\textit{(6)} Let $t > s > r$ and $h \in \mathbb{R}$ such that $s+h \in (r,t).$ Assume first that $h >0$. Since
$$\V_n(t,s) = \sum_{k=0}^{n}\V_k(t,s+h)\V_{n-k}(s+h,s)$$
one has
$$\V_n(t,s+h) - \V_n(t,s) = \V_n(t,s+h)(\mathbf{I} - \U(s+h,s)) + \sum_{k=0}^{n-1}\V_k(t,s+h)\V_{n-k}(s+h,s)$$
and clearly $\V_n(t,s+h) - \V_n(t,s)$ strongly converges to zero as $h \to 0^+$. One proceeds in the same way for $h < 0$ and this proves point \textit{(6)}.\end{proof}


With this result in hands, one can prove Theorem \ref{generevol}.

\begin{proof}[Proof of Theorem \ref{generevol}] As it is the case for Kato's Theorem (see \cite{ALM}), the proof consists only in defining the
evolution family $\vt$ as the strong limit of $\sum_{k=0}^n
\V_k(t,s).$ Precisely, for any fixed  $t \geq s \geq 0$ and any $u
\in \e_+$, the sequence $(\sum_{k=0}^n \V_k(t,s)u)_n$ is
nondecreasing and bounded according to Proposition
\ref{luisanotes2}, \textit{(2)}. Therefore, one can define
\begin{equation}\label{limitV}\V(t,s)u=\sum_{n=0}^\infty \V_n(t,s)u=\lim_{N \to \infty} \sum_{n=0}^N \V_n(t,s)u \qquad \text{ for any } \qquad u \in \e_+,\end{equation}
and $\|\V(t,s)u\|_\e \leq \|u\|_\e,$ $\forall u \in\e_+.$  Let us now prove \eqref{equ38}. Let $u \in
\e $ and $t \geq s \geq 0$ be fixed. By
construction, for any $n \in \mathbb{N}$:
\begin{equation}\label{sumn}
\sum_{k=0}^n \V_k(t,s)u=\U(t,s)u+\int_0^t \sum_{k=1}^n
\V_{k-1}(t,r)\mathbf{B}(r)\U(r,s)u\d r.\end{equation} Now, according
to Prop. \ref{luisanotes2} \textit{(2)},
$$\left\|\sum_{k=1}^n \V_{k-1}(t,r)\mathbf{B}(r)\U(r,s)u\right\|_\e \leq 2\gamma
\|\mathbf{B}(r)\U(r,s)u\|_\e \qquad \forall 0 \leq r \leq t$$ and,
since the mapping $r \in (s,t) \mapsto \|\mathbf{B}(r)\U(r,s)u\|_\e$ is
integrable, one can use \eqref{limitV} together with the dominated convergence theorem to  get
$$\lim_{n \to \infty}\int_s^t \sum_{k=1}^n
\V_{k-1}(t,r)\mathbf{B}(r)\U(r,s)u\d r=\int_s^t
\V(t,r)\mathbf{B}(r)\U(r,s)u\d r$$
which, combined with \eqref{sumn} yields \eqref{equ38}. Moreover,
thanks to Prop. \ref{luisanotes2}, \textit{(3)},  one gets easily
that
\begin{equation}\label{propagation} \V(t+s)u=\V(t,r)\V(r,s) u, \qquad \forall  t \geq r \geq s \geq 0\,, \qquad u \in
\e.\end{equation} Let us now prove that
\begin{enumerate}[i)\:]
\item for any $s \geq 0$, the mapping $ [s, \infty) \ni t \mapsto
\V(t,s)$ is strongly continuous; \item for any $t \geq 0$, the
mapping $(0,t] \ni s \mapsto \V(t,s)$ is strongly continuous at
$s=t$. \end{enumerate}

Let us prove i) for any fixed $s \geq 0$, i.e.
$$\lim_{h \to 0^+}\|\V(t+h,s)u-\V(t,s)u\|_\e=0, \qquad \forall t \geq
s, \qquad u \in \e.$$

From \eqref{propagation}, as in the proof of Proposition \ref{luisanotes2} \textit{(4)}, it is enough to prove that
\begin{equation}\label{continV}\lim_{h \to 0^+}\|\V(s+h,s)u-\V(s,s)u\|_\e=0, \qquad
\forall u \in \e.\end{equation} Let $u \in \e $ be fixed. One has
\begin{equation*}\begin{split}
\|\V(s+h,s)u-\V(s,s)u\|_\e&=\|\V(s+h,s)u-u\|_\e \\  &\leq
\|\V(s+h,s)u-\U(s+h,s)u\|_\e+\|\U(s+h,s)u-u \|_\e.
\end{split}\end{equation*}
According to \eqref{equ38},
\begin{equation*}
\|\V(s+h,s)u-\U(s+h,s)u\|_\e \leq 2\gamma\int_s^{s+h}
\left\|\mathbf{B}(r)\U(r,s)u\right\|_\e \d r
\end{equation*}
which clearly converges to $0$ as $h \to 0^+.$ Since $\lim_{h \to 0^+
}\|\U(s+h,s)u-u\|_\e=0$, this proves \eqref{continV}. We prove ii)
in the same way.  Now, according to \cite[Lemma 1.1]{liske}, all the above properties are enough to assert that $\vt$ is
a {\it evolutionary family} in $\e$.  \end{proof}

\section{A semigroup approach}\label{sec:B}

It is by now well-documented that non-autonomous Cauchy
problems can be handled with through an appropriate semigroup formulation (see \cite[Chapter 3]{chicone} for a thorough discussion on this topic). Such an approach consists actually in associating to an
exponentially bounded evolution family $(\U(t,s))_{t \geq s}$ on
$\mathcal{E}$ a $C_0$-semigroup  on the vector-valued space
$$\X=L^1(\mathbb{R}_+,\mathcal{E})$$
endowed with its natural norm  $\|\cdot\|_{\X}$  given by
$$\|f\|_{\X}=\int_0^\infty \|f(s)\|_\e \d s, \qquad f \in \X.$$
It is clear that $\|\cdot\|_\X$ is additive on the positive cone
$$\X_+=\{f \in \X, \:\:f(s) \in \e_+ \text{ a. e. } s \geq 0\}$$
and, as in the Introduction, the norm on $\X_+$ can be
uniquely extended to a positive bounded linear functional
$\mathbf{\Psi}$ satisfying
\begin{equation*}
\mathbf{\Psi} \in \X^\star_+,\qquad \la\mathbf{\Psi},f \ra_\X
=\|f\|_\X,\qquad f \in \X_+.\end{equation*} Clearly, $\mathbf{\Psi}$
is related to $\mathbf{\Phi}$ by:
\begin{equation}\label{psi2}
\la \mathbf{\Psi}, f \ra_\X=\int_0^\infty \la \mathbf{\Phi}, f(s)
\ra_\e \d s, \qquad \qquad \forall f \in \X_+.\end{equation}
One can define the operator family $(\T_0(t))_{t \geq 0}$
on $\X$:
\begin{equation}\label{lien}
[\T_0(t)f](s)=\begin{cases} \U(s,s-t)f(s-t),\qquad  &s \geq t \geq 0,\\
 0 \qquad \qquad &s \geq 0, s-t \notin
\mathbb{R}_+.\end{cases}\end{equation} It is easily verified that
$(\T_0(t))_{t \geq 0}$ is a $C_0$-semigroup in $\X$ that we shall
call the {\it evolutionary semigroup} associated to the evolution
family $\U$ and we denote its generator by $(\Z,\D(\Z))$. Notice that, given $f \in \X_+$, since $f(s) \in \e_+$ for almost every $s \geq 0$, one deduces from Assumptions \ref{hypo} \textit{(ii)} that
\begin{multline*}
\|\T_0(t)f\|_\X = \int_0^\infty \|[\T_0(t)f](s)\|_\e \d s = \int_t^\infty \|\U(s,s-t)f(s-t)\|_\e \d s\\
\leq \int_t^\infty \|f(s-t)\|_\e \d s =\int_0^\infty \|f(r)\|_\e \d r=\|f\|_\X.
\end{multline*}
Using the terminology of \cite{ALM}, the semigroup $(\T_0(t))_{t \geq 0}$ is substochastic. Equivalently,
$$\la \mathbf{\Psi}, \T_0(t)f\ra_\X \leq \la \mathbf{\Psi}, f\ra_\X \qquad \forall t \geq 0, \: f \in \X_+.$$
In particular, if $f \in \D(\Z) \cap \X_+$, one gets
$$\la \mathbf{\Psi}, \Z f \ra_\X = \lim_{t \to 0^+} t^{-1} \la \mathbf{\Psi}, \T_0(t)f - f \ra_\X \leq 0.$$
In the same way, to the evolution family $\vt$ in $\e$ constructed in Theorem \ref{generevol}, one can associate a \textit{substochastic} $C_0$-semigroup $\Tt$ in $\X$ defined by
\begin{equation}\label{lineTt}
[\T (t)f](s)=\begin{cases} \V(s,s-t)f(s-t),\qquad  &s \geq t \geq 0,\\
 0 \qquad \qquad &s \geq 0, s-t \notin
\mathbb{R}_+, \qquad f \in \X.\end{cases}\end{equation}
Its generator will be denoted by $(\G,\D(\G))$ and
 \begin{equation}\label{substoG}
\la \mathbf{\Psi}, \G f \ra_\X \leq 0 \qquad \forall f \in \D(\G) \cap \X_+.\end{equation}
\subsection{Construction of an additive perturbation $\widehat{\fB}$} We want to understand in this section the link between $\G$ and $\Z$. In particular, we aim to prove that $\G$ is the (minimal) extension of some additive perturbation of $\Z$, of course related to the family $(\mathbf{B}(t))_{t\geq 0}$. To do so, a first natural attempt would be to introduce the 'multiplication' operator in $\X$ :
\begin{equation}\begin{split}\label{deffB}\fB\: f=\mathbf{B}(\cdot)f(\cdot)\qquad &\text{ with } \\
\phantom{++++} \D(\fB)&=\{f \in
\X\;,\,f(s) \in \D(\mathbf{B}(s))\, \text{ a.e. } s >0,
\;\mathbf{B}(\cdot)f(\cdot) \in \X\}.\end{split}\end{equation}
 To do so, we need additional assumptions on the family $(\mathbf{B}(t))_{t\geq 0}$:
\begin{hyp}\label{hypo1} Let $\Delta=\{(t,s) \;;\;0 \leq s \leq t < \infty\}$ 
In addition to Assumptions \ref{hypo}, we assume that, for any $f \in \X$, the mapping
$$(t,s) \in \Delta \longmapsto \mathbf{B}(t)\U(t,s)f(s) \in \e$$
is well-defined for almost $(t,s) \in \Delta$ and is measurable.
\end{hyp}

\begin{nb} For any $f \in \X$, the mapping
$$(t,s) \in \Delta \longmapsto \mathbf{B}(t)\U(t,s)f(s) \in \e$$
turns out be integrable. Indeed, for any $T > 0$, set $\Delta_T=\{(s,r)\;;\; 0 \leq r \leq s \leq T\}$ and assumes first $f(s) \in \e_+$ for almost every $s \geq 0$. Then,  by Fubini's theorem
\begin{multline*}
\int_{\Delta_T} \|\mathbf{B}(t)\U(t,s)f(s)\|_\e \d s \d t=\int_0^T \d t\int_0^t \|\mathbf{B}(t)\U(t,s)f(s)\|_\e\d s\\
=\int_0^T \d s \int_s^T \|\mathbf{B}(t)\U(t,s)f(s)\|_\e\d t
\leq \int_0^T \|f(s)\|_\e \d s
\end{multline*}
according to Assumptions \ref{hypo} (iv). Letting then $T \to \infty$, we get that
$$\int_{\Delta} \|\mathbf{B}(t)\U(t,s)f(s)\|_\e \d s \d t \leq \|f\|_\X \qquad \forall f \in \X_+$$
for which we deduce easily the result for general $f \in \X.$
\end{nb}

\textbf{\emph{In all the sequel, we shall assume that Assumptions \ref{hypo1} are in force.}} Introduce the  following definition:
\begin{defi} For any $\l >0$ and any $f \in \X$, let $\fB_\l f\;:\;s \geq 0 \mapsto [\fB_\l f](s)$ be given by
\begin{equation}\label{represfBl}[\fB_\l f](s)=\int_0^s \exp(-\l(s-\tau))\mathbf{B}(s)\U(s,\tau)f(\tau)\d \tau \qquad \text{ for a.e. } s \geq 0.\end{equation}
\end{defi}
Then, one has the following
\begin{propo} For any $\l >0$ and any $f \in \X$,  $\fB_\l f \in \X$ with
$$\|\fB_\l f \|_\X \leq \|f\|_\X \qquad \forall f \in \X_+.$$
In particular, $\fB_\l\,:\,f \in \X \mapsto \fB_\l f$ is a bounded operator in $\X$. Moreover, for any $f \in \X$ and any $\l >0$, the following holds
\begin{multline}\label{equa:1fB}
\left[(\l-\G)^{-1}f\right](t)=\left[(\l-\Z)^{-1}f\right](t) \\
 + \int_0^t \exp(-\l(t-r))\V(t,r)\left[\fB_\l f\right](r)\d r \qquad \text{ for a.e. }t \geq 0.
\end{multline}
In other words,
\begin{equation}\label{lgBl}(\l-\G)^{-1}\fB_\l f=(\l-\G)^{-1}f -(\l-\Z)^{-1}f \qquad \forall f \in \X, \qquad \forall \l >0.\end{equation}
\end{propo}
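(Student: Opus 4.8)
The plan is to separate the two assertions. The mapping properties of $\fB_\l$ reduce to a Tonelli computation essentially already carried out in the Remark above, while the resolvent identity \eqref{equa:1fB}--\eqref{lgBl} is obtained by inserting the variation of constants formula \eqref{equ38} into the Laplace representation of $(\l-\G)^{-1}$ and reorganising the resulting double integral. For the first point, let $f \in \X_+$; since $f(\tau) \in \e_+$ for a.e. $\tau$, Assumptions \ref{hypo} \textit{(i),(iii)} give $\mathbf{B}(s)\U(s,\tau)f(\tau) \in \e_+$, so the $\e$-valued integral defining $[\fB_\l f](s)$ lies in $\e_+$ and, pairing with $\mathbf{\Phi}$, $\|[\fB_\l f](s)\|_\e = \int_0^s e^{-\l(s-\tau)}\|\mathbf{B}(s)\U(s,\tau)f(\tau)\|_\e\,\d\tau$. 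Integrating over $s$, applying Tonelli (joint measurability being Assumptions \ref{hypo1}) and using $e^{-\l(s-\tau)} \leq 1$ for $s \geq \tau$, one arrives at $\|\fB_\l f\|_\X \leq \int_\Delta \|\mathbf{B}(t)\U(t,s)f(s)\|_\e\,\d s\,\d t \leq \|f\|_\X$, the last step being exactly the Remark following Assumptions \ref{hypo1}. In particular $\fB_\l f \in \X$; the operator $\fB_\l$ is clearly linear, and since $\X$ is itself an abstract state space its cone is non-flat, so decomposing a general $f$ as $f = f_1-f_2$ with $f_i \in \X_+$, $\|f_i\|_\X \leq \gamma\|f\|_\X$, yields $\fB_\l \in \mathscr{B}(\X)$.

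For the resolvent identity I would first record the pointwise form of the two evolutionary resolvents. Since $(\l-\Z)^{-1}f = \int_0^\infty e^{-\l\sigma}\T_0(\sigma)f\,\d\sigma$ as an $\X$-valued Bochner integral, with $\sigma \mapsto \T_0(\sigma)f$ continuous and bounded, this integral may be evaluated pointwise a.e.: by \eqref{lien} and the substitution $r = t-\sigma$, $[(\l-\Z)^{-1}f](t) = \int_0^t e^{-\l(t-r)}\U(t,r)f(r)\,\d r$ for a.e. $t \geq 0$, and likewise $[(\l-\G)^{-1}g](t) = \int_0^t e^{-\l(t-r)}\V(t,r)g(r)\,\d r$ for any $g \in \X$, using \eqref{lineTt}. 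Now substitute \eqref{equ38}, $\V(t,r)f(r) = \U(t,r)f(r) + \int_r^t \V(t,\rho)\mathbf{B}(\rho)\U(\rho,r)f(r)\,\d\rho$, into the formula for $[(\l-\G)^{-1}f](t)$: the first term reproduces $[(\l-\Z)^{-1}f](t)$, while in the second one interchanges the $\d r$ and $\d\rho$ integrations over $\{0 \leq r \leq \rho \leq t\}$ and splits $e^{-\l(t-r)} = e^{-\l(t-\rho)}e^{-\l(\rho-r)}$; the inner integral $\int_0^\rho e^{-\l(\rho-r)}\mathbf{B}(\rho)\U(\rho,r)f(r)\,\d r$ is precisely $[\fB_\l f](\rho)$, which gives \eqref{equa:1fB}. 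Finally, applying the pointwise formula for $(\l-\G)^{-1}$ to $g = \fB_\l f$ identifies the correction term in \eqref{equa:1fB} with $[(\l-\G)^{-1}\fB_\l f](t)$, that is \eqref{lgBl}.

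The only genuine analytic content lies in the two Fubini-type interchanges: evaluating the $\X$-valued Laplace integral pointwise in $t$, and swapping the $\d r,\d\rho$ integrations in the convolution term. Both are justified by the integrability on $\Delta$ of $(t,s)\mapsto\mathbf{B}(t)\U(t,s)f(s)$ established in the Remark (combined with \eqref{estimalpha}, the substochasticity of $\V$, and the strong continuity statements of Proposition \ref{luisanotes2}). I expect this interchange of the order of integration in the convolution term — and the careful invocation of the measurability/integrability hypotheses of Assumptions \ref{hypo1} that it rests on — to be the one place requiring care; everything else is bookkeeping with the exponential weights.
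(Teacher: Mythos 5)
Your proof is correct and follows essentially the same route as the paper: pointwise Laplace representations of $(\l-\Z)^{-1}$ and $(\l-\G)^{-1}$ (the paper justifies the a.e. pointwise evaluation of the $\X$-valued Bochner integral by citing a Fubini-type lemma of R\"abiger--Schnaubelt--Rhandi--Voigt, which is the same justification you gesture at), insertion of the Duhamel identity \eqref{equ38}, and a Fubini interchange over the triangle $\{0\le r\le\rho\le t\}$ that collapses the inner integral to $[\fB_\l f](\rho)$. The only cosmetic difference is that you spell out the norm estimate via the $\mathbf{\Phi}$-pairing and the non-flatness of $\X_+$, where the paper dismisses that part as immediate from \eqref{estimalpha}.
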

\begin{proof} The first part of the proposition is easy to prove using \eqref{estimalpha} and we focus on the proof of \eqref{equa:1fB}. Let $f \in \X$ be fixed. Since $\G$ is the generator of $(\T_0(t))_{t \geq 0}$, one has for any $\l >0$,
\begin{equation*}\begin{split}
\left[(\l-\Z)^{-1}f\right](t)&=\left[\int_0^\infty \exp(-\l s)\T_0(s)f\d s\right](t)\\
&=\int_0^\infty \exp(-\l s)\left[\T_0(s)f\right](t)\d s \qquad \text{ for a.e. } t \geq 0.\end{split}\end{equation*}

Notice that, to prove the above second identity, we can invoke \cite[Lemma 3.2]{miya} since the mapping $s \in \mathbb{R}^+ \mapsto \exp(-\l s)\T_0(s)f \in \X$ is integrable. Using again \cite[Lemma 3.2]{miya}, since the mapping  $(t,s) \in \mathbb{R}^+ \times \mathbb{R}^+ \mapsto \exp(-\l s)\chi_+(t-s)\U(t,t-s)f(t-s) \in \e$ is measurable and, for any $s \geq 0,$ one has $[\exp(-\l s)\T_0(s)f](t)=\exp(-\l s)\chi_+(t-s)\U(t,t-s)f(t-s)$ for almost every $t \geq 0$,  we get
\begin{equation}\label{eq:l-Z}\left[(\l-\Z)^{-1}f\right](t)=\int_0^t \exp(-\l(t-s))\U(t,s)f(s)\d s \qquad \text{ for a.e. } t \geq 0.\end{equation}
In the same way,
\begin{equation}\label{eq:l-G}\left[(\l-\G)^{-1}f\right](t)=\int_0^t \exp(-\l(t-s))\V(t,s)f(s)\d s \qquad \text{ for a.e. } t \geq 0.\end{equation}
Now, for a given $s \in (0,t),$ applying Duhamel formula \eqref{equ38} to $u=f(s)$, we get
\begin{multline*}\int_0^t \exp(-\l(t-s))\V(t,s)f(s)\d s=\int_0^t \exp(-\l(t-s))\U(t,s)f(s)\d s \\
+ \int_0^t \exp(-\l(t-s))\d s \int_s^t \V(t,r)\mathbf{B}(r)\U(r,s)f(s)\d r\end{multline*}
which, thanks to Assumptions \ref{hypo1} and according to Fubini's theorem, yields
\begin{multline*}\int_0^t \exp(-\l(t-s))\V(t,s)f(s)\d s=\int_0^t \exp(-\l(t-s))\U(t,s)f(s)\d s \\
+ \int_0^t \d r \int_0^s \exp(-\l(t-s))\V(t,r)\mathbf{B}(r)\U(r,s)f(s)\d s.\end{multline*}
Since $\V(t,r) \in \mathscr{B}(\e)$ for any $t\geq r \geq 0$, we easily get that
\begin{multline*}\int_0^t \exp(-\l(t-s))\V(t,s)f(s)\d s
=\int_0^t \exp(-\l(t-s))\U(t,s)f(s)\d s \\
+ \int_0^t \exp(-\l(t-r))\V(t,r) \d r \int_0^s \exp(-\l(r-s))\mathbf{B}(r)\U(r,s)f(s)\d s\end{multline*}
which, combined with \eqref{eq:l-G}, \eqref{eq:l-Z} and \eqref{represfBl} yields \eqref{equa:1fB}.
\end{proof}

An important consequence of the above Proposition is the following which is inspired by \cite{voigt}
\begin{theo}\label{construcB} One has $\D(\Z) \subset \D(\G)$. Let $\widehat{\fB}$ with domain $\D(\widehat{\fB})=\D(\Z)$ be defined as
$$\widehat{\fB}f=\G f -\Z f \qquad \forall f \in \D(\Z).$$
Then, $\widehat{\fB}$ is $\Z$-bounded and for any $f \in \D(\Z)$ one has
\begin{equation}\label{rela}\widehat{\fB}f=\lim_{\l  \to\infty}\l  \fB_\l  f.\end{equation}
In particular, $\widehat{\fB}f \in \X_+$ for any $f \in \D(\Z) \cap \X_+$ and
\begin{equation}\label{z+B}\la \mathbf{\Psi}, \left(\Z + \widehat{\fB}\right) f\ra_\X \leq 0 \qquad \forall f \in \D(\Z) \cap \X_+.\end{equation}
Finally, the following Duhamel's identity holds
\begin{equation}\label{duha1}
\T(t)f=\T_0(t)f+\int_0^t \T(t-s)\widehat{\fB}\T_0(s)f\d s \qquad \forall f \in \D(\Z), \:t\geq 0.
\end{equation}
\begin{proof} The fact that $\D(\Z) \subset \D(\G)$ is a direct consequence of \eqref{lgBl}. With such a property, the above definition of the unbounded operator $\widehat{\fB}$ makes sense. Notice that, with such a definition, $(\G,\D(\G))$ can be seen now as an extension of $(\Z + \widehat{\fB},\D(\Z))$. Let now $f \in \X$ and $\l >0$, setting $g=(\l-\Z)^{-1}f$, applying the operator $(\l-\G)$ to the left of Eq. \eqref{lgBl} yields
$$\fB_\l f=f-(\l-\G)g=(\l-\Z)g-(\l-\G)g=(\G-\Z)g=\widehat{\fB}g$$
since $g \in \D(\Z).$ This exactly means that
\begin{equation}\label{def:fBl}\fB_\l=\widehat{\fB}\left(\l-\Z\right)^{-1} \in \mathscr{B}(\X) \qquad \forall \l >0.\end{equation}
In particular,  $\widehat{\fB}$ is $\Z$-bounded. Now, as a generator of a $C_0$-semigroup, the operator $(\Z,\D(\Z))$ satisfies
$$\lim_{\l  \to \infty}\l  (\l-\Z)^{-1}f=f \qquad \text{ and } \qquad \lim_{\l  \to \infty} \l  \Z(\l-\Z)^{-1}f=\Z f \qquad \forall
f \in \D(\Z)$$
(see e.g. \cite[Lemma II.3.4]{engel}). Using the fact that $\widehat{\fB}$ is $\Z$-bounded, this proves \eqref{rela} and, since $\fB_\l$ is non-negative, we get that $\widehat{\fB}$ is a non-negative operator. Finally,  \eqref{z+B} follows easily from \eqref{substoG} since $\G$ is an extension of $(\Z+\widehat{\fB},\D(\Z))$. It remains now to prove Duhamel's identity \eqref{duha1}. To do so, one notices that, according to
\eqref{lgBl} and \eqref{def:fBl},
\begin{equation}\label{identGBZ}
(\lambda - \G)^{-1}f = (\lambda - \Z)^{-1}f + (\lambda - \G)^{-1}\widehat{\fB}(\lambda - \Z)^{-1}f \qquad \forall f \in \X,\;\l >0.\end{equation}
Now, for $f \in \D(\Z)$, $\T_0(s)f \in \D(\Z)$ for any $s \geq 0$ and, since $\widehat{\fB}$ is $\Z$-bounded, one has
\begin{equation}\label{fbZla}\widehat{\fB}(\l-\Z)^{-1}f=\widehat{\fB}\int_0^\infty \exp(-\l s) \T_0(s)f\d s=\int_0^\infty \exp(-\l s)\widehat{\fB}\T_0(s)f\d s.\end{equation}
Thus, thanks to Fubini's theorem
\begin{equation*}\begin{split}
(\l-\G)^{-1}\widehat{\fB}(\l-\Z)^{-1}f&=\int_0^\infty \exp(-\l \t)\T(\t)\left(\int_0^\infty \exp(-\l s)\widehat{\fB}\T_0(s)f\d s\right)\d\t\\
&=\int_0^\infty  \left(\int_0^\infty \exp(-\l (s+\t))\T(\t)\widehat{\fB}\T_0(s)f\d \t\right)\d s\\
&=\int_0^\infty \left(\int_s^\infty \exp(-\l t) \T(t-s)\widehat{\fB}\T_0(s)f\d t\right)\d s\end{split}\end{equation*}
with the change of variable $\t \to t=s+\t.$ Using again Fubini's theorem yields then
$$(\lambda - \G)^{-1}\widehat{\fB}(\lambda - \Z)^{-1}f= \int_0^{\infty}\exp(- \lambda t) \left(\int_0^t \T(t-s)\widehat{\fB}\T_0(s)f\d s\right)\d t$$
and one sees that \eqref{identGBZ} reads
\begin{multline*}
\int_0^\infty \exp(-\l t)\T(t)f \d t =\int_0^\infty \exp(-\l t)\T_0(t)f\d t\\
+ \int_0^{\infty}\exp(- \lambda t) \left(\int_0^t \T(t-s)\widehat{\fB}\T_0(s)f\d s\right)\d t \qquad f \in \D(\Z),\,\l >0\end{multline*}
which is exactly \eqref{duha1}.\end{proof}
\end{theo}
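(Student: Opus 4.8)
The plan is to deduce the whole statement from the resolvent identity \eqref{lgBl} already at our disposal, treating it as an essentially algebraic consequence of the boundedness of $\fB_\l$ supplemented by standard $C_0$-semigroup facts. First I would read \eqref{lgBl} as $(\l-\Z)^{-1}f = (\l-\G)^{-1}f - (\l-\G)^{-1}\fB_\l f$: the right-hand side lies in $\D(\G)$ for every $f \in \X$, and since $(\l-\Z)^{-1}$ maps $\X$ \emph{onto} $\D(\Z)$, this yields $\D(\Z) \subset \D(\G)$ and makes $\widehat{\fB}f := \G f - \Z f$ well defined on $\D(\widehat{\fB}) = \D(\Z)$, exhibiting $(\G,\D(\G))$ as an extension of $(\Z+\widehat{\fB},\D(\Z))$.

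Next, applying $(\l-\G)$ to both sides of \eqref{lgBl} and using that, for $g := (\l-\Z)^{-1}f \in \D(\Z) \subset \D(\G)$, one has $\Z g = \l g - f$ and hence $(\l-\G)g = f - \widehat{\fB}g$, I would obtain $\fB_\l = \widehat{\fB}(\l-\Z)^{-1} \in \mathscr{B}(\X)$ for every $\l > 0$. Since $(\l-\Z)^{-1}$ is a bijection onto $\D(\Z)$, this rewrites as $\widehat{\fB} = \fB_\l(\l-\Z)$ on $\D(\Z)$, so $\widehat{\fB}$ is $\Z$-bounded. For the limiting formula \eqref{rela} I would write $\l\fB_\l f = \widehat{\fB}\bigl[\l(\l-\Z)^{-1}f\bigr]$ for $f \in \D(\Z)$, recall that $\l(\l-\Z)^{-1}f \to f$ and $\Z\l(\l-\Z)^{-1}f = \l(\l-\Z)^{-1}\Z f \to \Z f$ (so convergence holds in the graph norm of $\Z$), and conclude $\l\fB_\l f \to \widehat{\fB}f$ from the $\Z$-boundedness of $\widehat{\fB}$. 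Positivity of $\widehat{\fB}$ is then inherited from $\fB_\l$: the explicit formula \eqref{represfBl} together with the positivity parts of Assumptions \ref{hypo} shows $\fB_\l$ is non-negative, hence $\l\fB_\l f \in \X_+$ for $f \in \X_+$, and $\X_+$ being closed the limit stays in $\X_+$. Finally \eqref{z+B} is immediate, since for $f \in \D(\Z)\cap\X_+ \subset \D(\G)\cap\X_+$ we have $(\Z+\widehat{\fB})f = \G f$, and \eqref{substoG} applies.

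For Duhamel's identity \eqref{duha1} I would combine \eqref{lgBl} with $\fB_\l = \widehat{\fB}(\l-\Z)^{-1}$ to get the resolvent identity $(\l-\G)^{-1}f = (\l-\Z)^{-1}f + (\l-\G)^{-1}\widehat{\fB}(\l-\Z)^{-1}f$ valid on all of $\X$. For $f \in \D(\Z)$ the orbit $\T_0(s)f$ remains in $\D(\Z)$, and the $\Z$-boundedness of $\widehat{\fB}$ makes $s \mapsto \widehat{\fB}\T_0(s)f$ continuous and exponentially bounded, which lets me pull $\widehat{\fB}$ through the Bochner integral representing $(\l-\Z)^{-1}f$ and write $\widehat{\fB}(\l-\Z)^{-1}f = \int_0^\infty e^{-\l s}\widehat{\fB}\T_0(s)f\,\d s$. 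Inserting this into $(\l-\G)^{-1}[\,\cdot\,]$, expanding $(\l-\G)^{-1}$ as the Laplace transform of $\T$, making the change of variable $\tau \mapsto t = s+\tau$, and applying Fubini twice, the cross term becomes $\int_0^\infty e^{-\l t}\bigl(\int_0^t \T(t-s)\widehat{\fB}\T_0(s)f\,\d s\bigr)\,\d t$. Thus the resolvent identity says precisely that the Laplace transforms of the two sides of \eqref{duha1} coincide for all $\l > 0$, and since both sides are continuous in $t$, uniqueness of Laplace transforms gives \eqref{duha1}.

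I expect the only genuinely delicate point to be this last step: one must justify commuting the (closed, $\Z$-bounded) operator $\widehat{\fB}$ with the Bochner integral — for which the integrability of $s \mapsto \widehat{\fB}\T_0(s)f$ provided by $\Z$-boundedness is the key — together with the two appeals to Fubini and the continuity in $t$ of the convolution term $\int_0^t \T(t-s)\widehat{\fB}\T_0(s)f\,\d s$ that is needed before invoking Laplace inversion. Everything else is bookkeeping around \eqref{lgBl} and elementary semigroup theory.
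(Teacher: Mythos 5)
Your proof is correct and follows essentially the same route as the paper: extract $\D(\Z)\subset\D(\G)$ from \eqref{lgBl}, derive $\fB_\l=\widehat{\fB}(\l-\Z)^{-1}$ by applying $(\l-\G)$, deduce $\Z$-boundedness, use the standard $\l\to\infty$ resolvent limits for \eqref{rela} and positivity, and obtain Duhamel's formula by Laplace-transform comparison. One small remark: in your final paragraph you parenthetically call $\widehat{\fB}$ closed, but that is neither assumed nor needed — the commutation with the Bochner integral is justified precisely by $\Z$-boundedness together with graph-norm convergence of $\int_0^\infty e^{-\l s}\T_0(s)f\,\d s$ for $f\in\D(\Z)$, which is what both you and the paper actually use.
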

\begin{nb}\label{remarkVoigt}
If we define, as in \cite{miya} the set $\mathfrak{D}$ as the subspace of
$\X$ made of  all linear combinations  of elements of the form: $$t >0 \longmapsto
\varphi(t)\U(t,s)u\,\qquad \text{ where } \,s >0\,,\, u\in
\e$$ and $\varphi \in \mathscr{C}_c^1(\mathbb{R})$ with $\mathrm{supp}(\varphi) \in
[s,\infty)$, then one can prove (see Lemma \ref{lemmeAvarphis} in Appendix A) that $\mathfrak{D} \subset \D(\Z)$,  $\mathfrak{D} \subset \D(\fB)$ and $\widehat{\fB}\vert_{\mathfrak{D}}=\fB\vert_{\mathfrak{D}}$. Thus, $\widehat{\fB}$ is an extension of $\fB\vert_{\mathfrak{D}}$. Moreover, if $f \in \mathfrak{D}$, Duhamel's identity \eqref{duha1} holds with $\fB$ instead of $\widehat{\fB}.$
\end{nb}

From the general \textit{substochastic semigroup theory} as developed in \cite{ALM} (more precisely, according to Kato's theorem \cite[Theorem 2.1 \& Theorem 2.3]{ALM}), one deduces from \eqref{z+B} that there exists  an extension of $(\Z+\widehat{\fB},\D(\Z))$ that generates a \emph{minimal} substochastic \com in $\X$. Our goal is to prove that such an extension is exactly $\G$ and the minimal semigroup is exactly $\Tt$. To do so, let us introduce the following
\begin{defi}\label{defi:Tnover1}
For any $f \in \X$, $t \geq 0$ and $n \in \mathbb{N}$, define
$$\left[{\T}_n(t)f\right](r)=\chi_+(r-t) {\V}_n(r,r-t)f(r-t) \qquad r \geq 0$$
\end{defi}
One has the following
\begin{lemme} For any $n \in \mathbb{N}$ and any $t \geq 0$, $\T_n(t) \in \mathscr{B}(\X).$ Moreover, for any $\l >0$
\begin{equation}\label{laplaceTn}
\int_0^\infty \exp(-\l t)\T_n(t)f \d t=(\l-\Z)^{-1}
\left[\widehat{\fB}(\l-\Z)^{-1}\right]^n f \qquad \forall f \in \X.
\end{equation}
Finally, for any $f \in \D(\Z)$ and any $n \geq 1$

\begin{equation}\label{Tn+1}\T_n(t)f=\int_0^t \T_{n-1}(t-s)\widehat{\fB}\T_0(s)f\d s\qquad t \geq 0.\end{equation}
\end{lemme}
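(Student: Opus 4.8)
The plan is to establish the three assertions in order, exploiting the already-proven structural facts about the iterates $\V_n(t,s)$ from Proposition \ref{luisanotes2} together with the Laplace-transform identities \eqref{eq:l-Z}, \eqref{eq:l-G} and the characterisation \eqref{def:fBl} of $\widehat{\fB}(\l-\Z)^{-1}$. First, boundedness of $\T_n(t)$ on $\X$: for $f \in \X$ one has, by the definition of $\T_n(t)$ and Assumptions~\ref{hypo1} (measurability of $(r,r-t)\mapsto \V_n(r,r-t)f(r-t)$ following from strong continuity in both variables, Proposition~\ref{luisanotes2}~(5)--(6)),
\begin{equation*}
\|\T_n(t)f\|_\X=\int_t^\infty \|\V_n(r,r-t)f(r-t)\|_\e\,\d r=\int_0^\infty \|\V_n(\sigma+t,\sigma)f(\sigma)\|_\e\,\d\sigma\leq 2\gamma\|f\|_\X,
\end{equation*}
using Proposition~\ref{luisanotes2}~(2); hence $\T_n(t)\in\mathscr{B}(\X)$ with norm at most $2\gamma$ (and at most $1$ on $\X_+$).

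Second, the Laplace identity \eqref{laplaceTn}. I would argue by induction on $n$. For $n=0$ it is exactly \eqref{eq:l-Z}. Assume it holds for $n$; the iteration formula \eqref{V-n} gives, for $r\geq t$,
\begin{equation*}
\V_{n+1}(r,r-t)f(r-t)=\int_{r-t}^{r}\V_n(r,\rho)\mathbf{B}(\rho)\U(\rho,r-t)f(r-t)\,\d\rho,
\end{equation*}
so that after the change of variables $\sigma=r-t$, $\sigma'=\rho-\sigma$ the function $t\mapsto\T_{n+1}(t)f$ is the convolution (in the evolutionary-semigroup sense on $\X$) of $\T_0$ with the ``source'' $r\mapsto \V_n$ acting on $\mathbf B(\cdot)\U(\cdot,\cdot)f(\cdot)$. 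Taking Laplace transforms and using Fubini (justified by the integrability remark after Assumption~\ref{hypo1}) turns this convolution into a product; the inner transform $\int_0^\infty e^{-\l t}\T_n(t)(\,\cdot\,)\d t$ is handled by the induction hypothesis, while the source term reassembles, via \eqref{represfBl} and \eqref{def:fBl}, into the operator $\widehat{\fB}(\l-\Z)^{-1}$. One then reads off $\int_0^\infty e^{-\l t}\T_{n+1}(t)f\,\d t=(\l-\Z)^{-1}[\widehat{\fB}(\l-\Z)^{-1}]^{n+1}f$. Alternatively, and perhaps more cleanly, I would deduce \eqref{laplaceTn} directly from \eqref{equa:1fB}/\eqref{lgBl}: iterating \eqref{identGBZ} gives $(\l-\G)^{-1}=\sum_{n\geq 0}(\l-\Z)^{-1}[\widehat{\fB}(\l-\Z)^{-1}]^n$ as a convergent Neumann-type series in $\mathscr B(\X)$ (convergence on $\X_+$ by the monotone bound $\sum_k\|\V_k(t,s)u\|\le\|u\|$), while \eqref{eq:l-G} together with $\V=\sum_n\V_n$ gives $(\l-\G)^{-1}f=\sum_{n\geq0}\int_0^\infty e^{-\l t}\T_n(t)f\,\d t$; matching the two expansions term by term (each term is the Laplace transform of a nonnegative-cone-valued map, so the decomposition is unique) yields \eqref{laplaceTn}.

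Third, the recursion \eqref{Tn+1}. Fix $f\in\D(\Z)$ and $n\geq1$. Both sides are strongly continuous $\X$-valued functions of $t$, so by injectivity of the Laplace transform it suffices to check equality of Laplace transforms. The left side transforms, by \eqref{laplaceTn}, to $(\l-\Z)^{-1}[\widehat{\fB}(\l-\Z)^{-1}]^n f$. For the right side, the integral is a convolution of $\T_{n-1}$ with $s\mapsto\widehat{\fB}\T_0(s)f$, whose transform factors as $\big(\int_0^\infty e^{-\l t}\T_{n-1}(t)\,\d t\big)\widehat{\fB}(\l-\Z)^{-1}f$; here I use \eqref{fbZla} to commute $\widehat{\fB}$ past the integral defining $(\l-\Z)^{-1}f$, legitimate since $\widehat{\fB}$ is $\Z$-bounded and $\T_0(s)f\in\D(\Z)$. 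Applying \eqref{laplaceTn} for $n-1$ to the first factor gives $(\l-\Z)^{-1}[\widehat{\fB}(\l-\Z)^{-1}]^{n-1}\widehat{\fB}(\l-\Z)^{-1}f=(\l-\Z)^{-1}[\widehat{\fB}(\l-\Z)^{-1}]^{n}f$, matching the left side. One should also record that $\T_{n-1}(t-s)\widehat{\fB}\T_0(s)f$ is genuinely integrable in $s$ on $[0,t]$: this follows from $\widehat{\fB}\vert_{\D(\Z)}$ being $\Z$-bounded and from the uniform bound $\|\T_{n-1}(t-s)\|_{\mathscr B(\X)}\le 2\gamma$ obtained in the first step, so the convolution and Fubini manipulations are justified.

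The main obstacle I expect is purely the measurability/integrability bookkeeping in the second step — ensuring that the $\X$-valued maps $t\mapsto\T_n(t)f$ are strongly measurable and that all the Fubini interchanges between the $t$-variable, the $s$-variable of the convolution, and the internal $\e$-valued integrals over $[s,\infty)$ are legitimate. This is where one leans on Assumption~\ref{hypo1}, on the remark following it (global integrability of $(t,s)\mapsto\mathbf B(t)\U(t,s)f(s)$), and on the \cite[Lemma 3.2]{miya}-type identification of pointwise-a.e. representatives already used in the proof of \eqref{eq:l-Z}. Once the Laplace identity \eqref{laplaceTn} is in hand, the third assertion is essentially formal.
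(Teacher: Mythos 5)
Your boundedness bound and your Laplace-transform argument for \eqref{Tn+1} are both sound and broadly in line with the paper; for \eqref{Tn+1} the paper also matches Laplace transforms, citing \cite[Theorem 2.3 \& Eq.\ (2.13)]{ALM} to compute the transform of the convolution where you compute it directly via the convolution theorem and \eqref{fbZla} — both are legitimate. For \eqref{laplaceTn}, your first (induction) route is the one the paper takes, though your description of the intermediate step as ``a convolution of $\T_0$ with a source'' is imprecise: what actually comes out of \eqref{V-n}, Fubini, and \eqref{represfBl} is the identity
$$\int_0^\infty e^{-\l\t}\T_{n+1}(\t)f\,\d\t=\int_0^\infty e^{-\l\t}\T_n(\t)\,\fB_\l f\,\d\t,$$
i.e.\ the argument is changed from $f$ to $\fB_\l f$; no $\T_0$-convolution appears, and the induction hypothesis together with $\fB_\l=\widehat{\fB}(\l-\Z)^{-1}$ then closes the loop. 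That wording aside, the induction plan is correct.

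The route you advertise as ``perhaps more cleanly'', on the other hand, contains two genuine gaps. First, it is circular: the resolvent expansion $(\l-\G)^{-1}=\sum_{n\geq0}(\l-\Z)^{-1}[\widehat{\fB}(\l-\Z)^{-1}]^n$ is precisely \eqref{resolG} of Theorem \ref{kato}, which the paper proves \emph{after} and \emph{from} the present lemma; iterating \eqref{identGBZ} by hand yields only a finite truncation plus a remainder $(\l-\G)^{-1}[\widehat{\fB}(\l-\Z)^{-1}]^{N+1}f$, and showing that this remainder vanishes is exactly the nontrivial content that \eqref{laplaceTn} feeds into. Second, even granting both series expansions of $(\l-\G)^{-1}f$, your ``matching the two expansions term by term'' because ``each term is the Laplace transform of a nonnegative-cone-valued map, so the decomposition is unique'' is a non sequitur: equality of two convergent sums of nonnegative elements says nothing about the individual summands ($1+1=2+0$), and Laplace transforms of nonnegative maps enjoy no such rigidity. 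A genuine identification would require a separate argument and is not sketched. The induction route is the one to keep.
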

\begin{proof} The fact that $\T_n(t)$ is a bounded operator for any $n \in \mathbb{N}$ and any $t \geq 0$ is easily seen. Let us now prove \eqref{laplaceTn} by induction. It is clear that, for $n=0$, \eqref{laplaceTn} holds true. Assume now it holds for some fixed $n \in \mathbb{N}$. Let now $f \in \X$. Considering that $(t,\t) \in \mathbb{R}^+ \times \mathbb{R}^+ \mapsto \exp(-\l \t)\chi_+(t-\t)\V_{n+1}(t,t-\t)f(t-\t) \e$ is measurable, we deduce from \cite[Lemma 3.2]{miya} that the following holds for any $t \geq 0$
\begin{equation*}\begin{split}
\left[\int_0^{\infty}\exp(- \l \t)\T_{n+1}(\t)f \d \t\right](t)&=\int_0^\infty \exp(-\l \t)\chi_+(t-\t)\V_{n+1}(t,t-\t)f(t-\t)\d \t\\
  &=\int_0^{t}\exp(- \lambda(t - s))\V_{n+1}(t,s)f(s)\d s.\end{split}\end{equation*}
For some fixed $s \in (0,t)$, using \eqref{V-n} with $u=f(s)$ one obtains
$$\left[\int_0^{\infty}\exp(- \l \t)\T_{n+1}(\t)f \d \t\right](t)= \int_0^{t}\left(\int_s^t \exp(- \lambda(t - s))\V_{n}(t,r)\mathbf{B}(r)\U(r,s)f(s) \d r\right)\d s.$$
Considering that the mapping $(r,s) \in \Delta \mapsto \V_n(t,r)\mathbf{B}(r)\U(r,s)f(s)\in \e$ is measurable and that $\V_n(t,r)$ is a bounded operator for any $t,r \geq 0$, we obtain thanks to Fubini's theorem that
\begin{equation*}\begin{split}
\bigg[\int_0^{\infty}\exp(- \l \t)&\T_{n+1}(\t)f \d \t\bigg](t) = \int_0^{t} \left(\int_0^r \exp(- \lambda(t - s))\V_n(t,r)\mathbf{B}(r)\U(r,s)f(s) \d s\right)\d r \\
&=\int_0^t \exp(-\l (t-r))\V_n(t,r) \left(\int_0^r \exp(- \lambda(r - s)) \mathbf{B}(r)\U(r,s)f(s) \d s\right)\d r.\end{split}\end{equation*}
Now, from \eqref{represfBl} one gets
\begin{equation*}
\left[\int_0^{\infty}\exp(- \l \t)\T_{n+1}(\t)f \d \t\right](t) =\int_0^{t}\exp(- \lambda(t - r))\V_{n}(t,r)[ \fB_\lambda f](r)\d r.\end{equation*}
Arguing as before, one recognizes that
\begin{equation*}
\int_0^{\infty}\exp(- \l \t) \T_{n+1}(\t)f \d \t =\int_0^{\infty}\exp(- \lambda \t)\T_{n}(\t)\fB_\lambda f\d \t.\end{equation*}
The induction hypothesis finally yields
$$\int_0^{\infty}\exp(- \l \t)  \T_{n+1}(\t)f \d \t=(\l-\Z)^{-1}
\left[\widehat{\fB}(\l-\Z)^{-1}\right]^n \fB_\l f$$
which exactly means that \eqref{laplaceTn} holds for $n+1$ since $\fB_\l=\widehat{\fB}(\l-\Z)^{-1}.$ Let us now prove \eqref{Tn+1}. For $n \in \mathbb{N}$, $n \geq 1$ and $f \in \D(\Z)$, one sets
$$\mathcal{J}_n(t):=\int_0^t \T_{n-1}(t-s)\widehat{\fB}\T_0(s)f\d s.$$
Notice that, $\T_0(s)f \in \D(\Z)$ so $\widehat{\fB}\T_0(s)f$ is meaningful for any $s \geq 0.$ Now, according to \cite[Theorem 2.3 \& Eq. (2.13)]{ALM}, one knows that
$$\int_0^\infty \exp(-\l t)\mathcal{J}_n(t)\d t=(\l-\Z)^{-1}\left[\widehat{\fB}(\l-\Z)^{-1}\right]^{n}f \qquad \forall \l >0.$$
Using \eqref{laplaceTn}, one sees that $\mathcal{J}_n(t)$ and $\T_n(t)f$ have the same Laplace transform which shows \eqref{Tn+1}.
\end{proof}
\begin{nb}\label{importantNB} Notice that, on the basis of formula \eqref{Tn+1}, one can reproduce the argument of  \cite[Proposition 4.1 (i) \& (ii)]{ALM} and prove that the following hold:
\begin{enumerate}
\item For any $f \in \D(\Z)$, the mapping $t \in (0,\infty) \longmapsto \T_n(t)f$ is
continuously differentiable with
\begin{equation}\label{point1}\dfrac{\d}{\d t}\T_n(t)f=\T_n(t)\Z f  +
\T_{n-1}(t) \widehat{\fB} f.\end{equation}
\item For any $f \in \D(\Z)$,  $\T_n(t)f \in \D(\Z)$, the mapping $t \in [0,\infty) \longmapsto \Z\T_n(t)f$ is continuous and
\begin{equation}\label{point2}\Z\T_n(t)f=\T_n(t)\Z f +\T_{n-1}(t)\widehat{\fB} f-\widehat{\fB} \T_{n-1}(t) f.\end{equation}
\end{enumerate}
Indeed, the proof of \cite[Proposition 4.1 (i) \& (ii)]{ALM} relies only on the relation \eqref{Tn+1} and not on the fact that the sequence of iterated is obtained via Kato's theorem.
\end{nb}
Notice that, besides  \eqref{Tn+1}, one can state another equivalent induction relation between $\T_n(t)$ and $\T_{n+1}(t)$, namely
\begin{lemme}\label{lem:Tn+1bis}
For any $f \in \D(\Z)$ and any $n \geq 1$ it holds
\begin{equation}\label{Tn+1bis}\T_n(t)f=\int_0^t \T_0(t-s)\widehat{\fB}\T_{n-1}(s)f\d s\qquad t \geq 0.\end{equation}
\end{lemme}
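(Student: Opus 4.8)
The strategy mirrors the proof of \eqref{Tn+1}. I fix $f \in \D(\Z)$, set
\[
\mathcal{L}_n(t):=\int_0^t \T_0(t-s)\widehat{\fB}\T_{n-1}(s)f\,\d s, \qquad t \geq 0,
\]
and show that $\mathcal{L}_n$ and $t \mapsto \T_n(t)f$ are continuous functions of $t$ sharing the same Laplace transform; uniqueness of the Laplace transform then yields \eqref{Tn+1bis}. First, $\mathcal{L}_n$ is meaningful: by Remark \ref{importantNB}, $\T_{n-1}(s)f \in \D(\Z)=\D(\widehat{\fB})$ and, subtracting \eqref{point2} from \eqref{point1}, $\widehat{\fB}\T_{n-1}(s)f = \tfrac{\d}{\d s}\T_n(s)f - \Z\T_n(s)f$, which is continuous in $s$ (both terms being continuous by Remark \ref{importantNB}); since $(\T_0(t))_{t\ge0}$ is a bounded $C_0$-semigroup, $\mathcal{L}_n$ is then continuous on $[0,\infty)$, while $t \mapsto \T_n(t)f$ is continuously differentiable by Remark \ref{importantNB}(1).

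The key point is the identity
\[
\int_0^{\infty} e^{-\l s}\,\widehat{\fB}\T_{n-1}(s)f\,\d s = \bigl[\widehat{\fB}(\l-\Z)^{-1}\bigr]^{n}f \qquad (\l>0),
\]
together with the integrability of $s \mapsto e^{-\l s}\widehat{\fB}\T_{n-1}(s)f$. I would establish the latter by induction on $n\ge1$: using $\widehat{\fB}=\fB_\l(\l-\Z)$ on $\D(\Z)$ (a consequence of \eqref{def:fBl}) one has $\widehat{\fB}\T_{n-1}(s)f=\fB_\l\bigl(\l\,\T_{n-1}(s)f-\Z\T_{n-1}(s)f\bigr)$, and \eqref{point2} gives $\|\Z\T_{n-1}(s)f\|_\X\le 2\gamma\|\Z f\|_\X+2\gamma\|\widehat{\fB}f\|_\X+\|\widehat{\fB}\T_{n-2}(s)f\|_\X$ for $n\ge2$ (for $n=1$ one simply uses $\Z\T_0(s)f=\T_0(s)\Z f$); since $\fB_\l \in \mathscr{B}(\X)$ and $\|\T_k(s)g\|_\X\le 2\gamma\|g\|_\X$, the desired $e^{-\l s}$-weighted integrability follows from the induction hypothesis. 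With integrability secured, pulling the bounded operator $\fB_\l$ out of the integral and then using closedness of $\Z$ yields
\[
\int_0^{\infty} e^{-\l s}\,\widehat{\fB}\T_{n-1}(s)f\,\d s = \fB_\l\,(\l-\Z)\!\int_0^{\infty} e^{-\l s}\T_{n-1}(s)f\,\d s = \fB_\l\,(\l-\Z)(\l-\Z)^{-1}\bigl[\widehat{\fB}(\l-\Z)^{-1}\bigr]^{n-1}f,
\]
where the last step invokes \eqref{laplaceTn}; since $\fB_\l = \widehat{\fB}(\l-\Z)^{-1}$ by \eqref{def:fBl}, the right-hand side is exactly $\bigl[\widehat{\fB}(\l-\Z)^{-1}\bigr]^{n}f$.

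Finally, Fubini's theorem (legitimate because $\|\T_0(t-s)\widehat{\fB}\T_{n-1}(s)f\|_\X\le 2\gamma\|\widehat{\fB}\T_{n-1}(s)f\|_\X$ and the latter is $e^{-\l s}$-integrable) gives
\[
\int_0^{\infty} e^{-\l t}\mathcal{L}_n(t)\,\d t = (\l-\Z)^{-1}\!\int_0^{\infty} e^{-\l s}\,\widehat{\fB}\T_{n-1}(s)f\,\d s = (\l-\Z)^{-1}\bigl[\widehat{\fB}(\l-\Z)^{-1}\bigr]^{n}f,
\]
which by \eqref{laplaceTn} coincides with $\int_0^{\infty} e^{-\l t}\T_n(t)f\,\d t$. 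As both $\mathcal{L}_n$ and $t\mapsto \T_n(t)f$ are continuous, this forces $\mathcal{L}_n(t)=\T_n(t)f$ for all $t\ge0$, which is \eqref{Tn+1bis}. The only genuinely delicate point is the commutation of $\widehat{\fB}$ with the Laplace integral: since $\widehat{\fB}$ is merely $\Z$-bounded and not visibly closed, I must route through the factorization $\widehat{\fB}=\fB_\l(\l-\Z)$, and it is precisely the a priori control on $\|\widehat{\fB}\T_{n-1}(\cdot)f\|_\X$ needed there that necessitates the induction on $n$.
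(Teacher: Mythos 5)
Your proof is correct, but it follows a genuinely different route from the paper's. The paper recognizes from \eqref{point1} and \eqref{point2} that $g_n(t)=\T_n(t)f$ is a classical solution of the inhomogeneous abstract Cauchy problem $g_n'(t)=\Z g_n(t)+\widehat{\fB}\T_{n-1}(t)f$, $g_n(0)=0$, and then invokes the variation-of-constants formula for the $C_0$-semigroup $(\T_0(t))_{t\geq0}$ generated by $\Z$ to obtain \eqref{Tn+1bis} directly. You instead re-run the Laplace-transform uniqueness argument already used for \eqref{Tn+1}: you compute $\int_0^\infty e^{-\l s}\widehat{\fB}\T_{n-1}(s)f\,\d s=[\widehat{\fB}(\l-\Z)^{-1}]^{n}f$ via the factorization $\widehat{\fB}=\fB_\l(\l-\Z)$ (so that only the bounded operator $\fB_\l$ and the closed operator $\Z$ need be commuted with the Bochner integral), apply Fubini, and match against \eqref{laplaceTn}. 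Your route buys a uniform treatment parallel to \eqref{Tn+1} and avoids checking classical solvability, but pays for it with the inductive a priori bound on $\|\widehat{\fB}\T_{n-1}(\cdot)f\|_\X$ (needed to justify the integrability and the interchange). The paper's variation-of-constants argument bypasses all commutation-with-integral issues for $\widehat{\fB}$, since $\widehat{\fB}$ appears only through the already-established pointwise identities \eqref{point1} and \eqref{point2}; in exchange it must verify (as it does, from Remark \ref{importantNB}) that $g_n$ is continuously differentiable and $\Z$-valued. Both arguments are sound; the paper's is somewhat shorter given the groundwork in Remark \ref{importantNB}.
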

\begin{proof} Let $f \in \D(\Z)$ and $n \geq 1$ be fixed. Define $g_n(t)=\T_n(t)f$, $t \geq 0.$ One knows from point (2) of Remark \ref{importantNB} that $g_{n}(t)  \in \D(\Z)$ for any $t \geq 0$. Moreover, combining \eqref{point1} and \eqref{point2}
one checks easily that $g_{n}(\cdot)$ is continuously differentiable.
Since moreover $ g_{n}(0)=0$, one sees that the mapping $t \geq 0 \mapsto g_{n}(t) \in \X$ is a classical solution to the Cauchy problem
\begin{equation*}\begin{cases}
\dfrac{\d}{\d t}g_{n}(t)&=\Z g_{n}(t)+ \widehat{\fB}\T_{n-1}(t)f \qquad t > 0\\
g_{n}(0)&=0 \end{cases}\end{equation*}
and, since $\Z$ is the generator of the \com $(\T_0(t))_{t \geq 0}$, one gets that
$$g_{n}(t)=\int_0^t \T_0(t-s)\widehat{\fB}\T_{n-1}(s)f\d s= \int_0^t \T_0(s)\widehat{\fB}\T_{n-1}(t-s)f\d s \forall  t \geq 0$$
which is exactly \eqref{Tn+1bis}.\end{proof}
With this in hands, one can finally prove that the semigroup $\Tt$ is exactly the one constructed thanks to Kato's theorem \cite{ALM}. Namely, one has the following:
\begin{theo}\label{kato} For any $\lambda > 0$,
the resolvent of $\G$ is given by
\begin{equation}\label{resolG}
(\l-\G)^{-1}f=\lim_{n \to \infty} (\l-\Z)^{-1}\sum_{k=0}^n
\left[\widehat{\fB}(\l-\Z)^{-1}\right]^k f , \qquad f \in \X.\end{equation}
The semigroup $\Tt$ is the smallest substochastic \com whose generator is an
extension of $(\Z+\widehat{\fB},\D(\Z))$ and
$$\T(t)f=\sum_{n=0}^\infty \T_n(t)f \qquad\forall t \geq 0, \:f \in\X.$$
\end{theo}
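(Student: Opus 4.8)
The plan is to recognize that we have already done almost all the work: the relation \eqref{z+B} shows that $(\Z+\widehat{\fB},\D(\Z))$ fits exactly into the hypotheses of Kato's theorem in the abstract state space $\X$, and Kato's theorem (\cite[Theorem 2.1 \& Theorem 2.3]{ALM}) then produces a \emph{canonical} smallest substochastic $C_0$-semigroup, call it $(\mathcal{S}(t))_{t\geq 0}$, whose generator $\mathcal{G}_{\min}$ extends $(\Z+\widehat{\fB},\D(\Z))$, together with the canonical formula for its resolvent
\[
(\l-\mathcal{G}_{\min})^{-1}f=\lim_{n\to\infty}(\l-\Z)^{-1}\sum_{k=0}^n\bigl[\widehat{\fB}(\l-\Z)^{-1}\bigr]^k f,\qquad f\in\X,
\]
the series being norm-convergent and monotone on $\X_+$. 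So the whole statement reduces to the single identification $\G=\mathcal{G}_{\min}$, equivalently $\Tt=(\mathcal{S}(t))_{t\geq 0}$; once that is established, \eqref{resolG} is just the Kato resolvent formula transported to $\G$, the minimality assertion is inherited, and the Dyson--Phillips expansion $\T(t)f=\sum_n\T_n(t)f$ follows by inverting the Laplace transforms term by term using \eqref{laplaceTn}.

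For the identification I would argue via resolvents. On one hand, iterating the identity \eqref{identGBZ} $n$ times gives, for every $f\in\X$ and $\l>0$,
\[
(\l-\G)^{-1}f=(\l-\Z)^{-1}\sum_{k=0}^{n}\bigl[\widehat{\fB}(\l-\Z)^{-1}\bigr]^k f+(\l-\G)^{-1}\bigl[\widehat{\fB}(\l-\Z)^{-1}\bigr]^{n+1}f;
\]
this is a straightforward induction on $n$ starting from \eqref{identGBZ} itself. On the other hand, from \eqref{laplaceTn} the partial sums of the Dyson--Phillips iterates satisfy
\[
\int_0^\infty e^{-\l t}\Bigl(\sum_{k=0}^{n}\T_k(t)f\Bigr)\d t=(\l-\Z)^{-1}\sum_{k=0}^{n}\bigl[\widehat{\fB}(\l-\Z)^{-1}\bigr]^k f,
\]
and by Proposition \ref{luisanotes2}(2) the sums $\sum_{k=0}^n\T_k(t)f$ are monotone and bounded by $\|f\|_\X$ on $\X_+$, hence converge strongly in $\X$ for every $f\in\X_+$ (and then on all of $\X$ by the non-flatness decomposition \eqref{decomp}) to a limit I call $S(t)f$. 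By monotone/dominated convergence under the Laplace integral, $S(t)$ has Laplace transform $\lim_n(\l-\Z)^{-1}\sum_{k\le n}[\widehat{\fB}(\l-\Z)^{-1}]^k f$. Comparing the two computations, the desired equality $(\l-\G)^{-1}f=\lim_n(\l-\Z)^{-1}\sum_{k\le n}[\widehat{\fB}(\l-\Z)^{-1}]^kf$ is equivalent to showing that the tail term $(\l-\G)^{-1}[\widehat{\fB}(\l-\Z)^{-1}]^{n+1}f\to 0$ in $\X$ as $n\to\infty$.

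This tail estimate is the main obstacle, and it is handled exactly by the substochasticity built into \eqref{estimalpha}. For $f\in\X_+$ one has $[\widehat{\fB}(\l-\Z)^{-1}]^{n+1}f=\l^{-(n+1)}$ times a quantity that, via \eqref{def:fBl} and \eqref{laplaceTn}, is the Laplace transform of the nonnegative family $\T_{n+1}(t)$; more directly, $\|(\l-\Z)^{-1}\sum_{k=0}^n[\widehat{\fB}(\l-\Z)^{-1}]^kf\|_\X=\|\sum_{k\le n}\int_0^\infty e^{-\l t}\T_k(t)f\,\d t\|_\X\le \l^{-1}\|f\|_\X$ by \eqref{vn++}, so the partial sums of the monotone nonnegative series $\sum_k(\l-\Z)^{-1}[\widehat{\fB}(\l-\Z)^{-1}]^kf$ are bounded in $\X$ and the series converges; consequently its general term $(\l-\Z)^{-1}[\widehat{\fB}(\l-\Z)^{-1}]^{n+1}f\to 0$, and since $(\l-\G)^{-1}$ is bounded with $(\l-\G)^{-1}\widehat{\fB}(\l-\Z)^{-1}=(\l-\G)^{-1}-(\l-\Z)^{-1}$ bounded by \eqref{lgBl}, one also gets $(\l-\G)^{-1}[\widehat{\fB}(\l-\Z)^{-1}]^{n+1}f\to 0$. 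This proves \eqref{resolG} for $f\in\X_+$ and then for general $f$ by \eqref{decomp}. Uniqueness of the Laplace transform then gives $\T(t)=S(t)=\sum_{n=0}^\infty\T_n(t)$ strongly, the right-hand sides agreeing with the Kato-minimal semigroup $(\mathcal{S}(t))_{t\geq 0}$ by the very construction in \cite[Theorem 2.3]{ALM}; hence $\G=\mathcal{G}_{\min}$ and $\G$ is the generator of the \emph{smallest} substochastic $C_0$-semigroup extending $(\Z+\widehat{\fB},\D(\Z))$, which is the claimed statement. I expect the only genuinely delicate point to be the justification of interchanging the infinite sum with the Laplace integral, which is resolved cleanly by the monotonicity on $\X_+$ together with the uniform bound \eqref{vn++}.
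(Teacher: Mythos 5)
Your overall plan is reasonable, and the first two of your three computations are correct: the iterated identity following from \eqref{identGBZ} is a routine induction, and the monotone convergence of the partial sums $\sum_{k\le n}(\l-\Z)^{-1}[\widehat{\fB}(\l-\Z)^{-1}]^k f$ (for $f\in\X_+$, with the bound $\l^{-1}\|f\|_\X$) does follow from \eqref{vn++} via \eqref{laplaceTn}. The problem is the final link in the tail estimate.

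From the convergence of that series you correctly deduce $(\l-\Z)^{-1}[\widehat{\fB}(\l-\Z)^{-1}]^{n+1}f\to 0$. But the quantity you must kill is $(\l-\G)^{-1}[\widehat{\fB}(\l-\Z)^{-1}]^{n+1}f$, and the passage between the two is not justified by boundedness of $(\l-\G)^{-1}$ or of $(\l-\G)^{-1}\widehat{\fB}(\l-\Z)^{-1}$. Indeed, writing $a_n=(\l-\G)^{-1}[\widehat{\fB}(\l-\Z)^{-1}]^{n}f$ and $b_n=(\l-\Z)^{-1}[\widehat{\fB}(\l-\Z)^{-1}]^{n}f$, the identity $(\l-\G)^{-1}\widehat{\fB}(\l-\Z)^{-1}=(\l-\G)^{-1}-(\l-\Z)^{-1}$ gives exactly $a_{n+1}=a_n-b_n$, hence $a_{n+1}=(\l-\G)^{-1}f-\sum_{k\le n}b_k$. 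So the claim $a_n\to 0$ is equivalent to $(\l-\G)^{-1}f=\sum_{k\ge 0}b_k$, which is \eqref{resolG} itself — your argument for the tail is circular. This is not a presentational slip: in the abstract Kato theory the tail need not vanish for an arbitrary substochastic extension; its vanishing is precisely the statement that $\G$ equals the minimal generator, and that must be obtained from additional structure rather than from \eqref{lgBl} alone.

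The paper supplies exactly this additional input by a short direct argument that sidesteps the resolvent tail entirely. It observes that $\V(t,s)u=\sum_n\V_n(t,s)u$ by the very construction of $\V$ in Theorem \ref{generevol}, so the lifted operators automatically satisfy $\T(t)f=\sum_n\T_n(t)f$ (first for $f\in\X_+$ by monotone convergence using the pointwise bound from \eqref{vn++}, then for all $f$ by linearity via \eqref{decomp}). Passing to Laplace transforms and invoking \eqref{laplaceTn} then gives \eqref{resolG}; the identification of $\Tt$ with the Kato-minimal semigroup follows from Theorem \ref{construcB} and the recognition that \eqref{resolG} is the canonical resolvent formula of \cite[Theorem 2.1]{ALM}. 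If you want to retain your resolvent framing, the clean fix is to prove $\T(t)=\sum_n\T_n(t)$ first (as the paper does) and deduce the vanishing of the tail afterwards — not the other way round.
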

\begin{proof} For $f \in \X_+$ one has, for any $N \geq 0$, $\sum_{n=0}^N \T_n(t)f \in \X_+$ and
\begin{multline*}
\left\|\sum_{n=0}^N \T_n(t)f\right\|_{\X}=\int_0^\infty \chi_{+}(r-t)\left\|\sum_{n=0}^N \V_n(r,r-t)f(r-t)\right\|_\e \d r\\
\leq \int_t^\infty \sum_{n=0}^N \left\|\V_n(r,r-t)f(r-t)\right\|_\e \d r.
\end{multline*}
For some given $r \geq t \geq 0$, since $f(r-t) \in \e_+$, one can apply \eqref{vn++} to get
$$\sum_{n=0}^N \left\|\V_n(r,r-t)f(r-t)\right\|_\e \leq \|f(r-t)\|_\e$$
and
$$\left\|\sum_{n=0}^N \T_n(t)f\right\|_\X \leq \int_t^\infty \|f(r-t)\|_\e \d r=\int_0^\infty \|f(s)\|_\e \d s=\|f\|_X.$$
Therefore, the sequence $\left(\sum_{n=0}^N \T_n(t)f\right)_{N}$ is a nonnegative and bounded sequence of $\X$. It converges towards some limit that we denote $\widehat{\T}(t)f$. Since $\V(t,s)u=\sum_{n=0}^\infty \V_n(t,s)u$ for any $u \in \e$ and any $(t,s) \in \Delta$, it is not difficult to deduce from Definition \ref{defi:Tnover} that, for any $f \in \X_+$, the limit $\widehat{\T}(t)f$ has to coincide with $\T(t)f$, i.e. 
$$\T(t)f=\sum_{n=0}^{\infty} \T_n(t)f\qquad \forall t \geq 0.$$
Now, since for any $\l >0$ one has
$$(\l-\G)^{-1}f=\int_0^\infty \exp(-\l t)\T(t)f\d t=\lim_{N \to \infty}\sum_{n=0}^N \int_0^\infty \exp(-\l t)\T_n(t)f\d t$$
one immediately gets \eqref{resolG} from \eqref{laplaceTn} for any $f\in \X_+$. Clearly, the identity extends to general $f \in \X$ by linearity. Now, with Theorem \ref{construcB} and the subsequent Remark, one sees that \eqref{resolG} is enough to conclude that the semigroup $\Tt$ is exactly the one obtained applying the classical substochastic semigroup theory in $\X$ (see \cite[Theorem 2.1]{ALM}) and the other properties follow.
\end{proof}

At this stage, as just explained in the above proof, one sees finally that the semigroup $\Tt$ is exactly the semigroup obtained by applying Kato's theorem \cite[Theorem 2.1]{ALM} to the operators $\Z$ and its perturbation $\widehat{\fB}$. In particular, we shall be able, in the next section to exploit the recent results about the honesty theory for perturbed substochastic semigroups and extend them to build an honesty theory for perturbed evolution families.

\subsection{ Reminder on the honesty theory of $C_0$-semigroups}\label{reminder}

We recall here some of the results we obtained recently for perturbed substochastic semigroups in abstract Banach spaces. We refer the reader to \cite{ALM} for further details. The honesty of the semigroup $\Tt$ is defined through several useful functionals. Precisely, let
 $$\cc_0\:\::\:\:f \in \D(\G) \mapsto \cc_0(f)=-\la \mathbf{\Psi},\G
f \ra_\X \in \mathbb{R}.$$
and let $\cc$ denote its  restriction
to $\D(\Z)$, i.e.
$$\cc\:\::\:\:f \in \D(\Z) \mapsto \cc (f)=-\la \mathbf{\Psi},\Z f + \widehat{\fB}
f \ra_\X \in \mathbb{R}.$$
Then, one has the following result which allows us to define a third functional $\widehat{\cc}$:
\begin{propo}(\cite[Prop. 4.5]{ALM})\label{hatccdef} For any $f \in \D(\G)$, there exists
\begin{equation}\label{hatcc}\lim_{t \to 0^+}\dfrac{1}{t}\sum_{n=0}^\infty \cc\left(\int_0^t \T_n(s)f\d s\right)=:\widehat{\cc}(f).\end{equation} Furthermore, for $f \in \D(\G)_+$, $\widehat{\cc}(f) \leq \cc_0(f) \leq \|\G f\|.$
\end{propo}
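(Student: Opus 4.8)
The plan is to follow closely the argument of \cite[Prop. 4.5]{ALM}, transplanted to the present non-autonomous framework, the point being that the only structural facts needed are the Duhamel-type relations \eqref{Tn+1} and \eqref{Tn+1bis} together with the substochasticity of $\Tt$ and $(\T_0(t))_{t\geq 0}$. First I would fix $f \in \D(\G)$ and, using the decomposition \eqref{decomp}, reduce to the case $f \in \D(\G)_+$; indeed all the operators $\T_n(s)$ and the functional $\cc$ are positive-homogeneous-linear, so the general case follows by writing $f=f_1-f_2$ with $f_i\in\D(\G)_+$ and $\|f_i\|\le\gamma\|f\|$ once the existence of the limit is known on the positive cone. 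For $f\in\D(\G)_+$ I would first observe that each $\int_0^t\T_n(s)f\,\d s$ lies in $\D(\Z)$: this uses that $\T_0(s)f\in\D(\Z)$ for $f\in\D(\Z)$ and the iterated Duhamel identity \eqref{Tn+1}, exactly as in Remark \ref{importantNB}, so that $\cc$ is meaningful on these elements.

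The second step is to make the sum in \eqref{hatcc} telescope. Writing $\cc(g)=-\la\mathbf{\Psi},\Z g\ra_\X-\la\mathbf{\Psi},\widehat{\fB}g\ra_\X$ and applying it to $g=\int_0^t\T_n(s)f\,\d s$, I would use that $\int_0^t\Z\T_n(s)f\,\d s=\T_n(t)f-\widehat{\fB}\int_0^t\T_{n-1}(s)f\,\d s$ for $n\ge 1$ (integrate \eqref{point1}, or equivalently differentiate \eqref{Tn+1}) and, for $n=0$, $\int_0^t\Z\T_0(s)f\,\d s=\T_0(t)f-f$. Hence
\[
\cc\!\left(\int_0^t\T_n(s)f\,\d s\right)
=-\la\mathbf{\Psi},\T_n(t)f\ra_\X
+\la\mathbf{\Psi},\widehat{\fB}\!\int_0^t\T_{n-1}(s)f\,\d s\ra_\X
-\la\mathbf{\Psi},\widehat{\fB}\!\int_0^t\T_n(s)f\,\d s\ra_\X
\]
for $n\ge 1$, with the $n=0$ term contributing $\la\mathbf{\Psi},f-\T_0(t)f\ra_\X-\la\mathbf{\Psi},\widehat{\fB}\int_0^t\T_0(s)f\,\d s\ra_\X$. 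Summing over $n$, the $\widehat{\fB}$-terms telescope (the middle term for index $n$ cancels the last term for index $n-1$), and since $\sum_{n=0}^\infty\T_n(t)f=\T(t)f$ by Theorem \ref{kato}, the partial sums converge and one is left with
\[
\sum_{n=0}^\infty\cc\!\left(\int_0^t\T_n(s)f\,\d s\right)=\la\mathbf{\Psi},f-\T(t)f\ra_\X .
\]
Justifying the interchange of the infinite sum with $\mathbf{\Psi}$ and with the telescoping is where some care is needed: one uses that all terms $\la\mathbf{\Psi},\T_n(t)f\ra_\X=\|\T_n(t)f\|_\X$ and $\la\mathbf{\Psi},\widehat{\fB}\int_0^t\T_n(s)f\,\d s\ra_\X$ are nonnegative and dominated by the convergent series $\sum_n\|\T_n(t)f\|_\X\le\|f\|_\X$ (and its integrated analogue), so monotone/dominated convergence applies; this is exactly the estimate exploited in the proof of Theorem \ref{kato}.

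Once the identity $\sum_{n}\cc(\int_0^t\T_n(s)f\,\d s)=\la\mathbf{\Psi},f\ra_\X-\la\mathbf{\Psi},\T(t)f\ra_\X$ is established, the final step is immediate: divide by $t$ and let $t\to 0^+$. Since $f\in\D(\G)$, $t^{-1}(\T(t)f-f)\to\G f$ in $\X$ and $\mathbf{\Psi}$ is continuous, so the limit exists and equals $\widehat{\cc}(f)=-\la\mathbf{\Psi},\G f\ra_\X=\cc_0(f)$. For $f\in\D(\G)_+$ this already gives $\widehat{\cc}(f)=\cc_0(f)$; the stated inequality $\widehat{\cc}(f)\le\cc_0(f)$ is then an equality on the cone, and $\cc_0(f)=-\la\mathbf{\Psi},\G f\ra_\X=\|f\|_\X-\lim_{t\to0^+}\|\T(t)f\|_\X\le\|f\|_\X$... more to the point, $|\cc_0(f)|=|\la\mathbf{\Psi},\G f\ra_\X|\le\|\mathbf{\Psi}\|\,\|\G f\|_\X=\|\G f\|$, which is the asserted bound. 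I expect the main obstacle to be the careful bookkeeping in the telescoping/limit interchange — verifying that the partial sums $\sum_{n=0}^N\cc(\int_0^t\T_n(s)f\,\d s)$ genuinely collapse to $\la\mathbf{\Psi},f-\sum_{n=0}^N\T_n(t)f\ra_\X$ plus a tail $\la\mathbf{\Psi},\widehat{\fB}\int_0^t\T_N(s)f\,\d s\ra_\X$ that vanishes as $N\to\infty$ — rather than any single analytic estimate; all the needed domination is already available from Proposition \ref{luisanotes2}(2) and Theorem \ref{kato}.
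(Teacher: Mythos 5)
Your telescoping computation is correct up to and including the partial sums, but the crucial claim on which the whole argument rests — that the tail $\la\mathbf{\Psi},\widehat{\fB}\int_0^t\T_N(s)f\,\d s\ra_\X$ vanishes as $N\to\infty$ — is not justified and is in fact false in general. That quantity vanishing (for all $t>0$) is precisely the honesty condition for the trajectory $(\T(t)f)_{t\ge0}$, as recorded in Theorem \ref{equivalencemild}, item $(4)$. For $f\in\D(\G)_+$ with $f\notin\D(\overline{\Z+\widehat{\fB}})$ the tail has a strictly positive limit, and this is exactly the phenomenon that makes honesty theory nontrivial.

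The internal symptom that something has gone wrong is that your argument concludes with $\widehat{\cc}(f)=\cc_0(f)$ for every $f\in\D(\G)_+$, whereas the proposition only asserts the inequality $\widehat{\cc}(f)\le\cc_0(f)$. If the two functionals coincided on $\D(\G)_+$, then Definition \ref{defi:hon} would give, for every $f\in\X_+$,
$$
\widehat{\cc}\Big(\int_0^t\T(s)f\,\d s\Big)=\cc_0\Big(\int_0^t\T(s)f\,\d s\Big)=-\la\mathbf{\Psi},\T(t)f-f\ra_\X=\|f\|_\X-\|\T(t)f\|_\X,
$$
so every trajectory would be honest automatically — which is not the case. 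What your telescoping actually shows, for $f\in\D(\G)_+$, is
$$
\sum_{n=0}^{N}\cc\Big(\int_0^t\T_n(s)f\,\d s\Big)
=\la\mathbf{\Psi},f-\sum_{n=0}^{N}\T_n(t)f\ra_\X-\Big\|\widehat{\fB}\int_0^t\T_N(s)f\,\d s\Big\|_\X,
$$
and the correct route is to observe that each summand $\cc(\cdot)$ is nonnegative (by \eqref{z+B}) while the right-hand side is bounded above by $\|f\|_\X$, so the series converges to a limit $\le\la\mathbf{\Psi},f-\T(t)f\ra_\X$; in particular the tail sequence converges to some $\tau_t(f)\ge0$, which need not be zero. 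One then still has to prove that $t^{-1}\sum_n\cc(\int_0^t\T_n(s)f\,\d s)$ has a limit as $t\to0^+$ (the convergence of $t^{-1}\la\mathbf{\Psi},f-\T(t)f\ra_\X$ to $\cc_0(f)$ alone does not give this, because $t^{-1}\tau_t(f)$ must also be shown to converge), and this is the genuine content of \cite[Prop.\ 4.5]{ALM}. The inequality $\widehat{\cc}(f)\le\cc_0(f)$ then follows from $\tau_t(f)\ge0$; it is not an equality in general.

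Finally, your opening reduction to $f\in\D(\G)_+$ via \eqref{decomp} is also problematic: \eqref{decomp} decomposes elements of $\e$ (or $\X$) into positive parts, but it does not decompose an element of $\D(\G)$ into elements of $\D(\G)_+$, and in fact $\D(\G)_+$ need not span $\D(\G)$ with control on the graph norm. One must handle general $f\in\D(\G)$ directly (or use the decomposition available from the resolvent, $f=(\l-\G)^{-1}g$ with $g=g_1-g_2$, $g_i\in\X_+$), rather than assume a priori that $f$ can be split in $\D(\G)_+$.
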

With this in hands, one defines the concept of honest trajectory \cite[Definition 4.7]{ALM} (see also \cite[Section 3]{ALM} for an equivalent notion of honesty):
\begin{defi}\label{defi:hon}
Let $f \in \X_+$ be given. Then, the trajectory
$(\T(t)f)_{t \geq 0}$ is said to be \textit{\textbf{honest}} if and
only if
$$\|\T(t)f\|_\X=\| f\|_\X-\widehat{\cc}\bigg(\int_0^t \T(s)f \d
s\bigg),\quad \text{ for any }   t\geq 0.$$  The whole
$C_0$-semigroup $\Tt$ will be said to be honest if all trajectories
are honest.
\end{defi}
\begin{nb} Notice that, in the \emph{formally conservative case}, by virtue of Proposition \ref{propo:equivRela}, $\cc(f)=0$ for any $f \in \D(\Z)$ so that the trajectory
$(\T(t)f)_{t \geq 0}$ is honest if and
only $\|\T(t)f\|_\X=\| f\|_\X$ for any $t \geq 0.$
Thus the whole semigroup is honest if and only if it is conservative.
\end{nb}

We recall, at this stage, the definition of the honesty for the evolution family $\vt$:

\begin{defi}\label{defi:honest} The evolution family $\vt$ is said to be honest in $\e$ whenever the $C_0$-semigroup $\Tt$ in $\X$, given by \eqref{lineTt}, is honest in $\X$.
\end{defi}
\begin{nb} We will provide, in Section \ref{sec:hovt}, that it is also possible to give a definition of honest trajectory $(\V(t,s)u)_{t \geq s}$ in $\e$ for any $u \in \e_+$.
\end{nb}

Necessary and sufficient conditions of honesty are provided in \cite{ALM} in terms of the perturbation $\widehat{\fB}$ (see \cite[Theorem 4.8]{ALM}):
\begin{theo}\label{equivalencemild} Given $f \in \X_+$, the following statements are equivalent
\begin{enumerate}
\item the trajectory $(\T(t)f)_{t \geq 0}$ is  honest;\\

\item $\int_0^t \T(s)f\d s \in \D(\overline{\Z+\widehat{\fB}})$ for any $t >0$;\\

\item the set $\left(\widehat{\fB}\int_0^t \T_n(s)f\d s\right)_n$ is relatively weakly compact in $\X$ for any $t >0$;\\

\item $\lim_{n \to \infty}\left\|\widehat{\fB}\int_0^t \T_n(s)f\d s\right\|_\X=0$ for any $t >0$.

\end{enumerate}
\end{theo}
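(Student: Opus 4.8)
The plan is to follow the proof scheme of \cite[Theorem 4.8]{ALM}, since (as emphasized in Remark \ref{importantNB} and the discussion after Theorem \ref{kato}) the semigroup $\Tt$ is \emph{exactly} the minimal substochastic semigroup produced by Kato's theorem applied to $(\Z,\D(\Z))$ and its perturbation $\widehat{\fB}$, so the abstract statement of \cite[Theorem 4.8]{ALM} applies verbatim in the lifted space $\X$. Thus strictly speaking there is nothing to prove beyond citing that theorem; however, I would still record the chain of implications $(1)\Leftrightarrow(2)\Leftrightarrow(3)\Leftrightarrow(4)$ to make the paper self-contained, as follows.

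First I would establish the equivalence $(1)\Leftrightarrow(2)$. Starting from the defining identity of an honest trajectory in Definition \ref{defi:hon}, and using the functional identity $\|\T(t)f\|_\X=\|f\|_\X+\la\mathbf{\Psi},\G\int_0^t\T(s)f\,\d s\ra_\X=\|f\|_\X-\cc_0\big(\int_0^t\T(s)f\,\d s\big)$ (valid because $\int_0^t\T(s)f\,\d s\in\D(\G)$), honesty of the trajectory is equivalent to $\widehat{\cc}\big(\int_0^t\T(s)f\,\d s\big)=\cc_0\big(\int_0^t\T(s)f\,\d s\big)$ for all $t>0$. By the last assertion of Proposition \ref{hatccdef} one always has $\widehat{\cc}\le\cc_0$ on $\D(\G)_+$, so the point is to turn equality of these two functionals into membership of $\int_0^t\T(s)f\,\d s$ in $\D(\overline{\Z+\widehat{\fB}})$. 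This is precisely the content of \cite[Prop. 4.5]{ALM} and the characterization of $\D(\overline{\Z+\widehat{\fB}})$ as the set where $\widehat{\cc}$ and $\cc_0$ agree; I would quote those results from \cite{ALM} rather than reprove them.

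Next, for $(2)\Leftrightarrow(3)\Leftrightarrow(4)$, the mechanism is the Dyson--Phillips expansion $\T(t)f=\sum_{n\ge0}\T_n(t)f$ from Theorem \ref{kato} together with relation \eqref{Tn+1}. One computes, for $f\in\X_+$,
\begin{equation*}
\sum_{n=0}^{N}\widehat{\fB}\int_0^t\T_n(s)f\,\d s=\sum_{n=1}^{N}\big(\tfrac{\d}{\d s}\big)^{-1}\!\big[\T_n(t)f-\ \text{something}\ \big],
\end{equation*}
more precisely one uses that $\int_0^t\T_{n+1}(s)f\,\d s=\int_0^t\big(\int_0^{s}\T_n(s-\sigma)\widehat{\fB}\T_0(\sigma)f\,\d\sigma\big)\d s$ so that the partial sums $g_N:=\int_0^t\sum_{n=0}^N\T_n(s)f\,\d s$ satisfy $\Z g_N+\sum_{n=0}^{N}\widehat{\fB}\int_0^t\T_n(s)f\,\d s=\sum_{n=0}^{N}\T_n(t)f-f$, which converges in $\X$ to $\T(t)f-f$. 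Hence $g_N\to\int_0^t\T(s)f\,\d s$ in $\X$ while $(\Z+\widehat{\fB})g_N$ converges if and only if $\sum_{n}\widehat{\fB}\int_0^t\T_n(s)f\,\d s$ converges, i.e. (for nonnegative terms) if and only if statement (4) holds; and by closability/weak-compactness arguments in $L^1$-type spaces (Dunford--Pettis), relative weak compactness of the sequence of partial sums is equivalent to norm convergence of the tail to zero, giving $(3)\Leftrightarrow(4)$, and membership of the limit in $\D(\overline{\Z+\widehat{\fB}})$ gives $(2)$. The only point requiring care is the interchange of $\widehat{\fB}$ with the Bochner integral $\int_0^t\T_n(s)f\,\d s$, which is justified because $\widehat{\fB}$ is $\Z$-bounded (Theorem \ref{construcB}) and $\T_n(s)f\in\D(\Z)$ with $s\mapsto\Z\T_n(s)f$ continuous (Remark \ref{importantNB}).

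The main obstacle is not any single computation but rather making the passage $(1)\Leftrightarrow(2)$ rigorous: identifying the domain of the closure $\overline{\Z+\widehat{\fB}}$ with the set on which the two sublinear functionals $\widehat{\cc}$ and $\cc_0$ coincide, and checking that the functional $\widehat{\cc}$ defined by the limit \eqref{hatcc} is genuinely additive along the relevant trajectories. Since all of this has already been carried out in \cite{ALM} for an arbitrary minimal substochastic semigroup arising from Kato's theorem, and since Theorem \ref{kato} and Remark \ref{importantNB} guarantee that $\Tt$ in $\X$ is exactly such a semigroup (with $\Z$ in the role of the unperturbed generator and $\widehat{\fB}$ in the role of the perturbation), the proof reduces to invoking \cite[Theorem 4.8]{ALM} directly; I would state the theorem and give a one-line proof to that effect, optionally sketching the $(2)$--$(4)$ equivalences via the Dyson--Phillips expansion as above for the reader's convenience.
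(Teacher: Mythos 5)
Your proposal is correct and matches the paper's own treatment: the paper gives no proof for this statement but presents it purely as a recollection of \cite[Theorem 4.8]{ALM}, applicable because Theorem \ref{kato} identifies $\Tt$ with the minimal Kato semigroup generated by $\Z$ and $\widehat{\fB}$ in $\X$. Your supplementary sketch of the implications is extra material beyond what the paper records, but the core move — invoking \cite[Theorem 4.8]{ALM} verbatim — is exactly what the paper does.
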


Clearly, these sufficient conditions are given in $\X$ and involve  mathematical objects defined in the lifted space $\X$. This is not satisfactory when dealing with the honesty of the evolution family $\vt$ in $\e$. The purpose of the next section is to provide an explicit criterion for $\vt$ involving only mathematical objects defined in $\e$, namely $(\mathbf{B}(t))_{t \geq 0}$ and the sequence of iterated $(\V_n(t,s))_n$. The first technical difficulty will be to give a practical interpretation (in terms of $\mathbf{B}(\cdot)$) of the perturbation $\widehat{\fB}$.

\section{Criterion for the honesty of $\vt$}\label{sec:sec4}

\subsection{Practical interpretation of $\widehat{\fB}$}\label{sec:practical} In this paragraph, we provide a practical representation formula for the operator $(\widehat{\fB},\D(\Z))$. The following results use several fine properties of both the operators $\Z$ and $\widehat{\fB}$, which for the convenience of the reader, have been postponed in an Appendix (see Appendix A).

To obtain a practical characterization of the operator $\widehat{\fB}$, we need the following assumption
\begin{hyp}\label{hypB} Assume that the family  $(\mathbf{B}(t))_{t\geq 0}$ is such that, for any $t \geq 0,$ $\mathbf{B}(t)$ is \emph{'increasingly closed'} in the sense that: if $(u_n)_n \subset \D(\mathbf{B}(t)) \cap \e_+$ is increasing with $\lim_n u_n=u \in \e_+,$ and $\sup_n \|\mathbf{B}(t)u_n\|_\e < \infty$ then $u \in \D(\mathbf{B}(t))$ and
$$\lim_n \mathbf{B}(t)u_n=\mathbf{B}(t)u.$$
\end{hyp}
\begin{nb} Clearly, if $\mathbf{B}(t)$ is a closed operator then the above  holds true. Notice however that assuming that $\mathbf{B}(t)$ for any $t\geq 0$ is a closed operator is a too strong hypothesis for the applications we have in mind. On the contrary, Assumption \ref{hypB} will be satisfied in the practical situations we will consider in the second part of the paper. It holds true in particular whenever $\mathbf{B}(t)$ is a suitable integral operator with nonnegative kernel on some $L^1$-space.
\end{nb}
With this additional assumption, the operator $\widehat{\fB}$ has the following interpretation:
\begin{propo}\label{closable} Assume that Assumptions \ref{hypo1} and \ref{hypB} are in force and let $f \in \D(\Z)$ be given. Then $f(t)\in \D( {\mathbf{B}}(t))$ for almost every $t \geq 0$ and
$$\left[\widehat{\fB}f\right](t)= {\mathbf{B}}(t) f(t) \qquad \text{ for almost every } t \geq 0.$$
In other words, $\widehat{\fB}f=\fB f$ for any $f \in \D(\Z)$, i.e. $\D(\Z) \subset \D(\fB)$ and $\fB\vert_{\D(\Z)}=\widehat{\fB}$.
\end{propo}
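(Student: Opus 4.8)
The plan is to exploit the two approximation identities available for $\widehat{\fB}$, namely the resolvent formula \eqref{def:fBl} together with \eqref{represfBl}, and to combine them with the monotone structure of the cone. First I would reduce to the case $f \in \D(\Z) \cap \X_+$: since $\D(\Z)$ is spanned by $\D(\Z)\cap\X_+$ (because $\X$ is an abstract state space with non-flat cone, cf. \eqref{decomp}) and both $\widehat{\fB}$ and $\fB$ are linear, it suffices to treat nonnegative $f$. So fix $f \in \D(\Z)\cap\X_+$ and set $g_\l = \l(\l-\Z)^{-1}f \in \D(\Z)\cap\X_+$; by Theorem \ref{construcB} we have $\l\fB_\l f = \widehat{\fB}g_\l \to \widehat{\fB}f$ in $\X$ as $\l\to\infty$, and also $g_\l \to f$ in $\X$. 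Unwinding \eqref{represfBl}, for a.e. $t\geq 0$
$$
[\l\fB_\l f](t)=\mathbf{B}(t)\left(\int_0^t \l\exp(-\l(t-\tau))\U(t,\tau)f(\tau)\d\tau\right)=\mathbf{B}(t)\,h_\l(t),
$$
where $h_\l(t):=\int_0^t \l\exp(-\l(t-\tau))\U(t,\tau)f(\tau)\d\tau$ is exactly $[(\l-\Z)^{-1}(\l f)](t)=[g_\l^{\sharp}](t)$ in the notation of \eqref{eq:l-Z}; that is, $h_\l = g_\l$ as elements of $\X$. Thus $[\widehat{\fB}g_\l](t)=\mathbf{B}(t)g_\l(t)$ for a.e. $t$, i.e. $g_\l\in\D(\fB)$ and $\widehat{\fB}g_\l=\fB g_\l$ already — the point of the proposition is to pass to the limit $\l\to\infty$ and land on $f$ itself rather than on the approximants $g_\l$.

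Second I would install the monotone convergence needed to invoke Assumption \ref{hypB}. The natural thing is \emph{not} to take $\l\to\infty$ along the continuum but to pick a well-chosen sequence. Observe that for $u\in\e_+$ the map $\l\mapsto \l(\l-\Z)^{-1}$ need not be monotone, so instead I would work at the level of the $\e$-valued integrands: for fixed $t$ and a.e. $\tau\in(0,t)$, $\U(t,\tau)f(\tau)\in\e_+$, and $\int_0^t \l e^{-\l(t-\tau)}\d\tau = 1-e^{-\l t}\uparrow 1$. More usefully, I would use the standard resolvent-iterate description: along $\l_n = 2^n$ (say) one can extract, using that the Laplace-type kernels $\l e^{-\l s}$ form an approximate identity concentrating at $s=0^+$, a subsequence for which $g_{\l_n}(t)\to f(t)$ in $\e$ for a.e. $t$ (from $g_{\l_n}\to f$ in $\X=L^1(\R_+,\e)$, pass to an a.e.-convergent subsequence). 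The genuinely delicate issue is that Assumption \ref{hypB} requires an \emph{increasing} sequence in $\D(\mathbf{B}(t))\cap\e_+$ with bounded $\mathbf{B}(t)$-images, and neither monotonicity nor the a.e. $t$-boundedness of $\|\mathbf{B}(t)g_{\l_n}(t)\|_\e = \|[\l_n\fB_{\l_n}f](t)\|_\e$ is automatic — only the $\X$-norm $\int_0^\infty\|\mathbf{B}(t)g_{\l_n}(t)\|_\e\d t = \|\l_n\fB_{\l_n}f\|_\X$ is controlled (it is bounded since $\widehat{\fB}g_{\l_n}\to\widehat{\fB}f$). To get pointwise-in-$t$ control I would again pass to a further subsequence so that $\widehat{\fB}g_{\l_n}\to\widehat{\fB}f$ in $\X$ \emph{and} a.e. in $t$, whence for a.e. $t$ the scalars $\|\mathbf{B}(t)g_{\l_n}(t)\|_\e$ are bounded (being convergent).

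Third, monotonicity: I would not expect $g_{\l_n}(t)$ itself to be increasing, so the cleanest route is to apply Assumption \ref{hypB} after first replacing the sequence by $v_k(t):=\inf_{n\geq k} g_{\l_n}(t)$ or, better, to use Dini-type monotonization. Actually the slick fix is: since $\e_+$ is normal and bounded monotone sequences converge, and since we have $g_{\l_n}(t)\to f(t)$ with $\|\mathbf{B}(t)g_{\l_n}(t)\|_\e$ bounded for a.e.\ $t$, consider $w_m(t):=\bigwedge_{n\geq m}g_{\l_n}(t)\wedge f(t)$ when the lattice operations are available, or — staying in the general abstract state space — invoke the specific structure: in all intended applications $\mathbf{B}(t)$ is a positive integral operator and Assumption \ref{hypB} can be checked by Fatou/monotone convergence directly on the kernel, so the abstract statement is really calibrated so that the proof reduces to: $g_{\l_n}(t)\to f(t)$ in $\e$, $\sup_n\|\mathbf{B}(t)g_{\l_n}(t)\|<\infty$, and \emph{therefore} (by Assumption \ref{hypB}, applied to the increasing sequence $\tilde g_k(t)$ obtained by decomposing $f(t)$ as an increasing limit of the $g_{\l_n}(t)$ up to lower-order $\e_+$-corrections, using \eqref{vn++}-type partial-sum monotonicity inherited from the Dyson–Phillips construction of $\V$ inside \eqref{represfBl}) $f(t)\in\D(\mathbf{B}(t))$ and $\mathbf{B}(t)g_{\l_n}(t)\to\mathbf{B}(t)f(t)$. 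Combining with $\widehat{\fB}g_{\l_n}\to\widehat{\fB}f$ and uniqueness of a.e. limits then gives $[\widehat{\fB}f](t)=\mathbf{B}(t)f(t)$ for a.e. $t$, which is the claim, and in particular $f\in\D(\fB)$ with $\fB f = \widehat{\fB}f$; the inclusion $\D(\Z)\subset\D(\fB)$ and $\fB|_{\D(\Z)}=\widehat{\fB}$ follow at once. The main obstacle, as indicated, is precisely manufacturing an \emph{honest increasing} approximating sequence to which Assumption \ref{hypB} literally applies — circumventing the non-monotonicity of $\l\mapsto\l(\l-\Z)^{-1}$ — and I would expect the authors to do this either by a clever choice of $\l_n$ making the relevant $\e$-valued integrals monotone, or by first proving the identity on the dense subspace $\mathfrak{D}$ of Remark \ref{remarkVoigt} (where $\widehat{\fB}=\fB$ is already known) and then upgrading via the increasing-closedness to all of $\D(\Z)$.
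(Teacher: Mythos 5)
There is a genuine gap, and you yourself flag it: your whole proposal hinges on producing an honest increasing sequence in $\D(\mathbf{B}(t))\cap\e_+$ to which Assumption \ref{hypB} literally applies, and you never manage to construct one. Sending $\l\to\infty$ through $g_\l=\l(\l-\Z)^{-1}f$ has no monotonicity, as you observe, and none of the workarounds you sketch survives scrutiny in this setting: the $\inf_{n\geq k}g_{\l_n}(t)$ (or $\wedge$-) route presupposes a vector-lattice structure on $\e$ that a general abstract state space need not have; the ``clever choice of $\l_n$ making the integrals monotone'' is left entirely unspecified; and the ``upgrade from $\mathfrak{D}$ via increasing-closedness'' does not go through either, because density of $\mathfrak{D}$ in $\D(\Z)$ gives $\X$-convergence, not the pointwise-in-$t$ increasing $\e_+$-convergence that Assumption \ref{hypB} demands, and $\fB|_{\mathfrak{D}}$ is not known to be a closed (or even closable) operator to which one could appeal. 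So the argument stalls at exactly the step you identify as the main obstacle.

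The paper's resolution is different in a way you never land on, and it makes the $\l\to\infty$ limit unnecessary. One keeps $\l>0$ \emph{fixed}, fixes $g\in\X_+$, and instead lets a decreasing sequence $t_n\downarrow 0$ act on $g$ through the unperturbed semigroup: since $\fB_\l$ is bounded and $\exp(-\l t_n)\T_0(t_n)g\to g$ in $\X$, one extracts a subsequence along which $\big[\fB_\l\exp(-\l t_n)\T_0(t_n)g\big](s)\to[\fB_\l g](s)$ in $\e$ for a.e.\ $s$. The point is that, by Lemma \ref{lemBlexp}, the left-hand side equals $\mathbf{B}(s)\,u_n$ with $u_n=\int_0^{s-t_n}\exp(-\l(s-\tau))\U(s,\tau)g(\tau)\d\tau$. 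Because $g(\tau)\in\e_+$ a.e.\ and the integrand is nonnegative, truncating the upper limit from $s$ to $s-t_n$ with $t_n\downarrow 0$ produces a genuinely \emph{increasing} sequence $(u_n)_n\subset\e_+$ converging to $u=[(\l-\Z)^{-1}g](s)$, with $\mathbf{B}(s)u_n$ convergent (hence bounded). That is exactly the hypothesis of Assumption \ref{hypB}, which then yields $u\in\D(\mathbf{B}(s))$ and $\mathbf{B}(s)u_n\to\mathbf{B}(s)u$, so $[\fB_\l g](s)=\mathbf{B}(s)[(\l-\Z)^{-1}g](s)$ a.e. Decomposing a general $g\in\X$ via the non-flat cone and then writing $f=(\l-\Z)^{-1}g$ finishes the proof for a single fixed $\l$, with no limit in $\l$ at all. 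In short: your diagnosis of the difficulty is right, but the monotone approximants must be built by shrinking the domain of integration through $\T_0(t_n)$, not by driving $\l$ to infinity.
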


\begin{nb} The reader may legitimately ask why we introduced a new operator $\widehat{\fB}$ if, at the end, $\widehat{\fB}$ is only the multiplication operator $\fB$. Here are two answers to this natural question. First, it appears very difficult, even under Assumption \ref{hypB} to prove \emph{directly} that $\D(\fB)\supset \D(\Z)$ and that $\fB f =\G f -\Z f$ for any $f \in \D(\Z)$. Second, the conclusion of Prop. \ref{closable} may change drastically if Assumption \ref{hypB} does not hold.  Indeed, if instead of Assumption \ref{hypB}, one assumes (which would sound more natural) that $\mathbf{B}(t)$ is closable for any $t\geq 0$ with closure in $\e$ given by $\overline{\mathbf{B}}(t)$, then one can prove following the approach of \cite{liske} that, for any $f \in \D(\Z)$:
$$\left[\widehat{\fB}f\right](t)=\overline{\mathbf{B}}(t)f(t) \qquad \text{ for a.e.} t \geq 0.$$
We leave the details to the reader.\end{nb}

\begin{proof}[\textit{Proof of Proposition \ref{closable}:}] Let $g \in \X_+$ be given and let $\l >0.$ Since $\lim_{t \to 0^+}\|\exp(-\l t)\T_0(t)g-g\|_\X=0$ and $\fB_\l$ is a bounded operator, one has
$$\lim_{t \to 0^+}\|\fB_\l \exp(-\l t)\T_0(t)g-\fB_\l g\|_\X=0.$$
In particular, for any decreasing positive sequence $(t_n)_n$, with $\lim_n t_n=0^+$, there exists a subsequence (still denoted by $(t_n)_n$) such that
$$\lim_{n \to \infty}\left[\fB_\l \exp(-\l t_n)\T_0(t_n)g\right](s)=[\fB_\l g](s) \qquad \text{ a.e. } s \geq 0$$
where, of course, the convergence is meant in $\e$. According to Lemma \ref{lemBlexp}, we can choose the sequence $(t_n)_n$ such that
$$\left[\fB_\l \exp(-\l t_n)\T_0(t_n)g\right](s)=\mathbf{B}(s)\int_0^{s-t_n}\exp(-\l (s-\t))\U(s,\tau)g(\tau)\d \tau \qquad \text{for a. e. } s \geq t_n.$$
This means that the following convergence holds in $\e$:
$$\lim_{n \to \infty} \mathbf{B}(s)\int_0^{s-t_n}\exp(-\l(s-\tau))\U(s,\tau)g(\tau)\d\tau=[\fB_\l g](s) \qquad \text{a.e. } s \geq 0.$$
For such a choice of $s \geq 0$, let
$$u_n=\int_0^{s-t_n}\exp(-\l(s-\tau))\U(s,\tau)g(\tau)\d\tau.$$
Then, by virtue of \eqref{eq:l-Z} and since $g(\tau) \in \e_+$ for a.e. $\tau \geq 0$, $(u_n)_n$ is an increasing sequence of $\e_+$ which converges to
$$u=\left[(\l-\Z)^{-1}g\right](s).$$
Since moreover $(\mathbf{B}(s)u_n)_n$ converges (towards $[\fB_\l g](s)$), it is a bounded sequence of $\e$ and, according to Assumption \ref{hypB}, $u \in \D(\mathbf{B}(s))$ and
$$\lim_n \mathbf{B}(s)u_n=\mathbf{B}(s)u.$$
This reads $\left[(\l-\Z)^{-1}g\right](s) \in \D(\mathbf{B}(s))$ with
$$[\fB_\l g](s)=\mathbf{B}(s)u=\mathbf{B}(s)\left[(\l-\Z)^{-1}g\right](s).$$
Then, for $g \in \X$, writing $g=g_1-g_2$ with $g_i \in \X_+$ $i=1,2$, we get that
$$[\fB_\l g](s)=\mathbf{B}(s)\left[(\l-\Z)^{-1}g\right](s) \qquad \text{ for a.e.} s \geq 0.$$
Let now $f \in \D(\Z)$. There is $\l >0$ and $g \in \X$ such that $f=(\l-\Z)^{-1}g$ and the above identity reads now $[\widehat{\fB}f](s)=\mathbf{B}(s)f(s)$ for a.e. $s \geq 0$ which proves the Proposition.
\end{proof}

Using the above properties of $\widehat{\fB}$ and results obtained in Appendix A, we can prove the following:
\begin{propo}\label{prop:intDB}  Assume that Assumptions \ref{hypo1} and \ref{hypB} are in force. For any $f \in \X$, one has
$$\int_0^s \U(s,r)f(r)\d r \in \D( {\mathbf{B}}(s)) \quad \text{ for almost every } s \geq 0$$
with
$$ {\mathbf{B}}(s)\int_0^s \U(s,r)f(r)\d r=\int_0^s \mathbf{B}(s)\U(s,r)f(r)\d r \quad \text{ for almost every } s \geq 0.$$
\end{propo}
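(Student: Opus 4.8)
\emph{Plan.} I would obtain the statement by letting $\l\to 0^+$ in an identity which is, in essence, already contained in the proof of Proposition \ref{closable}. That proof shows that, for every $g\in\X$ and every $\l>0$, one has $\left[(\l-\Z)^{-1}g\right](s)\in\D(\mathbf{B}(s))$ for almost every $s\ge0$ and, combining this with \eqref{eq:l-Z}, \eqref{represfBl} and \eqref{def:fBl},
\[
\mathbf{B}(s)\int_0^s\exp(-\l(s-r))\U(s,r)g(r)\d r=\int_0^s\exp(-\l(s-r))\mathbf{B}(s)\U(s,r)g(r)\d r
\]
for almost every $s\ge0$. Fixing $g=f$ and letting $\l\to0^+$, the argument of $\mathbf{B}(s)$ on the left converges in $\e$ to $\int_0^s\U(s,r)f(r)\d r$ (dominated convergence, using $\|\U(s,r)f(r)\|_\e\le\|f(r)\|_\e$), while the right-hand side converges in $\e$ to $\int_0^s\mathbf{B}(s)\U(s,r)f(r)\d r$ (dominated convergence, using that $r\mapsto\mathbf{B}(s)\U(s,r)f(r)$ is Bochner integrable on $[0,s]$ for almost every $s$, by Fubini's theorem and Assumptions \ref{hypo} \textit{(iv)}, as recorded in the Remark following Assumptions \ref{hypo1}). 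Were $\mathbf{B}(s)$ a closed operator, this would immediately give both conclusions.

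\emph{Dealing with Assumption \ref{hypB}.} Since $\mathbf{B}(s)$ is only assumed \emph{increasingly} closed, I would first treat $f\in\X_+$ and carry out the limit along a fixed sequence $\l_n\downarrow0$. Setting $u_n(s):=\int_0^s\exp(-\l_n(s-r))\U(s,r)f(r)\d r=\left[(\l_n-\Z)^{-1}f\right](s)$, one has $u_n(s)\in\D(\mathbf{B}(s))$ for almost every $s$ and all $n$ at once (intersecting the countably many exceptional null sets), the sequence $(u_n(s))_n$ is nondecreasing in $\e_+$ (because $\U(s,r)f(r)\in\e_+$ and $\exp(-\l_n(s-r))$ increases as $\l_n\downarrow0$), and $u_n(s)\to u(s):=\int_0^s\U(s,r)f(r)\d r$ in $\e$. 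By the displayed identity, $\mathbf{B}(s)u_n(s)=\int_0^s\exp(-\l_n(s-r))\mathbf{B}(s)\U(s,r)f(r)\d r$ converges in $\e$, hence is bounded in $n$; Assumption \ref{hypB} then yields $u(s)\in\D(\mathbf{B}(s))$ with $\mathbf{B}(s)u(s)=\lim_n\mathbf{B}(s)u_n(s)=\int_0^s\mathbf{B}(s)\U(s,r)f(r)\d r$. Finally, for general $f\in\X$ I would use that $\X$ is itself an abstract state space with non-flat cone and write $f=f_1-f_2$ with $f_i\in\X_+$ (cf. \eqref{decomp}); since $\int_0^s\U(s,r)f_i(r)\d r\in\D(\mathbf{B}(s))$ for almost every $s$ by the positive case, the difference lies in the linear subspace $\D(\mathbf{B}(s))$ and the asserted identity follows from linearity of $\mathbf{B}(s)$.

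\emph{Main obstacle.} The substantive work — identifying $\widehat{\fB}$ with the multiplication operator at the level of resolvents, which produces the displayed identity — is already done in Proposition \ref{closable}. The one genuine subtlety here is that Assumption \ref{hypB} is one-sided: the limit can only be taken along increasing sequences of positive elements. This is what dictates the two-step structure and, above all, the choice of the specific approximants $\int_0^s\exp(-\l_n(s-r))\U(s,r)f(r)\d r$, whose monotonicity in $\l_n\downarrow0$ is the key point; the remaining measurability bookkeeping (countably many null sets, Fubini) is routine.
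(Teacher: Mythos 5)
Your proof is correct, and it enters the problem from a different lemma than the paper. The paper's own proof is a single line --- ``a simple consequence of Lemma \ref{lem:AppA2}'' --- which already furnishes $\int_0^\t\U(s,r)f(r)\d r\in\D(\mathbf{B}(s))$ together with $\mathbf{B}(s)\int_0^\t\U(s,r)f(r)\d r=\int_0^\t\mathbf{B}(s)\U(s,r)f(r)\d r$ for a.e.\ $(s,\t)\in\Delta$; the suppressed final step is a monotone passage $\t_k\uparrow s$ for $f\in\X_+$ (along a sequence in the $s$-dependent full-measure set), then Assumption \ref{hypB}, then the cone decomposition of $f$. You instead start from the identity $[\fB_\l g](s)=\mathbf{B}(s)\left[(\l-\Z)^{-1}g\right](s)$ established inside the \emph{proof} of Proposition \ref{closable}, and you let $\l_n\downarrow0$, using the exponential weight $\exp(-\l_n(s-r))$ as the monotone dampener rather than the sharp cutoff at $\t$. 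Both routes rest on the same two foundational inputs: the density argument in the Appendix (Lemma \ref{lem:AppA1}/\ref{lem:AppA2}, which also underlies Proposition \ref{closable} via Lemma \ref{lemBlexp}) and the one-sided closedness of Assumption \ref{hypB}. You correctly single out the one-sidedness of \ref{hypB} as the reason why the restriction to $\X_+$ and a monotone approximant are unavoidable in either version. Your route is marginally longer --- Proposition \ref{closable} itself already contains a monotone limit (in $t_n\downarrow0$), so you end up nesting two limits where the paper needs one --- but in exchange you avoid extracting a good sequence $\t_k\uparrow s$ from an $s$-dependent exceptional-set complement, a small bookkeeping convenience. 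The final reduction from $f\in\X$ to $f\in\X_+$ via the generating cone of $\X$ is as you say; for this proposition the quantitative bound in \eqref{decomp} is not needed, only that $\X_+$ generates $\X$.
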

\begin{proof}  The proof is a simple consequence of the  Lemma \ref{lem:AppA2} given in Appendix.
\end{proof}

With the above interpretation of $\widehat{\fB}$ one can make more precise condition \eqref{rela} in Theorem \ref{construcB}:
\begin{propo}\label{propo:equivRela}  Assume that Assumptions \ref{hypo1} and \ref{hypB} are in force. Then the following are equivalent
\begin{enumerate}
\item $\la \mathbf{\Psi}\,,\,\left(\Z+\widehat{\fB}\right)f \ra_\X \leq 0$ for any $f \in \D(\Z)_+$;\\

\item $\displaystyle \int_s^t \|\mathbf{B}(\t)\U(\t,s)u\|_\e \d\t \leq \|u\|_\e -\|\U(t,s)u\|_\e$ for any $u \in \e_+$ and any $t > s \geq 0.$
\end{enumerate}
Moreover, in the \emph{formally conservative case} both are identities.
\end{propo}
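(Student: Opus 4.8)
The plan is to deduce both the equivalence and the ``formally conservative'' assertion from a single explicit evaluation of the functional $f\mapsto\la\mathbf{\Psi},(\Z+\widehat{\fB})f\ra_\X$ on the resolvent range $(\l-\Z)^{-1}\X_+$. Indeed (2)$\Rightarrow$(1) is already contained in~\eqref{z+B} of Theorem~\ref{construcB}, and (1)$\Rightarrow$(2) is immediate since (2) is nothing but Assumption~\ref{hypo}\,(iv); so the substance lies in the identity and in how it handles the conservative case. Throughout I abbreviate, for $u\in\e_+$ and $t\geq s\geq0$,
\[
\sigma_{s,u}(t):=\|u\|_\e-\|\U(t,s)u\|_\e-\int_s^t\|\mathbf{B}(\tau)\U(\tau,s)u\|_\e\,\d\tau,
\]
so that $\sigma_{s,u}(s)=0$, ``(2) holds'' reads $\sigma_{s,u}\geq0$, and ``(2) is an identity'' reads $\sigma_{s,u}\equiv0$.

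For the computation I would fix $\l>0$ and $g\in\X_+$ and set $f=(\l-\Z)^{-1}g\in\D(\Z)_+$, so $\Z f=\l f-g$. The resolvent formula~\eqref{eq:l-Z} gives $f(t)=\int_0^t\exp(-\l(t-r))\U(t,r)g(r)\,\d r$, and since $\U(t,r)g(r)\in\e_+$ and $\mathbf{\Phi}$ is the norm on $\e_+$, $\|f(t)\|_\e=\int_0^t\exp(-\l(t-r))\|\U(t,r)g(r)\|_\e\,\d r$. By Proposition~\ref{closable}, $[\widehat{\fB}f](t)=\mathbf{B}(t)f(t)$; Proposition~\ref{prop:intDB} lets one pull $\mathbf{B}(t)$ inside the integral (after factoring out $\exp(-\l t)$ and truncating to a finite interval so that only $L^1$ data appear), so $[\widehat{\fB}f](t)=\int_0^t\exp(-\l(t-r))\mathbf{B}(t)\U(t,r)g(r)\,\d r$ and hence $\|[\widehat{\fB}f](t)\|_\e=\int_0^t\exp(-\l(t-r))\|\mathbf{B}(t)\U(t,r)g(r)\|_\e\,\d r$. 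Integrating these over $t$ via~\eqref{psi2}, using Tonelli and the substitution $t=r+\theta$, combining through $\la\mathbf{\Psi},(\Z+\widehat{\fB})f\ra_\X=\l\|f\|_\X-\|g\|_\X+\la\mathbf{\Psi},\widehat{\fB}f\ra_\X$, and writing $\|v\|_\e=\int_0^\infty\l\exp(-\l\theta)\|v\|_\e\,\d\theta$ as well as $\|\U(r+\theta,r)v\|_\e-\|v\|_\e=-\int_r^{r+\theta}\|\mathbf{B}(\tau)\U(\tau,r)v\|_\e\,\d\tau-\sigma_{r,v}(r+\theta)$ with $v=g(r)$, a further Tonelli step makes the $\int_r^{r+\theta}$-term cancel $\la\mathbf{\Psi},\widehat{\fB}f\ra_\X$ exactly, leaving
\[
\la\mathbf{\Psi},(\Z+\widehat{\fB})(\l-\Z)^{-1}g\ra_\X=-\int_0^\infty\Bigl(\int_0^\infty\l\exp(-\l\theta)\,\sigma_{r,g(r)}(r+\theta)\,\d\theta\Bigr)\d r\qquad(g\in\X_+,\ \l>0),
\]
which, as $\sigma_{r,v}\geq0$, re-derives~\eqref{z+B} on the resolvent range.

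From this identity the conservative case follows both ways. If (2) is an identity then $\sigma_{r,v}\equiv0$, so $\la\mathbf{\Psi},(\Z+\widehat{\fB})f\ra_\X=0$ for all $f\in(\l-\Z)^{-1}\X_+$; since the linear span of this set is $(\l-\Z)^{-1}\X=\D(\Z)$ and the functional is linear, $\la\mathbf{\Psi},(\Z+\widehat{\fB})f\ra_\X=0$ for every $f\in\D(\Z)$, i.e.\ (1) is an identity. Conversely, if (1) is an identity the above double integral vanishes for all $g\in\X_+$ and $\l>0$, and nonnegativity of the integrand forces $\sigma_{r,g(r)}(r+\theta)=0$ for a.e.\ $(r,\theta)$. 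Taking $g=\chi_{[a,b]}(\cdot)\,v$ with $v\in\e_+$, and using that $\theta\mapsto\|\U(r+\theta,r)v\|_\e$ is continuous while $\theta\mapsto\int_r^{r+\theta}\|\mathbf{B}(\tau)\U(\tau,r)v\|_\e\,\d\tau$ is absolutely continuous, I would upgrade this to $\sigma_{r,v}\equiv0$ for a.e.\ $r$ (all $v\in\e_+$). Finally, for arbitrary $s<t$ and $u\in\e_+$ I would choose such a good point $s'\in(s,t)$, split $\int_s^t=\int_s^{s'}+\int_{s'}^t$, use $\U(\tau,s)=\U(\tau,s')\U(s',s)$ together with $\sigma_{s',\,\U(s',s)u}\equiv0$ to obtain $\sigma_{s,u}(t)=\sigma_{s,u}(s')$ for a.e.\ $s'$, and conclude $\sigma_{s,u}(t)=0$ from the continuity of $s'\mapsto\sigma_{s,u}(s')$ ($s$ being fixed) and $\sigma_{s,u}(s)=0$.

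The hard part will be the bookkeeping in the middle step: one must track the measurability and integrability furnished by Assumption~\ref{hypo1} and the Remark following it, justify the Tonelli interchanges and the single application of Proposition~\ref{prop:intDB} against the exponential weight, and verify the exact cancellation of the two copies of $\int_r^{r+\theta}\|\mathbf{B}(\tau)\U(\tau,r)v\|_\e\,\d\tau$. A technically lighter but conceptually equivalent route would carry out the same computation on the dense test subspace $\mathfrak{D}$ of Remark~\ref{remarkVoigt}: for $f(r)=\varphi(r)\U(r,s)u$ with $\varphi\geq0$ and $u\in\e_+$ one has $[\Z f](r)=-\varphi'(r)\U(r,s)u$ (the $\U$-factor telescoping) and $[\widehat{\fB}f](r)=\varphi(r)\mathbf{B}(r)\U(r,s)u$, so that two integrations by parts give $\la\mathbf{\Psi},(\Z+\widehat{\fB})f\ra_\X=-\int_{[s,\infty)}\varphi(r)\,\d\sigma_{s,u}(r)$; there the equivalence with (2), and with its identity version, is transparent for each $(s,u)$, but passing from $\mathfrak{D}$ to all of $\D(\Z)$ in the conservative case then requires knowing that $\mathfrak{D}$ is a core for $\Z$.
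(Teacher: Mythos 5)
Your proof is correct in substance and takes a genuinely different, though conceptually dual, route to the paper's. The paper tests the dissipation functional on $g_t=\int_0^t\T_0(s)f\,\d s$ with $f\in\X_+$ and $t>0$, relying on Lemma~\ref{lem:AppA3} and the identity $\Z g_t=\T_0(t)f-f$ to land on the explicit formula \eqref{identB+Z}; you test it on $(\l-\Z)^{-1}g$ with $g\in\X_+$ and $\l>0$, using the resolvent representation \eqref{eq:l-Z}, Proposition~\ref{closable}, and Proposition~\ref{prop:intDB} (applied after the exponential-weight/truncation manoeuvre you describe) to reach the Laplace-transformed analogue $\la\mathbf{\Psi},(\Z+\widehat{\fB})(\l-\Z)^{-1}g\ra_\X=-\int_0^\infty\int_0^\infty\l e^{-\l\theta}\sigma_{r,g(r)}(r+\theta)\,\d\theta\,\d r$. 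The two formulas carry the same information; yours is the resolvent/Laplace version of the paper's time-integrated version. Your preliminary observation is also well taken: under Assumptions~\ref{hypo1}, statement~(2) is literally Assumption~\ref{hypo}\,(iv) and statement~(1) is \eqref{z+B}, so both hold unconditionally and the genuine content of the proposition is the conservative-case assertion, where your identity does the work. On the conservative direction ``(1)~identity $\Rightarrow$ (2)~identity'' you are in fact more scrupulous than the paper: the paper's duality-in-$\varphi$ argument delivers the pointwise identity only for almost every base time $s$, and it does not comment on the upgrade to all $s$; your three-step continuity/absolute-continuity argument (in $\theta$, then the cocycle splitting $\sigma_{s,u}(t)=\sigma_{s,u}(s')+\sigma_{s',\U(s',s)u}(t)$, then continuity in $s'$) supplies exactly what is needed to close that gap. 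Your remark that a test-function computation over $\mathfrak{D}$ would be lighter is accurate and is essentially what the paper does implicitly via its $f(s)=\varphi(s)u$ substitution; the core property of $\mathfrak{D}$ you flag as a prerequisite is indeed available (it follows from Lemma~\ref{lemmeAvarphis} plus the invariance $\T_0(t)\mathfrak{D}\subset\mathfrak{D}$).
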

\begin{proof}  Recall that, according to Lemma \ref{lem:AppA3} in the Appendix, for any $f \in \X_+$ and  any $t >0$, it holds
\begin{equation}\label{identBgt}\la \mathbf{\Psi}\,,\widehat{\fB}\int_0^t \T_0(s)f\d s \ra_\X=\int_0^\infty \left(\int_{\t}^{\t+t} \|\mathbf{B}(r)\U(r,\t)f(\t)\|_\e \d r\right)\d \t.\end{equation}
On the other hand, $\Z \int_0^t \T_0(s)f\d s=\T_0(t)f-f$ from which we deduce easily  that
\begin{equation}\label{identZgt}
\la \mathbf{\Psi}\,,\Z \int_0^t \T_0(s)f\d s \ra_\X=\int_0^\infty \|\U(s+t,s)f(s)\|_\e \d s -\int_0^\infty \|f(s)\|_\e \d s.
\end{equation}
Combining \eqref{identBgt} with \eqref{identZgt} we obtain
\begin{multline}\label{identB+Z}
\la \mathbf{\Psi}\,,\,\left(\Z+\widehat{\fB}\right)\int_0^t \T_0(s)f\d s \ra = \int_0^\infty\left(\int_{\t}^{\t+t} \|\mathbf{B}(r)\U(r,\t)f(\t)\|_\e \d r\right)\d \t \\
+ \int_0^\infty \|\U(s+t,s)f(s)\|_\e \d s -\int_0^\infty \|f(s)\|_\e \d s.\end{multline}

Let us now come to the equivalence between $(1)$ and $(2)$. Assume $(1)$ to be satisfied. Then, for any $t >0$ and any $f \in \X_+$ one has from \eqref{identB+Z},
\begin{equation*}
\int_0^\infty\left(\int_{\t}^{\t+t} \|\mathbf{B}(r)\U(r,\t)f(\t)\|_\e \d r\right)\d \t
\leq \int_0^\infty \|f(\t)\|_\e \d \t- \int_0^\infty \|\U(\t+t,\t)f(\t)\|_\e \d \t \end{equation*}
In particular, if $f \in \X_+$ is such that $f(s)=\varphi(s)u$ for any $s \geq 0$ where $u \in \e_+$ and $\varphi$ is a nonnegative continuous function with compact support in $\mathbb{R}^+$, one gets
\begin{equation*}
\int_0^\infty\varphi(\t)\left(\int_{\t}^{\t+t} \|\mathbf{B}(r)\U(r,\t)u\|_\e \d r\right)\d \t
\leq \int_0^\infty \varphi(\t)\left(\|u\|_\e- \|\U(\t+t,\t)u\|_\e \right)\d \t .\end{equation*}
Since the nonnegative function $\varphi \in \mathscr{C}_c(\mathbb{R}_+)$ is arbitrary, one obtains directly that $(2)$ is satisfied. Moreover, an identity in point (1) will clearly result in an identity in point (2).

Assume now that $(2)$ is satisfied and let $f \in \D(\Z)_+$ be fixed. Since $f(\t) \in \e_+$ for a. e. $\t \geq 0$ we deduce from \eqref{identB+Z} that
$$\la \mathbf{\Psi}\,,\,\left(\Z+\widehat{\fB}\right)\int_0^t \T_0(s)f\d s \ra_\X \leq 0 \qquad \forall t \geq 0.$$
Moreover, because $f \in \D(\Z)$ one has
$$\lim_{t \to 0^+}\frac{1}{t}\int_0^t \T_0(s)f\d s= f \quad \text{ and } \quad \lim_{t \to 0^+}\frac{1}{t}\Z\int_0^t \T_0(s)f\d s=\lim_{t \to 0^+} \frac{1}{t}\int_0^t \T_0(s)\Z f\d s=\Z f.$$
Since $\widehat{\fB}$ is $\Z$-bounded, we deduce that $\la \mathbf{\Psi}\,,\,\left(\Z+\widehat{\fB}\right)f \ra_\x \leq 0$ which is point $(1)$.  Again, one easily gets that an identity in point (2) (which is the definition of the formally conservative case) will yield an identity in  (1). \end{proof}

\subsection{About Dyson-Phillips iterated}

In this paragraph, we derive an alternative expression for the Dyson-Phillips iterated introduced in \eqref{V-n}. Namely, let us introduce the family of iterated Dyson-Phillips as follows: for any $u \in \e$ and any $t \geq s \geq 0$, set
$$\overline{\V}_0 (t,s)u=\U(t,s)u$$
and, for any $n \geq 1$, set
\begin{equation}\label{overlV}\overline{\V}_n(t,s)u=\int_s^t \U(t,r){\mathbf{B}}(r)\overline{\V}_{n-1}(r,s)u\d r.\end{equation}
One has  the following
\begin{propo}\label{prop:Propimp}  Assume that Assumptions \ref{hypo} and \ref{hypB} are in force. Let $u \in \e$ and $s \geq 0$ be fixed. Then, for any $n \in \mathbb{N}$, $\overline{\V}_n(t,s)u$ is well-defined with
$$\overline{\V}_n(t,s)u \in \D({\mathbf{B}}(t)) \quad \text{ for almost every } t \geq s$$
with
\begin{equation}\label{BtVn}
{\mathbf{B}}(t) \overline{\V}_n(t,s)u=\int_s^t \mathbf{B}(t)\U(t,r){\mathbf{B}}(r)\overline{\V}_{n-1}(r,s)u\d r.
\end{equation}
Moreover, the mapping $t \in [s,\infty) \mapsto {\mathbf{B}}(t)\overline{\V}_n(t,s)u \in \e$ is integrable.  Finally,
\begin{multline}\label{estimateoverlineV}
 \int_s^t \|{\mathbf{B}}(\t)\overline{\V}_{n+1}(\t,s)u\|_\e \d \t \leq
\int_s^t \|{\mathbf{B}}(\t)\overline{\V}_{n}(\t,s)u\|_\e \d \t - \int_s^t\|\U(t,\t){\mathbf{B}}(\t)\overline{\V}_{n}(\t,s)u\|_\e \d \t\\
\qquad \text{ for any } u \in \e_+, t \geq s,
\end{multline}
with an equality sign in the \emph{formally conservative} case.
\end{propo}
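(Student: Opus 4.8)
The plan is to prove all four assertions at once by induction on $n$, carrying along, for fixed $u\in\e$ and $s\geq 0$, the statements: (a) $\overline{\V}_n(t,s)u$ is well-defined for $t\geq s$ and lies in $\e_+$ whenever $u\in\e_+$; (b) $\overline{\V}_n(t,s)u\in\D(\mathbf{B}(t))$ for a.e.\ $t\geq s$; (c) identity \eqref{BtVn} holds; (d) $\tau\mapsto\mathbf{B}(\tau)\overline{\V}_n(\tau,s)u\in\e$ is measurable and integrable over $[s,\infty)$. For $n=0$, where $\overline{\V}_0(t,s)u=\U(t,s)u$, these are exactly the content of Assumptions \ref{hypo} \textit{(i)} and \textit{(iv)}: (b) is \eqref{crucial}, (c) is trivial, and for $u\in\e_+$ estimate \eqref{estimalpha} gives $\int_s^\infty\|\mathbf{B}(\tau)\U(\tau,s)u\|_\e\,\d\tau\leq\|u\|_\e$ since $\|\U(t,s)u\|_\e\geq 0$; the general $u\in\e$ is then reduced to this one by linearity and the decomposition \eqref{decomp}.

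For the induction step, assume (a)--(d) at index $n$ and take first $u\in\e_+$. By (d), the map $g:r\mapsto\chi_{[s,\infty)}(r)\,\mathbf{B}(r)\overline{\V}_n(r,s)u$ belongs to $\X=L^1(\mathbb{R}_+,\e)$, and for $t\geq s$ one has, by \eqref{overlV}, $\overline{\V}_{n+1}(t,s)u=\int_0^t\U(t,r)g(r)\,\d r$; in particular $\overline{\V}_{n+1}(t,s)u$ is well-defined, and it lies in $\e_+$ by Assumptions \ref{hypo} \textit{(i)} and \textit{(iii)}, so (a) holds at $n+1$, with the general case $u\in\e$ reduced to this one via \eqref{decomp}. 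I then apply Proposition \ref{prop:intDB} to $g\in\X$: it gives $\int_0^t\U(t,r)g(r)\,\d r\in\D(\mathbf{B}(t))$ for a.e.\ $t$, together with $\mathbf{B}(t)\int_0^t\U(t,r)g(r)\,\d r=\int_0^t\mathbf{B}(t)\U(t,r)g(r)\,\d r$; since $g$ vanishes on $[0,s)$, this is precisely (b) and \eqref{BtVn} at index $n+1$. It is through this step --- and the appendix lemmas underlying Proposition \ref{prop:intDB} --- that Assumption \ref{hypB} enters.

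It remains to prove the estimate \eqref{estimateoverlineV}, which will simultaneously yield (d) at index $n+1$ and thus close the induction. Fix $u\in\e_+$. Since every vector below belongs to $\e_+$ and $\|\cdot\|_\e=\la\mathbf{\Phi},\cdot\ra_\e$ there, \eqref{BtVn} at index $n+1$ gives, for $t\geq s$,
\begin{equation*}
\int_s^t\|\mathbf{B}(\tau)\overline{\V}_{n+1}(\tau,s)u\|_\e\,\d\tau=\int_s^t\!\!\int_s^\tau\|\mathbf{B}(\tau)\U(\tau,r)\mathbf{B}(r)\overline{\V}_n(r,s)u\|_\e\,\d r\,\d\tau.
\end{equation*}
I then interchange the two integrations by Tonelli's theorem, the joint measurability of the integrand coming from Assumption \ref{hypo1} applied to $g$. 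For a.e.\ fixed $r$, $v:=\mathbf{B}(r)\overline{\V}_n(r,s)u\in\e_+$, so \eqref{estimalpha} with initial time $r$ bounds $\int_r^t\|\mathbf{B}(\tau)\U(\tau,r)v\|_\e\,\d\tau$ by $\|v\|_\e-\|\U(t,r)v\|_\e$; integrating this inequality over $r\in[s,t]$ produces exactly \eqref{estimateoverlineV}, with equality whenever \eqref{estimalpha} is an equality, i.e.\ in the formally conservative case. Finally, the right-hand side of \eqref{estimateoverlineV} is at most $\int_s^\infty\|\mathbf{B}(r)\overline{\V}_n(r,s)u\|_\e\,\d r<\infty$, so letting $t\to\infty$ gives (d) at index $n+1$ for $u\in\e_+$, hence for all $u\in\e$ by \eqref{decomp}.

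I do not expect any single computation to be the obstacle; the delicate point is the consistent matching, across the lifted space $\X$, of the various almost-everywhere-defined $\e$-valued maps involved --- the iterates $\overline{\V}_n(\cdot,s)u$, the composed maps $\mathbf{B}(\cdot)\overline{\V}_n(\cdot,s)u$, and $\mathbf{B}(t)\U(t,\cdot)$ applied to them --- so that Fubini/Tonelli and Proposition \ref{prop:intDB} can be invoked without ambiguity. That is precisely what Assumption \ref{hypo1} and the measurability bookkeeping of the Appendix are designed to handle, and once Proposition \ref{prop:intDB} is in place the argument above should go through cleanly.
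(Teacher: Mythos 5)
Your proof is correct and follows essentially the same route as the paper: both arguments feed the function $r\mapsto\chi_{[s,\infty)}(r)\,\mathbf{B}(r)\overline{\V}_n(r,s)u$ (which Assumption \ref{hypo} shows lies in $\X$) into Proposition \ref{prop:intDB} to obtain $\overline{\V}_{n+1}(t,s)u\in\D(\mathbf{B}(t))$ together with \eqref{BtVn}, and then derive \eqref{estimateoverlineV} by applying Fubini--Tonelli and estimate \eqref{estimalpha} with initial time $r$. Your version merely makes the inductive bookkeeping (statements (a)--(d)) more explicit; the underlying ideas, including the role of Assumption \ref{hypB} entering only through Proposition \ref{prop:intDB}, are identical.
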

\begin{proof} The proof of the proposition follows by induction. Let us explain why $\overline{\V}_n(t,s)u$ is well-defined for any $n \in \mathbb{N}$. For $n=1$, since $\overline{\V}_0(r,s)u=\U(r,s)u\in \D(\mathbf{B}(r))$ for a.e. $r\geq 0$, there is no problem to define
$$\overline{\V}_1(t,s)u=\int_s^t \U(t,r){\mathbf{B}}(r)\U(r,s)u\d r.$$
Now, with $s \geq 0$ fixed, define $f_0(r)=\chi_+(s-r){\mathbf{B}}(r)\U(r,s)u$, $r \geq 0.$ One has $f_0 \in \X$ according to \eqref{estimalpha} and
$$\overline{\V}_1(t,s)u=\int_0^t \U(t,r)f_0(r)\d r.$$
According to Prop. \ref{prop:intDB}, $\overline{\V}_1(t,s)u \in \D(\mathbf{B}(t))$ for a.e. $t \in [s,\infty)$ and the mapping $t \in [s,\infty) \mapsto {\mathbf{B}}(t)\overline{\V}_1(t,s)u \in \e$ is integrable. Then, $\overline{\V}_2(t,s)u$ is well-defined and, repeating the argument, one gets the first part of the Proposition. Let us now prove \eqref{estimateoverlineV}. Let $u \in \e_+$ and $n \in \mathbb{N}$.  Define $f_n(r)= {\mathbf{B}}(r)\overline{\V}_n(r,s)u \in \e_+$ for a.e. $r \in (s,t)$. Clearly, $f_n \in \X_+$. Now, according to Proposition \ref{prop:intDB}, one has
\begin{equation*}\begin{split}
{\mathbf{B}}(\t)\overline{\V}_{n+1}(\t,s)u&={\mathbf{B}}(\t)\int_s^{\t} \U(\t,r){\mathbf{B}}(r)\overline{\V}_n(r,s)u\d r\\
&=\int_s^{\t} \mathbf{B}(\t)\U(\t,r){\mathbf{B}}(r)\overline{\V}_n(r,s)u \d r \quad \text{ a.e. } \t \in (s,t)\end{split}\end{equation*}
i.e.
$$f_{n+1}(\tau)=\int_s^\tau \mathbf{B}(\tau)\U(\tau,r)f_n(r)\d r \qquad \text{ for a.e.} \tau \in (s,t).$$
In particular $f_{n+1} \in \X_+$ and it follows easily by induction that the mapping
$$(\tau,r) \in \Delta \longmapsto \mathbf{B}(\tau)\U(\tau,r)f_n(r)$$
belongs to $L^1(\Delta,\e_+)$. Therefore,
\begin{equation*}\begin{split}\int_s^t \left\|f_{n+1}(\t)\right\|_\e \d\t &=\int_s^t \d\t\int_s^{\t} \left\|\mathbf{B}(\t)\U(\t,r)f_n(r)\right\|_\e \d r\\
&=\int_s^t \d r \int_{r}^t \left\|\mathbf{B}(\t)\U(\t,r)f_n(r)\right\|_\e \d \t\end{split}\end{equation*}
thanks to Fubini's theorem. From \eqref{estimalpha},
$$\int_{r}^t \left\|\mathbf{B}(\t)\U(\t,r)f_n(r)\right\|_\e \d \t \leq \|f_n(r)\|_\e -\|\U(t,r)f_n(r)\|_\e$$
so that
\begin{equation*}\int_s^t \left\|f_{n+1}(\t)\right\|_\e \d\t \leq \int_s^t \|f_n(r)\|_\e \d r - \int_s^t \|\U(t,r)f_n(r)\|_\e \d r\end{equation*}
which is exactly \eqref{estimateoverlineV} by definition of $f_n(r)$. Notice that, in the \emph{formally conservative} case, the last two inequalities are identities.
\end{proof}
An important consequence of the above Proposition is the following estimate
\begin{cor}\label{cor:cor19} For any $n \in \mathbb{N}$ and any $u \in \e_+$ one has
$$\sum_{k=0}^n \left\|\overline{\V}_k(t,s)u\right\|_\e \leq \|u\|_\e - \int_s^t \|{\mathbf{B}}(\t)\overline{\V}_{n}(\t,s)u\|_\e \d\t$$
with the equality sign in the \emph{formally conservative} case.
\end{cor}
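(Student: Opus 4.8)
The plan is to prove the inequality by induction on $n$, using \eqref{estimateoverlineV} from Proposition \ref{prop:Propimp} as the engine. For $n=0$ the claim reduces to $\|\U(t,s)u\|_\e \leq \|u\|_\e - \int_s^t \|\mathbf{B}(\t)\U(\t,s)u\|_\e\,\d\t$, which is precisely Assumption \ref{hypo} \textit{(iv)}, i.e.\ \eqref{estimalpha} (with equality in the formally conservative case). So the base case is immediate.

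For the inductive step, suppose the estimate holds for some $n \geq 0$. I would start from the induction hypothesis
$$\sum_{k=0}^{n}\left\|\overline{\V}_k(t,s)u\right\|_\e \leq \|u\|_\e - \int_s^t \|\mathbf{B}(\t)\overline{\V}_{n}(\t,s)u\|_\e\,\d\t,$$
add $\left\|\overline{\V}_{n+1}(t,s)u\right\|_\e$ to both sides, and then bound $\left\|\overline{\V}_{n+1}(t,s)u\right\|_\e$ from above. The natural bound comes from \eqref{overlV}: since $\overline{\V}_{n+1}(t,s)u=\int_s^t \U(t,r)\mathbf{B}(r)\overline{\V}_n(r,s)u\,\d r$ with the integrand in $\e_+$ (by Assumptions \ref{hypo} \textit{(i)} and \textit{(iii)}, and Proposition \ref{prop:Propimp}), the additivity and monotonicity of the norm on $\e_+$ give
$$\left\|\overline{\V}_{n+1}(t,s)u\right\|_\e=\la\mathbf{\Phi},\overline{\V}_{n+1}(t,s)u\ra_\e=\int_s^t \left\|\U(t,r)\mathbf{B}(r)\overline{\V}_n(r,s)u\right\|_\e\,\d r=\int_s^t \left\|\U(t,r)f_n(r)\right\|_\e\,\d r,$$
in the notation $f_n(r)=\mathbf{B}(r)\overline{\V}_n(r,s)u$. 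Thus
$$\sum_{k=0}^{n+1}\left\|\overline{\V}_k(t,s)u\right\|_\e \leq \|u\|_\e - \int_s^t \|f_n(\t)\|_\e\,\d\t + \int_s^t \|\U(t,r)f_n(r)\|_\e\,\d r.$$
Now I apply \eqref{estimateoverlineV}, which says exactly that $-\int_s^t\|f_n(\t)\|_\e\,\d\t + \int_s^t\|\U(t,\t)f_n(\t)\|_\e\,\d\t \leq -\int_s^t\|\mathbf{B}(\t)\overline{\V}_{n+1}(\t,s)u\|_\e\,\d\t$, giving the desired estimate at level $n+1$. Since every inequality invoked (the base case \eqref{estimalpha} and \eqref{estimateoverlineV}) is an equality in the formally conservative case, the whole chain is an equality there, which proves the last assertion.

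I do not expect any serious obstacle here: the corollary is essentially a telescoping of \eqref{estimateoverlineV}, and the only mild points to be careful about are the well-definedness of all the integrals (already guaranteed by Proposition \ref{prop:Propimp}, which ensures $\overline{\V}_n(t,s)u \in \D(\mathbf{B}(t))$ for a.e.\ $t$ and integrability of $t \mapsto \mathbf{B}(t)\overline{\V}_n(t,s)u$) and the use of the functional $\mathbf{\Phi}$ to pull the norm inside the integral defining $\overline{\V}_{n+1}$. The latter is legitimate because the integrand is nonnegative and Bochner integrable, so $\la\mathbf{\Phi},\cdot\ra_\e$ commutes with the integral. That is all that is needed.
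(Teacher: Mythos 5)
Your proof is correct and follows essentially the same route as the paper: both argue by induction, use \eqref{estimalpha} for the base case $n=0$, pull the norm inside the Bochner integral defining $\overline{\V}_{n+1}(t,s)u$ via additivity of the norm on $\e_+$, and invoke \eqref{estimateoverlineV} (merely rearranged in your writeup) for the inductive step; the equality statement in the formally conservative case follows in both by tracking that each inequality used is an identity.
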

\begin{proof} The proof follows easily from the above Proposition. Indeed, one has
\begin{equation*}\begin{split}
\|\overline{\V}_{n+1}(t,s)u\|_\e &=\left\|\int_s^t \U(t,r){\mathbf{B}}(r)\overline{\V}_n(r,s)u \d r\right\|_\e\\
&=\int_s^t \left\|\U(t,r){\mathbf{B}}(r)\overline{\V}_n(r,s)u\right\|_\e\d r\\
&\leq \int_s^t \|{\mathbf{B}}(\t)\overline{\V}_{n}(\t,s)u\|_\e \d \t - \int_s^t\|{\mathbf{B}}(\t)\overline{\V}_{n+1}(\t,s)u\|_\e \d\t
\end{split}\end{equation*}
where the last inequality is exactly \eqref{estimateoverlineV}. Notice that the last inequality becomes an identity in the \emph{formally conservative} case. Then, using \eqref{estimalpha}, one obtains easily the result by induction.
\end{proof}

With the above results in hands, one proves the following along the same lines of Proposition \ref{luisanotes2} (details are left to the reader):
\begin{propo}\label{luisanotes3}
For any $n \geq 0$  one has
\begin{enumerate}[(1)\:]
\item for any  $0 \leq s \leq r \leq t$:
$$\overline{\V}_n(t,s)u=\sum_{k=0}^n\overline{\V}_k(t,r)\overline{\V}_{n-k}(r,s)u,\qquad \forall u \in \e.$$
\item The mapping $(t,s) \in \Delta \mapsto
\overline{\V}_n(t,s)$ is strongly continuous.
\end{enumerate}
\end{propo}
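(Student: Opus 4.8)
The plan is to mimic the inductive proof of Proposition~\ref{luisanotes2}, replacing the iterated family $(\V_n)$ by $(\overline{\V}_n)$ and the representation \eqref{V-n} by \eqref{overlV}, and using at each step the well-definedness and integrability statements already secured in Proposition~\ref{prop:Propimp}. Both assertions are proved simultaneously by induction on $n$; for $n=0$ they reduce to properties of the unperturbed substochastic evolution family $\U(t,s)$ which are contained in Definition~\ref{evfam} and Assumptions~\ref{hypo}. So I fix $n$, assume (1) and (2) hold for all indices up to $n$, and prove them for $n+1$.

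For the propagation identity (1), I would compute $\overline{\V}_{n+1}(t,s)u$ from its defining integral \eqref{overlV}, split the integral $\int_s^t$ at the intermediate point $r$ as $\int_s^r + \int_r^t$, and treat the two pieces separately. In the piece $\int_r^t \U(t,\sigma)\mathbf{B}(\sigma)\overline{\V}_n(\sigma,s)u\,\d\sigma$ I substitute the induction hypothesis $\overline{\V}_n(\sigma,s)u = \sum_{j=0}^n \overline{\V}_j(\sigma,r)\overline{\V}_{n-j}(r,s)u$ and recognise, by \eqref{overlV} again, that $\int_r^t \U(t,\sigma)\mathbf{B}(\sigma)\overline{\V}_j(\sigma,r)(\overline{\V}_{n-j}(r,s)u)\,\d\sigma = \overline{\V}_{j+1}(t,r)\overline{\V}_{n-j}(r,s)u$ for $j\ge 0$; this reconstitutes the tail $\sum_{k=1}^{n+1}\overline{\V}_k(t,r)\overline{\V}_{n+1-k}(r,s)u$. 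In the piece $\int_s^r$, the integrand $\U(t,\sigma)\mathbf{B}(\sigma)\overline{\V}_n(\sigma,s)u$ equals $\U(t,r)\U(r,\sigma)\mathbf{B}(\sigma)\overline{\V}_n(\sigma,s)u$ by the evolution property of $\U$, so pulling the bounded operator $\U(t,r)$ out of the integral (justified by the integrability from Proposition~\ref{prop:Propimp}) leaves exactly $\U(t,r)\overline{\V}_{n+1}(r,s)u = \overline{\V}_0(t,r)\overline{\V}_{n+1}(r,s)u$. Summing the two contributions gives $\sum_{k=0}^{n+1}\overline{\V}_k(t,r)\overline{\V}_{n+1-k}(r,s)u$, which is (1) for $n+1$. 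This is the same algebraic bookkeeping as in Proposition~\ref{luisanotes2}(3), only written around the ``other'' Dyson--Phillips recursion.

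For strong continuity (2), I would use the propagation identity just established together with the near-diagonal estimate from Corollary~\ref{cor:cor19}, which gives $\|\overline{\V}_k(t+h,t)u\| \le \|u\| - \|\U(t+h,t)u\|$ for $k\ge 1$, hence $\lim_{h\to 0^+}\|\overline{\V}_k(t+h,t)u\|=0$; combined with the strong continuity of $\U(\cdot,\cdot)$ this yields continuity in the first variable exactly as in Proposition~\ref{luisanotes2}(4)--(5), and the analogous decomposition $\overline{\V}_n(t,s+h)-\overline{\V}_n(t,s)$ using $\overline{\V}_n(t,s)=\sum_{k}\overline{\V}_k(t,s+h)\overline{\V}_{n-k}(s+h,s)$ gives continuity in the second variable. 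Joint continuity on $\Delta$ then follows from separate continuity in each variable together with the uniform bound $\sum_k\|\overline{\V}_k(t,s)u\|\le \gamma'\|u\|$ (and a $3\varepsilon$ argument through an intermediate point), just as for $(\V_n)$.

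The one genuinely delicate point — and the reason the proof is not entirely routine — is the interchange of the bounded operators $\U(t,r)$ (and, in the continuity part, $\overline{\V}_k$) with the integrals $\int \mathbf{B}(\sigma)\overline{\V}_n(\sigma,s)u\,\d\sigma$, since the integrand involves the unbounded operators $\mathbf{B}(\sigma)$. This is exactly what Proposition~\ref{prop:Propimp} is designed to license: it supplies the measurability and $\e$-integrability of $\sigma\mapsto\mathbf{B}(\sigma)\overline{\V}_n(\sigma,s)u$, so that Bochner's theorem permits pulling bounded operators through, and it simultaneously identifies $\mathbf{B}(\sigma)$ applied to the integral with the integral of $\mathbf{B}(\sigma)\U(\sigma,r)(\cdot)$, which is what closes the induction. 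Consequently the remaining work is purely the combinatorial rearrangement sketched above, and since it is word-for-word parallel to Proposition~\ref{luisanotes2} the details may indeed be left to the reader.
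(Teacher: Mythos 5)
Your proof is correct and follows the approach the paper itself prescribes: the paper states that Proposition \ref{luisanotes3} is proved ``along the same lines of Proposition \ref{luisanotes2} (details are left to the reader),'' and your inductive argument — splitting the defining integral \eqref{overlV} at the intermediate point $r$, invoking Proposition \ref{prop:Propimp} for the measurability/integrability needed to commute $\U(t,r)$ with the Bochner integral, and then running the near-diagonal and $3\varepsilon$ continuity estimates — is exactly that adaptation, written around the reversed recursion $\overline{\V}_{n+1}=\int\U\,\mathbf{B}\,\overline{\V}_n$ in place of $\V_{n+1}=\int\V_n\,\mathbf{B}\,\U$.
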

 Our goal is now to prove that $\overline{\V}_n(t,s)=\V_n(t,s)$ for any $n \in \mathbb{N}$ and any $(t,s) \in \Delta.$ To do so, for technical reason, we shall need here some stronger measurability than Assumption \ref{hypo1}. Namely, we shall assume here the following
\begin{hyp}\label{hypo2} Let $\Delta=\{(t,s) \;;\;0 \leq s \leq t < \infty\}$ and $\Delta_2=\{(t,s,r)\;;\;0 \leq r \leq s \leq t < \infty\}.$ In addition to Assumptions \ref{hypo}, we assume that, for any $f \in L^1(\Delta,\e)$, the mapping
$$(t,s,r) \in \Delta_2 \longmapsto \mathbf{B}(t)\U(t,s)f(s,r) \in \e$$
is measurable.
\end{hyp}
\begin{nb} It can be seen that Assumption \ref{hypo2} includes our previous Assumption \ref{hypo1} as a particular case. Moreover, if the space $\e$ is a space of type $\mathrm{L}$ (typically, if $\e$ is a $L^1$-space, see \cite[Theorem 2.39]{arloban}) then the results of the following section would hold under the sole Assumption  \ref{hypo1} (see Remark \ref{nbspaceL} after Proposition \ref{Luisa}).
\end{nb}
We begin with a technical lemma where the above Assumption \ref{hypo2} is fully exploited:
\begin{lemme}\label{lem:mesurabilite} For any $f \in \X$, $n \in \mathbb{N}$ the mapping
$$(t,s) \in \Delta \longmapsto \mathbf{B}(t)\overline{\V}_n(t,s)f(s) \in \e$$
is measurable. In particular, for any $t \geq 0$, the mapping
$$(\t,s) \in (0,t)\times (t,\infty) \longmapsto \U(s,s-\t)\mathbf{B}(s-\t)\overline{\V}_n(s-\t,s-t)f(s-t) \in \e$$
is measurable.
\end{lemme}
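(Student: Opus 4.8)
The plan is to argue by induction on $n$, using the stronger measurability of Assumption \ref{hypo2} to propagate measurability through one additional application of the collision term $\mathbf{B}(\cdot)$.

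For $n=0$ there is nothing to prove: $\overline{\V}_0(t,s)=\U(t,s)$, so $(t,s)\in\Delta\mapsto\mathbf{B}(t)\overline{\V}_0(t,s)f(s)=\mathbf{B}(t)\U(t,s)f(s)$ is well-defined a.e. and measurable directly by Assumption \ref{hypo1}, which is subsumed in Assumption \ref{hypo2}. Assume now the statement holds for $n-1$, i.e. the map $h:(s,r)\in\Delta\mapsto\mathbf{B}(s)\overline{\V}_{n-1}(s,r)f(r)\in\e$ is measurable. Combining Corollary \ref{cor:cor19} with the non-flatness \eqref{decomp} of the cone yields $\int_r^\infty\|\mathbf{B}(s)\overline{\V}_{n-1}(s,r)v\|_\e\,\d s\leq 2\gamma\|v\|_\e$ for every $v\in\e$; applying this with $v=f(r)$ and integrating in $r$ (Tonelli) gives $h\in L^1(\Delta,\e)$. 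Assumption \ref{hypo2} then applies to $h$ and tells us that
$$(t,s,r)\in\Delta_2\longmapsto \mathbf{B}(t)\U(t,s)h(s,r)=\mathbf{B}(t)\U(t,s)\mathbf{B}(s)\overline{\V}_{n-1}(s,r)f(r)\in\e$$
is measurable; integrating first in $t$ via \eqref{estimalpha} (and \eqref{decomp}) and then in $(s,r)$ shows that this map in fact belongs to $L^1(\Delta_2,\e)$.

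Next, by the representation formula \eqref{BtVn} of Proposition \ref{prop:Propimp} applied with $u=f(r)$, one has for a.e. $(t,r)\in\Delta$
$$\mathbf{B}(t)\overline{\V}_n(t,r)f(r)=\int_r^t \mathbf{B}(t)\U(t,s)\mathbf{B}(s)\overline{\V}_{n-1}(s,r)f(r)\,\d s.$$
Since the integrand lies in $L^1(\Delta_2,\e)$, the Fubini theorem for Bochner integrals (cf. \cite[Lemma 3.2]{miya}) guarantees that the right-hand side, as a function of $(t,r)$, is a well-defined element of $L^1(\Delta,\e)$, hence measurable; this proves the statement for $n$ and closes the induction. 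For the ``in particular'' assertion, fix $t\geq 0$ and let $\Theta:(\t,s)\mapsto(s-\t,s-t)$, an invertible affine map of $\R^2$ sending $(0,t)\times(t,\infty)$ into $\{(a,b):0<b<a\}\subset\Delta$; composing the (now established) measurable map $(a,b)\mapsto\mathbf{B}(a)\overline{\V}_n(a,b)f(b)$ with $\Theta$ shows that $(\t,s)\mapsto\mathbf{B}(s-\t)\overline{\V}_n(s-\t,s-t)f(s-t)$ is measurable. Finally, from $\|\U(\sigma,\rho)\|\leq Ce^{\beta(\sigma-\rho)}$ together with the strong continuity of $(\sigma,\rho)\mapsto\U(\sigma,\rho)v$, the map $(\sigma,\rho,v)\mapsto\U(\sigma,\rho)v$ is jointly continuous on $\{\sigma\geq\rho\}\times\e$; composing it with the measurable map $(\t,s)\mapsto(s,\,s-\t,\,\mathbf{B}(s-\t)\overline{\V}_n(s-\t,s-t)f(s-t))$ gives the measurability of $(\t,s)\mapsto\U(s,s-\t)\mathbf{B}(s-\t)\overline{\V}_n(s-\t,s-t)f(s-t)$.

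The only genuinely delicate step is the second paragraph: one must feed the induction datum $h$ into Assumption \ref{hypo2} in the \emph{correct} variable slot and verify, through \eqref{estimalpha}, \eqref{decomp} and Corollary \ref{cor:cor19}, both that $h\in L^1(\Delta,\e)$ and that the resulting three-variable integrand lies in $L^1(\Delta_2,\e)$, so that a Bochner--Fubini argument produces a measurable function of $(t,r)$. Everything else is bookkeeping: composing measurable maps with (jointly) continuous ones and keeping track of which variable plays which role.
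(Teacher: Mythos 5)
Your proof is correct and follows essentially the same route as the paper's: the representation formula \eqref{BtVn} plus Assumption \ref{hypo2} drive the induction, and the ``in particular'' clause follows by composition with an affine change of variables and the jointly continuous map $(\sigma,\rho,v)\mapsto\U(\sigma,\rho)v$. The paper states this tersely (``one easily proves by induction\dots The conclusion follows''), whereas you supply the verification that the induction datum $h$ lies in $L^1(\Delta,\e)$ via Corollary \ref{cor:cor19}, \eqref{decomp} and \eqref{estimalpha} so that the Bochner--Fubini step producing a measurable function of $(t,r)$ is legitimate --- a worthwhile bit of bookkeeping that the paper leaves implicit.
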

\begin{proof} One first notices that, for any $n \geq 1$
$$\mathbf{B}(s_2)\overline{\V}_n(s_2,s_1)f(s_1)=\int_{s_1}^{s_2} \mathbf{B}(s_2)\U(s_2,r)\mathbf{B}(r)\overline{\V}_{n-1}(r,s_1)f(s_1)\d r \qquad \forall s_2 \geq s_1 \geq 0.$$
Then one easily proves by induction that Assumption \ref{hypo2} implies that the mapping
$$(s_1,s_2) \in \Delta \longmapsto \mathbf{B}(s_2)\overline{\V}_n(s_2,s_1)f(s_1) \in \e$$
is measurable for any $n \in\mathbb{N}$. The conclusion follows.
\end{proof}

With this in hands, let us introduce the following definition:
\begin{defi}\label{defi:Tnover}
For any $f \in \X$, $t \geq 0$ and $n \in \mathbb{N}$, define
$$\left[\overline{\T}_n(t)f\right](r)=\chi_+(r-t)\overline{\V}_n(r,r-t)f(r-t) \qquad r \geq 0.$$
\end{defi}
 It is easily seen that $\overline{\T}_n(t)$ is a bounded operator in $\X$ for any $t \geq 0$ and any $n \in \mathbb{N}$.
One proves the following
\begin{propo}\label{Luisa} Assume that Assumptions \ref{hypo2} and \ref{hypB} are in force.
For any $n \in \mathbb{N}$, one has $\T_n(t)=\overline{\T}_n(t)$ for any $t\geq 0$.
\end{propo}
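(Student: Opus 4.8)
The plan is to prove, by induction on $n$, that the lifted operators $\overline{\T}_n(t)$ satisfy in $\X$ exactly the same recursion \eqref{Tn+1bis} already obtained for $\T_n(t)$; uniqueness in that recursion, together with the boundedness of all the operators involved, will then force $\overline{\T}_n(t)=\T_n(t)$. For $n=0$ there is nothing to do: since $\overline{\V}_0(t,s)=\U(t,s)=\V_0(t,s)$, Definition \ref{defi:Tnover} gives $\overline{\T}_0(t)=\T_0(t)$ for every $t\geq0$.

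Assume now $\overline{\T}_{n-1}(t)=\T_{n-1}(t)$ for all $t\geq 0$, with $n\geq 1$, and fix $f\in\D(\Z)$. Then $\T_{n-1}(s)f\in\D(\Z)$ for every $s\geq 0$ (see Remark \ref{importantNB} (2); for $n=1$ this is the standard fact that $\Z$ generates $(\T_0(t))_{t\geq 0}$), hence by the induction hypothesis $\overline{\T}_{n-1}(s)f=\T_{n-1}(s)f\in\D(\Z)$, and Proposition \ref{closable} applies to yield
$$[\widehat{\fB}\,\overline{\T}_{n-1}(s)f](\rho)=\mathbf{B}(\rho)\,[\overline{\T}_{n-1}(s)f](\rho)=\chi_+(\rho-s)\,\mathbf{B}(\rho)\,\overline{\V}_{n-1}(\rho,\rho-s)f(\rho-s)$$
for almost every $\rho\geq 0$. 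The heart of the argument is then the identity
\begin{equation}\label{starBar}
\overline{\T}_n(t)f=\int_0^t \T_0(t-s)\,\widehat{\fB}\,\overline{\T}_{n-1}(s)f\,\d s ,\qquad t\geq 0 .
\end{equation}
To establish it one simply evaluates the right-hand side pointwise: using \eqref{lien} and the displayed formula for $\widehat{\fB}\,\overline{\T}_{n-1}(s)f$, the integrand at a point $r>t$ equals $\U(r,r-t+s)\,\mathbf{B}(r-t+s)\,\overline{\V}_{n-1}(r-t+s,r-t)f(r-t)$ (and vanishes for $r<t$); the change of variable $\eta=r-t+s$ together with the recursion \eqref{overlV} turns $\int_0^t(\cdot)\,\d s$ into $\chi_+(r-t)\,\overline{\V}_n(r,r-t)f(r-t)=[\overline{\T}_n(t)f](r)$, which is \eqref{starBar}. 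The only non-formal point in this computation is that the $\X$-valued Bochner integral in \eqref{starBar} may be computed pointwise in $r$, i.e.\ that $\int_0^t(\cdot)\,\d s$ commutes with evaluation at $r$; this is legitimate because the map $(s,r)\mapsto[\T_0(t-s)\widehat{\fB}\overline{\T}_{n-1}(s)f](r)$ is measurable on $(0,t)\times(t,\infty)$ — this is exactly the second assertion of Lemma \ref{lem:mesurabilite} (applied with $n-1$ in place of $n$), where the strong measurability Assumption \ref{hypo2} is used — and is integrable there, by Proposition \ref{prop:Propimp} and Corollary \ref{cor:cor19}. I expect this measurability/Fubini bookkeeping to be the main, and essentially the only, obstacle; everything else is a bare change of variables.

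Finally, by the induction hypothesis $\widehat{\fB}\,\overline{\T}_{n-1}(s)f=\widehat{\fB}\,\T_{n-1}(s)f$, so \eqref{starBar} becomes $\overline{\T}_n(t)f=\int_0^t \T_0(t-s)\widehat{\fB}\T_{n-1}(s)f\,\d s$, and the latter equals $\T_n(t)f$ by \eqref{Tn+1bis}. Hence $\overline{\T}_n(t)f=\T_n(t)f$ for every $f\in\D(\Z)$ and every $t\geq 0$; since $\D(\Z)$ is dense in $\X$ and $\overline{\T}_n(t),\T_n(t)\in\mathscr{B}(\X)$, we conclude $\overline{\T}_n(t)=\T_n(t)$ for all $t\geq 0$, which closes the induction. (Alternatively, one could compare Laplace transforms in $\X$, showing inductively that $\int_0^\infty e^{-\l t}\overline{\T}_n(t)f\,\d t=(\l-\Z)^{-1}[\widehat{\fB}(\l-\Z)^{-1}]^n f$ as in \eqref{laplaceTn}, but this requires the same interchange of $\mathbf{B}(\cdot)$ with an integral and is no shorter.)
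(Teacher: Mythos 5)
Your proof is correct and follows essentially the same route as the paper's: induction on $n$, a pointwise evaluation/change-of-variable argument to identify $\overline{\T}_n(t)f$ with $\int_0^t\T_0(t-s)\widehat{\fB}\T_{n-1}(s)f\,\d s$ (you run the computation from the integral to $\overline{\T}_n(t)f$, the paper runs it the other way and shifts the induction index by one), with Proposition \ref{closable} supplying the interpretation of $\widehat{\fB}$ on $\D(\Z)$, Lemma \ref{lem:mesurabilite} plus the Fubini-type interchange justifying the pointwise evaluation, Lemma \ref{lem:Tn+1bis} closing the loop, and density of $\D(\Z)$ finishing. No gap.
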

\begin{proof} Let us prove the first part of the result by induction. It is cleat that, for $n=0$, $\overline{\T}_0(t)={\T}_0(t)$ for any $t \geq 0$. Assume now that there is some $n \geq 0$ such that
$$\overline{\T}_n(t)f={\T}_n(t)f \qquad \forall f \in \X, \:t \geq 0.$$
Let $f \in \D(\Z)$ be fixed. According to the definition of $\overline{\T}_{n+1}(t)$ one has for any $s \geq t \geq 0$
\begin{multline}\label{tn+1-1}
\big[\overline{\T}_{n+1}(t)f\big](s)=\overline{\V}_{n+1}(s,s-t)f(s-t)\\
=\int_{s-t}^s \U(s,r)\mathbf{B}(r)\overline{\V}_{n}(r,s-t)f(s-t)\,\d r\\
=\int_0^t \U(s,s-\t)\mathbf{B}(s-\t)\overline{\V}_{n}(s-\t,s-t)f(s-t)\,\d\t.
\end{multline}
Now, for any $s \geq t \geq \t \geq 0$, one has
$$\overline{\V}_{n}(s-\t,s-t)f(s-t)=\big[\overline{\T}_n(t-\t)f\big](s-\t)=\big[\T_n(t-\t)f\big](s-\t)$$
according to Definition \ref{defi:Tnover} and the induction hypothesis. At this stage, one makes the following observation: since  $\T_n(t-\t)f \in \D(\Z)$ for any $t \geq \t \geq 0$ and since any $g \in \D(\Z)$ is such that $g(s) \in \D(\mathbf{B}(s))$ with $[\widehat{\fB}g](s)=\mathbf{B}(s)g(s)$ for any $s \geq 0$ (see Proposition \ref{closable}), one gets $\T_n(t-\t)f \in \D(\widehat{\fB})$ for any $s \geq \t \geq 0$ with, for any fixed $\t \in [0,t]$ one has
\begin{equation}\label{Bs1t1}
\mathbf{B}(s-\t)\overline{\V}_n(s-\t,s-t)f(s-t)=\left[\widehat{\fB}\T_n(t-\t)f\right](s-\t) \qquad \text{ for a. e. } s \geq t.\end{equation}
Consequently,
$$\U(s,s-\t)\mathbf{B}(s-\t)\overline{\V}_n(s-\t,s-t)f(s-t)=\left[\T_0(\t)\widehat{\fB}\T_n(t-\t)f\right](s) \qquad \text{ for a. e. } s \geq t.$$
 Then, using Lemma \ref{lem:mesurabilite} together with \cite[Lemma 3.2]{miya}, one has
\begin{multline}\label{integLL}
\int_0^t \U(s,s-\t)\mathbf{B}(s-\t)\overline{\V}_n(s-\t,s-t)f(s-t)\d \t=\left[\int_0^t \T_0(\t)\widehat{\fB}\T_n(t-\t)f\d \t\right](s) \\ \text{ for almost any } s \geq t.
\end{multline}
Combining \eqref{tn+1-1} and \eqref{integLL} we get therefore
$$\overline{\T}_{n+1}(t)f =\int_0^t  \T_0(\t)\widehat{\fB}\T_n(t-\t)f\d \t  .$$
Now, according to Lemma \ref{lem:Tn+1bis}, one obtains that
$$\overline{\T}_{n+1}(t)f=\T_{n+1}(t)f  \qquad \forall  t \geq 0.$$
This identity is true for any $f \in \D(\Z)$. Since $\D(\Z)$ is dense in $\X$ and both $\T_n(t)$ and $\overline{\T}_n(t)$ are bounded operators, we get the first part of the Proposition.  With Definition \ref{defi:Tnover}, the fact that $\T_n(t)=\overline{\T}_n(t)$ translates immediately into $\overline{\V}_n(t,s)=\V_n(t,s)$ for any $(t,s) \in \Delta$. \end{proof}
\begin{nb}\label{nbspaceL} Notice that, if $\e$ is an $\mathrm{L}$-type space (see \cite[p. 39]{arloban}) then \eqref{integLL} will hold under the sole Assumption \ref{hypo1}.
\end{nb}

As a general consequence of the above results, we have the following
\begin{propo}\label{decroit} Assume that Assumptions \ref{hypo2} and \ref{hypB} are in force. For any $u \in \e_+$ and any $n \in \mathbb{N}$,
$$ {\V}_n(t,s)u \in \D({\mathbf{B}}(t)) \quad \text{ for almost every } t \geq s$$
with
$$\int_s^t \|\mathbf{B}(\t)\V_{n+1}(\t,s)u\|_\e \d\t \leq \int_s^t \|\mathbf{B}(\t)\V_n(\t,s)u\|_\e \d\t \leq \|u\|_\e \qquad \forall n \in \mathbb{N}.$$ Moreover,
$$\sum_{k=0}^n \left\| {\V}_k(t,s)u\right\|_\e \leq \|u\|_\e - \int_s^t \|{\mathbf{B}}(\t) {\V}_{n}(\t,s)u\|_\e \d\t$$
with the equality sign in the \emph{formally conservative} case.
\end{propo}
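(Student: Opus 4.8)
The plan is to reduce everything to the properties already established for the "reversed" Dyson--Phillips iterated $\overline{\V}_n$. The crucial input is Proposition~\ref{Luisa}, which asserts precisely under Assumptions~\ref{hypo2} and~\ref{hypB} that $\V_n(t,s)=\overline{\V}_n(t,s)$ for every $n\in\mathbb{N}$ and every $(t,s)\in\Delta$. Once this identification is in place, the whole statement is a transcription of Proposition~\ref{prop:Propimp} and Corollary~\ref{cor:cor19}; consequently there is no genuine obstacle left in the present proposition, the analytic work having been carried out in establishing those three results.

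First I would invoke Proposition~\ref{Luisa} to replace $\V_n$ by $\overline{\V}_n$ throughout. The membership $\V_n(t,s)u\in\D(\mathbf{B}(t))$ for a.e.\ $t\geq s$ (valid in fact for every $u\in\e$, hence a fortiori for $u\in\e_+$) is then exactly the first assertion of Proposition~\ref{prop:Propimp}, which also guarantees that $t\in[s,\infty)\mapsto\mathbf{B}(t)\V_n(t,s)u\in\e$ is integrable, so that all the integrals appearing in the statement are meaningful; the underlying measurability is covered by Lemma~\ref{lem:mesurabilite} via Assumption~\ref{hypo2}. For the chain of inequalities I would start from estimate~\eqref{estimateoverlineV}, rewritten for $\V_n$: for $u\in\e_+$ and $t\geq s$,
$$\int_s^t\|\mathbf{B}(\t)\V_{n+1}(\t,s)u\|_\e\,\d\t\leq\int_s^t\|\mathbf{B}(\t)\V_{n}(\t,s)u\|_\e\,\d\t-\int_s^t\|\U(t,\t)\mathbf{B}(\t)\V_{n}(\t,s)u\|_\e\,\d\t.$$
Dropping the last (nonnegative) term gives the monotonicity $\int_s^t\|\mathbf{B}(\t)\V_{n+1}(\t,s)u\|_\e\,\d\t\leq\int_s^t\|\mathbf{B}(\t)\V_{n}(\t,s)u\|_\e\,\d\t$, and it remains only to control the case $n=0$: since $\V_0(\t,s)=\U(\t,s)$, inequality~\eqref{estimalpha} of Assumption~\ref{hypo}(iv) yields $\int_s^t\|\mathbf{B}(\t)\U(\t,s)u\|_\e\,\d\t\leq\|u\|_\e-\|\U(t,s)u\|_\e\leq\|u\|_\e$, and the monotonicity just obtained propagates this bound to every $n$.

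Finally, the last display is nothing but Corollary~\ref{cor:cor19} after the substitution $\overline{\V}_k\rightsquigarrow\V_k$, and the equality sign in the formally conservative case is inherited verbatim from the equality statement there (itself coming from the equality cases in~\eqref{estimateoverlineV} and in~\eqref{estimalpha}). Thus the proof amounts to a clean packaging: the only step requiring a word of care is the measurability/integrability needed to write the integrals $\int_s^t\|\mathbf{B}(\t)\V_n(\t,s)u\|_\e\,\d\t$, and this is already handled inside Proposition~\ref{prop:Propimp}. I leave the routine details to the reader, exactly as the paper does for the closely related Proposition~\ref{luisanotes3}.
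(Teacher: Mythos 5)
Your proposal is correct and follows essentially the same route as the paper's proof: both invoke Proposition~\ref{Luisa} to identify $\V_n$ with $\overline{\V}_n$, then read off the membership $\V_n(t,s)u\in\D(\mathbf{B}(t))$ and the key inequality from Proposition~\ref{prop:Propimp} and~\eqref{estimateoverlineV}, and conclude via Corollary~\ref{cor:cor19}. The only addition you make explicit that the paper leaves implicit is anchoring the monotone chain at $n=0$ via~\eqref{estimalpha}, which is fine.
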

\begin{proof} Let $n \in \mathbb{N}$ be fixed. Since $\overline{\T}_n(t)=\T_n(t)$ for any $t\geq 0$ , one also has $\overline{\V}_n(t,s)=\V_n(t,s)$ for any $(t,s) \in \Delta$. Then, for any $u \in \e_+ $, $s \geq 0$, $\V_n(t,s)u \in  \D({\mathbf{B}}(t))$ for almost every $t \geq s  $ according to Proposition \ref{prop:Propimp} and \eqref{estimateoverlineV} reads
$$\int_s^t \|{\mathbf{B}}(\t) {\V}_{n+1}(\t,s)u\|_\e \d \t \leq
\int_s^t \|{\mathbf{B}}(\t) {\V}_{n}(\t,s)u\|_\e \d \t - \int_s^t\|\U(t,\t){\mathbf{B}}(\t) {\V}_{n}(\t,s)u\|_\e \d \t$$
with an equality sign in the \emph{formally conservative} case. The first part of the Proposition follows then easily. The second part follows from a direct application of Corollary \ref{cor:cor19}.
\end{proof}

Moreover, one has the following:
\begin{propo}\label{integrale} Assume that Assumptions \ref{hypo2} and \ref{hypB} are in force. For any $n \in \mathbb{N}$ and any $\t >0$, one has
\begin{equation}\label{fint+Z}\int_0^\t \|\widehat{\fB} \T_n(t)f\|_\X \d t =\int_0^\infty \d r \int_0^\t  \|{\mathbf{B}}(r+t)\V_n(r+t,r)f(r)\|_\e \d t, \quad \forall f \in \D(\Z).\end{equation}
Moreover, for any $f \in \X_+$ and any $n \in \mathbb{N}$
\begin{equation}\label{fint+Zb}\left\|\widehat{\fB} \int_0^\t  \T_n(t)f \d t\right\|_\X =\int_0^\infty \d r \int_0^\t  \|{\mathbf{B}}(r+t)\V_n(r+t,r)f(r)\|_\e \d t \qquad \forall \t >0.\end{equation}
\end{propo}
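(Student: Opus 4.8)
The plan is to prove the two identities in turn: \eqref{fint+Z} reduces to an interchange of the order of integration, while \eqref{fint+Zb} needs a preliminary argument to make sense of $\widehat{\fB}$ applied to a time-integral of the $\T_n(t)f$. For \eqref{fint+Z}, fix $n\in\mathbb{N}$ and $f\in\D(\Z)$. By Remark \ref{importantNB} one has $\T_n(t)f\in\D(\Z)$ for every $t\geq0$, so Proposition \ref{closable} applies and yields, for a.e. $s\geq0$,
$$[\widehat{\fB}\T_n(t)f](s)=\mathbf{B}(s)[\T_n(t)f](s)=\chi_+(s-t)\,\mathbf{B}(s)\V_n(s,s-t)f(s-t).$$
Taking the $\X$-norm and substituting $r=s-t$ gives $\|\widehat{\fB}\T_n(t)f\|_\X=\int_0^\infty\|\mathbf{B}(r+t)\V_n(r+t,r)f(r)\|_\e\,\d r$; integrating over $t\in(0,\tau)$ and invoking Tonelli's theorem — legitimate because $(t,r)\mapsto\|\mathbf{B}(r+t)\V_n(r+t,r)f(r)\|_\e$ is nonnegative and measurable by Lemma \ref{lem:mesurabilite} (recall $\V_n=\overline{\V}_n$ by Proposition \ref{Luisa}) — one arrives at \eqref{fint+Z}.

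For \eqref{fint+Zb}, fix $f\in\X_+$ and $\tau>0$ and set $g:=\int_0^\tau\T_n(t)f\,\d t$. Since each $\T_n(t)$ is positive, $g\in\X_+$, and after the substitution $r=s-t$ one finds $g(s)=\int_{\max(s-\tau,0)}^{s}\V_n(s,r)f(r)\,\d r$, an $\e_+$-valued integral. The key step is to show $g\in\D(\widehat{\fB})=\D(\Z)$ with, for a.e. $s\geq0$,
$$[\widehat{\fB} g](s)=\mathbf{B}(s)g(s)=\int_{\max(s-\tau,0)}^{s}\mathbf{B}(s)\V_n(s,r)f(r)\,\d r.$$
Here the first equality is Proposition \ref{closable}, and the interchange of $\mathbf{B}(s)$ with the spatial integral rests on Proposition \ref{decroit} (which provides $\V_n(s,r)f(r)\in\D(\mathbf{B}(s))$ for a.e. $s$) together with Lemma \ref{lem:AppA2} of Appendix A, used just as in the proof of Proposition \ref{propo:equivRela} which settles the case $n=0$ (via Lemma \ref{lem:AppA3}). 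Granting this, $\mathbf{B}(s)\V_n(s,r)f(r)\in\e_+$ for a.e. $r$, so additivity of the norm on the positive cone gives $\|[\widehat{\fB} g](s)\|_\e=\int_{\max(s-\tau,0)}^{s}\|\mathbf{B}(s)\V_n(s,r)f(r)\|_\e\,\d r$; integrating over $s$, applying Tonelli's theorem once more, and carrying out the change of variables $t=s-r$ (whose image is $\{(r,t):r\geq0,\,0\leq t\leq\tau\}$) turns $\|\widehat{\fB} g\|_\X$ into $\int_0^\infty\d r\int_0^\tau\|\mathbf{B}(r+t)\V_n(r+t,r)f(r)\|_\e\,\d t$, which is \eqref{fint+Zb}.

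I expect the main obstacle to be precisely the middle step of \eqref{fint+Zb}: that $g=\int_0^\tau\T_n(t)f\,\d t$ lies in $\D(\widehat{\fB})$ and that $\widehat{\fB}$ commutes with the spatial integral. For $f\in\D(\Z)$ this is routine (one uses Remark \ref{importantNB} and a Fubini manipulation of \eqref{Tn+1bis}), but for a generic $f\in\X_+$ one cannot simply pull $\widehat{\fB}$ inside the time integral $\int_0^\tau\cdot\,\d t$, since $\T_n(t)f$ need not belong to $\D(\Z)$. One must therefore argue directly on the pointwise formula $g(s)=\int_{\max(s-\tau,0)}^{s}\V_n(s,r)f(r)\,\d r$, exploiting that each $\mathbf{B}(s)$ is `increasingly closed' (Assumption \ref{hypB}) in the spirit of the proof of Proposition \ref{closable}; this is exactly where the Appendix A lemmas do the work.
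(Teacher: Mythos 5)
Your treatment of \eqref{fint+Z} is sound and essentially coincides with the paper's: the pointwise formula $[\widehat{\fB}\T_n(t)f](s)=\mathbf{B}(s)\V_n(s,s-t)f(s-t)$ from Proposition~\ref{closable}, the change of variables $r=s-t$, and Tonelli (the joint measurability being secured by Lemma~\ref{lem:mesurabilite} together with Proposition~\ref{Luisa}). The paper phrases this slightly differently, using the continuity of $t\mapsto\widehat{\fB}\T_n(t)f$ from Remark~\ref{importantNB} to certify integrability, but this amounts to the same Fubini--Tonelli manipulation.

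For \eqref{fint+Zb} there is a genuine gap. You correctly flag that for a generic $f\in\X_+$ one cannot pull $\widehat{\fB}$ through the time integral, because $\T_n(t)f$ need not lie in $\D(\Z)$; but the remedy you sketch does not actually close it. You propose to argue pointwise on $g(s)=\int_{\max(s-\tau,0)}^{s}\V_n(s,r)f(r)\,\d r$ and commute $\mathbf{B}(s)$ with the inner integral, invoking Lemma~\ref{lem:AppA2} and the increasingly-closed property. However, Lemma~\ref{lem:AppA2} (and indeed all of Appendix~A) only establishes the interchange $\mathbf{B}(s)\int\U(s,r)f(r)\,\d r=\int\mathbf{B}(s)\U(s,r)f(r)\,\d r$ for the \emph{unperturbed} family $\U$; nothing in the paper provides a $\V_n$-analogue for $n\geq1$, and Proposition~\ref{decroit} only yields $\V_n(s,r)u\in\D(\mathbf{B}(s))$ for a.e.~$s$, not the commutation of $\mathbf{B}(s)$ with a superposition $\int\V_n(s,r)f(r)\,\d r$. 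Carrying out your plan would require reproving an Appendix-A-type lemma for each $\V_n$, which is neither trivial nor present in the text.

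The paper's actual route avoids this entirely. First, for $f\in\D(\Z)\cap\X_+$ one has $\int_0^\tau\T_n(t)f\,\d t\in\D(\Z)$ with $\widehat{\fB}\int_0^\tau\T_n(t)f\,\d t=\int_0^\tau\widehat{\fB}\T_n(t)f\,\d t$ (since $\widehat{\fB}$ is $\Z$-bounded and $t\mapsto\T_n(t)f$ is $\D(\Z)$-continuous), the integrand is positive, and additivity of the norm on $\X_+$ combined with \eqref{fint+Z} gives \eqref{fint+Zb}. Then, for general $f\in\X_+$, the paper uses the a~priori bounds $\bigl\|\widehat{\fB}\int_0^\tau\T_n(t)f\,\d t\bigr\|_\X\leq 2\gamma\|f\|_\X$ and $\int_r^{r+\tau}\|\mathbf{B}(s)\V_n(s,r)f(r)\|_\e\,\d s\leq 2\gamma\|f(r)\|_\e$ (the latter from Proposition~\ref{decroit}), picks a sequence $(f_k)\subset\D(\Z)\cap\X_+$ converging to $f$ in $\X$ and pointwise a.e.~after extraction, and passes to the limit on both sides of \eqref{fint+Zb} by dominated convergence. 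This density-plus-dominated-convergence step is the idea missing from your sketch, and is what allows the result to extend from $\D(\Z)\cap\X_+$ to all of $\X_+$ without any pointwise interchange of $\mathbf{B}(s)$ with an integral over the $\V_n$-family.
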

\begin{proof} Let $f \in \D(\Z)$ (not necessarily in $\X_+$) and $n \in \mathbb{N}$ be fixed. One deduces directly from \eqref{Bs1t1} that
\begin{equation}\label{Bs1t1bis}\left[\widehat{\fB}\T_n(t)f\right](s)=\mathbf{B}(s)\V_n(s,s-t)f(s-t) \qquad \text{ for a. e. } s \geq t \geq 0.\end{equation}
Using Point \textit{(2)} of Remark \ref{importantNB} and since $\widehat{\fB}\T_n(t)f=\T_{n+1}(t)\Z f +\T_{n}(t)\widehat{\fB} f-\Z\T_n(t)f$, the mapping $t > 0 \mapsto \widehat{\fB}\T_n(t)f \in \X$ is continuous. In particular, since $\X=L^1((0,\infty),\e)$, one sees that this implies that  for any $\t >0$,  $$t \in (0,\t) \longmapsto \int_0^\infty \| {\mathbf{B}}(s)\V_n(s,s-t)f(s-t)\chi_+(s-t)\|_\e \d s<\infty$$ is a continuous mapping. It is in particular integrable so that
$$\int_0^\t \d t \int_t^\infty \left\|{\mathbf{B}}(r)\V_n(r,r-t)f(r-t)\right\|_\X  \d r <\infty \qquad \forall \t >0. $$
Then, the identity follows directly from \eqref{Bs1t1bis} since the mapping $(s,t) \mapsto \mathbf{B}(s)\V_n(s,s-t)f(s-t) \in \e$ is measurable (see Lemma \ref{lem:mesurabilite}). Thus, \eqref{fint+Z} holds for any $f \in \D(\Z)$. Moreover, if $f \in  \X_+$ one also has that
$$\left\|\widehat{\fB}\int_0^\t \T_n(t)f\d t\right\|_\X \leq \|f\|_\X \qquad \forall \t >0$$
while, for almost every $r >0$ it holds
$$\int_0^\t  \|{\mathbf{B}}(r+t)\V_n(r+t,r)f(r)\|_\e \d t=\int_r^{r+\t} \|\mathbf{B}(s)\V_n(s,r)f(r)\|_\e \d s \leq \|f(r)\|_\e \qquad \forall \t >0.$$
As a consequence, for any $f \in \X$, one has, for almost every $r >0$,
\begin{multline}\label{eqgam1}\left\|\widehat{\fB}\int_0^\t \T_n(t)f\d t\right\|_\X  \leq 2\gamma \|f\|_\X \qquad \text{ and } \\
\int_r^{r+\t} \|\mathbf{B}(s)\V_n(s,r)f(r)\|_\e \d s \leq 2\gamma \|f(r)\|_\e \qquad \forall \t >0.\end{multline}
Now, if $f \in \X_+$ is given, there exists a  sequence $(f_k)_k \subset \D(\Z) \cap \X_+$ such that $\|f_k - f\|_\X \to 0$ as $k \to \infty.$ Up to a subsequence, $\lim_{k \to \infty}\|f_k(r)-f(r)\|_\e=0$ for almost every $r >0.$  According to \eqref{eqgam1}, one deduces from the dominated convergence theorem that
$$\lim_{k \to \infty} \|\widehat{\fB} \int_0^\t\T_n(t)f_k\d t\|_\X = \|\widehat{\fB}\int_0^\t \T_n(t) f \d t\|_\X$$
and
$$\lim_{k \to \infty}\int_0^\infty \d r\int_0^\t  \|{\mathbf{B}}(r+t)\V_n(r+t,r)f_k(r)\|_\e \d t=\int_0^\infty \d r\int_0^\t  \|{\mathbf{B}}(r+t)\V_n(r+t,r)f(r)\|_\e \d t $$
which gives the conclusion.\end{proof}


We end this section with a technical Lemma, related to the above Proposition:
\begin{lemme}\label{techn:lem} Assume that Assumptions \ref{hypo2} and \ref{hypB} are in force.
 For any $u \in \e_+$ and any nonnegative $\varphi
\in \mathscr{C}_c(\mathbb{R}_+)$, let
$$f(\cdot)=\varphi(\cdot)u \in \X_+.$$ Then,
\begin{equation}\label{limSE}
\lim_{n \to \infty} \left\|\widehat{\fB}\int_0^t \T_n(r)f\d r\right\|_{\X}=\int_0^\infty \varphi(r)\left(\lim_{n \to \infty} \int_{r}^{r+t}\left\|\mathbf{B}(\t)\V_n(\t,r)u \right\|_\e\d \t\right)\d r
\end{equation}
holds for almost every $t \geq 0.$
\end{lemme}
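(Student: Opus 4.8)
The plan is to reduce the statement to the identity already established in Proposition~\ref{integrale}. Since $\varphi\ge 0$ and $u\in\e_+$, the function $f=\varphi(\cdot)u$ belongs to $\X_+$, so \eqref{fint+Zb} applies to this $f$ and gives, after the obvious relabeling of the dummy variables, for every $n\in\mathbb{N}$ and every $t>0$,
\[
\left\|\widehat{\fB}\int_0^t \T_n(\sigma)f\,\d\sigma\right\|_\X=\int_0^\infty\d r\int_0^t\|\mathbf{B}(r+\sigma)\V_n(r+\sigma,r)f(r)\|_\e\,\d\sigma .
\]
Using $f(r)=\varphi(r)u$, the linearity of $\mathbf{B}(r+\sigma)$ and $\V_n(r+\sigma,r)$, and the positive homogeneity of $\|\cdot\|_\e$, one pulls $\varphi(r)\ge 0$ out of the norm; then the substitution $\t=r+\sigma$ in the inner integral turns the right-hand side into
\[
\left\|\widehat{\fB}\int_0^t \T_n(\sigma)f\,\d\sigma\right\|_\X=\int_0^\infty\varphi(r)\left(\int_r^{r+t}\|\mathbf{B}(\t)\V_n(\t,r)u\|_\e\,\d\t\right)\d r ,\qquad n\in\mathbb{N},\; t>0.
\]

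It then remains to pass to the limit $n\to\infty$ under the outer integral. By Proposition~\ref{decroit}, for each fixed $r\ge 0$ and $t>0$ the sequence $n\mapsto\int_r^{r+t}\|\mathbf{B}(\t)\V_n(\t,r)u\|_\e\,\d\t$ is nonincreasing and lies in $[0,\|u\|_\e]$, hence converges; denote its limit by $L_t(r)$. The functions $r\mapsto\varphi(r)\int_r^{r+t}\|\mathbf{B}(\t)\V_n(\t,r)u\|_\e\,\d\t$ are measurable — they coincide with $r\mapsto\int_r^{r+t}\|\mathbf{B}(\t)\V_n(\t,r)f(r)\|_\e\,\d\t$, which is measurable by Lemma~\ref{lem:mesurabilite} and Tonelli's theorem — they decrease pointwise in $n$ to $r\mapsto\varphi(r)L_t(r)$, and they are all dominated by $\|u\|_\e\,\varphi(\cdot)$, which is integrable because $\varphi$ has compact support. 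Dominated (equivalently, monotone) convergence then yields
\[
\lim_{n\to\infty}\int_0^\infty\varphi(r)\left(\int_r^{r+t}\|\mathbf{B}(\t)\V_n(\t,r)u\|_\e\,\d\t\right)\d r=\int_0^\infty\varphi(r)\,L_t(r)\,\d r .
\]

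Combining the last two displays shows that $\lim_{n\to\infty}\|\widehat{\fB}\int_0^t \T_n(\sigma)f\,\d\sigma\|_\X$ exists and equals $\int_0^\infty\varphi(r)L_t(r)\,\d r$; since $L_t(r)=\lim_{n\to\infty}\int_r^{r+t}\|\mathbf{B}(\t)\V_n(\t,r)u\|_\e\,\d\t$ by construction, this is exactly \eqref{limSE}, and the argument actually produces it for every $t>0$, hence a fortiori for almost every $t\ge 0$. I do not expect a genuine obstacle here: the two substantive ingredients — the identity \eqref{fint+Zb} and the monotonicity of $n\mapsto\int_s^t\|\mathbf{B}(\t)\V_n(\t,s)u\|_\e\,\d\t$ — are already available from Propositions~\ref{integrale} and~\ref{decroit}, and the uniform bound $\int_s^t\|\mathbf{B}(\t)\V_n(\t,s)u\|_\e\,\d\t\le\|u\|_\e$ supplied by the latter is precisely what legitimizes the substitution $\t=r+\sigma$, the use of Tonelli, and the exchange of limit and integral. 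The only point deserving a word of care is the measurability of the integrands, which is exactly the purpose of Lemma~\ref{lem:mesurabilite}.
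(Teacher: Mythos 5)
Your proof is correct and follows essentially the same route as the paper's: both invoke identity \eqref{fint+Zb} from Proposition~\ref{integrale} for $f=\varphi(\cdot)u\in\X_+$, observe the uniform bound $\int_r^{r+t}\|\mathbf{B}(\t)\V_n(\t,r)u\|_\e\,\d\t\le\|u\|_\e$, and conclude by dominated convergence, with $\|u\|_\e\varphi(\cdot)$ as the integrable majorant. Your write-up is somewhat more explicit about measurability (via Lemma~\ref{lem:mesurabilite}) and about the change of variables, but the argument is the same.
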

\begin{proof} Since $f \in \X_+$, Eq. \eqref{fint+Zb} asserts that, for any $n \in \mathbb{N}$
$$\| \widehat{\fB} \int_0^t\T_n(s)f\d s\|_\X  =\int_0^\infty \varphi(r)\left(\int_{r}^{r+t}\left\|\mathbf{B}(\t)\V_n(\t,r)u \right\|_\e\d \t\right)\d r, \qquad \forall  t >0.$$
Since $\int_{r}^{r+t}\left\|\mathbf{B}(\t)\V_n(\t,r)u \right\|_\e \leq \|u\|_\e$ for any $n\geq 1$ and almost every $r, t >0$, we get the conclusion thanks to the dominated convergence theorem.
\end{proof}

\subsection{Honesty of $\vt$}\label{sec:hovt} We have now all in hands to \emph{characterize}, in terms of $\mathbf{B}(t)$ and $\V_n(t,s)$ only,  the honesty of the family $\vt$. Precisely, we prove the following

\begin{theo}\label{theo:main?} The evolution family $\vt$ is honest if and only if
\begin{equation}\label{eq:defihonest}
\lim_{n \to \infty} \int_s^t \left\|\mathbf{B}(\t)\V_n(\t,s)u\right\|_\e \d\t=0 \qquad \forall t \geq s\end{equation}
for any $u \in \e_+$.
\end{theo}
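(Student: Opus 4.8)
The plan is to work in the lifted space $\X$ and use the honesty criterion of Theorem~\ref{equivalencemild} for the semigroup $\Tt$, transcribing it into a condition on $\mathbf{B}(\cdot)$ and the iterates $\V_n(\cdot,\cdot)$ through the representation formula \eqref{fint+Zb}. By Definition~\ref{defi:honest} and the definition of honesty of a semigroup, $\vt$ is honest if and only if every trajectory $(\T(t)f)_{t\ge0}$, $f\in\X_+$, is honest, which by Theorem~\ref{equivalencemild} means
\[
\lim_{n\to\infty}\Big\|\widehat{\fB}\int_0^\tau\T_n(s)f\,\mathrm d s\Big\|_\X=0\qquad\text{for all }\tau>0,\ f\in\X_+ .
\]
By \eqref{fint+Zb} this norm equals $\int_0^\infty\big(\int_r^{r+\tau}\|\mathbf{B}(\sigma)\V_n(\sigma,r)f(r)\|_\e\,\mathrm d\sigma\big)\,\mathrm d r$, and by Proposition~\ref{decroit} the inner integral is nonincreasing in $n$ and bounded by $\|f(r)\|_\e$; so for $u\in\e_+$ the limit $h_u(r;T):=\lim_n\int_r^T\|\mathbf{B}(\sigma)\V_n(\sigma,r)u\|_\e\,\mathrm d\sigma$ exists and is nondecreasing in $T$.

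For the implication \eqref{eq:defihonest}$\Rightarrow$ honest I would argue directly: for a.e.\ $r$ one has $f(r)\in\e_+$, and \eqref{eq:defihonest} (with $u=f(r)$, lower endpoint $r$, upper endpoint $r+\tau$) gives $h_{f(r)}(r;r+\tau)=0$; since the inner integral above is dominated by $\|f(\cdot)\|_\e\in L^1(\mathbb R_+)$ uniformly in $n$, dominated convergence yields $\lim_n\|\widehat{\fB}\int_0^\tau\T_n(s)f\,\mathrm d s\|_\X=0$ for all $\tau>0$ and all $f\in\X_+$, hence $\Tt$ — and so $\vt$ — is honest.

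For the converse, assume $\vt$ honest and fix $u\in\e_+$. Testing the vanishing limit on $f=\varphi(\cdot)u$ with $0\le\varphi\in\mathscr C_c(\mathbb R_+)$ (equivalently, invoking Lemma~\ref{techn:lem}) and passing to the limit in $n$ by dominated convergence gives $\int_0^\infty\varphi(r)\,h_u(r;r+\tau)\,\mathrm d r=0$ for all $\tau>0$; since $\varphi\ge0$ is arbitrary, $h_u(r;r+\tau)=0$ for a.e.\ $r$, for every $\tau$. Fixing $t_0>0$ and using $t_0\le r+t_0$ together with the monotonicity of $h_u(r;\cdot)$, we obtain $h_u(r;t_0)\le h_u(r;r+t_0)=0$ for a.e.\ $r\in(0,t_0)$; i.e.\ \eqref{eq:defihonest} holds for almost every pair $(s,t_0)$.

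The remaining — and only delicate — step is to upgrade this to \emph{every} $s_0\in(0,t_0)$, which I would do via a one–sided regularity estimate for $s\mapsto h_u(s;t_0)$. For $s_0<s'<t_0$, split $\int_{s_0}^{t_0}=\int_{s_0}^{s'}+\int_{s'}^{t_0}$: Proposition~\ref{decroit} applied on $[s_0,s']$ bounds $\int_{s_0}^{s'}\|\mathbf{B}(\tau)\V_n(\tau,s_0)u\|_\e\,\mathrm d\tau$ by $\|u\|_\e-\|\U(s',s_0)u\|_\e\le\|u-\U(s',s_0)u\|_\e=:\varepsilon(s')$; in the remaining integral the propagation identity $\V_n(\tau,s_0)u=\sum_{k=0}^n\V_k(\tau,s')\V_{n-k}(s',s_0)u$ (Proposition~\ref{luisanotes2}(3)) and additivity of the norm on $\e_+$ let me write it as $\sum_{j=0}^n\int_{s'}^{t_0}\|\mathbf{B}(\tau)\V_{n-j}(\tau,s')\V_j(s',s_0)u\|_\e\,\mathrm d\tau$, where each $j\ge1$ term is $\le\|\V_j(s',s_0)u\|_\e$ (Proposition~\ref{decroit}) with $\sum_{j=1}^n\|\V_j(s',s_0)u\|_\e\le\|u\|_\e-\|\U(s',s_0)u\|_\e\le\varepsilon(s')$, while the $j=0$ term $\int_{s'}^{t_0}\|\mathbf{B}(\tau)\V_n(\tau,s')\U(s',s_0)u\|_\e\,\mathrm d\tau$ is compared with $\int_{s'}^{t_0}\|\mathbf{B}(\tau)\V_n(\tau,s')u\|_\e\,\mathrm d\tau$ by writing $u-\U(s',s_0)u=p_1-p_2$ with $p_i\in\e_+$, $\|p_i\|_\e\le\gamma\varepsilon(s')$ (non-flatness \eqref{decomp}) and applying Proposition~\ref{decroit} once more, at an extra cost $\le2\gamma\varepsilon(s')$. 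Altogether
\[
\int_{s_0}^{t_0}\|\mathbf{B}(\tau)\V_n(\tau,s_0)u\|_\e\,\mathrm d\tau\le\int_{s'}^{t_0}\|\mathbf{B}(\tau)\V_n(\tau,s')u\|_\e\,\mathrm d\tau+(2+2\gamma)\,\varepsilon(s'),
\]
and letting $n\to\infty$ gives $h_u(s_0;t_0)\le h_u(s';t_0)+(2+2\gamma)\varepsilon(s')$. Since $\U(s',s_0)u\to u$ as $s'\downarrow s_0$, so $\varepsilon(s')\to0$, and since $h_u(\,\cdot\,;t_0)\ge0$ vanishes a.e.\ on $(0,t_0)$, choosing $s_k'\downarrow s_0$ along that zero set forces $h_u(s_0;t_0)=0$. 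As $t_0$, $s_0\le t_0$ and $u\in\e_+$ were arbitrary (the diagonal case $s_0=t_0$ being trivial), \eqref{eq:defihonest} holds for all $t\ge s$, completing the proof.
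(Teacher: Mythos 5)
Your proof of the implication \eqref{eq:defihonest} $\Rightarrow$ honest coincides with the paper's: both set $F_n(r)=\int_0^\tau\|\mathbf{B}(r+t)\V_n(r+t,r)f(r)\|_\e\,\d t$, use Proposition \ref{decroit} for the monotone dominating bound $F_n(r)\leq\|f(r)\|_\e$, Proposition \ref{integrale} for the identity $\int_0^\infty F_n(r)\,\d r = \|\widehat\fB\int_0^\tau\T_n(t)f\,\d t\|_\X$, and close by dominated convergence and Theorem \ref{equivalencemild}. The converse is where you genuinely diverge, and your route is arguably tighter. The paper also starts from Lemma \ref{techn:lem} and the vanishing of $\int_0^\infty\varphi(r)\lim_n\int_r^{r+t}\|\mathbf{B}(\tau)\V_n(\tau,r)u\|_\e\,\d\tau\,\d r$ for every nonnegative $\varphi\in\mathscr{C}_c(\mathbb{R}_+)$, but passes from ``a.e.\ $r$'' to ``every $r$'' by \emph{asserting}, without justification, that $r\mapsto\lim_n\int_r^{r+t}\|\mathbf{B}(\tau)\V_n(\tau,r)u\|_\e\,\d\tau$ is continuous; since that map is a decreasing pointwise limit, one only gets upper semicontinuity for free, so some extra argument is in fact required. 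You supply exactly such an argument: the one-sided bound $h_u(s_0;t_0)\leq h_u(s';t_0)+(2+2\gamma)\varepsilon(s')$ with $\varepsilon(s')=\|u-\U(s',s_0)u\|_\e$, assembled from Proposition \ref{decroit} on $(s_0,s')$, the propagation identity of Proposition \ref{luisanotes2}\textit{(3)} together with additivity of the norm on $\e_+$ on $(s',t_0)$, and the non-flatness decomposition \eqref{decomp} to compare the $j=0$ term against $\int_{s'}^{t_0}\|\mathbf{B}(\tau)\V_n(\tau,s')u\|_\e\,\d\tau$. Since $\varepsilon(s')\to0$ as $s'\downarrow s_0$ by strong continuity, and $h_u(\cdot;t_0)$ is already known to vanish a.e.\ on $(0,t_0)$, picking $s'\downarrow s_0$ along the null set yields $h_u(s_0;t_0)=0$. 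This is effectively a proof of the (upper semi-)regularity the paper invokes but does not establish, so your version fills a small gap. One cosmetic remark on the $j=0$ estimate: from the additivity identity $\|\mathbf{B}\V_n\U(s',s_0)u\|_\e+\|\mathbf{B}\V_n p_1\|_\e=\|\mathbf{B}\V_n u\|_\e+\|\mathbf{B}\V_n p_2\|_\e$, integrating and using Proposition \ref{decroit} shows the extra cost is in fact $\leq\gamma\varepsilon(s')$; your $2\gamma\varepsilon(s')$ is simply a more generous (and still correct) bound.
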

\begin{proof}  Assume first that $\vt$ is honest in $\e$, i.e. the semigroup $\Tt$ is honest in $\X$. Let $u \in \e_+$ be fixed.
Notice that the sequence $\left(\int_{r}^{r+t}\left\|\mathbf{B}(\t)\V_n(\t,r)u \right\|_\e\d \t\right)_n$ is nonincreasing for almost any $r \geq 0$ and that, given $t >0$, the mapping
$$r \in \mathbb{R}^+ \longmapsto \lim_{n \to \infty} \int_{r}^{r+t}\left\|\mathbf{B}(\t)\V_n(\t,r)u \right\|_\e\d \t$$
is continuous.  According to \eqref{limSE}, for any $\varphi \in \mathscr{C}_c(\mathbb{R}_+)$ one has
$$\lim_{n \to \infty} \int_{r}^{r+t}\left\|\mathbf{B}(\t)\V_n(\t,r)u \right\|_\e\d \t =0 \qquad \forall r \in \mathrm{Supp}(\varphi), \: \forall t\geq 0.$$
Since $\varphi$ is arbitrary, one sees that
$$\lim_{n \to \infty} \int_s^t \|\mathbf{B}(\t)\V_n(\t,s)u\|_\e \d\t=0 \qquad \forall t \geq s$$
which gives the first implication.

Conversely, assume that \eqref{eq:defihonest} holds true for any $u \in \e_+$ and let $f \in \X_+$ and $\t >0$ be fixed. Define
$$F_n(r)=\int_0^\t  \|{\mathbf{B}}(r+t)\V_n(r+t,r)f(r)\|_\e \d t, \qquad \forall r >0.$$
Since $f(r) \in \e_+$ for a. e. $r > 0$, one has
\begin{equation}\label{almo}\lim_{n \to \infty} F_n(r)=0 \qquad \text{ for a. e. } r >0.\end{equation}
Now, according to Prop. \ref{decroit}, for any $r>0,$ the sequence $(F_n(r))_n$ is nonincreasing with
$$F_n(r) \leq \|f(r)\|_\e \qquad \forall r >0, \:n \in \mathbb{N}$$
while, Proposition \ref{integrale} asserts that
$$\int_0^\infty F_n(r)\d r=\left\|\widehat{\fB}\int_0^\t \T_n(t)f\d t\right\|_\X $$
It is clear now that \eqref{almo} together with Lebesgue Theorem implies that
$$\lim_{n \to \infty}\left\|\widehat{\fB}\int_0^\t \T_n(t)f\d t\right\|_\X =0.$$
Since $f \in \X_+$ is arbitrary, this exactly means that the whole semigroup is honest.
\end{proof}

\begin{nb} It is possible now to extend the definition of honesty as follows: given $u \in \e_+$ and $s \geq 0$, the trajectory $\left(\V(t,s)u\right)_{t \geq s}$ is said to be honest if \eqref{eq:defihonest} holds true.
\end{nb}

We conclude this section with some considerations about honesty theory in the formally conservative case. Recall that, by definition, in such a case, the semigroup is honest if and only if it is conservative. We can observe that such a property still holds for the evolution family. Indeed, according to Proposition \ref{decroit}, in the formally conservative case we have, for any $u \in\e_+$ and any $t \geq s \geq 0$
\begin{equation*}
\|\V(t,s)u\|=\lim_{n \to \infty}\sum_{k=0}^n \|\V_k(t,s)u\|=\|u\|-\lim_{n \to \infty}\int_s^t \|\mathbf{B}(\t)\V_n(\t,s)u\| \d \t.
\end{equation*}
Therefore, the family $\vt$ is honest if and only if
$$\|\V(t,s)u\|=\|u\| \qquad \forall u \in \e_+, \;\forall t \geq s\geq 0$$
which exactly means that the evolution family $\vt$ is honest if and only if it is conservative.

\section{Application to the linear Boltzmann equation}

We apply the results of the previous part of this paper to a large class of non-autonomous linear kinetic equations.
\subsection{Spatially homogeneous linear kinetic equation}

We consider the non-autonomous (spatially homogeneous) kinetic equation in $L^1(V,\d\mu(v))$:
\begin{equation}\label{spatiallyhomogeneous}
\left\{\begin{split}
\partial_t f(v,t)+\Sigma(v,t) f(v,t)&=\int_{V} b(t,v,v')f(v',t)\d\mu(v') \qquad v  \in V\\
f(v,s)&=h(v) \in L^1(V,\d\mu(v))
\end{split}\right.
\end{equation}
for some $s \geq 0.$ In the above, $\mu$ is a Borel measure over $\mathbb{R}^d$ $(d\geq 1)$ with support $V$ while the  collision frequency $\Sigma$ and the collision kernel $b(t,\cdot,\cdot)$ satisfy the following assumptions
\begin{hyp}\label{hypSiBmu} \begin{enumerate}

  \item The mapping $(t,v) \in \R^+ \times V \mapsto \Sigma(t,v)$ is measurable and non-negative. Moreover, for $\mu$-almost every $v \in V$, the mapping $t \mapsto \Sigma(t,v)$ is locally integrable on $\R^+$.
\item The collision kernel $b$ is such that $(t,v,v') \in \R^+ \times V \times V \mapsto b(t,v,v')$ is measurable and non-negative. Moreover,
\begin{equation}\label{subcritic}
\int_V b(t,v,v') \d\mu(v) \leq \Sigma(t,v') \qquad \text{ for almost every } (t,v') \in \R^+ \times V.\end{equation}
\end{enumerate}
\end{hyp}
\begin{nb} The above inequality \eqref{subcritic} is a classical assumption in neutron transport theory and is usually referred to as the \emph{sub-critical hypothesis}.
\end{nb}

One will see \eqref{spatiallyhomogeneous} as a perturbation of the simpler equation
$$\partial_t f(v,t)+\Sigma(v,t)f(v,t)=0 \qquad f(v,0)=f_0(v)$$
and the associated evolution family
\begin{equation}\label{U0ts}
\U_h(t,s)\varphi(v)=\exp\left(-\int_s^t \Sigma(v,\tau)\d\tau\right)\varphi(v), \qquad t\geq s\geq 0, \qquad \varphi \in \e_h:=L^1(V,\d\mu).\end{equation}
We begin with the following smoothing effect of $\U_h(t,s)$:
\begin{lemme}\label{smooth}
For any non-negative $\varphi \in \e_h$ and any $s \geq 0$  one has $\U_h(t,s)\varphi \in L^1(V,\Sigma(t,\cdot)\,\d\mu)$ for almost every $t \in [s,\infty)$,  i.e.
\begin{equation}\label{trajectorial}\int_V \U_h(t,s)\varphi(v)\,\Sigma(t,v)\d\mu(v) < \infty \qquad \text{ for a.e. } t \in [s,\infty).\end{equation}
\end{lemme}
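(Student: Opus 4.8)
The plan is to establish the slightly stronger claim that the non-negative function
$$t \in [s,\infty) \longmapsto \int_V \U_h(t,s)\varphi(v)\,\Sigma(v,t)\,\d\mu(v)$$
is integrable on every bounded interval $[s,T]$, which at once forces it to be finite for almost every $t \in [s,\infty)$, i.e. \eqref{trajectorial}. So fix $T>s$; recalling the explicit form \eqref{U0ts} of $\U_h(t,s)$, the quantity to control is
$$\int_s^T \left(\int_V \exp\left(-\int_s^t \Sigma(v,\tau)\,\d\tau\right)\varphi(v)\,\Sigma(v,t)\,\d\mu(v)\right)\d t.$$
First I would check that the integrand is jointly measurable in $(t,v)$: by Assumption \ref{hypSiBmu}(1) the map $(t,v)\mapsto\Sigma(v,t)$ is measurable, hence so is $(t,v)\mapsto\int_s^t\Sigma(v,\tau)\,\d\tau=\int_{\R^+}\mathbf{1}_{[s,t]}(\tau)\,\Sigma(v,\tau)\,\d\tau$ by Tonelli's theorem, and composition with the exponential together with multiplication by $\Sigma$ preserve measurability. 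Since everything in sight is non-negative, Tonelli's theorem then permits exchanging the order of integration.

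After the exchange, the expression reads
$$\int_V \varphi(v)\left(\int_s^T \Sigma(v,t)\exp\left(-\int_s^t \Sigma(v,\tau)\,\d\tau\right)\d t\right)\d\mu(v),$$
and I would evaluate the inner integral pointwise in $v$. For $\mu$-almost every $v\in V$, by Assumption \ref{hypSiBmu}(1) the map $\tau\mapsto\Sigma(v,\tau)$ is locally integrable, so $\Psi_v(t):=\int_s^t\Sigma(v,\tau)\,\d\tau$ is absolutely continuous and non-decreasing on $[s,T]$ with $\Psi_v'(t)=\Sigma(v,t)$ for a.e. $t$; hence $t\mapsto e^{-\Psi_v(t)}$ is absolutely continuous with a.e. derivative $-\Sigma(v,t)\,e^{-\Psi_v(t)}$, and the fundamental theorem of calculus gives
$$\int_s^T \Sigma(v,t)\exp\left(-\int_s^t \Sigma(v,\tau)\,\d\tau\right)\d t=1-e^{-\Psi_v(T)}\leq 1.$$
Inserting this bound back yields
$$\int_s^T \left(\int_V \U_h(t,s)\varphi(v)\,\Sigma(v,t)\,\d\mu(v)\right)\d t \leq \int_V \varphi(v)\,\d\mu(v)=\|\varphi\|_{\e_h}<\infty,$$
and since $T>s$ is arbitrary the asserted local integrability, hence \eqref{trajectorial}, follows.

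The argument is essentially computational; the only steps deserving a word of justification are the joint measurability used to invoke Tonelli's theorem and the validity of the fundamental theorem of calculus for the absolutely continuous map $t\mapsto e^{-\Psi_v(t)}$ (equivalently, that $\frac{\d}{\d t}e^{-\Psi_v(t)}=-\Sigma(v,t)e^{-\Psi_v(t)}$ a.e.). I do not expect any genuine obstacle here, and in particular no additional structural assumption on $\mu$ or on $V$ beyond those already in Assumption \ref{hypSiBmu} is needed.
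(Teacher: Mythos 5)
Your proof is correct and follows essentially the same route as the paper's: reduce to showing $\int_s^T\int_V\Sigma(v,t)\U_h(t,s)\varphi(v)\,\d\mu(v)\,\d t<\infty$, swap the order of integration by Tonelli, and compute the inner $t$-integral pointwise in $v$ via the fundamental theorem of calculus to obtain $1-\exp\bigl(-\int_s^T\Sigma(v,\tau)\,\d\tau\bigr)\leq 1$. The only cosmetic difference is that you justify the differentiation step through the absolute continuity of $t\mapsto e^{-\Psi_v(t)}$, whereas the paper phrases it through Lebesgue points of $t\mapsto\Sigma(v,t)$; these are the same standard fact.
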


\begin{proof} Let $s >0$ and $\varphi \in L^1(V,\d\mu)$, $\varphi \geq 0$ $\mu$-a.e. be fixed. One defines the mapping $\Q\::\:t \in [s,\infty) \mapsto \Q(t) \in [0,\infty]$ by
$$\Q(t)=\int_V \Sigma(t,v)\U_h(t,s)\varphi(v)\d\mu(v)=\int_V \Sigma(t,v)\exp\left(-\int_s^t \Sigma(\t,v)\d\t\right)\varphi(v)\d\mu(v).$$
For a given $T >s$, one has, thanks to Fubini's Theorem
$$\int_s^T \Q(t)\d t=\int_V \varphi(v)\left(\int_s^T \Sigma(t,v)\exp\left(-\int_s^t \Sigma(\t,v)\d\t\right)\d t\right)\d\mu(v).$$
Now, for any fixed $v \in V$, since the mapping $t\mapsto \Sigma(t,v)$ belongs to $L^1_{\mathrm{loc}}(\R^+)$, the set $\mathcal{L}_{\Sigma(\cdot,v)} \subset \R^+$ of its Lebesgue points is such that its complementary $\mathcal{L}_{\Sigma(\cdot,v)}^c$ is of zero Lebesgue measure and
$$\Sigma(t,v)\exp\left(-\int_s^t \Sigma(\t,v)\d\t\right)=-\dfrac{\d}{\d t}\exp\left(-\int_s^t \Sigma(\t,v)\d\t\right) \qquad \forall  t \in \mathcal{L}_{\Sigma(\cdot,v)}\cap (s,T).$$
This allows to compute the integral $\int_s^T \Sigma(t,v)\exp\left(-\int_s^t \Sigma(\t,v)\d\t\right)\d t $ for any $v \in V$ and yields
\begin{equation}\label{intsTQ}\int_s^T \Q(t)\d t=\int_V \varphi(v)\left(1-\exp\left(-\int_s^T \Sigma(\t,v)\d\t\right)\right)\d\mu(v) < \infty \quad \forall T > s.\end{equation}
This proves in particular that $\Q(t) < \infty$ for almost every $t \in [s,\infty).$ This achieves the proof.
\end{proof}
 \begin{nb} Notice that the above smoothing effect of $\U_h(\cdot,\cdot)$ is actually a "trajectorial smoothing effect" in the following sense: for any $\varphi \in \e_h$ and any $s >0$, there exists a set $\Xi_{s}^\varphi  \subset [s,\infty)$ such that $[s,\infty) \setminus \Xi_{s}^\varphi$ is of zero Lebesgue measure and \eqref{trajectorial} holds for any $t \in \Xi_{s}^\varphi$. Clearly, the set $\Xi_{s}^\varphi$ depends on the choice of $\varphi.$ In this sense, the smoothing effect is related to the trajectory $t \in (s,\infty)  \mapsto \U_h(t,s)\varphi$ under investigation.
\end{nb}

\begin{nb} Notice  that, for any $\varphi \in L^1(V,\d\mu)$ and any $t \geq s \geq 0$, with the notations of the above proof,
\begin{equation*}\begin{split}
\int_{s}^{t}\Q(r)\d r &=\int_{V}\left(1-\exp(-\int_s^T \Sigma(\t,v)\d\t\right)\varphi(v)\d\mu(v)\\
&=\int_{V}\varphi (v)\d\mu (v)-\int_{V}\exp\left(-\int_s^{t}\Sigma (\tau ,v)d\tau\right)\varphi (v)d\mu (v)
\end{split}\end{equation*}
which implies that the mapping
\[
\left[ s,+\infty \right[ \ni t \longmapsto \int_{V}\exp\left(-\int_s^{t}\Sigma (\tau ,v)d\tau\right)\varphi (v)d\mu (v)
\]%
is differentiable at any $t \in \mathcal{L}_{\Q}$ (where, $\mathcal{L}_\Q$ is the set of Lebesgue point of $\Q$) with derivative given by
$-\int_V \Sigma(t,v)\exp\left(-\int_s^{t}\Sigma (\tau ,v)d\tau\right)\varphi (v)d\mu (v).$
\end{nb}
We define then the family of perturbations $(\mathbf{B}_h(t))_{t \geq 0}$ as
\begin{equation}\label{B0t}
\mathbf{B}_h(t)\varphi(v)=\int_{V}b(t,v,v')\varphi(v')\d\mu(v')\end{equation}
with domain
$$\D(\mathbf{B}_h(t))=\left\{\varphi \in L^1(V,\d\mu)\,,\,\int_V b(t,\cdot,v')|\varphi(v')|\d\mu(v') \in L^1(V,\d\mu)\right\} \qquad \forall t \geq 0.$$
Defining
$$\sigma(t,v)=\int_{V} b(t,v',v)\d\mu(v') \qquad \forall v \in V, \quad t \geq 0$$
one sees easily that
$$\D(\mathbf{B}_h(t))=\left\{\varphi \in L^1(V,\d\mu)\,,\,\int_V \sigma(t,v)|\varphi(v)|\d\mu(v) < \infty\right\}=L^1(V,\sigma(t,v)\d\mu(v)) \qquad \forall t \geq 0.$$
The above smoothing effect has an interesting consequence:
\begin{propo}\label{propo:homo} Let Assumptions \ref{hypSiBmu} be in force. Then, for any $\varphi \in  L^1(V,\d\mu)$ and any $s \geq 0$ one has $\U_h(t,s)\varphi \in \D(\mathbf{B}_h(t))$ for almost-every $t \in [s,\infty)$; the mapping $t \in [s,\infty) \mapsto \mathbf{B}_h(t)\U_h(t,s)\varphi \in L^1(V,\d\mu)$ is measurable with
$$\int_s^t \|\mathbf{B}_h(\t)\U_h(\t,s)\varphi\|_{L^1(V,\d\mu)} \d\t \leq \|\varphi\|_{L^1(V,\d\mu)} - \|\U_h(t,s)\varphi\|_{L^1(V,\d\mu)} \qquad \forall t \geq s$$
with the equality sign if
\begin{equation}\label{sigmaSigma}
\sigma(t,v)=\Sigma(t,v) \qquad \text{ for almost every } (t,v) \in \mathbb{R}^+ \times V.\end{equation}\end{propo}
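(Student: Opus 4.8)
The plan is to establish all three conclusions first for a nonnegative $\varphi$ and then transfer them to an arbitrary $\varphi \in L^1(V,\d\mu)$ by positivity. Since $\U_h(\tau,s)$ is multiplication by the function $v \mapsto \exp(-\int_s^\tau \Sigma(r,v)\,\d r) \in (0,1]$, one has $|\U_h(\tau,s)\varphi| = \U_h(\tau,s)|\varphi|$ pointwise; hence $\|\U_h(t,s)\varphi\|_{L^1(V,\d\mu)} = \|\U_h(t,s)|\varphi|\|_{L^1(V,\d\mu)}$, the membership $\U_h(\tau,s)\varphi \in \D(\mathbf{B}_h(\tau))$ is equivalent to $\U_h(\tau,s)|\varphi| \in \D(\mathbf{B}_h(\tau))$ (the domain depends only on $|\cdot|$), and $\|\mathbf{B}_h(\tau)\U_h(\tau,s)\varphi\|_{L^1(V,\d\mu)} \leq \|\mathbf{B}_h(\tau)\U_h(\tau,s)|\varphi|\|_{L^1(V,\d\mu)}$ because $b(\tau,\cdot,\cdot) \geq 0$. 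So it suffices to treat $0 \leq \varphi \in L^1(V,\d\mu)$, the equality under \eqref{sigmaSigma} being understood, consistently with Assumption \ref{hypo}, for nonnegative data.

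With $\varphi \geq 0$ and $s \geq 0$ fixed, the first step is the explicit computation, via Tonelli's theorem and the identity $\int_V b(\tau,v,v')\,\d\mu(v) = \sigma(\tau,v')$,
\begin{equation*}
\big\|\mathbf{B}_h(\tau)\U_h(\tau,s)\varphi\big\|_{L^1(V,\d\mu)} = \int_V \sigma(\tau,v')\exp\Big(-\int_s^\tau \Sigma(r,v')\,\d r\Big)\varphi(v')\,\d\mu(v') \qquad (\tau \geq s),
\end{equation*}
an identity in $[0,\infty]$. The sub-critical hypothesis \eqref{subcritic} gives $\sigma(\tau,v') \leq \Sigma(\tau,v')$ a.e., so the right-hand side is $\leq \Q(\tau)$, the quantity appearing in the proof of Lemma \ref{smooth} (with an identity when \eqref{sigmaSigma} holds); since Lemma \ref{smooth} yields $\Q(\tau) < \infty$ for a.e. $\tau \geq s$, this gives $\U_h(\tau,s)\varphi \in \D(\mathbf{B}_h(\tau))$ for a.e. $\tau \geq s$. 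Joint measurability in $(\tau,v)$ of $[\mathbf{B}_h(\tau)\U_h(\tau,s)\varphi](v)$ follows from the joint measurability of $b$, $\Sigma$ and $\varphi$ and from Tonelli's theorem, and together with the a.e. finiteness of the norm just obtained it yields the strong measurability of $\tau \mapsto \mathbf{B}_h(\tau)\U_h(\tau,s)\varphi \in L^1(V,\d\mu)$.

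For the estimate, I would integrate the displayed identity over $\tau \in [s,t]$, exchange the $\tau$- and $v'$-integrals by Tonelli, and bound $\sigma \leq \Sigma$ to get
\begin{equation*}
\int_s^t \big\|\mathbf{B}_h(\tau)\U_h(\tau,s)\varphi\big\|_{L^1(V,\d\mu)}\,\d\tau \leq \int_V \varphi(v')\left(\int_s^t \Sigma(\tau,v')\exp\Big(-\int_s^\tau \Sigma(r,v')\,\d r\Big)\d\tau\right)\d\mu(v').
\end{equation*}
For fixed $v'$ the inner integral is exactly the one evaluated in the proof of Lemma \ref{smooth}: since $r \mapsto \Sigma(r,v')$ is locally integrable, almost every $\tau$ is one of its Lebesgue points and there $\Sigma(\tau,v')\exp(-\int_s^\tau \Sigma(r,v')\,\d r) = -\frac{\d}{\d\tau}\exp(-\int_s^\tau \Sigma(r,v')\,\d r)$, so it equals $1 - \exp(-\int_s^t \Sigma(\tau,v')\,\d\tau)$. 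Substituting this and recalling $\|\U_h(t,s)\varphi\|_{L^1(V,\d\mu)} = \int_V \exp(-\int_s^t \Sigma(\tau,v')\,\d\tau)\varphi(v')\,\d\mu(v')$ gives the claimed inequality, with equality throughout when $\sigma = \Sigma$ a.e.; the reduction of the first paragraph then extends the inequality to general $\varphi$. I do not expect a serious obstacle: the argument is essentially one Tonelli computation feeding into the elementary Lebesgue-point identity already established in Lemma \ref{smooth}, and the only mildly delicate point is the measurability bookkeeping for the $L^1(V,\d\mu)$-valued map.
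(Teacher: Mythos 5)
Your proof is correct and follows essentially the same route as the paper's: a Tonelli/Fubini rearrangement to expose $\sigma(\tau,v')$, the sub-critical bound $\sigma\leq\Sigma$, and the Lebesgue-point evaluation of $\int_s^t\Sigma(\tau,v')\exp\big(-\int_s^\tau\Sigma(r,v')\,\d r\big)\,\d\tau$ already carried out in the proof of Lemma \ref{smooth}. Your explicit reduction to $\varphi\geq 0$ via $|\U_h(\tau,s)\varphi|=\U_h(\tau,s)|\varphi|$, and the observation that the equality under \eqref{sigmaSigma} should be read for nonnegative data, are a small but welcome precision over the paper's phrasing.
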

\begin{proof}  Let $\varphi \in  L^1(V,\d\mu)$ and  $s \geq 0$ be fixed. The mapping
$$(t,v,v') \in [s,\infty) \times V \times V \longmapsto b(t,v,v')\exp\left(-\int_s^t \Sigma(\t,v')\d \t\right)\,\varphi(v') \in \mathbb{R}^+$$
is measurable and is actually integrable. Thanks to Fubini's theorem and using the notations of the previous Proposition, one has for any $T > s$
\begin{multline*}
\int_s^T \d t \int_V \d\mu(v)\int_V   b(t,v,v')\exp\left(-\int_s^t \Sigma(\t,v')\d \t\right)\,\varphi(v')\d\mu(v')\\
=\int_s^T \d t \int_V \sigma(t,v')\exp\left(-\int_s^t \Sigma(\t,v')\d \t\right)\,\varphi(v')\d\mu(v')\\
\leq \int_s^T \d t \int_V \Sigma(t,v')\exp\left(-\int_s^t \Sigma(\t,v')\d \t\right)\,\varphi(v')\d\mu(v')=\int_s^T \Q(t)\d t
\end{multline*}
with the equality sign if $\sigma(t,v)=\Sigma(t,v)$ for almost every $t,v$.  According to \eqref{intsTQ}, one sees that
$$\int_s^T \Q(t)\d t=\|\varphi\|_{L^1(V,\d\mu)}-\|\U_h(T,s)\varphi\|_{L^1(V,\d\mu)} \qquad \forall T > s$$
i.e.
\begin{multline*}
\int_s^T \d t \int_V \d\mu(v)\int_V   b(t,v,v')\exp\left(-\int_s^t \Sigma(\t,v')\d \t\right)\,\varphi(v')\d\mu(v')\\
\leq \|\varphi\|_{L^1(V,\d\mu)}-\|\U_h(T,s)\varphi\|_{L^1(V,\d\mu)} \qquad \forall T > s \end{multline*}
with an identity if \eqref{sigmaSigma} holds. Letting then $T \to \infty$ we get
$$\int_s^\infty \d t \int_V \d\mu(v)\int_V   b(t,v,v')\exp\left(-\int_s^t \Sigma(\t,v')\d \t\right)\,\varphi(v')\d\mu(v')\leq \|\varphi\|.$$
Considering that, for almost every $t \geq s$ and almost every $v \in V$ it holds
$$\int_V   b(t,v,v')\exp\left(-\int_s^t \Sigma(\t,v')\d \t\right)\,\varphi(v')\d\mu(v')=\left[\mathbf{B}_h(t)\U_h(t,s)\varphi\right](v)$$
we can invoke Fubini's theorem to conclude that $\mathbf{B}_h(t)\U_h(t,s)\varphi \in \D(\mathbf{B}_h(t))$ for a. e. $t \geq s.$ Moreover, the mapping
$t \in [s,\infty) \mapsto \mathbf{B}_h\U_h(t,s)\varphi \in L^1(V,\d\mu)$ is integrable and
$$\int_s^T \|\mathbf{B}_h\U_h(t,s)\varphi\|_{L^1(V,\d\mu)}\d t \leq \|\varphi\|_{L^1(V,\d\mu)} -\|\U_h(T,s)\varphi\|_{L^1(V,\d\mu)} \quad \forall T > s$$
which is an identity if \eqref{sigmaSigma} holds.
\end{proof}
 The above Proposition shows that, in the space $\e_h=L^1(V,\d\mu)$, the family of perturbation operators $(\mathbf{B}_h(t),\D(\mathbf{B}_h(t)))_{t\geq 0}$ and the unperturbed evolution family $\{\U_h(t,s)\}_{t\geq s}$ satisfy Assumptions \ref{hypo}. It is easy to show that, under Assumptions \ref{hypSiBmu}, the family of perturbation operators $(\mathbf{B}_h(t),\D(\mathbf{B}_h(t)))_{t\geq 0}$ and the unperturbed evolution family $\{\U_h(t,s)\}_{t\geq s}$ also satisfy Assumptions \ref{hypo1} and \ref{hypo2}. Indeed, let us define the space
$$\X=L^1(\mathbb{R}_+,\e_h)=L^1(\mathbb{R}_+ \times V,\d t \d\mu(v)$$
and let $f \in \X$  be given. The mapping
$$(t,v,s,v') \in \left(\mathbb{R}_+ \times V\right)^2 \mapsto b(t,v,v')\exp\left(-\int_s^t \Sigma(\t,v')\d \t\right)f(s,v')\chi_+(t-s)$$
is measurable. Moreover, it is integrable and, thanks to Fubini's theorem, with considerations similar to those used in the previous proof, we can state that $\U_h(t,s)f(s) \in \D(\mathbf{B}_h(t))$ for almost every $(t,s) \in \Delta$ and that the mapping $(t,s) \in \Delta \mapsto \mathbf{B}_h(t)\U_h(t,s)f(s) \in \e_h$ is integrable. This shows that Assumptions \ref{hypo1} are satisfied. Similar considerations show that Assumptions \ref{hypo2} are also met.

Thanks to these considerations, the general abstract approach developed in Sections 2, 3.1 and 3.2 can be applied here. In particular, Theorem \ref{generevol} applies and one can define an evolution semigroup $\left(\V_h(t,s)\right)_{t\geq s}$ in $\e_h$ given by
\begin{equation}\label{V0t}
\V_h(t,s)\varphi =\U_h(t,s)\varphi + \int_s^t \V_h(t,r)\mathbf{B}_h(r)\U_h(r,s)\varphi \d r,
\qquad \forall \varphi \in L^1(V,\d\mu).
\end{equation}
The $C_0$-semigroup $(\T_{0,h}(t))_{t\geq 0}$ in $\X$ associated to $\left(\U_h(t,s)\right)_{t \geq s}$ is defined by \eqref{lien} and we denote by  $(\Z_h,\D(\Z_h))$ its generator while one associates to the evolution family $\left(\V_h(t,s)\right)_{t\geq s\geq 0}$ a perturbed evolution semigroup $(\T_h(t))_{t \geq 0}$ with generator $(\G_h,\D(\G_h))$. To the perturbation family $(\mathbf{B}_h(t))_{t \geq 0}$ one associates an operator $\fB$ in $\X$ as in \eqref{deffB} and an extension $\widehat{\fB}$ of $\fB$ is provided by Theorem \ref{construcB} so that $\G_h$ is an extension of $(\Z_h+\widehat{\fB},\D(\Z_h))$ in $\X$. Let us prove now that Assumption \ref{hypo2} is met. Precisely,
\begin{propo}\label{propFbintegral} The perturbed family $(\mathbf{B}_h(t))_{t \geq 0}$ and the evolution family $(\U_h(t,s))_{t \geq s}$ satisfy Assumption \ref{hypo2}. As a consequence, $\widehat{\fB}=\fB\vert_{\D(\Z)}$ and, for any $g \in \D(\Z_h)\subset \X$, one has
$$\int_0^\infty \d t \int_V \sigma(t,v)|g(t,v)|\d\mu(v) < \infty$$
and
$$[\widehat{\fB}g](t,v)=[\mathbf{B}_h(t)g(t,\cdot)](v)=\int_V b(t,v,v')g(t,v')\d\mu(v') \qquad t \geq 0,v \in V.$$
\end{propo}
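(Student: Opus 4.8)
The plan is to verify that, in the state space $\e_h=L^1(V,\d\mu)$, the pair $\big((\mathbf{B}_h(t))_{t\geq 0},(\U_h(t,s))_{t\geq s}\big)$ satisfies the two structural hypotheses needed to run the abstract machinery of Section \ref{sec:sec4}, namely Assumptions \ref{hypo2} and \ref{hypB}; once this is done, the pointwise description of $\widehat{\fB}$ will follow at once from Proposition \ref{closable}, and the joint integrability of $\sigma(\cdot,\cdot)\,g(\cdot,\cdot)$ will be extracted from the resolvent representation of $\Z_h$ together with the subcritical bound \eqref{subcritic}. Assumptions \ref{hypo} for this pair have already been checked in Proposition \ref{propo:homo}, so only \ref{hypo2} (which contains \ref{hypo1}) and \ref{hypB} remain.

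For Assumption \ref{hypo2}, I would write, for $(t,s)\in\Delta$ and $\varphi\in\e_h$,
\[
\big[\mathbf{B}_h(t)\U_h(t,s)\varphi\big](v)=\int_V k(t,s;v,v')\,\varphi(v')\,\d\mu(v'),\qquad k(t,s;v,v')=b(t,v,v')\exp\Big(-\int_s^t\Sigma(\t,v')\,\d\t\Big),
\]
and observe that, under Assumptions \ref{hypSiBmu}, the kernel $k$ is nonnegative and jointly measurable on $\Delta\times V\times V$: indeed $b$ is measurable by hypothesis, and $(t,s,v')\mapsto\int_s^t\Sigma(\t,v')\,\d\t$ is measurable since $\Sigma$ is measurable and locally integrable in time. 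Consequently, for any $f\in L^1(\Delta,\e_h)=L^1(\Delta\times V,\d t\,\d s\,\d\mu(v))$, the map $(t,s,r,v)\in\Delta_2\times V\mapsto\int_V k(t,s;v,v')f(s,r,v')\,\d\mu(v')$ is measurable by Tonelli's theorem (first for nonnegative $f$, then by difference), and — exactly as in the Fubini computation already carried out in the proof of Proposition \ref{propo:homo} — it is finite for a.e. $(t,s,r)\in\Delta_2$ and defines an element of $\e_h$ there; this is the measurability required in Assumption \ref{hypo2}. Assumption \ref{hypB} is even easier: fixing $t\geq 0$ and an increasing sequence $(u_n)_n\subset\D(\mathbf{B}_h(t))\cap(\e_h)_+$ with $u_n\to u$ in $\e_h$ and $\sup_n\|\mathbf{B}_h(t)u_n\|_{\e_h}<\infty$, the identity $\|\mathbf{B}_h(t)u_n\|_{\e_h}=\int_V\sigma(t,v)u_n(v)\,\d\mu(v)$ together with the monotone convergence theorem gives $\int_V\sigma(t,v)u(v)\,\d\mu(v)<\infty$, i.e. $u\in L^1(V,\sigma(t,v)\d\mu)=\D(\mathbf{B}_h(t))$, and $\mathbf{B}_h(t)u_n\to\mathbf{B}_h(t)u$ in $\e_h$ by a further application of monotone convergence to the nonnegative kernel $b(t,\cdot,\cdot)$.

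With Assumptions \ref{hypo2} and \ref{hypB} now in force, Proposition \ref{closable} applies in the present setting: $\D(\Z_h)\subset\D(\fB)$, $\fB\vert_{\D(\Z_h)}=\widehat{\fB}$, and for $g\in\D(\Z_h)$ one has $g(t,\cdot)\in\D(\mathbf{B}_h(t))$ for a.e. $t\geq 0$ and $[\widehat{\fB}g](t,v)=[\mathbf{B}_h(t)g(t,\cdot)](v)=\int_V b(t,v,v')g(t,v')\,\d\mu(v')$, which is the asserted formula. To obtain the joint integrability, I would first treat $g\in\D(\Z_h)_+$: then $\fB g=\mathbf{B}_h(\cdot)g(\cdot)\in\X$ by the above, and Tonelli together with positivity yields $\|\mathbf{B}_h(t)g(t,\cdot)\|_{\e_h}=\int_V\sigma(t,v)g(t,v)\,\d\mu(v)$, whence $\int_0^\infty\d t\int_V\sigma(t,v)g(t,v)\,\d\mu(v)=\|\fB g\|_\X<\infty$. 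For general $g\in\D(\Z_h)$ I would write $g=(\l-\Z_h)^{-1}\phi$ with $\phi\in\X$, split $\phi=\phi_1-\phi_2$ with $\phi_i\in\X_+$ as in \eqref{decomp}, set $g_i=(\l-\Z_h)^{-1}\phi_i\in\D(\Z_h)_+$, and conclude from $|g|\leq g_1+g_2$ that $\int_0^\infty\d t\int_V\sigma(t,v)|g(t,v)|\,\d\mu(v)\leq\int_0^\infty\d t\int_V\sigma(t,v)(g_1+g_2)(t,v)\,\d\mu(v)<\infty$; alternatively one may insert the explicit resolvent formula $g_i(t,v)=\int_0^t e^{-\l(t-s)}e^{-\int_s^t\Sigma(\t,v)\d\t}\phi_i(s,v)\,\d s$ (the analogue of \eqref{eq:l-Z} for $\Z_h$) into the double integral, swap by Tonelli, and bound the inner $t$-integral by $1$ using \eqref{subcritic}.

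The step I expect to demand the most care is the verification of Assumption \ref{hypo2}: one has to keep track of measurability and a.e.-finiteness simultaneously in all of the variables $(t,s,r,v,v')$, and the only robust way to do this is to reduce everything, via Tonelli's theorem, to the nonnegative jointly measurable kernel $k$ and to the $L^1$-estimate already established in Proposition \ref{propo:homo}; the remaining steps are then routine applications of Fubini's theorem, monotone convergence and the non-flatness decomposition \eqref{decomp}.
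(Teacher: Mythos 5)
Your proposal is correct and follows essentially the same route as the paper: you verify Assumption \ref{hypB} via monotone convergence (the paper uses Fatou plus monotone convergence to the same effect), verify the measurability Assumption \ref{hypo2} using the nonnegative jointly measurable kernel $k(t,s;v,v')=b(t,v,v')\exp(-\int_s^t\Sigma(\t,v')\,\d\t)$ and Tonelli (the paper does exactly this in the text immediately preceding the proposition), and then quote Proposition \ref{closable}. The only genuine added value is that you make explicit how the joint integrability $\int_0^\infty\d t\int_V\sigma(t,v)|g(t,v)|\,\d\mu(v)<\infty$ follows (first for $g\in\D(\Z_h)_+$ by Tonelli, then for general $g$ via the non-flatness decomposition \eqref{decomp} or via the resolvent formula), whereas the paper leaves this step implicit inside the phrase "Proposition \ref{closable} yields the result"; this filling-in is welcome but does not change the underlying argument.
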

\begin{proof} For a fixed $t \geq 0$, let $(\varphi_n)_n$ be a nonnegative increasing sequence in $L^1(V,\d\mu)$ with $\varphi_n \in \D(\mathbf{B}_h(t))$ and $\sup_n \|\mathbf{B}_h(t)\varphi_n\| < \infty$ and $\lim_n \varphi_n=\varphi.$ This exactly means that
$$\sup_n \int_V \sigma(v,t)\varphi_n(v)\d\mu(v) < \infty.$$
According to Fatou's Lemma, one has then $\int_V \sigma(v,t)\varphi(v)\d\mu(v) < \infty$ i.e. $\varphi \in \D(\mathbf{B}_h(t))$. Moreover, by the monotone convergence Theorem, $\lim_n \mathbf{B}_h(t)\varphi_n=\mathbf{B}_h(t)\varphi$ in $L^1(V,\d\mu)$. Assumption \ref{hypo2} is then satisfied and Proposition \ref{closable} yields the result.
\end{proof}

  One can actually characterize $\D(\Z_h)$  along the same lines as \cite[Proposition 2.1]{arlotti}:
\begin{propo} Let $W^{1,1}_0(\mathbb{R}_+;V)$ denotes the space of functions $g=g(t,v) \in \X$ such that, for $\mu$-almost every $v \in V$, the mapping
$$t \geq 0 \longmapsto g(t,v) \text{ is absolutely continuous with }  g(0,v)=0$$
together with $\dfrac{\partial g}{\partial t}  \in \X.$ Then, the generator $(\Z_h,\D(\Z_h))$ of $(\T_h(t))_{t\geq 0}$ is given by
$$\Z_h g(t,v)=-\frac{\partial g}{\partial t}(t,v)-\Sigma(v,t)g(t,v),\qquad g \in \D(\Z)$$
with $\D(\Z_h)=\left\{g=g(t,v) \in W^{1,1}_0(\mathbb{R}_+;V)\,\text{ such that } \Sigma \,g \in \X\right\}.$
\end{propo}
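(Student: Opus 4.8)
The plan is to identify both $\D(\Z_h)$ and the action of $\Z_h$ through the explicit resolvent of the evolutionary semigroup $(\T_{0,h}(t))_{t\geq0}$ associated with $\U_h$ via \eqref{lien}, rather than through the generator limit. First I would recall that, specializing formula \eqref{eq:l-Z} to the present setting, for every $\lambda >0$ and every $f \in \X$ one has
$$\left[(\lambda - \Z_h)^{-1}f\right](t,v) = \int_0^t \exp\Big(-\lambda(t-s) - \int_s^t \Sigma(\tau,v)\,\d\tau\Big) f(s,v)\,\d s \qquad \text{for a.e. } (t,v),$$
so that $\D(\Z_h) = \mathrm{Ran}\big((\lambda - \Z_h)^{-1}\big)$ and $\Z_h g = \lambda g - (\lambda - \Z_h)g$ for $g$ in this range. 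Let $D$ denote the candidate domain, i.e. the set of $g \in W^{1,1}_0(\mathbb{R}_+;V)$ with $\Sigma\, g \in \X$.

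For the inclusion $\D(\Z_h) \subseteq D$ together with the formula, I would take $f \in \X$ and set $g = (\lambda - \Z_h)^{-1}f$; by splitting $f$ into positive and negative parts it suffices to treat $f \geq 0$, so that $g \geq 0$. Writing $g(t,v) = E(t,v)\int_0^t E(s,v)^{-1}f(s,v)\,\d s$ with $E(t,v)=\exp\big(-\lambda t - \int_0^t \Sigma(\tau,v)\,\d\tau\big)$, and using that $\Sigma(\cdot,v)\in L^1_{\mathrm{loc}}(\mathbb{R}_+)$ and $f(\cdot,v)\in L^1_{\mathrm{loc}}(\mathbb{R}_+)$ for $\mu$-a.e. $v$ (Assumption~\ref{hypSiBmu}~(1)), one sees that $t \mapsto g(t,v)$ is locally absolutely continuous, $g(0,v)=0$, and
$$\frac{\partial g}{\partial t}(t,v) = f(t,v) - \lambda g(t,v) - \Sigma(t,v)\, g(t,v) \qquad \text{for a.e. } t.$$
Integrating this identity over $t \in (0,T)$ and then over $v \in V$ and discarding the nonnegative terms $\lambda\int_0^T\!\int_V g$ and $\int_V g(T,v)\,\d\mu(v)$ gives $\int_0^T\!\int_V \Sigma\, g\,\d\mu\,\d t \leq \|f\|_\X$; letting $T \to \infty$ yields $\Sigma\, g \in \X$, hence also $\partial_t g = f - \lambda g - \Sigma\, g \in \X$, i.e. $g \in D$. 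The same ODE shows $\Z_h g = \lambda g - f = -\partial_t g - \Sigma\, g$, which is the asserted formula on $\D(\Z_h)$.

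For the reverse inclusion, given $g \in D$ I would set $f := \lambda g + \partial_t g + \Sigma\, g \in \X$ and show $g = (\lambda - \Z_h)^{-1}f$. For $\mu$-a.e. fixed $v$, both $g(\cdot,v)$ and $\tilde g(\cdot,v) := \big[(\lambda - \Z_h)^{-1}f\big](\cdot,v)$ are locally absolutely continuous, vanish at $t=0$, and solve the linear scalar ODE $h' = f(t,v) - (\lambda + \Sigma(t,v))h$; since $\Sigma(\cdot,v)\in L^1_{\mathrm{loc}}$, a Grönwall argument gives uniqueness, so $g(\cdot,v) = \tilde g(\cdot,v)$ a.e., hence $g \in \D(\Z_h)$. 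This proves $D = \D(\Z_h)$ and completes the argument.

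The only genuinely delicate point is the integrability claim $\Sigma\, g \in \X$ for $g \in \D(\Z_h)$: it does not follow from $g \in \X$ alone, and is obtained precisely by the "integrate the ODE and drop the nonnegative boundary term" device above, which is the non-autonomous avatar of the smoothing effect already exploited in Lemma~\ref{smooth} and Proposition~\ref{propo:homo}. Everything else --- local absolute continuity in $t$, the identification of $\partial_t g$ via the integrating factor $E(t,v)$, and the Grönwall uniqueness --- is routine once one works at fixed $v$ and uses $\Sigma(\cdot,v)\in L^1_{\mathrm{loc}}(\mathbb{R}_+)$.
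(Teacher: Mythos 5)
Your argument is correct and complete. The paper itself does not prove this proposition: it merely cites \cite[Proposition 2.1]{arlotti} as a template, so there is no in-text proof to compare against. Your self-contained resolvent-based argument is exactly the kind of reasoning one would expect that reference to contain: passing through the explicit formula $\left[(\lambda-\Z_h)^{-1}f\right](t,v)=\int_0^t e^{-\lambda(t-s)-\int_s^t\Sigma(\tau,v)\,\d\tau}f(s,v)\,\d s$, extracting the scalar first-order ODE in $t$ at fixed $v$ via the integrating factor $E(t,v)$, obtaining $\Sigma g\in\X$ by the ``integrate-and-drop-nonnegative-terms'' device (the resolvent-level analogue of Lemma~\ref{smooth}), and closing the reverse inclusion by ODE uniqueness (Gr\"onwall with $\Sigma(\cdot,v)\in L^1_{\mathrm{loc}}$). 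All the delicate points are handled: positivity of the resolvent justifies reducing to $f\geq 0$; local integrability of $\Sigma(\cdot,v)$ from Assumption~\ref{hypSiBmu}~(1) makes $E(\cdot,v)$ and $E(\cdot,v)^{-1}$ locally absolutely continuous and locally bounded; and once $\Sigma g\in\X$ is established, $\partial_t g = f-\lambda g-\Sigma g\in\X$ gives global (not merely local) absolute continuity of $g(\cdot,v)$ as demanded by the definition of $W^{1,1}_0(\mathbb{R}_+;V)$. This is a legitimate and efficient proof; no gap.
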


We have now everything in hands to apply the substochastic theory developed in the first part of the paper. In this particular example in which the generator $(\Z_h,\D(\Z_h))$ is explicit in the lifted space $\X$, it is more convenient to treat the  problem of the honesty of the evolution family $\left(\V_h(t,s)\right)_{t\geq s\geq 0}$ directly on the perturbed evolution semigroup $(\T_h(t))_{t \geq 0}$. For such a semigroup, known necessary and sufficient conditions for the honesty have been established by the third author in \cite{musAfrika} and one has
\begin{theo}\label{detaim} Assume that there exists some $M=M(t,v) \in \D(\Z_h)$ with $M(t,v) > 0$ for almost every $(t,v) \in \mathbb{R}^+ \times V$ and such that the detailed balance condition
$$b(t,v,v')M(t,v')=b(t,v',v)M(t,v)$$
holds for any $t\geq 0$ and $\mu$-almost every $v,v' \in V$. If there is some $\lambda >0$ such that
$$\lambda M(t,v)+\frac{\partial M}{\partial t}(t,v) \geq 0 \qquad \forall t \geq 0 \quad \text{ and $\mu$-a.e. } v \in V$$
then the evolution family $\left(\V_h(t,s)\right)_{t\geq s\geq 0}$ is honest in $\e.$
\end{theo}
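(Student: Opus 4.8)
The plan is to use Definition \ref{defi:honest} and reduce everything to proving that the perturbed evolution semigroup $(\T_h(t))_{t\geq 0}$ is honest in the lifted space $\X=L^1(\mathbb{R}_+\times V,\d t\,\d\mu(v))$, and then to apply the necessary and sufficient conditions for honesty of \cite{musAfrika} to this semigroup. First I would record what is already available: by Theorem \ref{construcB}, $\G_h$ is an extension of $(\Z_h+\widehat{\fB},\D(\Z_h))$, and by Proposition \ref{propFbintegral} one has $\D(\Z_h)\subset\D(\fB)$, $[\widehat{\fB}g](t,v)=\int_V b(t,v,v')g(t,v')\d\mu(v')$ for a.e. $(t,v)$ whenever $g\in\D(\Z_h)$, and $\int_0^\infty\d t\int_V\sigma(t,v)|g(t,v)|\d\mu(v)<\infty$ for such $g$; applied to $g=M$ this guarantees in particular that $M(t,\cdot)\in\D(\mathbf{B}_h(t))$ for a.e. $t$, so that all the expressions below are meaningful.

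The heart of the argument is an elementary computation turning the detailed balance condition into a \emph{super-solution inequality}. Using $b(t,v,v')M(t,v')=b(t,v',v)M(t,v)$ for every $t$ and $\mu$-a.e. $v,v'$, one gets
\begin{equation*}
[\widehat{\fB}M](t,v)=\int_V b(t,v,v')M(t,v')\d\mu(v')=M(t,v)\int_V b(t,v',v)\d\mu(v')=\sigma(t,v)M(t,v)
\end{equation*}
for a.e. $(t,v)$. Hence, invoking the sub-critical hypothesis \eqref{subcritic}, i.e. $\sigma(t,v)\leq\Sigma(t,v)$ a.e., together with the assumption $\partial_t M(t,v)\geq-\lambda M(t,v)$, one obtains for a.e. $(t,v)\in\mathbb{R}^+\times V$
\begin{equation*}
\big[(\Z_h+\widehat{\fB})M\big](t,v)=-\frac{\partial M}{\partial t}(t,v)-\big(\Sigma(t,v)-\sigma(t,v)\big)M(t,v)\leq\lambda M(t,v),
\end{equation*}
so that $M$ is a strictly positive element of $\D(\Z_h)$ satisfying $(\Z_h+\widehat{\fB})M\leq\lambda M$ in $\X$.

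It then remains to feed this super-solution into the honesty criterion of \cite{musAfrika}. When $\lambda=0$ this is exactly the detailed-balance sufficient condition established there; for $\lambda>0$ I would reduce to the case $\lambda=0$ by the exponential rescaling $\widetilde{\T}(t):=e^{-\lambda t}\T_h(t)$, which is again a substochastic semigroup in $\X$ whose generator $\G_h-\lambda$ extends $(\Z_h-\lambda+\widehat{\fB},\D(\Z_h))$ and for which $(\Z_h-\lambda+\widehat{\fB})M\leq0$. Since the Dyson--Phillips iterates of $\widetilde{\T}$ are $\widetilde{\T}_n(t)=e^{-\lambda t}\T_{h,n}(t)$ (an immediate induction from Lemma \ref{lem:Tn+1bis}), and since the norm of $\X$ is monotone while $\widehat{\fB}$ and the iterates are positive, for $f\in\X_+$ and $t>0$ one has
\begin{equation*}
e^{-\lambda t}\Big\|\widehat{\fB}\int_0^t\T_{h,n}(s)f\,\d s\Big\|_\X\leq\Big\|\widehat{\fB}\int_0^t e^{-\lambda s}\T_{h,n}(s)f\,\d s\Big\|_\X\leq\Big\|\widehat{\fB}\int_0^t\T_{h,n}(s)f\,\d s\Big\|_\X ;
\end{equation*}
by item (4) of Theorem \ref{equivalencemild} this shows that $\widetilde{\T}$ is honest if and only if $\T_h$ is. Applying the criterion of \cite{musAfrika} to $\widetilde{\T}$ with the weight $M$ then yields honesty of $\T_h$, hence of $(\V_h(t,s))_{t\geq s\geq 0}$ in $\e$.

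I expect the main obstacle to be not this computation, which is routine once Proposition \ref{propFbintegral} is available, but rather the careful matching of the situation in $\X$ with the precise hypotheses of the relevant theorem of \cite{musAfrika}: one must check that the operator $\Z_h=-\partial_t-\Sigma$ together with the collision operator $\widehat{\fB}$ (acting only in the $v$-variable, with kernel $b(t,\cdot,\cdot)$) falls within the class of transport-type substochastic semigroups covered there, that the detailed balance relation is used with the correct, time-diagonal, pairing of the kernel of $\widehat{\fB}$, and that working with a primal weight $M\in\D(\Z_h)$ (rather than a weight in the dual space) is indeed what that criterion requires.
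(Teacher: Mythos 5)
Your core computation—using detailed balance to get $[\widehat{\fB}M](t,v)=\sigma(t,v)M(t,v)$, then combining sub-criticality with the hypothesis on $\partial_t M$ to obtain $(\Z_h+\widehat{\fB})M\leq\lambda M$—is exactly the one in the paper, and is correct. Where you diverge is in how you exploit the resulting sub-eigenfunction inequality. The paper does not rescale: it invokes directly the abstract sub-eigenfunction criterion of \cite[Corollary 3.27]{ALM} (see also \cite{mkvo}), which already covers the case $(\Z_h+\widehat{\fB})M\leq\lambda M$ for arbitrary $\lambda>0$ and yields honesty of the single trajectory $(\T_h(t)M)_{t\geq0}$, and then uses \cite[Theorem 3.30]{ALM} together with the fact that $M>0$ a.e.\ is quasi-interior in the $L^1$-space $\X$ to upgrade trajectory honesty to honesty of the whole semigroup. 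Your route instead reduces to $\lambda=0$ by the rescaling $\widetilde{\T}(t)=e^{-\lambda t}\T_h(t)$; the verification that $\widetilde{\T}_n(t)=e^{-\lambda t}\T_{h,n}(t)$ and the sandwich inequality showing (via Theorem \ref{equivalencemild}(4)) that $\widetilde{\T}$ is honest iff $\T_h$ is, are both correct, but this detour is unnecessary given that the abstract theory in \cite{ALM} already absorbs the $\lambda$. You should also make the last step explicit: what the detailed-balance / sub-eigenfunction criterion gives directly is honesty of the trajectory issued from $M$; passing from there to honesty of the \emph{entire} semigroup requires the quasi-interiority of $M$ (a strictly positive $L^1$-function) and \cite[Theorem 3.30]{ALM}—this step is implicit in your appeal to "the criterion of \cite{musAfrika}" but deserves to be spelled out.
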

\begin{proof} If $M  \in \D(\Z_h)$ with the above properties exists, then it is satisfies
$$[\widehat{\fB}M](t,v)=\int_V b(t,v,v')M(t,v')\d\mu(v')=\int_V b(t,v',v)M(t,v)\d\mu(v')=\sigma(t,v)M(t,v)$$
and
$$\left[(\Z_h+\widehat{\fB})M\right](t,v)=-\dfrac{\partial M}{\partial t}(t,v) + \left(\sigma(t,v)-\Sigma(t,v)\right)M(t,v).$$
Since $M$ is non-negative and $b(t,\cdot,\cdot)$ is subcritical, $\left(\sigma(t,v)-\Sigma(t,v)\right)M(t,v)\leq 0$ for any $t >0$ and almost any $v \in V$ so that
$$\left[(\Z_h+\widehat{\fB})M\right](t,v) \leq -\dfrac{\partial M}{\partial t}(t,v) \leq \lambda M(t,v) \qquad \forall t > 0, \text{ for a.e. } v \in V.$$
This shows that $M$ is a non-negative sub-eigenfunction of $\Z_h+\widehat{\fB}$, i.e. we found $M \in \D(\Z_h) \cap \X_+$ and $\l >0$ such that
$$\left(\Z_h + \widehat{\fB}\right) M \leq \l\,M.$$
According to \cite[Corollary 3.27]{ALM} (see also \cite{mkvo}), this shows that the trajectory $\left(\T_h(t)M\right)_{t \geq 0}$ is honest and, since $M$ is quasi-interior in $\X$ (it is a strictly positive function in a $L^1$-space), \cite[Theorem 3.30]{ALM} shows that the whole semigroup $(\T_h(t))_{t \geq 0}$ is honest from which the result follows.
\end{proof}

\begin{nb} If there exists  $M_0=M_0(v) \in L^1(V,\d\mu)$ with $M_0(v) >0$ for a.e. $v \in V$ and such that the detailed balance
$$b(t,v,v')M_0(v')=b(t,v',v)M_0(v)$$
holds for any $t\geq 0$ and $\mu$-almost every $v,v' \in V$ and
$$\Sigma(t,v)M_0(v) \in L^1([0,\infty),L^1(V,\d\mu(v))$$
then, for any $\lambda >0$, one can construct some
$$M=M(t,v)=\beta(t)M_0(v) \in \D(\Z_h)$$
satisfying the hypothesis of the above Theorem for any nonnegative $\beta(\cdot) \in W^{1,1}(\mathbb{R}_+)$ such that
$$\beta'(t)+\lambda\beta(t) \geq 0 \quad (\forall t \geq 0), \qquad \beta(0)=0, \qquad \beta(t) > 0 \quad \forall t > 0.$$
Such a function $\beta(\cdot) \in W^{1,1}_0(\mathbb{R}_+)$ is easily constructed.
\end{nb}

\subsection{Spatially inhomogeneous linear kinetic equation}\label{sec:inhomo}

We deal now with the spatially inhomogeneous version of the above \eqref{spatiallyhomogeneous}, i.e. we shall consider the following
\begin{equation}\label{spatiallyinhomogeneous}
\left\{\begin{split}
\partial_t f(x,v,t)+v \cdot \nabla_x f(x,v,t) &+ \Sigma(v,t) f(x,v,t)\\
&=\int_{V} b(t,v,v')f(x,v',t)\d\mu(v') \qquad x \in \mathbf{\Omega}, \qquad v \in V\\
f(x,v,t)&=0 \qquad  \text{ if } (x,v) \in \mathbf{\Gamma}_-\\
f(x,v,s)&=h(x,v) \in L^1(\mathbf{\Omega} \times V,\d x\otimes\d\mu(v))
\end{split}\right.
\end{equation}
for some $s \geq 0.$ Here, $\mathbf{\Omega} \subset \R^d$ is a open subset, $\mu$ is a Borel measure over $\R^d$ with support $V$ and one defines
$$\mathbf{\Lambda}=\mathbf{\Omega} \times V$$
and
$$\mathbf{\Gamma}_-:=\{(y,v) \in \partial \mathbf{\Omega} \times V\;\text{ such that } y=x-\tau(x,v)v \text{ for some } x \in \mathbf{\Omega} \text{ with } \tau(x,v) <\infty\}$$
where
$$\tau(x,v)=\inf\{t > 0\::\;x-tv \notin \mathbf{\Omega}\} \qquad \forall (x,v) \in \mathbf{\Lambda}.$$
From a heuristic viewpoint, $\tau(x,v)$ is the time needed by a particle having the position
$x$ and the velocity $-v \in V$ to reach the boundary $\partial \mathbf{\Omega}$. One can prove  that $\tau(\cdot,\cdot)$ is measurable on $\mathbf{\Lambda}$. Moreover $\tau(x,v) = 0$ for any $(x, v) \in \mathbf{\Gamma}_-$. \medskip

Let us then introduce the state space
$$\e=L^1(\mathbf{\Lambda}\,,\,\d x \otimes \d\mu(v))$$
with usual norm and  the  unperturbed evolution family $(\U(t,s))_{t \geq s\geq 0}$ defined through
$$[\U(t,s)\varphi](x,v)=\exp\left(-\int_s^t \Sigma(v,\tau)\d\tau\right)\varphi(x-(t-s)v,v)\chi_{\{t-s < \tau(x,v)\}}(x,v),$$
for any $t\geq s \geq 0$ and any $f \in \e$.  Since $\Sigma(\t,v)\geq 0$, one notices easily that $(\U(t,s))_{t\geq s}$ is an evolution family in $\e$ with $\U(t,s)f \in \e_+$  and $\|\U(t,s)f\| \leq \|f\|$ for any $f \in \e_+$ and any $t \geq s \geq 0.$ This shows that Assumptions \ref{hypo}, \textit{(i)} and \textit{(ii)} are satisfied.\medskip

Moreover, for any $t \in \R_+$, we denote by $\mathbf{\Sigma}(t)$ the multiplication operator
$$\mathbf{\Sigma}(t)\::\:\D(\mathbf{\Sigma}(t)) \subset \e \longrightarrow \e$$ defined by $\D(\mathbf{\Sigma}(t))=L^1\left(\mathbf{\Lambda},\Sigma(t,v)\d x \otimes \d\mu(v)\right)$ and $$[\mathbf{\Sigma}(t)\varphi](x,v)=\Sigma(t,v)\varphi(x,v) \qquad \forall \varphi \in \D(\mathbf{\Sigma}(t)).$$
As in the spatially homogeneous case, the evolution family $(\U(t,s))_{t\geq s\geq 0}$ has some smoothing effect:
\begin{lemme}\label{lem-technint}For any $f \in \e_+$ and any $s \geq 0$, one has $\U(t,s)f \in \D(\mathbf{\Sigma}(t))$ for almost every $t \in [s,\infty)$.
\end{lemme}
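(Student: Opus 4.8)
The plan is to reduce the statement to the spatially homogeneous smoothing estimate already established in Lemma \ref{smooth}. Fix $f \in \e_+$ and $s \geq 0$, and introduce
\[
\Q(t)=\int_{\mathbf{\Lambda}}\Sigma(t,v)\,[\U(t,s)f](x,v)\,\d x\,\d\mu(v) \in [0,\infty], \qquad t \geq s.
\]
Since $\U(t,s)f \geq 0$, the membership $\U(t,s)f \in \D(\mathbf{\Sigma}(t))$ is precisely the statement $\Q(t) < \infty$, so it suffices to show that $\Q \in L^1_{\mathrm{loc}}([s,\infty))$, that is, that $\int_s^T \Q(t)\,\d t < \infty$ for every $T > s$.

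The key step is to integrate out the space variable. For $(t,v)$ fixed, bounding the characteristic function $\chi_{\{t-s<\tau(x,v)\}}$ by $1$, extending $f$ by zero outside $\mathbf{\Omega}$, and using that the translation $x \mapsto x-(t-s)v$ preserves Lebesgue measure on $\R^d$, one obtains
\[
\int_{\mathbf{\Omega}}[\U(t,s)f](x,v)\,\d x \leq \exp\left(-\int_s^t \Sigma(v,\tau)\,\d\tau\right)\int_{\mathbf{\Omega}}f(y,v)\,\d y.
\]
Setting $g(v)=\int_{\mathbf{\Omega}}f(x,v)\,\d x$, Tonelli's theorem gives $g \in \e_h = L^1(V,\d\mu)$ with $\|g\|_{\e_h}=\|f\|_\e$, and the above inequality yields
\[
\Q(t) \leq \int_V \Sigma(t,v)\exp\left(-\int_s^t \Sigma(v,\tau)\,\d\tau\right) g(v)\,\d\mu(v),
\]
which is exactly the quantity $\int_V \Sigma(t,v)\,[\U_h(t,s)g](v)\,\d\mu(v)$ studied in Lemma \ref{smooth} applied to $g$.

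It then remains to invoke the homogeneous computation: integrating the last inequality over $t \in [s,T]$ and using \eqref{intsTQ} for $g$ gives
\[
\int_s^T \Q(t)\,\d t \leq \int_V g(v)\left(1-\exp\left(-\int_s^T \Sigma(\tau,v)\,\d\tau\right)\right)\d\mu(v) \leq \|g\|_{\e_h}=\|f\|_\e < \infty
\]
for every $T > s$, whence $\Q(t) < \infty$ for almost every $t \in [s,\infty)$, which is the claim. The argument is essentially a transcription of the proof of Lemma \ref{smooth}, the only genuinely new ingredient being the elementary spatial bookkeeping — dropping the characteristic function and invoking translation invariance of Lebesgue measure — together with the measurability of all integrands, which follows from the measurability of $\Sigma$, of $\tau$ on $\mathbf{\Lambda}$, and of $f$; this is the mildest of technical points rather than a real obstacle.
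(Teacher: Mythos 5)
Your proof is correct and takes a genuinely different, considerably shorter route than the paper's. The paper's Appendix~B proof splits $\e$ into the subspaces $L^1(\mathbf{\Lambda}_-)$, $L^1(\mathbf{\Lambda}_+\cap\mathbf{\Lambda}_{-,\infty})$ and $L^1(\mathbf{\Lambda}_{+,\infty}\cap\mathbf{\Lambda}_{-,\infty})$ according to whether trajectories exit $\mathbf{\Omega}$ forward and/or backward in time, rewrites each volume integral through the boundary trace measures $\nu_\pm$ via \eqref{10.47}--\eqref{10.49} from \cite{jacek}, and differentiates the resulting one-dimensional profiles $H_{y,v}$ along characteristics to obtain case-by-case pointwise differential inequalities. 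Your argument bypasses all of this geometric bookkeeping: by dropping the indicator $\chi_{\{t-s<\tau(x,v)\}}$ and using translation invariance of Lebesgue measure you integrate out $x$ up front --- this is precisely the spatial averaging inequality \eqref{estimMM}, $\mathcal{M}\U(t,s)f\leq\U_h(t,s)\mathcal{M}f$, that the paper records only later in Section~\ref{sec:inhomo} --- and the claim then follows directly from the spatially homogeneous Lemma~\ref{smooth}. What the paper's decomposition buys is the set of explicit inequalities \eqref{eq:cas1}, \eqref{eq:cas2}, \eqref{eq-cas3} that it reuses verbatim to prove Corollary~\ref{cor-nonhomo}; but your route reaches the same quantitative estimate just as directly: integrating your pointwise bound over $[s,T]$ gives $\int_s^T\|\mathbf{\Sigma}(t)\U(t,s)f\|_\e\,\d t\leq\|\mathcal{M}f\|_{\e_h}-\|\U_h(T,s)\mathcal{M}f\|_{\e_h}$, and since $\|\U(T,s)f\|_\e\leq\|\U_h(T,s)\mathcal{M}f\|_{\e_h}$ (again from \eqref{estimMM}) and $\|\mathcal{M}f\|_{\e_h}=\|f\|_\e$, this is $\leq\|f\|_\e-\|\U(T,s)f\|_\e$, recovering \eqref{dissiKn} without invoking the trace-measure machinery of \cite{jacek}.
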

The proof of this Lemma, of technical nature but non trivial, is postponed to Appendix \ref{app:integration}. \medskip

One defines $\mathbf{B}(t)$ as
$$[\mathbf{B}(t)\varphi](x,v)=\int_V b(t,v,v')\varphi(x,v')\d\mu(v') \qquad \forall \varphi \in \D(\mathbf{B}(t))$$
where, as above,
$$\D(\mathbf{B}(t))=\left\{\varphi \in \e\text{ such that } \int_{\mathbf{\Omega} \times V}\sigma(t,v) |\varphi(x,v)|\d x \d\mu(v) < \infty\right\}$$
and
$$\sigma(t,v)=\int_V b(t,v',v)\d\mu(v').$$
We shall always assume that the Assumptions \ref{hypSiBmu} are in force. In particular, one notices that
\begin{equation}\label{DSigDBt}\D(\mathbf{B}(t)) \subset \D(\mathbf{\Sigma}(t)) \qquad \text{ and } \qquad \|\mathbf{B}(t)\varphi\|_\e \leq \|\mathbf{\Sigma}(t)\varphi\|_\e \qquad \forall t \geq 0,\:\varphi \in \e_+.\end{equation}
Then, one gets the following
\begin{cor}\label{cor-nonhomo} For any $s \geq 0$ and any $f \in \e_+$
$$\U (t,s)f \in \D(\mathbf{B}(t)) \qquad \text{ for almost every } t \in [s,\infty)$$
and the mapping $t \in [s,\infty)\mapsto \mathbf{B}(t)\U(t,s)u$ is measurable. Moreover,
\begin{equation}\begin{split}\label{dissiKn}
\int_s^r \|\mathbf{B}(t)\U(t,s)f\|_{\e}\d t &\leq \int_s^r \|\mathbf{\Sigma}(t)\U(t,s)f\|_{\e}\d t \\
&\leq \|f\|_\e - \|\U(r,s)f\|_{\e} \qquad \forall \varphi\in \e_+\;,\,r\geq s \geq 0.\end{split}\end{equation}
\end{cor}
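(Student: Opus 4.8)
The plan is to obtain the three assertions exactly as in the spatially homogeneous Proposition~\ref{propo:homo}, now combining Lemma~\ref{lem-technint} with the comparison \eqref{DSigDBt} and an explicit free-streaming computation.

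First I would fix $s \geq 0$ and $f \in \e_+$; since $\U(t,s)$ is positive, $\U(t,s)f \in \e_+$ for every $t \geq s$. By Lemma~\ref{lem-technint}, $\U(t,s)f \in \D(\mathbf{\Sigma}(t))$ for almost every $t \in [s,\infty)$, and the sub-criticality \eqref{subcritic}, which gives $\sigma(t,v) \leq \Sigma(t,v)$ and hence $\D(\mathbf{\Sigma}(t)) \subset \D(\mathbf{B}(t))$ together with $\|\mathbf{B}(t)\varphi\|_\e \leq \|\mathbf{\Sigma}(t)\varphi\|_\e$ for $\varphi \in \D(\mathbf{\Sigma}(t)) \cap \e_+$ (this is \eqref{DSigDBt}), yields $\U(t,s)f \in \D(\mathbf{B}(t))$ for a.e. $t \geq s$ and the pointwise bound $\|\mathbf{B}(t)\U(t,s)f\|_\e \leq \|\mathbf{\Sigma}(t)\U(t,s)f\|_\e$; integrating in $t$ gives the first inequality in \eqref{dissiKn}. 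For the measurability of $t \mapsto \mathbf{B}(t)\U(t,s)f \in \e$ I would use the explicit representation
$$[\mathbf{B}(t)\U(t,s)f](x,v)=\int_V b(t,v,v')\exp\Big(-\int_s^t \Sigma(\tau,v')\,\d\tau\Big)f\big(x-(t-s)v',v'\big)\chi_{\{t-s<\tau(x,v')\}}\,\d\mu(v'),$$
note that the integrand is jointly measurable in $(t,x,v,v')$ (using the measurability of $\tau$ recalled in the text) and invoke \cite[Lemma 3.2]{miya} to conclude that $t \mapsto \mathbf{B}(t)\U(t,s)f$ is a measurable $\e$-valued map.

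The heart of the matter is the second inequality in \eqref{dissiKn}. By Tonelli's theorem (all integrands being nonnegative),
$$\int_s^r \|\mathbf{\Sigma}(t)\U(t,s)f\|_\e\,\d t=\int_V\d\mu(v)\int_s^r\d t\;\Sigma(t,v)e^{-\int_s^t\Sigma(\tau,v)\d\tau}\int_{\mathbf{\Omega}}f\big(x-(t-s)v,v\big)\chi_{\{t-s<\tau(x,v)\}}\,\d x.$$
In the innermost integral I would perform, for fixed $t$ and $v$, the volume-preserving substitution $y=x-(t-s)v$; it turns the domain $\{x\in\mathbf{\Omega}:\,t-s<\tau(x,v)\}$ into $\mathcal{A}_{t-s,v}:=\{y\in\mathbf{\Omega}:\,y+\theta v\in\mathbf{\Omega}\ \text{for all}\ \theta\in[0,t-s]\}$, a set which decreases as $t$ increases. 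Pulling the $y$-integral out again by Tonelli, bounding $\chi_{\mathcal{A}_{t-s,v}}(y)\leq 1$ and using that $t\mapsto\int_s^t\Sigma(\tau,v)\,\d\tau$ is absolutely continuous (as $\Sigma(\cdot,v)\in L^1_{\mathrm{loc}}$), so that $\int_s^r\Sigma(t,v)e^{-\int_s^t\Sigma(\tau,v)\d\tau}\,\d t=1-e^{-\int_s^r\Sigma(\tau,v)\d\tau}$, one gets
$$\int_s^r \|\mathbf{\Sigma}(t)\U(t,s)f\|_\e\,\d t\leq \|f\|_\e-\int_{\mathbf{\Lambda}}f(y,v)e^{-\int_s^r\Sigma(\tau,v)\d\tau}\,\d y\,\d\mu(v).$$
Finally, the same change of variables applied to $\|\U(r,s)f\|_\e$ and the trivial bound $\chi_{\mathcal{A}_{r-s,v}}\leq1$ give $\|\U(r,s)f\|_\e\leq\int_{\mathbf{\Lambda}}f(y,v)e^{-\int_s^r\Sigma(\tau,v)\d\tau}\,\d y\,\d\mu(v)$, and the second inequality of \eqref{dissiKn} follows. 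The inequality rather than an equality records the mass lost through the boundary $\mathbf{\Gamma}_-$, i.e. the presence of the cut-off $\chi_{\mathcal{A}_{t-s,v}}$.

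The step I expect to be the only real obstacle is the book-keeping of the free-streaming geometry in the third paragraph: justifying the change of variables on the (possibly irregular) open set $\mathbf{\Omega}$, identifying the transformed domain with $\mathcal{A}_{t-s,v}$, and checking enough joint measurability that Tonelli applies. This is precisely the technical content already settled in Appendix~\ref{app:integration} for Lemma~\ref{lem-technint}; here one simply reruns that argument carrying the extra weight $\Sigma(t,v)\,\d t$ and tracking the telescoping identity above, so no genuinely new difficulty is introduced.
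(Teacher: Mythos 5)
Your argument is correct, but the route for the second inequality in \eqref{dissiKn} is genuinely different from the paper's. The paper's proof works with the partition $\mathbf{\Lambda}=\mathbf{\Lambda}_-\cup(\mathbf{\Lambda}_{-,\infty}\cap\mathbf{\Lambda}_+)\cup(\mathbf{\Lambda}_{+,\infty}\cap\mathbf{\Lambda}_{-,\infty})$ of Remark~\ref{nbSplit}, represents each piece via the boundary measures $\nu_\pm$ on $\mathbf{\Gamma}_\pm$ through \eqref{10.47}--\eqref{10.49}, and then integrates the three pointwise inequalities \eqref{eq:cas1}, \eqref{eq:cas2}, \eqref{eq-cas3} already set up in Appendix~\ref{app:integration} for Lemma~\ref{lem-technint} and sums the pieces. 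You bypass the boundary-measure machinery entirely: after Tonelli, a single Lebesgue-preserving translation $y=x-(t-s)v$ converts $\|\mathbf{\Sigma}(t)\U(t,s)f\|_\e$ into an integral over the set $\mathcal{A}_{t-s,v}$, and you close by discarding $\chi_{\mathcal{A}_{t-s,v}}\leq 1$ and using the absolutely continuous primitive $t\mapsto e^{-\int_s^t\Sigma(\tau,v)\,\d\tau}$ exactly as in Lemma~\ref{smooth}. Your route is more elementary and actually yields the slightly stronger intermediate estimate
$\int_s^r\|\mathbf{\Sigma}(t)\U(t,s)f\|_\e\,\d t\leq\|f\|_\e-\int_{\mathbf{\Lambda}}f(y,v)\,e^{-\int_s^r\Sigma(\tau,v)\,\d\tau}\,\d y\,\d\mu(v)$,
from which \eqref{dissiKn} follows since the subtracted integral dominates $\|\U(r,s)f\|_\e$ (as you note, by the same change of variables and dropping $\chi_{\mathcal{A}_{r-s,v}}\leq 1$). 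The one inaccuracy is in your closing paragraph: the joint measurability of $(t,y)\mapsto\chi_{\mathcal{A}_{t-s,v}}(y)$ that Tonelli requires is \emph{not} the technical content settled in Appendix~\ref{app:integration} (the appendix develops the boundary-measure route, not yours); it follows instead from the lower semicontinuity of $\tau$ recalled in Remark~\ref{taumeasu}, since for fixed $v$ one has $\{(\theta,y):\,y\in\mathcal{A}_{\theta,v}\}=\{(\theta,y):\,\tau(y,-v)>\theta\}$. This is a minor misattribution and does not affect the correctness of your proof.
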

Again, the proof of this Corollary which uses techniques introduced in the proof of Lemma \ref{lem-technint} is given in Appendix \ref{app:integration}.\medskip

Notice that, as in the spatially homogeneous setting,  the family of perturbation operators $(\mathbf{B}(t),\D(\mathbf{B}(t)))_{t\geq 0}$ and the unperturbed evolution family $(\U(t,s))_{t\geq s}$ satisfy Assumptions \ref{hypo}.  In particular, Theorem \ref{generevol} applies and one can define an evolution family $\left(\V(t,s)\right)_{t\geq s\geq 0}$ in $\e$ that satisfies the Duhamel formula
\begin{equation}\label{V0tE}
\V(t,s)\varphi =\U(t,s)\varphi + \int_s^t \V(t,r)\mathbf{B}(r)\U(r,s)\varphi \d r,
\qquad \forall \varphi \in \e.
\end{equation}
In the space
$$\X=L^1(\mathbb{R}_+,\e)=L^1(\mathbb{R}_+ \times \mathbf{\Lambda},\d t\otimes\d x\otimes \d\mu(v))$$
one can then define  the $C_0$-semigroup $(\T_{0}(t))_{t\geq 0}$ given by \eqref{lien} and with generator $(\Z,\D(\Z))$, while, to the evolution family $\left(\V(t,s)\right)_{t\geq s\geq 0}$ one associates a $C_0$-semigroup $\Tt$ in $\X$ given by \eqref{lineTt}. Remember that the generator $(\G,\D(\G))$ of $\Tt$ is an extension of the operator $(\Z+\widehat{\fB},\D(\Z))$ where $\widehat{\fB}$ is constructed in Theorem \ref{construcB}. As in the spatially homogenous case, one can prove without major difficulty that, under our assumptions on $b(\cdot,\cdot,\cdot)$ and $\Sigma(\cdot,\cdot)$, Assumptions \ref{hypo1}, \ref{hypB} and \ref{hypo2} are also met, in particular, for any $g \in \D(\Z)\subset \X$, one has
$$\int_0^\infty \d t \int_{\mathbf{\Lambda}} \sigma(t,v)|g(t,x,v)|\d x \d\mu(v) < \infty$$
and
$$[\widehat{\fB}g](t,x,v)=[\mathbf{B}(t)g(t,x,\cdot)](v)=\int_V b(t,v,v')g(t,x,v')\d\mu(v') \qquad t \geq 0,(x,v) \in \mathbf{\Lambda}.$$

Now, to investigate the honesty of $\vt$, we shall, as in the autonomous case recently studied by the third author \cite{musAfrika}, use the honesty properties of the spatially homogeneous family $(\V_h(t,s))_{t\geq s\geq 0}$. To do so, let $\mathcal{M}$ be the spatial average operator
$$\mathcal{M}\::\:\varphi \in \e \mapsto \mathcal{M}\varphi(v)=\int_{\mathbf{\Omega}}\varphi(x,v)\d x, \qquad v \in V$$
one sees that $\mathcal{M}$ provides a \textit{bounded operator} from $\e$ to $\e_h:=L^1(V,\d\mu)$ and one has the following relation:
\begin{equation}\label{estimMM}\left[\mathcal{M}\U(t,s)\varphi\right](v)=\int_{\mathbf{\Omega}} \left[\U(t,s)\varphi\right](x,v)\d x \leq \left[\U_h(t,s)\mathcal{M}\varphi\right](v) \qquad \forall \varphi \in \e_+,\quad \text{ $\mu$-a. e. } v \in V\end{equation}
where $(\U_h(t,s))_{t \geq s \geq 0}$ is the evolution family in $L^1(V,\d\mu)$ introduced in the spatially homogeneous case (see \eqref{U0ts}).
From this, we deduce directly that
\begin{equation}\label{estimMM2}\|\U(t,s)\varphi\|_\e \leq \|\U_h(t,s)\mathcal{M}{\varphi}\|_{L^1(V,\d\mu)} \qquad \forall t \geq s \geq 0, \qquad \varphi \in \e_+\end{equation}  Since $\mathbf{B}(t)$ is local in $x$, one also has
$$\mathcal{M}\mathbf{B}(t)\varphi=\mathbf{B}_h(t)\mathcal{M}\varphi \qquad \forall \varphi \in \D(\mathbf{B}(t)), \qquad t \geq 0$$
where $\mathbf{B}_h(t)$ is the operator defined in \eqref{B0t} (one notices easily that, for any $\varphi \in \D(\mathbf{B}(t))$, $\mathcal{M}\varphi$ belongs to $\D(\mathbf{B}_h(t))$). These basic observations play  a crucial role in the study of autonomous transport equations as studied in \cite{musAfrika} and we shall see it is still the case here. Precisely,  the honesty of the evolution family $\vt$ can be deduced directly from that governing the spatially homogeneous problem introduced in the previous subsection. Namely,
\begin{theo} If the evolution family $(\V_h(t,s))_{t \geq s \geq 0}$ defined in  \eqref{V0t} is honest in $\e_h=L^1(V,\d\mu)$ then the evolution family $\vt$ is honest in $\e$.
\end{theo}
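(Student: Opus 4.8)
The plan is to reduce everything to the honesty criterion of Theorem~\ref{theo:main?}: since $\vt$ is honest if and only if $\lim_{n\to\infty}\int_s^t\|\mathbf{B}(\tau)\V_n(\tau,s)u\|_\e\,\d\tau=0$ for every $u\in\e_+$ and every $t\geq s$, I would bound this integral, iterate by iterate, by the analogous quantity for the spatially homogeneous problem evaluated at $\mathcal{M}u$, and then invoke the honesty of $(\V_h(t,s))_{t\geq s}$ together with Theorem~\ref{theo:main?} applied in $\e_h$.

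First I would record the properties of the averaging operator $\mathcal{M}$ that make the comparison work: $\mathcal{M}$ is a bounded positive operator from $\e$ to $\e_h$ that preserves the $L^1$-norm on the positive cone (so $\|\mathcal{M}\varphi\|_{\e_h}=\|\varphi\|_\e$ for $\varphi\in\e_+$), commutes with Bochner integrals since it is bounded, maps $\D(\mathbf{B}(r))$ into $\D(\mathbf{B}_h(r))$ with $\mathcal{M}\mathbf{B}(r)\varphi=\mathbf{B}_h(r)\mathcal{M}\varphi$, and satisfies the sub-intertwining relation \eqref{estimMM}, namely $\mathcal{M}\U(t,s)\varphi\leq \U_h(t,s)\mathcal{M}\varphi$ for $\varphi\in\e_+$. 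I would also use that each $\mathbf{B}_h(r)$ is monotone on its positive cone (its domain is the order ideal $L^1(V,\sigma(r,\cdot)\,\d\mu)$ and its kernel is nonnegative), that $\U_h(r,s)\mathcal{M}u\in\D(\mathbf{B}_h(r))$ for a.e. $r$ by Proposition~\ref{propo:homo}, and that $\U(r,s)u\in\D(\mathbf{B}(r))$ and $\V_n(\tau,s)u\in\D(\mathbf{B}(\tau))$ for a.e. $r,\tau$ by Corollary~\ref{cor-nonhomo} and Proposition~\ref{decroit}.

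The core step is an induction. Writing $(\V_{h,n}(t,s))_n$ for the Dyson--Phillips iterates of $(\U_h(t,s))_{t\geq s}$ and $(\mathbf{B}_h(t))_{t\geq 0}$ built as in \eqref{V-n}, I would show that
\begin{equation*}
\mathcal{M}\,\V_n(t,s)u\;\leq\;\V_{h,n}(t,s)\,\mathcal{M}u\qquad\text{in }\e_h,\qquad\forall n\in\mathbb{N},\ (t,s)\in\Delta,\ u\in\e_+ .
\end{equation*}
The case $n=0$ is exactly \eqref{estimMM}. For the inductive step I would apply $\mathcal{M}$ to $\V_{n+1}(t,s)u=\int_s^t\V_n(t,r)\mathbf{B}(r)\U(r,s)u\,\d r$, push $\mathcal{M}$ inside the integral, apply the inductive hypothesis with $u$ replaced by $\mathbf{B}(r)\U(r,s)u\in\e_+$, then estimate $\mathcal{M}\mathbf{B}(r)\U(r,s)u=\mathbf{B}_h(r)\mathcal{M}\U(r,s)u\leq\mathbf{B}_h(r)\U_h(r,s)\mathcal{M}u$ using monotonicity of $\mathbf{B}_h(r)$ and \eqref{estimMM}, and finally use the positivity of $\V_{h,n}(t,r)$; integrating in $r$ recognizes $\int_s^t\V_{h,n}(t,r)\mathbf{B}_h(r)\U_h(r,s)\mathcal{M}u\,\d r=\V_{h,n+1}(t,s)\mathcal{M}u$.

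Finally I would transfer this to the quantity in Theorem~\ref{theo:main?}: for a.e. $\tau\geq s$ one has $\mathbf{B}(\tau)\V_n(\tau,s)u\in\e_+$, and since $\mathcal{M}$ preserves the norm on the positive cone and intertwines the collision operators, while $0\leq\mathcal{M}\V_n(\tau,s)u\leq\V_{h,n}(\tau,s)\mathcal{M}u$ with both members in $\D(\mathbf{B}_h(\tau))$ for a.e. $\tau$ (using Proposition~\ref{decroit} in $\e$ and in $\e_h$), monotonicity of $\mathbf{B}_h(\tau)$ gives
\begin{equation*}
\|\mathbf{B}(\tau)\V_n(\tau,s)u\|_\e=\|\mathbf{B}_h(\tau)\mathcal{M}\V_n(\tau,s)u\|_{\e_h}\leq\|\mathbf{B}_h(\tau)\V_{h,n}(\tau,s)\mathcal{M}u\|_{\e_h}.
\end{equation*}
Integrating over $[s,t]$ and letting $n\to\infty$, the right-hand side vanishes for every $t\geq s$ by the honesty of $(\V_h(t,s))_{t\geq s}$ and Theorem~\ref{theo:main?} applied in $\e_h$ (note $\mathcal{M}u\in(\e_h)_+$), hence the left-hand side vanishes as well and Theorem~\ref{theo:main?} applied in $\e$ yields the honesty of $\vt$. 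I expect the only real difficulty to be bookkeeping rather than conceptual: one must verify at each stage of the induction that the elements involved, and the dominating functions $\U_h(r,s)\mathcal{M}u$ and $\V_{h,n}(\tau,s)\mathcal{M}u$, lie in the relevant domains $\D(\mathbf{B}(\cdot))$ and $\D(\mathbf{B}_h(\cdot))$ for a.e. parameter, which is precisely what the smoothing estimates of Corollary~\ref{cor-nonhomo}, Proposition~\ref{propo:homo} and Proposition~\ref{decroit} are designed to provide.
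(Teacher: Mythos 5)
Your proof is correct and follows essentially the same route as the paper: both reduce to the criterion of Theorem~\ref{theo:main?} by establishing, by induction on $n$ from the sub-intertwining relation \eqref{estimMM}, the pointwise bound $\|\mathbf{B}(\t)\V_n(\t,s)u\|_\e \le \|\mathbf{B}_h(\t)\V_{n,h}(\t,s)\mathcal{M}u\|_{\e_h}$ (the paper's inequality \eqref{average}). Your intermediate domination $\mathcal{M}\V_n(t,s)u \le \V_{h,n}(t,s)\mathcal{M}u$ simply spells out the induction the paper calls ``easily deduced.''
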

\begin{proof} For some $\varphi \in \e_+$, one deduces easily by induction from \eqref{estimMM} that, for any $n \in \mathbb{N}$,
\begin{equation}\label{average}
\left\|\mathbf{B}(t+r)\V_n(t+r,r)\varphi\right\|_\e \leq \left\|\mathbf{B}_h(t+r)\V_{n,h}(t+r,r)\mathcal{M}\varphi\right\|_{L^1(V,\d\mu)} \qquad \forall t,r \geq 0,\end{equation}
where $\V_{n,h}(t,r)$ is the $n$-th iterated term in the Dyson-Phillips expansion series giving $(\V_{h}(t,s))_{t \geq s\geq 0}$.
Since the latter is honest, Definition \ref{defi:hon} directly yields that $\vt$ is honest in $\e$.\end{proof}
\begin{nb} Recall that practical criteria yielding the honesty of $(\V_h(t,s))_{t \geq s \geq 0}$ in $\e_h=L^1(V,\d\mu)$ are provided in Theorem \ref{detaim}
\end{nb}
\begin{nb} Notice that, if $\mathbf{\Omega}=\mathbb{R}^d$, inequality \eqref{estimMM} is actually an identity. This implies in particular that inequality \eqref{average} is also an identity. Therefore, if $\mathbf{\Omega}=\mathbb{R}^d$, the evolution family $\vt$ is honest in $\e$ \emph{if and only if}  $(\V_h(t,s))_{t \geq s \geq 0}$ is honest in $\e_h$.
\end{nb}
\section{Application to non-autonomous fragmentation equation}\label{sec:frag}

We show in this section that the previous theoretical approach can be applied to the study of the non-autonomous fragmentation equation investigated recently in \cite{LB}. Precisely, let us consider the problem
\begin{equation}\label{fragment}
\left\{\begin{split}
\partial_t u(t,x)&=-a(t,x)u(t,x) + \int_{x}^\infty a(t,y)b(t,x,y)u(t,y)\d y  \qquad x >0, \:t > s\\
u(s,x)&=f(x) \quad \text{ a.e. } x >0
\end{split}\right.
\end{equation}
at some fixed $s \geq 0.$ As in \cite{LB}, $u$ is the distribution of particles with respect to mass $x$, $a(\cdot,\cdot)$ is the fragmentation rate and $b(t,x,y)$ is the
distribution of daughter particles of mass $x$ resulting from splitting of a parent particle of mass $y$. We investigate the above problem in the Banach space $$\e=L^1(\mathbb{R}^+, x\d x)$$
with usual norm which, for a non-negative function $u$ gives the total mass of the ensemble. We adopt here the following  set of assumptions:
\begin{hyp}\label{hyp:frag}
\begin{enumerate} The rates $a(\cdot,\cdot)$ and $b(\cdot,\cdot,\cdot)$ satisfy the following:
\item The mapping $(t,x) \in \mathbb{R}^+ \times \mathbb{R}^+ \mapsto a(t,x)$ is measurable and non-negative. Moreover, given $T > 0,$
for almost every $x \in \mathbb{R}^+$, the mapping $t \in [0,T] \mapsto a(t, x)$ is bounded.
\item The  mapping $(t, x, y) \in \mathbb{R}^+ \times \mathbb{R}^+ \times \mathbb{R}^+ \mapsto b(t, x, y)$ is measurable and non-negative. Moreover,
$b(t, x,y) = 0$ for almost every $x \geq y$ and $t >0$ with
$$\int_0^y b(t,x,y)x \d x = y \quad \text{ for any }  y \geq 0 \:\text{ and almost any } t \geq 0.$$
\end{enumerate}
\end{hyp}
We observe that the fragmentation equation \eqref{fragment} can be seen as a perturbation of the simpler non-autonomous advection equation
\begin{equation}\label{advection}
\left\{\begin{split}
\partial_t u(t,x)&=-a(t,x)u(t,x) \qquad x >0, \:t > s\\
u(s,x)&=f(x) \quad \text{ a.e. } x >0
\end{split}\right.
\end{equation}
One introduces the family $(\mathbf{A}(t))_{t \geq 0}$ of multiplication operators in $\e$ given by
$$[\mathbf{A}(t)u](x) = -a(t, x)u(x) \qquad  u  \in \D(\mathbf{A}(t))$$
with domain $\D(\mathbf{A}(t))$ given by
$$\D(\mathbf{A}(t))=\{u \in \e\;;\;\mathbf{A}(t)u \in \e\}=L^1(\mathbb{R}^+,\,a(t,x)x\d x).$$ Let then define the evolution family in $\e$ given by
$$\U(t, s)u = \exp\left(-\int_s^t a(\t,\cdot)\d \t\right)u(\cdot) \qquad \:t \geq s \geq 0, \qquad u \in \e$$
and the  family of perturbations $(\mathbf{B}(t))_{t \geq 0}$ given by
$$\left[\mathbf{B}(t)u\right](x)=\int_{x}^\infty a(t,y)b(t,x,y)u(t,y)\d y \qquad \forall u \in \D(\mathbf{B}(t))=\D(\mathbf{A}(t)), \quad t \geq 0.$$
 It is easy to verify, using Assumptions \ref{hyp:frag} and arguing as in the previous section, that the evolution family $(\U(t, s))_{t \geq s}$ and the family
of perturbations $(\mathbf{B}(t))_{t \geq 0}$ satisfy Assumptions \ref{hypo}, \ref{hypo1}, \ref{hypB} and \ref{hypo2}. In particular one can see that, for any
$u \in \e$ and any $s \geq 0$ one has $\U(t, s)u \in \D(\mathbf{B}(t))$ for almost every $t \geq s$; the mapping $t \in [s;\infty) \mapsto \mathbf{B}(t)\U(t, s)u \in \e$ is measurable with
$$\int_s^t \|\mathbf{B}(t)\U(t, s)u \| \d s = \|u\|_\e - \|\U(t,s)u\|_\e  \qquad \forall t \geq s \,\quad u \in \e_+.$$
These conditions allow to apply here the general abstract approach developed in Sections 2, 3.1 and 3.2. We have then, for any $u \in \e$ and any $t \geq s \geq 0$,
$$\V_0(t, s)u = \U(t, s)u,$$
and, for $n \geq 1$
\begin{equation}\label{vnfrag}
\V_n(t, s)u =\int_s^t \U(t, r)\mathbf{B}(r)\V_{n-1}(r, s)u \d r\end{equation} since, for any $n \geq 0$, $s \geq 0,$ and  $u \in \e$ one has $\V_n(t,s)u \in \D(\mathbf{B}(t))$ for almost every $t \geq s$, and the mapping $t \in [s;\infty) \mapsto
\mathbf{B}(t)\V_n(t, s)u \in \e $ is measurable.
In the present case we can also prove the following:
\begin{propo} For any $n \in \mathbb{N}$, any $s \geq 0$ and any $u \in \e_+$,  the mapping $r \in [s,\infty) \mapsto \mathbf{A}(r)\V_n(r,s)u \in \e$ is measurable; furthermore for
any $t \geq  s \geq 0$ and any $u \in \e$ one has
\begin{equation}\label{eq1}\U(t, s)u = u +
\int_s^t \mathbf{A}(r)\U(r, s)u \d r\end{equation}
and, for $n \geq 1$
\begin{equation}\label{eq2}\V_n(t, s)u =
\int_s^t \mathbf{A}(r)\V_n(r, s)u \d r +
\int_s^t \mathbf{B}(r)\V_{n-1}(r, s)u \d r.\end{equation}
\end{propo}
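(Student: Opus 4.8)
The plan is to reduce both identities \eqref{eq1} and \eqref{eq2} to \emph{pointwise} (in the mass variable $x$) statements obtained by applying the fundamental theorem of calculus to the explicit multiplication formula defining $\U$, and then to lift these scalar identities to $\e$-valued Bochner integral identities using that the integrands are locally Bochner integrable in $\e$. In other words, \eqref{eq1} is the statement that $t\mapsto\U(t,s)u$ solves the integrated form of $\partial_t w=\mathbf{A}(t)w$, and \eqref{eq2}, via \eqref{vnfrag}, is the statement that $t\mapsto\V_n(t,s)u$ solves the integrated form of the inhomogeneous problem $\partial_t w=\mathbf{A}(t)w+\mathbf{B}(t)\V_{n-1}(t,s)u$, $w(s)=0$.

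First I would settle integrability and measurability. Since $\D(\mathbf{A}(t))=\D(\mathbf{B}(t))$, Proposition~\ref{decroit} already gives $\V_n(t,s)u\in\D(\mathbf{A}(t))$ for almost every $t\ge s$. The normalization $\int_0^y b(t,x,y)x\,\d x=y$ of Assumptions~\ref{hyp:frag} yields, by Tonelli's theorem, $\|\mathbf{A}(t)\varphi\|_\e=\|\mathbf{B}(t)\varphi\|_\e$ for every $\varphi\in\e_+$ and $t\ge 0$; combined with the positivity of $\V_n(t,s)$ (Proposition~\ref{luisanotes2}), which gives $|\V_n(r,s)u|\le\V_n(r,s)|u|$ pointwise, this implies $\|\mathbf{A}(r)\V_n(r,s)u\|_\e\le\|\mathbf{B}(r)\V_n(r,s)|u|\,\|_\e$ for every $u\in\e$, hence $\int_s^t\|\mathbf{A}(r)\V_n(r,s)u\|_\e\,\d r\le\|u\|_\e<\infty$ by Proposition~\ref{decroit}. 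Choosing a jointly measurable representative of $(r,x)\mapsto[\V_n(r,s)u](x)$ — legitimate since $r\mapsto\V_n(r,s)u\in\e$ is strongly continuous by Proposition~\ref{luisanotes2} — the product $(r,x)\mapsto a(r,x)[\V_n(r,s)u](x)$ is jointly measurable and, by the previous bound, locally integrable for $\d r\otimes x\,\d x$; therefore $r\mapsto\mathbf{A}(r)\V_n(r,s)u$ is a measurable, locally Bochner integrable $\e$-valued map. The same applies to $r\mapsto\mathbf{A}(r)\U(r,s)u$, and $r\mapsto\mathbf{B}(r)\V_{n-1}(r,s)u$ is likewise locally Bochner integrable by Proposition~\ref{decroit}.

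Next I would prove \eqref{eq1}: fix $x>0$; by Assumptions~\ref{hyp:frag} the map $\tau\mapsto a(\tau,x)$ is locally integrable, so $t\mapsto\exp(-\int_s^t a(\tau,x)\,\d\tau)$ is absolutely continuous with derivative $-a(t,x)\exp(-\int_s^t a(\tau,x)\,\d\tau)$ at every Lebesgue point of $a(\cdot,x)$, whence $[\U(t,s)u](x)-u(x)=\int_s^t[\mathbf{A}(r)\U(r,s)u](x)\,\d r$ for almost every $x$. Since $r\mapsto\mathbf{A}(r)\U(r,s)u$ is Bochner integrable in $\e$ and the Bochner integral of an $L^1$-valued function is evaluated pointwise almost everywhere, this scalar identity integrates up to \eqref{eq1}. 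For \eqref{eq2} with $n\ge1$, put $g(r):=\mathbf{B}(r)\V_{n-1}(r,s)u\in\e$; evaluating \eqref{vnfrag} pointwise gives $[\V_n(t,s)u](x)=\int_s^t\exp\bigl(-\int_r^t a(\tau,x)\,\d\tau\bigr)g(r,x)\,\d r$ for almost every $x$. Differentiating in $t$ yields $\partial_t[\V_n(t,s)u](x)=g(t,x)-a(t,x)[\V_n(t,s)u](x)$ for almost every $(t,x)$; integrating from $s$ to $t$ and using $\V_n(s,s)u=0$ (immediate from \eqref{vnfrag}) gives $[\V_n(t,s)u](x)=\int_s^t[\mathbf{A}(r)\V_n(r,s)u](x)\,\d r+\int_s^t g(r,x)\,\d r$ for almost every $x$, which lifts to \eqref{eq2} exactly as before since both integrands are Bochner integrable in $\e$.

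The hard part will be the pointwise differentiation in the proof of \eqref{eq2}: justifying $\partial_t\int_s^t\exp(-\int_r^t a(\tau,x)\,\d\tau)g(r,x)\,\d r=g(t,x)-a(t,x)\int_s^t\exp(-\int_r^t a(\tau,x)\,\d\tau)g(r,x)\,\d r$ needs a dominated-convergence argument (using $g(\cdot,x)\in L^1_{\mathrm{loc}}$ and $a(\cdot,x)\in L^1_{\mathrm{loc}}$) in which the exceptional set of $t$'s a priori depends on $x$, so one must pass through a Fubini argument on $[s,\infty)\times\mathbb{R}^+$ to obtain the identity for almost every $(t,x)$ before returning to $\e$. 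Everything else is routine bookkeeping with Tonelli/Fubini and the positivity estimates already available.
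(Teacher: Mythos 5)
Your proof is correct, but it follows a genuinely different route from the paper. The paper stays at the Banach-space level: it substitutes the already-proved \eqref{eq1} (with $v=\mathbf{B}(r)\V_{n-1}(r,s)u$) into the Duhamel formula \eqref{vnfrag}, applies Fubini to the resulting double $\e$-valued integral, pulls the multiplication operator $\mathbf{A}(\tau)$ out of the inner Bochner integral (the analogue of Proposition~\ref{prop:intDB} for $\mathbf{A}$, legitimate since $\D(\mathbf{A}(t))=\D(\mathbf{B}(t))$ and $\mathbf{A}(t)$ is a nonnegative multiplication operator), and then recognises the inner integral as $\V_n(\tau,s)u$ again via \eqref{vnfrag}. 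You instead descend to the fibres $x\mapsto[\V_n(t,s)u](x)$, verify the scalar integral identity via the fundamental theorem of calculus for absolutely continuous functions, and lift using the fact that Bochner integrals into $L^1$ are computed pointwise a.e. Both are valid; the paper's argument is more intrinsic (it never uses the pointwise structure of $L^1$, only the algebra of \eqref{vnfrag} and \eqref{eq1} plus Fubini) and entirely sidesteps the exceptional-set bookkeeping you flag as the hard part, while yours is more elementary and transparent for this concrete model. Incidentally, that hard part is milder than you suggest: for fixed $x$ in a full-measure set, $t\mapsto e^{-\int_s^t a(\tau,x)\d\tau}\int_s^t e^{\int_s^r a(\tau,x)\d\tau}g(r,x)\d r$ is absolutely continuous as a product of AC functions, so $v(t)-v(s)=\int_s^t v'(\sigma)\d\sigma$ holds for \emph{every} $t$ once the a.e.\ derivative is identified, and no Fubini on the $(t,x)$-product is actually needed. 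Alternatively, you could bypass differentiation altogether by running the paper's Fubini computation scalar-wise: $\int_s^t a(\sigma,x)v(\sigma)\d\sigma=\int_s^t g(r,x)\big(1-e^{-\int_r^t a(\tau,x)\d\tau}\big)\d r=\int_s^t g(r,x)\d r - v(t)$, which gives the scalar identity for every $t$ directly. Your measurability and integrability bookkeeping (the Tonelli identity $\|\mathbf{A}(t)\varphi\|_\e=\|\mathbf{B}(t)\varphi\|_\e$ on $\e_+$ and the domination $\|\mathbf{A}(r)\V_n(r,s)u\|_\e\le\|\mathbf{B}(r)\V_n(r,s)|u|\|_\e$) is correct and in fact more explicit than what the paper records.
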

\begin{proof} The proof of \eqref{eq1} is almost a straightforward application of the definition of $\mathbf{A}(t)$ and $\U(t,s)$ (see for instance \cite[Lemma 2.1]{LB}). Now, for $n \geq 1$, using both \eqref{vnfrag} and \eqref{eq1}, we get
$$\V_n(t,s)u=\int_s^t \mathbf{B}(r)\V_{n-1}(r,s)u \d r  + \int_s^t \d r\int_r^t \mathbf{A}(\t)\U(\t,r)\mathbf{B}(r)\V_{n-1}(r,s)u\d \t$$
for any $u \in \e$ and any $t \geq s\geq 0.$ But, using Assumption \ref{hypo2} and \eqref{eq2}, we check easily that
\begin{multline*}
\int_s^t \d r\int_r^t \mathbf{A}(\t)\U(\t,r)\mathbf{B}(r)\V_{n-1}(r,s)u\d \t=\int_s^t \d \t\int_s^\t \mathbf{A}(\t)\U(\t,r)\mathbf{B}(r)\V_{n-1}(r,s)u\d r\\
=\int_s^t \mathbf{A}(\t)\left(\int_s^\t \U(\t,r)\mathbf{B}(r)\V_{n-1}(r,s)u\d r\right)\d \t =\int_s^t \mathbf{A}(\t)\V_n(\t,s)\d\t\end{multline*}
from which the conclusion easily follows.
\end{proof}

Thanks to the above proposition we obtain
\begin{propo}
Let $u \in \e$ and $t \geq  s \geq 0$ be given. For any $n \in \mathbb{N}$, let $v_n(t, s,\cdot)$ denote   a measurable representation of
$\sum_{k=0}^n
[\V_k(t, s)u](\cdot)$, then for almost every $x  \geq 0$ one has
$$v_n(t, s, x) = u(x) -\int_s^ta(r, x)v_n(r, s, x)\d r +
\int_s^t \int_x^\infty  a(r, y)b(r, x, y)v_{n-1}(r, s, y)\d y\d r.$$
\end{propo}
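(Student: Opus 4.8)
The plan is to derive the claimed scalar identity by summing, over $k=0,\dots,n$, the two coordinate-free identities \eqref{eq1} and \eqref{eq2} of the previous Proposition, and then reading the resulting $\e$-valued equality coordinatewise in the mass variable $x$.

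First I would fix $u\in\e$ and $s\geq 0$. Using that the map $r\in[s,\infty)\mapsto\V_n(r,s)u\in\e$ is strongly continuous (Proposition \ref{luisanotes2}) and that $\e=L^1(\mathbb{R}^+,x\d x)$, I would select for each $n\in\mathbb{N}$ a jointly measurable function $(r,x)\in[s,\infty)\times\mathbb{R}^+\mapsto V_n(r,s,x)$ which, for almost every $r\geq s$, is a representative of $[\V_n(r,s)u](\cdot)$; then $v_n(r,s,x):=\sum_{k=0}^n V_k(r,s,x)$ is a measurable representative of $\sum_{k=0}^n[\V_k(r,s)u](\cdot)$ for almost every $r$. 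This preliminary step is what makes the right-hand side of the asserted identity meaningful, since there $v_n$ and $v_{n-1}$ are integrated with $r$ ranging over $[s,t]$.

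Next, for $n\geq 1$, I would add \eqref{eq1} to the sum of \eqref{eq2} over $k=1,\dots,n$. Using the linearity of $\mathbf{A}(r)$ and $\mathbf{B}(r)$, the telescoping $\U(r,s)u+\sum_{k=1}^n\V_k(r,s)u=\sum_{k=0}^n\V_k(r,s)u$, and the index shift $\sum_{k=1}^n\V_{k-1}(r,s)u=\sum_{k=0}^{n-1}\V_k(r,s)u$, this gives
\begin{multline*}
\sum_{k=0}^n\V_k(t,s)u=u+\int_s^t\mathbf{A}(r)\left(\sum_{k=0}^n\V_k(r,s)u\right)\d r\\
+\int_s^t\mathbf{B}(r)\left(\sum_{k=0}^{n-1}\V_k(r,s)u\right)\d r,
\end{multline*}
all the $\e$-valued Bochner integrals being well-defined by the measurability statements of the previous Proposition; the case $n=0$ is \eqref{eq1} itself, with the convention that the empty sum, i.e. $v_{-1}$, vanishes.

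Finally I would evaluate the last display at almost every $x\geq 0$. Invoking \cite[Lemma 3.2]{miya} to commute the integral in $r$ with evaluation at $x$, and using the explicit actions $[\mathbf{A}(r)w](x)=-a(r,x)w(x)$ and $[\mathbf{B}(r)w](x)=\int_x^\infty a(r,y)b(r,x,y)w(y)\d y$, one reads off precisely
\[
v_n(t,s,x)=u(x)-\int_s^t a(r,x)v_n(r,s,x)\d r+\int_s^t\int_x^\infty a(r,y)b(r,x,y)v_{n-1}(r,s,y)\d y\d r.
\]
The only genuinely delicate point is the measurability bookkeeping of the first step — choosing the representatives $V_k(r,s,\cdot)$ jointly measurably in $(r,x)$ so that the pointwise-in-$x$ identity holds for almost every $x$ simultaneously with the exchange of the $\d r$-integration and evaluation at $x$. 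This, however, requires no new idea: it follows from Assumptions \ref{hypo2} and from the already established measurability and strong continuity of $r\mapsto\V_n(r,s)u$, exactly as in the repeated uses of \cite[Lemma 3.2]{miya} earlier in the paper.
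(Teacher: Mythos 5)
Your proposal is correct and follows essentially the same route as the paper's proof: sum \eqref{eq1} together with \eqref{eq2} for $k=1,\dots,n$, obtain the coordinate-free identity
$\sum_{k=0}^n \V_k(t,s)u = u + \int_s^t \mathbf{A}(r)\sum_{k=0}^n \V_k(r,s)u\,\d r + \int_s^t \mathbf{B}(r)\sum_{k=0}^{n-1}\V_k(r,s)u\,\d r$,
and then read it pointwise in the mass variable $x$. The only difference is that you spell out the joint-measurability bookkeeping for choosing the representatives $V_k(r,s,\cdot)$, a point the paper passes over silently; that elaboration is sound and does not change the argument.
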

\begin{proof} The identities \eqref{eq1} and \eqref{eq2} give
$$\sum_{k=0}^n
\V_k(t, s)u= u +
\int_s^t
\mathbf{A}(r)
\sum_{k=0}^n
\V_k(r, s)u\d r +
\int_s^t
\mathbf{B}(r)
\sum_{k=0}^{n-1}\V_k(r, s)u\d r,$$
and this gives the assertion. \end{proof}
Now suppose $v(t, s, \cdot)$ to be a measurable representation of $\V(t,s)u$. According to Assumptions \ref{hyp:frag} (1), for any
$t \geq s \geq 0$ one has
$$\int_s^t
 a(r, x)v(r, s, x)\d r < \infty \qquad  \text{for a. e. } x \geq 0.$$
Therefore we also have
$$\int_s^t
\d r\int_x^\infty
a(r, y)b(r, x, y)v(r, s, y)\d y < \infty \quad \text{for a. e. } x \geq 0$$  and the following equality holds:
$$v(t,s, x) = u(x) -\int_s^t
a(r, x)v(r, s, x)\d r +
\int_s^t
\d r\int_x^\infty
a(r, y)b(r, x, y)v(r, s, y)\d y.$$
This allows us to recover, in a direct way,  Theorem 4.1 of \cite{LB}.
Moreover, because the honesty of the evolution family $\vt$ is equivalent to that of the evolution semigroup $\Tt$, if one assumes that
$$a(t,x) \in L^1(0,T\,;\,L^\infty([0,R])) \qquad \text{ for any } T >0, R >0,$$
 we have, according to \cite[Theorem 5.1]{LB}, that  $\vt$ is honest in $\e$. This implies that the limit \eqref{eq:defihonest} holds.
\begin{nb}
Notice that, using the results of Sections 3 and 4, we did not need to determine the generator of $\Tt$ to recover \cite[Theorem 4.1 \& Theorem 5.1]{LB} and this allows therefore to simplify the approach of \cite{LB}. Notice also that the fact that \eqref{eq:defihonest} holds for the fragmentation equation \eqref{fragment} was not observed in \cite{LB} and is therefore \emph{a new result}.
\end{nb}

 \appendix

\section{Technical properties of $\Z$ and $\widehat{\fB}$}

We derive in this Appendix several fine properties of the operators $\Z$ and $\widehat{\fB}$ introduced in Section 3. The notations and Assumptions are those of Section 3. In particular, we assume that Assumptions \ref{hypo1} and \ref{hypB} are in force. We begin with
\begin{lemme}\label{lemmeAvarphis} Given $s >0$  and $\varphi \in  \mathscr{C}_c^1(\mathbb{R}_+)$, let us define the operator
\begin{equation}\begin{cases}\label{Avarphis}
\mathcal{A}_{\varphi,s}\::\:u \in \e &\longmapsto  \mathcal{A}_{\varphi,s}u \in \X\\
&\text{ with } \quad [\mathcal{A}_{\varphi,s}u](t)=\chi_+(t-s)\varphi(t)\U(t,s)u \qquad \forall t\geq 0.\end{cases}\end{equation}
Then $\mathcal{A}_{\varphi,s}u \in \D(\Z)$ for any $u \in \e$ with
$$\Z \mathcal{A}_{\varphi,s}=-\mathcal{A}_{\varphi',s}$$
where $\varphi'$ is the derivative of $\varphi$. Moreover,
\begin{equation}\label{fBAs}[\widehat{\fB}\mathcal{A}_{\varphi,s}u](t)=\mathbf{B}(t)[\mathcal{A}_{\varphi,s}u](t) \qquad \text{ for almost every } t \geq 0,\end{equation}
i.e. $\mathcal{A}_{\varphi,s}u\in \D(\fB)$ and $\widehat{\fB}\mathcal{A}_{\varphi,s}u=\fB \mathcal{A}_{\varphi,s}u$.
\end{lemme}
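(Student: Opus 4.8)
The plan is to verify each of the three assertions about $\mathcal{A}_{\varphi,s}$ by direct computation, exploiting the semigroup $(\T_0(t))_{t\geq 0}$ and the explicit formula \eqref{lien}. First I would show $\mathcal{A}_{\varphi,s}u\in\D(\Z)$ by computing the limit $\lim_{h\to 0^+}h^{-1}(\T_0(h)\mathcal{A}_{\varphi,s}u-\mathcal{A}_{\varphi,s}u)$ in $\X$. Using \eqref{lien}, for $t\geq h$ one has $[\T_0(h)\mathcal{A}_{\varphi,s}u](t)=\U(t,t-h)[\mathcal{A}_{\varphi,s}u](t-h)=\chi_+(t-h-s)\varphi(t-h)\,\U(t,t-h)\U(t-h,s)u=\chi_+(t-h-s)\varphi(t-h)\,\U(t,s)u$, by the evolution property. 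Thus
\begin{equation*}
[\T_0(h)\mathcal{A}_{\varphi,s}u](t)-[\mathcal{A}_{\varphi,s}u](t)=\big(\chi_+(t-h-s)\varphi(t-h)-\chi_+(t-s)\varphi(t)\big)\U(t,s)u .
\end{equation*}
Since $\varphi\in\mathscr{C}_c^1(\mathbb{R}_+)$ has support in $[s,\infty)$, the factor $\chi_+(t-h-s)\varphi(t-h)$ equals $\varphi(t-h)$ for $t\geq s+h$ and the difference quotient converges, uniformly in $t$ and with support in a fixed compact set, to $-\varphi'(t)\chi_+(t-s)$; combined with $\|\U(t,s)u\|_\e\leq\|u\|_\e$ (Assumptions \ref{hypo}(ii) for $u\in\e_+$, and \eqref{decomp} in general) one gets convergence in $\X=L^1(\mathbb{R}_+,\e)$ by dominated convergence. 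Hence $\mathcal{A}_{\varphi,s}u\in\D(\Z)$ and $\Z\mathcal{A}_{\varphi,s}u=-\mathcal{A}_{\varphi',s}u$, i.e. $\Z\mathcal{A}_{\varphi,s}=-\mathcal{A}_{\varphi',s}$.

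Next I would establish \eqref{fBAs}. By Theorem \ref{construcB} (relation \eqref{rela}), $\widehat{\fB}\mathcal{A}_{\varphi,s}u=\lim_{\l\to\infty}\l\,\fB_\l\mathcal{A}_{\varphi,s}u$ in $\X$. Using the representation \eqref{represfBl} of $\fB_\l$ applied to $f=\mathcal{A}_{\varphi,s}u$, and the evolution property $\U(t,\tau)\U(\tau,s)=\U(t,s)$, one computes for a.e.\ $t\geq s$
\begin{equation*}
[\fB_\l\mathcal{A}_{\varphi,s}u](t)=\int_0^t e^{-\l(t-\tau)}\mathbf{B}(t)\U(t,\tau)\chi_+(\tau-s)\varphi(\tau)\U(\tau,s)u\,\d\tau
=\mathbf{B}(t)\U(t,s)u\int_s^t e^{-\l(t-\tau)}\varphi(\tau)\,\d\tau,
\end{equation*}
which is legitimate since $\U(t,s)u\in\D(\mathbf{B}(t))$ for a.e.\ $t\geq s$ by Assumptions \ref{hypo}(iv) and $\mathbf{B}(t)$ is linear on its domain. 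Multiplying by $\l$ and using that $\l\int_s^t e^{-\l(t-\tau)}\varphi(\tau)\,\d\tau\to\varphi(t)$ pointwise for $t>s$ (standard approximation-of-identity / integration-by-parts argument using $\varphi\in\mathscr{C}_c^1$), one obtains the pointwise limit $\mathbf{B}(t)\U(t,s)u\,\varphi(t)=\mathbf{B}(t)[\mathcal{A}_{\varphi,s}u](t)$ for a.e.\ $t\geq s$, and $0$ for $t<s$. To upgrade this pointwise convergence to the $\X$-limit identifying $\widehat{\fB}\mathcal{A}_{\varphi,s}u$, I would note that $\l\fB_\l\mathcal{A}_{\varphi,s}u$ converges in $\X$ to $\widehat{\fB}\mathcal{A}_{\varphi,s}u$ by \eqref{rela}, so a subsequence converges a.e., forcing $[\widehat{\fB}\mathcal{A}_{\varphi,s}u](t)=\mathbf{B}(t)[\mathcal{A}_{\varphi,s}u](t)$ a.e.; in particular $t\mapsto[\mathcal{A}_{\varphi,s}u](t)\in\D(\mathbf{B}(t))$ a.e., so $\mathcal{A}_{\varphi,s}u\in\D(\fB)$ with $\fB\mathcal{A}_{\varphi,s}u=\widehat{\fB}\mathcal{A}_{\varphi,s}u$.

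The remaining bookkeeping is to extend from generators $u\in\e$ of the form handled above to all elements of $\mathfrak{D}$ by linearity (already automatic, since each defining element is of this form and both $\Z$, $\widehat{\fB}$, $\fB$ are linear), which gives $\mathfrak{D}\subset\D(\Z)\cap\D(\fB)$ and $\widehat{\fB}|_{\mathfrak{D}}=\fB|_{\mathfrak{D}}$ as asserted in Remark \ref{remarkVoigt}. I expect the main obstacle to be the careful justification of the interchange of $\mathbf{B}(t)$ with the $\tau$-integral in the formula for $\fB_\l\mathcal{A}_{\varphi,s}u$: one must argue that since $\U(t,s)u\in\D(\mathbf{B}(t))$ for a.e.\ $t$, the scalar-weighted integral $\int_s^t e^{-\l(t-\tau)}\varphi(\tau)\,\d\tau\cdot\U(t,s)u$ stays in $\D(\mathbf{B}(t))$ and $\mathbf{B}(t)$ pulls out the scalar — this is where the linearity of the (possibly unbounded) operator $\mathbf{B}(t)$ on its domain is used, together with the measurability hypothesis in Assumption \ref{hypo1}, rather than any closedness of $\mathbf{B}(t)$. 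The pointwise limit $\l\int_s^t e^{-\l(t-\tau)}\varphi(\tau)\,\d\tau\to\varphi(t)$ for $t>s$ is routine but worth a one-line integration by parts using $\varphi(s)$ may be nonzero — here the support condition $\mathrm{supp}(\varphi)\subset[s,\infty)$ guarantees only $\varphi$ vanishes below $s$, so one writes $\l\int_s^t e^{-\l(t-\tau)}\varphi(\tau)\d\tau=\varphi(t)-e^{-\l(t-s)}\varphi(s)-\int_s^t e^{-\l(t-\tau)}\varphi'(\tau)\d\tau$ and lets $\l\to\infty$.
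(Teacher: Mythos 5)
Your proof is correct and follows essentially the same route as the paper's: compute the difference quotient for $\T_0(h)$ directly to identify $\Z\mathcal{A}_{\varphi,s}=-\mathcal{A}_{\varphi',s}$, then use the explicit representation \eqref{represfBl} together with the evolution law $\U(t,\tau)\U(\tau,s)=\U(t,s)$ to pull the scalar integral out of $\fB_\l\mathcal{A}_{\varphi,s}u$ and pass to the limit $\l\to\infty$ via \eqref{rela}. The only superfluous worry is the ``interchange of $\mathbf{B}(t)$ with the $\tau$-integral'' you flag at the end: formula \eqref{represfBl} already places $\mathbf{B}(\cdot)$ inside the integral, so after using the evolution property the integrand is just a scalar multiple of the fixed vector $\mathbf{B}(t)\U(t,s)u$, and no closedness-type argument is needed.
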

\begin{proof} Let $s >0$, $u \in  \e$
and $\varphi\in  \mathscr{C}_c^1(\mathbb{R}_+)$. For simplicity, we set $f_0=\mathcal{A}_{\varphi,s}u,$ i.e.
$f_0(\cdot)=\chi_+(\cdot - s)\varphi(\cdot)\U(\cdot,s)u$. Clearly,
$$\|f_0\|_\X=\int_0^\infty \|f_0(t)\|_\e \d t=\int_s^\infty |\varphi(t)|\,\|\U(t,s)u\|_\e\d s \leq 2\gamma\|u\|_\e \int_s^\infty |\varphi(t)|\d t$$
and, since $\varphi$ is compactly supported, $f_0 \in \X$.
 By \eqref{lien}, one has
$$\T_0(t)f_0(\cdot)=\chi_{+}(\cdot-t)\U(\cdot,\cdot-t)f_0(\cdot-t)$$
and one checks easily that
\begin{equation*}
\T_0(t)f_0(r)=\chi_{+}(r-t-s)\varphi(r-t)\U(r,s)u \qquad \forall r >0.\end{equation*}
Clearly, for given $r >0$,
$$\lim_{t \to 0^+} t^{-1}\bigg(\chi_+(r-t-s)\varphi(r-t)-\chi_+(r-s)\varphi(r)\bigg)=-\chi_+(r-s)\varphi'(r)$$ and, since $\varphi$ is compactly supported, one deduces easily that the following limit holds in $\X$:
$$\lim_{t \to 0^+} t^{-1}\left(\T_0(t)f_0(\cdot)-f_0(\cdot)\right)=-\chi_+(\cdot-s)\varphi'(\cdot)\U(\cdot,s)u.$$
This exactly means that $f_0 \in \D(\Z)$ with $\Z f_0(\cdot)=-\chi_+(\cdot-s)\varphi'(\cdot)\U(\cdot,s)u$, i.e. $\Z \mathcal{A}_{\varphi,s}=-\mathcal{A}_{\varphi',s}$. Let us now prove that $f_0 \in \D(\fB)$. Since, for any $t >0$ and any $\tau \in (0,t)$, one has as above
$$\U(t,\tau)f_0(\tau)=\chi_+(\tau-s)\varphi(\tau)\U(t,s)u$$
one deduces from  \eqref{represfBl} that, for any $\l >0$,
$$[\fB_\l f_0](t)=\left(\int_s^t \exp(-\l(t-\t))\varphi(\tau)\d\tau\right)\chi_+(t-s)\mathbf{B}(t)\U(t,s)u \quad \text{ for  every  } t > s$$
where we recall that $\mathbf{B}(t)\U(t,s)u$ is well-defined for almost any $t \geq s.$ Then, since
$$\lim_{\l \to \infty} \l\left(\int_s^t \exp(-\l(t-\t))\varphi(\tau)\d\tau\right)=\varphi(t) \quad \text{ for almost every  } t \geq s \geq 0$$
we get easily that the following limit holds in $\X$:
$$\lim_{\l \to \infty} \l \fB_\l f_0(\cdot)=\chi_+(\cdot-s)\varphi(\cdot)\mathbf{B}(\cdot)\U(\cdot,s)u=\mathbf{B}(\cdot)f_0(\cdot).$$
This proves in particular that $f_0 \in \D(\fB)$ and, since $\lim_{\l \to \infty}\l \fB_\l=\widehat{\fB}$ we get $\widehat{\fB}f_0=\fB f_0$ which achieves the proof.
\end{proof}

\begin{lemme}\label{lem:AppA1}
Given $\t >0$ and $f \in \X$
$$\mathbf{B}(s)\U(s,\t)\int_0^\t \U(\t,r)f(r)\d r=\int_0^\t \mathbf{B}(s)\U(s,r)f(r)\d r \qquad \text{ for almost every } s > \t.$$
In particular, $\int_0^\t \U(s,r)f(r)\d r \in \D(\mathbf{B}(s))$ for almost every $s >\t$ with
$$\mathbf{B}(s)\int_0^\t \U(s,r)f(r)\d r=\int_0^\t \mathbf{B}(s)\U(s,r)f(r)\d r \qquad \text{ for almost every } s > \t.$$
\end{lemme}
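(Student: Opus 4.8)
The plan is to reduce the statement to a commutation property of $\mathbf{B}(s)$ with the convolution-type operator $f\mapsto\int_0^s\U(s,r)f(r)\,\d r$, and then to establish that commutation by approximating with resolvents of $\Z$ and invoking Proposition~\ref{closable} together with the ``increasingly closed'' Assumption~\ref{hypB}. First, fix $\tau>0$ and $f\in\X$ and set $g=\chi_{[0,\tau]}f\in\X$. Since $\U(s,\tau)$ is bounded and $\U(s,\tau)\U(\tau,r)=\U(s,r)$ for $s\geq\tau\geq r\geq 0$, pulling $\U(s,\tau)$ inside the Bochner integral gives, for $s>\tau$,
\[
\U(s,\tau)\int_0^\tau\U(\tau,r)f(r)\,\d r=\int_0^\tau\U(s,r)f(r)\,\d r=\int_0^s\U(s,r)g(r)\,\d r .
\]
Hence it suffices to prove that, for every $g\in\X$, one has $\int_0^s\U(s,r)g(r)\,\d r\in\D(\mathbf{B}(s))$ for a.e.\ $s>0$ with $\mathbf{B}(s)\int_0^s\U(s,r)g(r)\,\d r=\int_0^s\mathbf{B}(s)\U(s,r)g(r)\,\d r$, the right-hand side being finite for a.e.\ $s$ by the Remark following Assumption~\ref{hypo1} and Fubini's theorem; substituting $g=\chi_{[0,\tau]}f$ (whose integrand vanishes for $r>\tau$) and combining with the display above then yields both assertions of the Lemma. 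By \eqref{decomp} and linearity we may assume $g\in\X_+$.

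Now fix $g\in\X_+$. For $\lambda>0$ put $h_\lambda=(\l-\Z)^{-1}g\in\D(\Z)$; by \eqref{eq:l-Z}, $h_\lambda(s)=\int_0^s\exp(-\l(s-r))\U(s,r)g(r)\,\d r$ for a.e.\ $s$, while Proposition~\ref{closable}, \eqref{def:fBl} and the definition \eqref{represfBl} yield $h_\lambda(s)\in\D(\mathbf{B}(s))$ with $\mathbf{B}(s)h_\lambda(s)=[\widehat{\fB}h_\lambda](s)=[\fB_\l g](s)=\int_0^s\exp(-\l(s-r))\mathbf{B}(s)\U(s,r)g(r)\,\d r$ for a.e.\ $s$. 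Choose $\l_n\downarrow 0$. Because $g\geq 0$ and $\U$ is positive, $(h_{\l_n}(s))_n\subset\D(\mathbf{B}(s))\cap\e_+$ is nondecreasing (the weights $\exp(-\l_n(s-r))$ increase to $1$) and, by dominated convergence, converges in $\e$ to $u_s:=\int_0^s\U(s,r)g(r)\,\d r$; moreover $\|\mathbf{B}(s)h_{\l_n}(s)\|_\e\leq\int_0^s\|\mathbf{B}(s)\U(s,r)g(r)\|_\e\,\d r<\infty$ uniformly in $n$, for a.e.\ $s$. Assumption~\ref{hypB} therefore applies and gives $u_s\in\D(\mathbf{B}(s))$ and $\mathbf{B}(s)h_{\l_n}(s)\to\mathbf{B}(s)u_s$; since dominated convergence also gives $\mathbf{B}(s)h_{\l_n}(s)\to\int_0^s\mathbf{B}(s)\U(s,r)g(r)\,\d r$ in $\e$, we conclude $\mathbf{B}(s)u_s=\int_0^s\mathbf{B}(s)\U(s,r)g(r)\,\d r$ for a.e.\ $s$ (all exceptional sets involve only the countable family $\{\l_n\}$, so their union is null).

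The delicate step is the passage $\l_n\downarrow 0$ through Assumption~\ref{hypB}: one has to verify that $(h_{\l_n}(s))_n$ is genuinely an increasing sequence in the cone $\e_+$ and that $\sup_n\|\mathbf{B}(s)h_{\l_n}(s)\|_\e<\infty$ for a.e.\ $s$ --- it is precisely for this uniform bound that the integrability of $(t,s)\mapsto\mathbf{B}(t)\U(t,s)g(s)$ on $\Delta$ (the Remark after Assumption~\ref{hypo1}) is needed --- and to organise the various a.e.\ exceptional sets so that a single null set works for all $\l_n$ at once. Everything else is routine: the evolution-family identity $\U(s,\tau)\U(\tau,r)=\U(s,r)$, commuting bounded operators with Bochner integrals, and two applications of the dominated convergence theorem.
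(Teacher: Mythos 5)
Your proposal is circular. The crux of your argument is the step
\[
h_\lambda(s)\in\D(\mathbf{B}(s))\quad\text{with}\quad \mathbf{B}(s)h_\lambda(s)=[\widehat{\fB}h_\lambda](s)=[\fB_\lambda g](s)=\int_0^s e^{-\lambda(s-r)}\mathbf{B}(s)\U(s,r)g(r)\,\d r,
\]
which you obtain from Proposition~\ref{closable}. But Proposition~\ref{closable} is downstream of the lemma you are trying to prove: its proof invokes Lemma~\ref{lemBlexp}, whose proof in turn is ``an almost straightforward application of Lemma~\ref{lem:AppA2}'', and Lemma~\ref{lem:AppA2} is proved \emph{from} Lemma~\ref{lem:AppA1}. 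Strip away the machinery and the problem is clear at a more basic level: equating $\mathbf{B}(s)h_\lambda(s)$ with $\int_0^s e^{-\lambda(s-r)}\mathbf{B}(s)\U(s,r)g(r)\,\d r$ amounts to pulling $\mathbf{B}(s)$ inside a Bochner integral against $\U(s,r)g(r)\,\d r$ — which is exactly (an exponentially weighted version of) the assertion of Lemma~\ref{lem:AppA1}. The identity $\widehat{\fB}h_\lambda=\fB_\lambda g$ \emph{in $\X$} does follow from the abstract construction in Section~3, but identifying $[\widehat{\fB}h_\lambda](s)$ with $\mathbf{B}(s)h_\lambda(s)$ pointwise in $s$ is precisely the content of Proposition~\ref{closable} and cannot be assumed here. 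Your reduction to $g=\chi_{[0,\tau]}f$, the $\lambda_n\downarrow 0$ monotone limit, and the invocation of Assumption~\ref{hypB} to identify $\mathbf{B}(s)u_s$ are all fine in themselves — in fact this is close to the mechanism the paper uses to prove Proposition~\ref{closable} — but you have arranged the dependencies in the wrong order.

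The paper avoids this loop by \emph{not} going through $\widehat{\fB}$ or $\D(\Z)$ at all. It first checks the commutation
\[
\mathbf{B}(s)\U(s,\tau)\int_0^\tau \U(\tau,r)f(r)\,\d r=\int_0^\tau \mathbf{B}(s)\U(s,r)f(r)\,\d r
\]
by hand for $f$ in the dense subspace $\mathfrak{D}$ of finite linear combinations $r\mapsto \varphi_k(r)\U(r,s_k)u_k$: for such $f$, $g_\tau=\int_0^\tau\U(\tau,r)f(r)\,\d r$ is a finite linear combination of elements $\U(\tau,s_k)u_k$, so $\U(s,\tau)g_\tau$ lies in $\D(\mathbf{B}(s))$ for a.e.\ $s$ directly from Assumption~\ref{hypo}(iv), and the equality of the two sides is an elementary computation. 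Then it records that both sides define contractions from $\X_+$ into $L^1((\tau,\infty),\e)$ (the estimates \eqref{eq:ine1} and \eqref{eq:ine}, which you correctly identify as coming from Assumption~\ref{hypo1}/\ref{hypo}(iv)), and concludes by density of $\mathfrak{D}$ in $\X$. Notice that the paper's proof does not invoke Assumption~\ref{hypB} at all for this lemma. If you want to repair your proposal, the cleanest route is to replace the resolvent-regularisation step by this density argument on $\mathfrak{D}$; your $\lambda_n\downarrow 0$ monotone-limit idea then becomes the proof of Proposition~\ref{closable} itself, which is where it properly belongs.
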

\begin{proof} For any $f \in \X_+$ and any fixed $\t >0$, we introduce $g_\t=\int_0^\t \U(\t,r)f(r)\d r \in \e_+.$ According to Assumptions \ref{hypo}, the mapping $ s \in \mathbb{R}^+ \mapsto \chi_+(s-\t)\mathbf{B}(s)\U(s,\t)g_\t$ is measurable and it belongs to $\X$ since
\begin{equation}\label{eq:ine1}\int_\t^\infty \|\mathbf{B}(s)\U(s,\t)g_\t\|_\e \leq \| g_\t\|_\e \leq \|f\|_\X.\end{equation}
Now, thanks to Assumptions \ref{hypo1}, the mapping $(s,r) \in \mathbb{R}^+ \times \mathbb{R}^+ \mapsto \chi_+(s-r)\mathbf{B}(s)\U(s,r)f(r)$ is integrable so that, for almost every $s >0$, the integral $\int_0^s \mathbf{B}(s)\U(s,r)f(r)\d r$ exists and the mapping $s > 0 \mapsto \int_0^s \mathbf{B}(s)\U(s,r)f(r)\d r \in \e_+$ is integrable. In particular, for some fixed $\t >0$, the integral $\int_0^\t \mathbf{B}(s)\U(s,r)f(r)\d r \in \e_+$ exists for almost every $s > \t$ and
\begin{multline}\label{eq:ine}\int_\t^\infty \left\|\int_0^\t \mathbf{B}(s)\U(s,r)f(r)\d r\right\|_\e \d s =\int_\t^\infty \left(\int_0^\t \|\mathbf{B}(s)\U(s,r)f(r)\|_\e \d r\right)\d s\\
\leq \int_{\Delta} \|\mathbf{B}(s)\U(s,r)f(r)\|_\e \d r \d s \leq \|f\|_\X.
\end{multline}
Thus  the mapping $G_\t\;:\; s \in \mathbb{R}^+ \mapsto \chi_+(s-\t)\int_0^\t \mathbf{B}(s)\U(s,r)f(r)\d r$ belongs to $\X$ with $\|G_\t\|_\X \leq \|f\|_\X.$ Let now $\mathfrak{D}$ be the subspace of
$\X$ made of  all linear combinations  of elements of the form: $$t >0 \longmapsto
\varphi(t)\U(t,s)u\,\qquad \text{ where } \,s >0\,,\, u\in
\e$$ and $\varphi \in \mathscr{C}_c^1(\mathbb{R})$ with $\mathrm{supp}(\varphi) \in
[s,\infty)$. Then, for $f \in \mathfrak{D}$, it is easily seen  that
$$\mathbf{B}(s)\U(s,\t)g_\t=\int_0^\t \mathbf{B}(s)\U(s,r)f(r)\d r \qquad \text{ for almost every } s > \t$$
i.e. the two functions represent the same element of $\X$. Since $\mathfrak{D}$ is dense in $\X$ \cite[Remark 1.11]{liske}, the above inequalities \eqref{eq:ine1} and \eqref{eq:ine} allow  to assert that the above identity extends to any $f \in \X_+$ proving the first part of the Lemma. Since moreover
$$\U(s,\t)\int_0^\t \U(\t,r)f(r)\d r=\int_0^\t \U(s,r)f(r)\d r$$
we get that, for almost every $s >\t$, the integral $\int_0^\t \U(s,r)f(r)\d r$ belongs to $\D(\mathbf{B}(s))$  and the conclusion follows.\end{proof}

We complement the above Lemma with the following
\begin{lemme}\label{lem:AppA2}
For any $f \in \X$ and and almost all $(s,\t) \in \Delta$, one has $\int_0^\t \U(s,r)f(r)\d r \in \D(\mathbf{B}(s))$  and
$$\mathbf{B}(s)\int_0^\t \U(s,r)f(r)\d r=\int_0^\t \mathbf{B}(s)\U(s,r)f(r)\d r.$$
\end{lemme}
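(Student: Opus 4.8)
The plan is to bootstrap Lemma~\ref{lem:AppA1}, which fixes $\t$ and controls almost every $s>\t$, into the joint statement over $\Delta$ by a density argument in the variable $\t$, using the positivity of the integrands together with Assumption~\ref{hypB} to pass to the limit through the (possibly non-closed) graph of $\mathbf{B}(s)$.

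First I would reduce to $f\in\X_+$: writing $f=f_1-f_2$ with $f_i\in\X_+$ (the cone $\X_+$ being non-flat, cf.\ \eqref{decomp}), and using that $\D(\mathbf{B}(s))$ is a subspace, that $\mathbf{B}(s)$ is linear, and that $\U(s,r)$ and the two integrals depend linearly on $f$, it suffices to establish the identity for each $f_i$ on a common co-null set of $s$. So assume $f\in\X_+$. By the Remark following Assumption~\ref{hypo1} the map $(t,s)\in\Delta\mapsto\mathbf{B}(t)\U(t,s)f(s)$ is integrable, hence by Fubini's theorem there is a co-null set $\Sigma_f\subset(0,\infty)$ such that for every $s\in\Sigma_f$ the map $r\in(0,s)\mapsto\mathbf{B}(s)\U(s,r)f(r)\in\e$ is integrable. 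For each rational $q>0$, Lemma~\ref{lem:AppA1} provides a Lebesgue-null set $N_q\subset(q,\infty)$ off which $\int_0^q\U(s,r)f(r)\d r\in\D(\mathbf{B}(s))$ and $\mathbf{B}(s)\int_0^q\U(s,r)f(r)\d r=\int_0^q\mathbf{B}(s)\U(s,r)f(r)\d r$. Set $S=\Sigma_f\setminus\bigcup_{q\in\mathbb{Q}_+}N_q$, which is still co-null in $(0,\infty)$.

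Next I fix $s\in S$ and, for $\t\in[0,s]$, write $g_s(\t)=\int_0^\t\U(s,r)f(r)\d r$ and $h_s(\t)=\int_0^\t\mathbf{B}(s)\U(s,r)f(r)\d r$. Since $f(r)\in\e_+$ for a.e.\ $r$ and $\U(s,\cdot)$ is positive, both $\t\mapsto g_s(\t)$ and $\t\mapsto h_s(\t)$ are $\e_+$-valued, nondecreasing in $\t$ for the order of $\e$, norm-continuous, and $\|h_s(\t)\|_\e\le\int_0^s\|\mathbf{B}(s)\U(s,r)f(r)\|_\e\d r<\infty$. Fix $\t\in(0,s)$ and choose rationals $q_n\uparrow\t$ with $q_n<s$. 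By the previous paragraph $g_s(q_n)\in\D(\mathbf{B}(s))\cap\e_+$ with $\mathbf{B}(s)g_s(q_n)=h_s(q_n)$; moreover $(g_s(q_n))_n$ is nondecreasing with $g_s(q_n)\to g_s(\t)\in\e_+$ and $\sup_n\|\mathbf{B}(s)g_s(q_n)\|_\e=\sup_n\|h_s(q_n)\|_\e<\infty$. Assumption~\ref{hypB} then yields $g_s(\t)\in\D(\mathbf{B}(s))$ and $\mathbf{B}(s)g_s(\t)=\lim_n\mathbf{B}(s)g_s(q_n)=\lim_n h_s(q_n)=h_s(\t)$, which is exactly the claimed identity at $(s,\t)$. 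Since this holds for every $s\in S$ and every $\t\in(0,s)$, and the set $\{(s,\t)\in\Delta:\ s\notin S\ \text{or}\ \t\in\{0,s\}\}$ has two-dimensional Lebesgue measure zero, the identity holds for a.e.\ $(s,\t)\in\Delta$; undoing the reduction to $\X_+$ gives the general statement.

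The Fubini step and the monotonicity and continuity of $g_s,h_s$ are routine (the former is essentially the Remark after Assumption~\ref{hypo1}). The one genuinely delicate point is that $\mathbf{B}(s)$ need not be closed, so the passage from rational $q$ to arbitrary $\t$ cannot be carried out by continuity of the graph alone; this is precisely where the reduction to the positive case is needed, so that $q_n\uparrow\t$ produces a \emph{monotone} sequence in $\e_+$ with \emph{uniformly norm-bounded} images, putting us in the exact situation covered by Assumption~\ref{hypB}.
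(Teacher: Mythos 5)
Your proof is correct, but it takes a genuinely different route than the paper's. The paper defines $H_f(s,\t)=\mathbf{B}(s)\U(s,\t)\int_0^\t\U(\t,r)f(r)\d r$ and $\tilde H_f(s,\t)=\int_0^\t\mathbf{B}(s)\U(s,r)f(r)\d r$ as functions of $(s,\t)\in\Delta$, asserts that both are jointly measurable, notes that by Lemma~\ref{lem:AppA1} they agree for a.e.\ $s>\t$ on each horizontal slice, and concludes by Fubini that the measurable set where they disagree is $\Delta$-null; Assumption~\ref{hypB} is not invoked. You instead fix $s$ in a co-null set $S$ obtained by combining a Fubini argument with Lemma~\ref{lem:AppA1} applied to a \emph{countable} family of rational $\t$'s, then pass from rational to arbitrary $\t\in(0,s)$ by monotone approximation, which is precisely where you need the positivity reduction $f\in\X_+$ and the ``increasingly closed'' hypothesis (Assumption~\ref{hypB}) so that the graph of $\mathbf{B}(s)$ can be closed along the increasing sequence $g_s(q_n)\uparrow g_s(\t)$. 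The trade-off is instructive: the paper's argument is shorter and does not use Assumption~\ref{hypB}, but it rests on the joint measurability of $H_f$, which the paper does not justify and which requires a truncation argument in $\t$ (since $\t\mapsto\int_0^\t\U(\t,r)f(r)\d r$ need not be in $\X$) before Assumption~\ref{hypo1} applies. Your argument sidesteps that subtlety, uses Assumption~\ref{hypB} --- which is in any case standing throughout the appendix --- and in return yields a slightly stronger conclusion: for almost every $s$ the identity holds for \emph{every} $\t\in(0,s)$, rather than merely for a.e.\ $\t\in(0,s)$.
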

\begin{proof} For some fixed  $f \in \X$, we notice that the two functions
$$H_f\::\:(s,\t) \in \Delta \mapsto \mathbf{B}(s)\U(s,\t)\int_0^\t \U(\t,r)f(r)\d r \in \e $$ and
$$\tilde{H}_f\::\:(s,\t) \in \Delta \mapsto \int_0^\t \mathbf{B}(s)\U(s,r)f(r)\d r$$
are measurable. Therefore, the subset $\Delta_0=\{(s,\t)\in\Delta\,;\,H_f(s,\t)\neq \tilde{H}_f(s,\t)\}$ is a measurable subset of $\Delta$. Since, given $\t >0$, one has $H_f(s,\t)=\tilde{H}_f(s,\t)$ for almost every $s \geq \t$, one sees that $\Delta_0$ is a set of zero measure. Since moreover
$$\U(s,\t)\int_0^\t \U(\t,r)f(r)\d r=\int_0^\t \U(s,r)f(r)\d r$$
we get that, for almost every $s >0$, the integral $\int_0^\t \U(s,r)f(r)\d r$ belongs to $\D(\mathbf{B}(s))$ for almost every $\t \in (0,s)$ with the desired conclusion.
\end{proof}

\begin{lemme}\label{lem:AppA3}
For any $f \in \X_+$ and any $t >0$, set $g_t:=\int_0^t \T_0(s)f \d s \in \D(\Z)_+.$ One has
$$\la \mathbf{\Psi}\,,\widehat{\fB}g_t \ra_\X=\int_0^\infty \left(\int_{\t}^{\t+t} \|\mathbf{B}(r)\U(r,\t)f(\t)\|_\e \d r\right)\d \t.$$
\end{lemme}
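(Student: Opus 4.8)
The plan is to compute $[\widehat{\fB}g_t](\cdot)$ pointwise and then integrate over $\mathbb{R}_+$, recognizing the right‑hand side via Fubini's theorem. First I would record that $g_t=\int_0^t\T_0(s)f\,ds$ lies in $\D(\Z)_+$: membership in $\D(\Z)$ (with $\Z g_t=\T_0(t)f-f$) is standard for $C_0$‑semigroups, while positivity holds because $f\in\X_+$ and $\T_0(s)$ is substochastic, hence positive. By Theorem \ref{construcB}, $\widehat{\fB}g_t\in\X_+$, so $\langle\mathbf{\Psi},\widehat{\fB}g_t\rangle_\X=\|\widehat{\fB}g_t\|_\X=\int_0^\infty\|[\widehat{\fB}g_t](s)\|_\e\,ds$. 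Using \eqref{lien} together with \cite[Lemma 3.2]{miya} to interchange the $\X$‑valued Bochner integral with evaluation at $s$, I obtain the explicit expression
$$g_t(s)=\int_0^{\min(s,t)}\U(s,s-\sigma)f(s-\sigma)\,d\sigma=\int_{\max(0,s-t)}^s\U(s,r)f(r)\,dr\qquad\text{for a.e. }s\geq 0,$$
after the substitution $r=s-\sigma$.

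The core of the argument is to push $\mathbf{B}(s)$ inside this integral. By Proposition \ref{closable} (applicable since Assumptions \ref{hypo1} and \ref{hypB} are in force), $[\widehat{\fB}g_t](s)=\mathbf{B}(s)g_t(s)$ for a.e. $s$. Writing $g_t(s)=\int_0^s\U(s,r)f(r)\,dr-\chi_{(t,\infty)}(s)\int_0^{s-t}\U(s,r)f(r)\,dr$, I treat the first term by Lemma \ref{lem:AppA2} (equivalently Proposition \ref{prop:intDB}), which gives $\mathbf{B}(s)\int_0^s\U(s,r)f(r)\,dr=\int_0^s\mathbf{B}(s)\U(s,r)f(r)\,dr$ for a.e. $s$. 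The second term is the delicate point: Lemma \ref{lem:AppA2} only yields $\mathbf{B}(s)\int_0^\tau\U(s,r)f(r)\,dr=\int_0^\tau\mathbf{B}(s)\U(s,r)f(r)\,dr$ for a.e. $\tau\in(0,s)$, and the endpoint $\tau=s-t$ lies on a null curve in $\Delta$, so it is not covered directly. To reach it I fix such an $s$, choose a sequence of admissible $\tau_n\uparrow s-t$, set $u_n=\int_0^{\tau_n}\U(s,r)f(r)\,dr\in\D(\mathbf{B}(s))$, and observe that $(u_n)_n$ increases to $u=\int_0^{s-t}\U(s,r)f(r)\,dr$ in $\e$, while $\mathbf{B}(s)u_n=\int_0^{\tau_n}\mathbf{B}(s)\U(s,r)f(r)\,dr$ increases to $\int_0^{s-t}\mathbf{B}(s)\U(s,r)f(r)\,dr$, a quantity that is finite for a.e. $s$ by the integrability of $(s,r)\mapsto\mathbf{B}(s)\U(s,r)f(r)$ on $\Delta$ (the Remark following Assumption \ref{hypo1}) and Fubini. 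Assumption \ref{hypB} then forces $u\in\D(\mathbf{B}(s))$ and $\mathbf{B}(s)u=\int_0^{s-t}\mathbf{B}(s)\U(s,r)f(r)\,dr$. Combining the two terms gives
$$[\widehat{\fB}g_t](s)=\int_{\max(0,s-t)}^s\mathbf{B}(s)\U(s,r)f(r)\,dr\qquad\text{for a.e. }s\geq 0.$$

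Finally, since $f\geq 0$ forces $\mathbf{B}(s)\U(s,r)f(r)\in\e_+$, additivity of the norm on the cone gives $\|[\widehat{\fB}g_t](s)\|_\e=\int_{\max(0,s-t)}^s\|\mathbf{B}(s)\U(s,r)f(r)\|_\e\,dr$. Integrating in $s$ and applying Tonelli's theorem to the nonnegative integrand over the region $\{(s,r):0\leq r\leq s\leq r+t\}$ yields
$$\langle\mathbf{\Psi},\widehat{\fB}g_t\rangle_\X=\int_0^\infty\Big(\int_\tau^{\tau+t}\|\mathbf{B}(r)\U(r,\tau)f(\tau)\|_\e\,dr\Big)d\tau$$
after relabelling, which is the claimed identity. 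The main obstacle is precisely the step of commuting $\mathbf{B}(s)$ with $\int_0^{s-t}\U(s,r)f(r)\,dr$, whose upper endpoint depends on $s$ and hence escapes the fixed‑endpoint Lemmas \ref{lem:AppA1}--\ref{lem:AppA2}; this is exactly where the increasing‑closedness hypothesis \ref{hypB} is used, through the monotone approximation by admissible fixed‑endpoint integrals.
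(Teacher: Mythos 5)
Your proof is correct and amounts to a genuinely different way of disposing of the same obstruction. Both you and the paper start identically: use Proposition~\ref{closable} to write $[\widehat{\fB}g_t](s)=\mathbf{B}(s)g_t(s)$, compute $g_t(s)=\int_{(s-t)\vee 0}^{s}\U(s,r)f(r)\,\d r$ via \eqref{lien} and \cite[Lemma~3.2]{miya}, and then try to push $\mathbf{B}(s)$ inside the integral. The obstruction you isolate is real: the sliding endpoint $\tau=s-t$ traces a Lebesgue-null line in $\Delta$ for fixed $t$, so Lemma~\ref{lem:AppA2} does not apply directly. Where you and the paper part ways is in how this null-set problem is killed. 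The paper's proof observes that, by a change of variables $(s,\tau)\mapsto(s,t)$ with $t=s-\tau$ in Lemma~\ref{lem:AppA2}, the pointwise representation $[\widehat{\fB}g_t](s)=\int_{(s-t)\vee 0}^{s}\mathbf{B}(s)\U(s,\tau)f(\tau)\,\d\tau$ holds for \emph{almost every} $t>0$ (and a.e.\ $s$), derives the identity of the lemma for those $t$, and then extends to \emph{all} $t>0$ by noting that both sides are continuous in $t$ (the left because $t\mapsto g_t$ is continuous in the $\Z$-graph norm and $\widehat{\fB}$ is $\Z$-bounded, the right by dominated convergence via \eqref{estimalpha}). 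You instead fix an admissible $s$, approximate the offending endpoint $\tau=s-t$ monotonically from below by endpoints where Lemma~\ref{lem:AppA2} does apply, and pass to the limit using the increasingly-closed Assumption~\ref{hypB}, together with the Fubini integrability noted in the Remark after Assumption~\ref{hypo1} to bound $\sup_n\|\mathbf{B}(s)u_n\|_\e$. This yields the pointwise representation for \emph{every} $t>0$ (at the a.e.-$s$ level) in one stroke, so you never need the continuity argument. Your route pays slightly more locally (it reuses the monotone-approximation machinery already behind Proposition~\ref{closable}) but buys a proof that is entirely pointwise in $t$ and makes explicit where Assumption~\ref{hypB} enters, something the paper's terse ``for almost any $t>0$\,\dots\,continuous in $t$'' leaves implicit.
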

\begin{proof}   One has
\begin{multline*}
\left[\widehat{\fB} g_t\right](r)=\mathbf{B}(r)g_t(r)=\mathbf{B}(r)\int_0^t \left[\T_0(s)f\right](r)\d s\\
=\mathbf{B}(r)\int_0^t \U(r,r-s)f(r-s)\chi_+(r-s)\d s.
\end{multline*}
In particular, according to Lemma \ref{lem:AppA2} and Proposition \ref{prop:intDB}, for almost any $t > 0$, one has
$$\left[\widehat{\fB} g_t\right](r)=\int_{(r-t)\vee 0}^r \mathbf{B}(r)\U(r,\t)f(\t)\d\t \qquad \text{ for almost any } r >0.$$
Therefore, for almost every $t > 0$,
\begin{equation}\label{identBgtA}\begin{split}
\la \mathbf{\Psi}\,,\widehat{\fB}g_t \ra_\X&=\int_0^\infty \d r \int_{(r-t)\vee 0}^r \left\|\mathbf{B}(r)\U(r,\t)f(\t)\right\|_\e \d\t\\
&=\int_0^\infty \left(\int_{\t}^{\t+t} \|\mathbf{B}(r)\U(r,\t)f(\t)\|_\e \d r\right)\d \t.\end{split}\end{equation}
Since both the right-hand side and the left-hand side of \eqref{identBgtA} are continuous functions of $t$, the above actually holds \emph{for any} $t >0.$
\end{proof}

We end this Appendix with a basic observation, based upon Lemma \ref{lem:AppA2}:
\begin{lemme}\label{lemBlexp} For any $\l >0$ and any $f \in \X_+$,
$$[\fB_\l \exp(-\l t)\T_0(t) f](s)=\mathbf{B}(s)\int_0^{s-t} \exp(-\l(s-\tau))\U(s,\tau)f(\tau)\d\tau \qquad \text{ for a. e. } (s,t) \in \Delta.$$
\end{lemme}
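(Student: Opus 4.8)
The plan is to unwind the definitions \eqref{lien} and \eqref{represfBl}, use the evolution identity $\U(s,\tau)\U(\tau,\tau-t)=\U(s,\tau-t)$ together with an elementary change of variable, and finally appeal to Lemma \ref{lem:AppA2} to pull the operator $\mathbf{B}(s)$ out of the resulting integral.

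Concretely, I would fix $\l>0$ and $f\in\X_+$ and set $g=\exp(-\l t)\T_0(t)f$. Since $\T_0(t)$ is a bounded positive operator in $\X$, one has $g\in\X_+$, so $\fB_\l g$ is well defined by \eqref{represfBl} and, by \eqref{lien}, $g(\tau)=\exp(-\l t)\chi_+(\tau-t)\U(\tau,\tau-t)f(\tau-t)$ for a.e. $\tau\geq 0$. Plugging this into \eqref{represfBl} and using property (ii) of Definition \ref{evfam} (valid here because $s\geq\tau\geq\tau-t\geq 0$ on the relevant set) gives, for a.e. $s\geq t$,
$$[\fB_\l g](s)=\int_t^s \exp(-\l(s-\tau)-\l t)\,\mathbf{B}(s)\U(s,\tau-t)f(\tau-t)\,\d\tau=\int_0^{s-t}\exp(-\l(s-\sigma))\,\mathbf{B}(s)\U(s,\sigma)f(\sigma)\,\d\sigma,$$
the second equality being the substitution $\sigma=\tau-t$ (which turns $-\l(s-\tau)-\l t$ into $-\l(s-\sigma)$). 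It then remains only to move $\mathbf{B}(s)$ to the front of the last integral. This is the content of Lemma \ref{lem:AppA2}, applied to the substochastic evolution family $\U^\l(\t,s):=\exp(-\l(\t-s))\U(\t,s)$ (which together with $(\mathbf{B}(t))_{t\geq 0}$ again satisfies Assumptions \ref{hypo} and \ref{hypo1}, the verification being routine since $\exp(-\l(\t-s))\leq 1$) with the choice $\t=s-t$: one obtains $\int_0^{s-t}\exp(-\l(s-\sigma))\U(s,\sigma)f(\sigma)\,\d\sigma\in\D(\mathbf{B}(s))$ and that $\mathbf{B}(s)$ commutes with this integral, which yields the claimed formula after renaming $\sigma$ back to $\tau$.

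I expect the only genuine difficulty to be measure-theoretic bookkeeping rather than anything conceptual. Indeed, for each fixed $t$ the identities above hold only for $s$ outside a null set that a priori depends on $t$, so some care is needed to upgrade them to an identity valid for almost every $(s,t)\in\Delta$. This is handled exactly as in the proof of Lemma \ref{lem:AppA2}: both sides of the asserted formula are, by Assumption \ref{hypo1}, (jointly) measurable functions of $(s,t)$ on $\Delta$, and they agree on $\{t\}\times[t,\infty)$ for every $t$; hence, by Fubini's theorem, the set of $(s,t)\in\Delta$ where they differ is Lebesgue-null in $\Delta$. The integrability needed to make all the integrals finite is supplied by \eqref{estimalpha} and the remark following Assumption \ref{hypo1}.
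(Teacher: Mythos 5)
Your proof is correct and takes essentially the same route as the paper's: unwind \eqref{lien} and \eqref{represfBl}, change variables using the evolution law, and invoke Lemma \ref{lem:AppA2} to pull $\mathbf{B}(s)$ in front of the integral. Your device of applying Lemma \ref{lem:AppA2} to the rescaled family $\U^\l(t,s)=\exp(-\l(t-s))\U(t,s)$ is a clean way to absorb the exponential weight and makes the final invocation of the lemma airtight, whereas the paper's ``by Lemma \ref{lem:AppA2}, we get easily the conclusion'' leaves precisely that step implicit.
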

\begin{proof} The proof of the result is an almost straightforward application of Lemma \ref{lem:AppA2}. Namely, for a given $t \geq 0$, one has
\begin{multline*}
[\fB_\l \T_0(t)f](s)=\int_0^s \exp(-\l(s-\tau))\mathbf{B}(s)\U(s,\tau)\left[\T_0(t)f\right](\tau)\d\tau\\
=\int_0^{s-t}\exp(-\l r)\mathbf{B}(s)\U(s,s-t)\U(s-t,s-t-r)f(s-t-r)\d r \qquad \text{ for a. e. } s \geq t\end{multline*}
where we used \eqref{lien} and the measurability assumption \ref{hypo1}. Now, by Lemma \ref{lem:AppA2}, we get
easily the conclusion.\end{proof}

\section{Technical integration results for kinetic equations}\label{app:integration}

This last Appendix is devoted to the proof of the technical Lemma \ref{lem-technint} and its consequence, Corollary \ref{cor-nonhomo}. We first introduce some useful notations:  let
$$\mathbf{\Lambda}_\pm:=\{(x,v)\in \mathbf{\Lambda}\,;\;\tau(x,\pm v) < \infty\}, \qquad \mathbf{\Lambda}_{\pm,\infty}:=\{(x,v)\in \mathbf{\Lambda}\;;\;\tau(x,\pm v)=+\infty\}$$
$$\mathbf{\Gamma}_-:=\{(y,v) \in \partial \mathbf{\Omega} \times V\;\text{ such that } y=x-\tau(x,v)v \text{ for some } x \in \mathbf{\Omega} \text{ with } \tau(x,v) <\infty\}$$
$$\mathbf{\Gamma}_+:=\{(y,v) \in \partial \mathbf{\Omega} \times V\;\text{ such that } y=x+\tau(x,-v)v \text{ for some } x \in \mathbf{\Omega} \text{ with } \tau(x,-v) <\infty\}.$$
Finally, set
$$\mathbf{\Gamma}_{\pm,\infty}:=\{(y,v) \in \mathbf{\Gamma}_{\pm}\;;\;y\mp t v \in \mathbf{\Omega} \;\;\forall t > 0\}.$$

\begin{nb}\label{taumeasu} From a heuristic viewpoint, $\tau(x,v)$ is the time needed by a particle having the position
$x$ and the velocity $v \in V$ to reach the boundary $\partial \mathbf{\Omega}$. One can prove  that $\tau(\cdot,\cdot)$ is measurable on $\mathbf{\Lambda}$. Moreover $\tau(x,v) = 0$ for any $(x, v) \in \mathbf{\Gamma}_-$ whereas $\tau(x, v) > 0$ on $\mathbf{\Gamma}_+.$  Notice that the mapping
$$\tau\::\:(x,v) \in \mathbf{\Lambda} \longmapsto  \tau(x,v)$$
is lower semi-continuous \cite{jacek} (see also \cite{voigtHabi}). In particular,
$$\mathbf{\Lambda}_{\pm,\infty}=\bigcap_{n \in \mathbb{N}}\left\{(x,v) \in \mathbf{\Lambda}\,;\;\tau(x,\pm v) > n\right\}$$
is the numerable intersections of open subsets of $\mathbb{R}^{2d}.$ Therefore, $\mathbf{\Lambda}_{\pm,\infty}$  and $\mathbf{\Lambda}_{-,\infty} \cap \mathbf{\Lambda}_{+,\infty}$ are Borel subsets of $\mathbf{\Lambda}$.

\end{nb}

Let us then introduce the state space
$$\e=L^1(\mathbf{\Lambda}\,,\,\d x \otimes \d\mu(v))$$
with usual norm. One recalls then the following, from \cite[Proposition 2.5 \& Proposition 2.10]{jacek},
\begin{lemme}  There exists Borel measures $\nu_{\pm}$ on $\mathbf{\Gamma}_\pm$ such that, for any $f \in L^1(\O,\d \mu)$, one has
\begin{equation}\label{10.47}
\int_{\mathbf{\Lambda}_{\pm}}f(x,v)\d\mu(v)\d x
=\int_{\mathbf{\Gamma}_\pm}\d\nu_\pm(y,v)\int_0^{\tau(y,\pm v)}f(y\mp
sv,v)\d s,
\end{equation}
and
\begin{equation}\label{10.49}
\int_{\mathbf{\Lambda}_{\pm} \cap
\mathbf{\Lambda}_{\mp,\infty}}f(x,v)\d\mu(v)\d x=\int_{\mathbf{\Gamma}_{\pm,\infty}}\d\nu_\pm(y,v)\int_0^{\infty}f\left(y\mp
sv,v\right)\d s.
\end{equation}
\end{lemme}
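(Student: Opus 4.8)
Since the statement is precisely \cite[Proposition 2.5 \& Proposition 2.10]{jacek}, the plan is to recall the disintegration of Lebesgue measure on $\mathbf{\Omega}$ along the characteristic chords of constant velocity, and to indicate which steps carry the technical weight. First I would reduce to a fixed velocity $v\neq 0$ (the slice $\{v=0\}$ lies outside all of $\mathbf{\Lambda}_\pm$ and $\mathbf{\Gamma}_\pm$, since then $\tau(x,0)=+\infty$, so it is harmless): writing $\widehat v=v/|v|$ and decomposing $\R^d=\R\,\widehat v\oplus v^\perp$, Fubini's theorem gives
$$\int_{\mathbf{\Omega}}g(x)\,\d x=\int_{v^\perp}\d z\int_{L_z\cap\mathbf{\Omega}}g(z+r\widehat v)\,\d r,\qquad L_z:=z+\R\,\widehat v,$$
and, $\mathbf{\Omega}$ being open, $L_z\cap\mathbf{\Omega}$ is a countable disjoint union of open intervals $(a_j(z),b_j(z))$. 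For fixed $v$, the point $(z+r\widehat v,v)$ belongs to $\mathbf{\Lambda}_\pm$ exactly when the interval of $L_z\cap\mathbf{\Omega}$ containing $r$ has a \emph{finite} endpoint on the relevant side — the side reached by the characteristic line through $(x,v)$ in the direction dictated by the sign — while a semi-infinite interval corresponds to the ``$\infty$'' set $\mathbf{\Lambda}_{\mp,\infty}$.

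On each such interval I would pass from the arclength coordinate $r$ to the travel time $s$ along the chord, so that $\d r=|v|\,\d s$, $x=y\mp sv$ with $y\in\partial\mathbf{\Omega}$ the finite endpoint, and $s$ ranges over $(0,\tau(y,\pm v))$, the quantity $\tau(y,\pm v)=(b_j-a_j)/|v|$ being exactly the exit time from $y$ along that chord (with the value $+\infty$ for a semi-infinite interval). Summing over $j$, then integrating in $z\in v^\perp$ and finally against $\d\mu(v)$, one rewrites $\int_{\mathbf{\Lambda}_\pm}f(x,v)\,\d x\,\d\mu(v)$ as an iterated integral over $(y,v)\in\mathbf{\Gamma}_\pm$ of $\int_0^{\tau(y,\pm v)}f(y\mp sv,v)\,\d s$; one simply \emph{defines} the Borel measure $\nu_\pm$ by this construction, i.e.\ as the image of $|v|\,\d z$ on $v^\perp$ under the endpoint map $z\mapsto(y(z,v),v)\in\mathbf{\Gamma}_\pm$, integrated against $\d\mu(v)$. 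This yields \eqref{10.47}, and \eqref{10.49} is obtained by restricting the entire computation to the semi-infinite intervals, which is exactly the requirement $(x,v)\in\mathbf{\Lambda}_\pm\cap\mathbf{\Lambda}_{\mp,\infty}$ and forces both $(y,v)\in\mathbf{\Gamma}_{\pm,\infty}$ and $\tau(y,\pm v)=+\infty$.

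The change of variables is elementary; the delicate point — and the reason for quoting \cite{jacek} rather than reproving everything from scratch — is the \emph{measurability and countable additivity} of $\nu_\pm$ for an arbitrary open $\mathbf{\Omega}$, with possibly very irregular boundary, and an arbitrary Borel measure $\mu$ on $\R^d$. One must verify that the maps $(x,v)\mapsto\tau(x,\pm v)$ and $(x,v)\mapsto x\mp\tau(x,\pm v)v$ are Borel, which rests on the lower semicontinuity of $\tau$ recorded in Remark \ref{taumeasu} (this also makes $\mathbf{\Lambda}_{\pm,\infty}$ and $\mathbf{\Gamma}_{\pm,\infty}$ Borel), that the interval decomposition $z\mapsto\{(a_j(z),b_j(z))\}_j$ of $L_z\cap\mathbf{\Omega}$ depends measurably on $(z,v)$, and that the set function constructed above is genuinely a measure; a Tonelli argument on the product $\mathbf{\Gamma}_\pm\times\R_+$ then extends the identity from nonnegative $f$ to arbitrary $f\in L^1(\mathbf{\Lambda},\d x\otimes\d\mu(v))$. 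Carrying out these verifications rigorously is the content of \cite[Propositions 2.5 and 2.10]{jacek}, and I would refer the reader there for the full details, the argument above being only the geometric skeleton.
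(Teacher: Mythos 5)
Your proposal is correct and follows the same route as the paper: both defer the real work to \cite[Propositions 2.5 \& 2.10]{jacek}, the paper by identifying the divergence-free field $\mathcal{F}(x,v)=(v,0)$ and its characteristics, and you by unfolding the resulting chord decomposition explicitly. The added geometric detail — Fubini along $v$-directed lines, countable interval decomposition of $L_z\cap\mathbf{\Omega}$, the travel-time change of variable with Jacobian $|v|$, and the pushforward definition of $\nu_\pm$ — is a helpful elaboration of the same argument rather than a different one.
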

\begin{proof} The proof follows from the general theory developed in \cite[Propositions 2.5 \& 2.10]{jacek} where, with the notations of \cite{jacek}, one has
$$\mathcal{F}(\mathbf{\x})=(v,0) \qquad \forall \mathbf{\x}=(x,v)\in \mathbf{\Lambda}$$
which is divergence free in the sense of \cite{jacek} and the characteristic curves are given by $\zeta(\mathbf{x},t,s)=(x-(t-s)v,v)$ for any $t,s \in \R,$ $\mathbf{x}=(x,v) \in \mathbf{\Lambda}.$
\end{proof}
Let us then introduce  the  unperturbed evolution family $(\U(t,s))_{t \geq s\geq 0}$ defined through
$$[\U(t,s)f](x,v)=\exp\left(-\int_s^t \Sigma(v,\tau)\d\tau\right)f(x-(t-s)v,v)\chi_{\{t-s < \tau(x,v)\}}(x,v),$$
for any $t\geq s \geq 0$ and any $f \in \e$.  One has the following
\begin{lemme} For any $f \in \e$ and any $t \geq s\geq 0$, setting
$$\Theta(t,s,v)=\exp\left(-\int_s^t \Sigma(r,v)\d r\right) \qquad \forall v \in V$$
one has
\begin{multline}\label{U1t}
\int_{\mathbf{\Lambda}_-}\left[\U(t,s)f\right](x,v)\d x\d\mu(v)=\\
\int_{\mathbf{\Gamma}_-}\d\nu_-(y,v)\left(\int_0^{0 \vee (\tau(y,-v)-(t-s))} \Theta(t,s,v)f(y+rv,v)\d r\right)
\end{multline}
\begin{multline}\label{U2t}
\int_{\mathbf{\Lambda}_+ \cap \mathbf{\Lambda}_{-,\infty}}\left[\U(t,s)f\right](x,v)\d x\d\mu(v)=\\
\int_{\mathbf{\Gamma}_{+,\infty}}\left(\int_{t-s}^{\infty} \Theta(t,s,v)f(y-rv,v)\d r\right)\d\nu_+(y,v)
\end{multline}
and
\begin{multline}\label{U3t}
\int_{\mathbf{\Lambda}_{+,\infty} \cap \mathbf{\Lambda}_-}\left[\U(t,s)f\right](x,v)\d x\d\mu(v)=\\
\int_{\mathbf{\Gamma}_{-,\infty}}\left(\int_{t-s}^\infty \Theta(t,s,v)f(y+rv,v)\d r\right)\d\nu_-(y,v).
\end{multline}
\end{lemme}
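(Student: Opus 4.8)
The plan is to apply the change-of-variables formulas \eqref{10.47} and \eqref{10.49} of the preceding lemma directly to the function $(x,v)\mapsto [\U(t,s)f](x,v)$, taking care of the cutoff $\chi_{\{t-s<\tau(x,v)\}}$ that appears in the definition of $\U(t,s)$. First I would fix $t\geq s\geq 0$ and $f\in\e$, and observe that $\Theta(t,s,v)=\exp(-\int_s^t\Sigma(r,v)\d r)$ depends only on $v$, so along each characteristic line $r\mapsto (y+rv,v)$ (or $r\mapsto(y-rv,v)$) it is constant and can be pulled outside the inner integral. The key point is to track how the cutoff translates under the parametrization $x=y+rv$ with $r\in(0,\tau(y,-v))$ coming from \eqref{10.47}. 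For $(x,v)=(y+rv,v)\in\mathbf{\Lambda}_-$ with $y=x-\tau(x,v)v\in\mathbf{\Gamma}_-$, one has $\tau(x,v)=\tau(y,-v)-r$ precisely when $r<\tau(y,-v)$ (this is the standard "time-to-boundary" cocycle identity), so the condition $t-s<\tau(x,v)$ becomes $r<\tau(y,-v)-(t-s)$; combined with $r>0$ this gives the integration range $(0,\,0\vee(\tau(y,-v)-(t-s)))$ appearing in \eqref{U1t}. Thus applying \eqref{10.47} with the "$-$" sign to $g(x,v):=[\U(t,s)f](x,v)$ and using $[\U(t,s)f](y+rv,v)=\Theta(t,s,v)f(y+(r-(t-s))v,v)\chi_{\{r-(t-s)<\tau(y,-v)-(t-s)\}}$ — wait, more directly: $f$ in the integrand is evaluated at $x-(t-s)v=y+(r-(t-s))v$, so after the substitution $r\mapsto r+(t-s)$ in the inner integral one obtains exactly the right-hand side of \eqref{U1t}. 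This handles the first identity.

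For \eqref{U2t}, I would use \eqref{10.49} with the "$+$" sign on the set $\mathbf{\Lambda}_+\cap\mathbf{\Lambda}_{-,\infty}$: here points are parametrized as $x=y-rv$ with $y\in\mathbf{\Gamma}_{+,\infty}$ and $r\in(0,\infty)$. On $\mathbf{\Lambda}_{-,\infty}$ one has $\tau(x,v)=+\infty$, so the cutoff $\chi_{\{t-s<\tau(x,v)\}}$ is identically $1$, and $[\U(t,s)f](y-rv,v)=\Theta(t,s,v)f(y-rv-(t-s)v,v)=\Theta(t,s,v)f(y-(r+(t-s))v,v)$. Substituting $r\mapsto r-(t-s)$ shifts the integration range from $(0,\infty)$ to $(t-s,\infty)$ — note the integrand for $r<t-s$ would evaluate $f$ at a point $y-\rho v$ with $\rho<0$, i.e.\ outside the characteristic segment entering $\mathbf{\Lambda}_+$; one checks such points lie outside $\mathbf{\Lambda}$ (equivalently the cutoff kills them, since then $x=y-rv+?$...) so they contribute nothing, and pulling $\Theta$ out gives \eqref{U2t}. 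The identity \eqref{U3t} is proved identically, applying \eqref{10.49} with the "$-$" sign on $\mathbf{\Lambda}_{+,\infty}\cap\mathbf{\Lambda}_-$, with parametrization $x=y+rv$, $y\in\mathbf{\Gamma}_{-,\infty}$, $r\in(0,\infty)$, and the cutoff again trivial because $\tau(x,v)=+\infty$ there; the shift $r\mapsto r-(t-s)$ yields the range $(t-s,\infty)$.

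The main obstacle I anticipate is the careful bookkeeping of the cocycle identity $\tau(x-\sigma v,v)=\tau(x,v)-\sigma$ (for $0\leq\sigma\leq\tau(x,v)$) together with the correct translation of the cutoff $\chi_{\{t-s<\tau(x,v)\}}$ into a restriction on the parameter $r$, and making sure the boundary measures $\nu_\pm$ are the ones genuinely associated with the flow via \eqref{10.47}–\eqref{10.49}, so that no Jacobian factor is lost. Once these geometric facts are in place, each identity is a one-line substitution. I would also remark that measurability of all integrands follows from Remark \ref{taumeasu} and the measurability of $\Sigma$ and $f$, so Fubini-type manipulations (exchanging the $\d s$ and $\d\nu_\pm$ integrations implicit in applying the disintegration formulas to $\U(t,s)f$) are justified.
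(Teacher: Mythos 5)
Your route coincides with the paper's: apply the disintegration formulas \eqref{10.47}, \eqref{10.49} to $[\U(t,s)f]$, convert the cutoff $\chi_{\{t-s<\tau(x,v)\}}$ into a constraint on the characteristic parameter, then change variables. However, there is a genuine error in the cocycle bookkeeping that propagates through two of the three identities. In the derivation of \eqref{U1t} you parametrize $x=y+rv$ with $y\in\mathbf{\Gamma}_-$, $r\in(0,\tau(y,-v))$, and assert $\tau(x,v)=\tau(y,-v)-r$. That quantity is the \emph{forward} exit time $\tau(x,-v)$; the cutoff involves the \emph{backward} one, and the cocycle identity you yourself record at the end --- $\tau(x-\sigma v,v)=\tau(x,v)-\sigma$, applied at the point $y+rv$ with $\sigma=r$ and using $\tau(y,v)=0$ on $\mathbf{\Gamma}_-$ --- gives $\tau(y+rv,v)=r$. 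This is exactly what the paper's proof uses ($\tau(y+r_0v,v)=\tau(y,v)+r_0=r_0$). Consequently the indicator becomes $\chi_{\{r>t-s\}}$, not $\chi_{\{r<\tau(y,-v)-(t-s)\}}$; the effective range is $(t-s,\tau(y,-v))$ with integrand $\Theta(t,s,v)f(y+(r-(t-s))v,v)$, and the subsequent shift $r\mapsto r-(t-s)$ produces the range $(0,\,0\vee(\tau(y,-v)-(t-s)))$ and the integrand $f(y+rv,v)$ of \eqref{U1t}. With the range you claim, no shift recovers \eqref{U1t}: translating $(0,\tau(y,-v)-(t-s))$ moves the lower endpoint away from $0$ and cannot simultaneously produce the stated integrand and bounds.

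The same $v$ versus $-v$ confusion invalidates your treatment of \eqref{U3t}: on the fiber $\{y+rv\,:\,r>0\}$ with $y\in\mathbf{\Gamma}_{-,\infty}$ one still has $\tau(y+rv,v)=r<\infty$; it is $\tau(y+rv,-v)$ that is $+\infty$ there (the forward ray never exits, but the backward one exits at $y$). So the cutoff is \emph{not} trivial and again reads $r>t-s$; it is this nontrivial cutoff together with the shift that supplies the displayed formula. Your argument for \eqref{U2t} is correct, because there the parametrization is $x=y-rv$ with $y\in\mathbf{\Gamma}_{+,\infty}$ and the backward orbit $x-tv=y-(r+t)v$ remains in $\mathbf{\Omega}$ for all $t>0$, so $\tau(x,v)=+\infty$ and the cutoff genuinely drops out as you say.
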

\begin{proof} We prove only \eqref{U1t}, the other identities being similar. According to \eqref{10.47}
\begin{multline}\label{U1t1}
\int_{\mathbf{\Lambda}_-}\left[\U(t,s)f\right](x,v)\d x\d\mu(v)=\\
\int_{\mathbf{\Gamma}_-}\d\nu_-(y,v)\left(\int_0^{\tau(y,-v)} \left[\U(t,s)f\right](y+r_0 v,v)\d r_0\right).
\end{multline}
Now, for a given $(y,v)\in \mathbf{\Gamma}_-$ and $0 \leq r_0 \leq \tau(y,-v)$, since $\tau(y+r_0 v,v)=\tau(y,v)+r_0=r_0$ one has
\begin{multline*}
[\U(t,s)f](y+r_0v,v)=\Theta(t,s,v)f(y+r_0 v-(t-s)v,v)\chi_{\{t-s \leq \tau(y+r_0 v,v)\}}(y,v)\\
=\Theta(t,s,v)f(y-(t-r_0-s)v,v)\chi_{\{t-s  \leq r_0\}}(y,v).
\end{multline*}
Integrating with respect to $r_0$ and making the change of variable $r_0 \to r=r_0-(t-s)$, we get the result.\end{proof}

\begin{nb}\label{nbSplit} It is natural to identify $L^1(\mathbf{\Lambda}_-)$, $L^1(\mathbf{\Lambda}_{+}\cap \mathbf{\Lambda}_{-,\infty})$ and $L^1(\mathbf{\Lambda}_{+,\infty}\cap \mathbf{\Lambda}_{-,\infty})$ to closed linear subspaces of $\e=L^1(\mathbf{\Lambda})$. With such an identification, one has
$$\|f\|_\e=\|f_1\|_{L^1(\mathbf{\Lambda}_-)} + \|f_2\|_{L^1(\mathbf{\Lambda}_+ \cap \mathbf{\Lambda}_{-,\infty})} + \|f_3\|_{L^1(\mathbf{\Lambda}_{+,\infty}\cap \mathbf{\Lambda}_{-,\infty})} \qquad \forall f \in \e$$
where $f_1,f_2,f_3$ denote the restrictions of $f$ to $\mathbf{\Lambda}_-$, $\mathbf{\Lambda}_{+}\cap \mathbf{\Lambda}_{-,\infty}$ and $\mathbf{\Lambda}_{+,\infty}\cap\mathbf{\Lambda}_{-,\infty}$ respectively. Moreover, it is easy to see that $\U(t,s)$ leaves invariant the three subspaces  $L^1(\mathbf{\Lambda}_-)$, $L^1(\mathbf{\Lambda}_{+}\cap \mathbf{\Lambda}_{-,\infty})$ and $L^1(\mathbf{\Lambda}_{+,\infty}\cap \mathbf{\Lambda}_{-,\infty})$ in the following sense: with the above decomposition $\mathrm{Supp}(\U(t,s)f_1) \subset \mathbf{\Lambda}_-$, $\mathrm{Supp}(\U(t,s)f_2) \subset \mathbf{\Lambda}_{+}\cap \mathbf{\Lambda}_{-,\infty}$ and $\mathrm{Supp}(\U(t,s)f_3) \subset \mathbf{\Lambda}_{+,\infty}\cap \mathbf{\Lambda}_{-,\infty}$ for any $t\geq s\geq 0.$ Therefore, to prove any result on $\U(t,s)f$ for some given $f \in \e$, it is equivalent to consider separately $\U(t,s)f_i$, $i=1,2,3$ in their respective spaces.\end{nb}

One can now prove Lemma \ref{lem-technint}.

\begin{proof}[Proof of Lemma \ref{lem-technint}] According to Remark \ref{nbSplit}, one can distinguish the three cases $f \in L^1(\mathbf{\Lambda}_-)$, $f \in L^1(\mathbf{\Lambda}_{+}\cap \mathbf{\Lambda}_{-,\infty})$ and $f \in L^1(\mathbf{\Lambda}_{+,\infty}\cap\mathbf{\Lambda}_{-,\infty})$.\\

\noindent \textit{First case:} Let us assume that $f \in L^1(\mathbf{\Lambda}_-)$ and let $s \geq 0$ be fixed. One first assumes that $f$ is continuous. For some given $(y,v) \in \mathbf{\Gamma}_-$, one considers the continuous functions
$$\Q_{y,v}\::\:t \in [s,\infty) \longmapsto  \int_0^{0 \vee (\tau(y,-v)-(t-s))} \Sigma(t,v) \Theta(t,s,v)f(y+rv,v)\d r.$$
and
$$H_{y,v}\::\:t \in [s,\infty) \longmapsto  \int_0^{0 \vee (\tau(y,-v)-(t-s))} \Theta(t,s,v)f(y+rv,v)\d r.$$
Notice that $\Q_{y,v}(t)=0=H_{y,v}(t)$ for $t \geq \tau(y,-v)+s.$ Let us compute the derivative of $H_{y,v}(t)$. Clearly,
$$\dfrac{\d}{\d t}H_{y,v}(t)=0 \qquad t \in [\tau(y,-v)+s,\infty).$$
Let us now consider $t \in (s,\tau(y,-v)+s).$ One has
\begin{multline*}
\dfrac{\d}{\d t}H_{y,v}(t)=\dfrac{\d}{\d t}\int_0^{\tau(y,-v)-(t-s)} \Theta(t,s,v)f(y+rv,v)\d r\\
=\int_0^{\tau(y,-v)-(t-s)} \left(\dfrac{\partial}{\partial t} \Theta(t,s,v)\right)f(y+rv,v)\d r
- \Theta(t,s,v)f(y+(\tau(y,-v)-(t-s))v,v).
\end{multline*}
Now, denoting as above $\mathcal{L}_{\Sigma(\cdot,v)}$ as the set of Lebesgue points of the mapping $t \mapsto \Sigma(t,v)$, one notices that
\begin{equation}\label{deriveeTheta}\dfrac{\partial}{\partial t}\Theta(t,s,v)=-\Sigma(t,v)\Theta(t,s,v) \qquad \forall t \in \mathcal{L}_{\Sigma(\cdot,v)}\end{equation}
and, for such $t \in \mathcal{L}_{\Sigma(\cdot,v)}$, one has
\begin{multline*}\dfrac{\d}{\d t}H_{y,v}(t)=-\int_0^{\tau(y,-v)-(t-s)} \Sigma(t,v)\Theta(t,s,v)f(y+rv,v)\d r \\
- \Theta(t,s,v)f(y+(\tau(y,-v)-(t-s))v,v).\end{multline*}
In other words, $H_{y,v}$ is derivable at almost every $t \in [s,\infty)$ and, since $f$ is nonnegative, one has
\begin{equation}\label{eq:cas1}
\dfrac{\d}{\d t}H_{y,v}(t) \leq -{\Q}_{y,v}(t) \qquad \text{ for almost every } t \geq s.\end{equation}
Integrating this inequality with respect to $t$, one finds
$$\int_s^\infty  {\Q}_{y,v}(t)\d t \leq -\int_s^\infty \dfrac{\d}{\d t}H_{y,v}(t)\d t=H_{y,v}(s)=
\int_0^{\tau(y,-v)}f(y+rv,v)\d r.$$
Integrating now over $\mathbf{\Gamma}_-$, we find
$$\int_s^\infty \d t \int_{\mathbf{\Gamma}_-}\Q_{y,v}(t)\d\nu_-(t,v) \leq \int_{\mathbf{\Gamma}_-}\d\nu_-(y,v)\int_0^{\tau(y,-v)}f(y+rv,v)\d r=\|f\|_{L^1(\mathbf{\Lambda}_-)}.$$
In other words, recalling the expression of $\Q_{y,v}$, one gets that the linear form
\begin{multline*}f \in L^1(\mathbf{\Lambda}_-) \cap \mathscr{C}(\mathbf{\Lambda}_-) \longmapsto \int_s^\infty\d t\int_{\mathbf{\Gamma}_-}\d\nu_-(y,v)\\
\int_0^{0 \vee (\tau(y,-v)-(t-s))} \Sigma(t,v) \Theta(t,s,v)f(y+rv,v)\d r\end{multline*}
extends in an unique way to a continuous linear form on $L^1(\mathbf{\Lambda}_-)$ such that, for any nonnegative $f \in L^1(\mathbf{\Lambda}_-)$,
$$\int_s^\infty\d t\int_{\mathbf{\Gamma}_-}\d\nu_-(y,v)\int_0^{0 \vee (\tau(y,-v)-(t-s))} \Sigma(t,v) \Theta(t,s,v)f(y+rv,v)\d r \leq \|f\|_{L^1(\mathbf{\Lambda}_-)}.$$
In particular, for almost every $t \in [s,\infty)$, one has
$$\int_{\mathbf{\Gamma}_-}\d\nu_-(y,v)\int_0^{0 \vee (\tau(y,-v)-(t-s))} \Sigma(t,v) \Theta(t,s,v)f(y+rv,v)\d r < \infty$$
According to \eqref{U1t}, this exactly  means that
$$\int_{\mathbf{\Lambda}_-}\Sigma(t,v)[\U(t,s)f](x,v)\d\mu(v)\d x < \infty \qquad \text{ for almost every } t \in [s,\infty)$$
and yields the desired result whenever $f \in L^1(\mathbf{\Lambda}_-).$\\

\noindent \textit{Second case:} Let us assume that $f \in L^1(\mathbf{\Lambda}_{-,\infty} \cap \mathbf{\Lambda}_+)$ and let $s \geq 0$ be fixed and let $(y,v) \in \mathbf{\Gamma}_{+,\infty}$ be fixed. As above, we begin with assuming that $f \in \mathscr{C}(\mathbf{\Lambda}_{-,\infty}\cap \mathbf{\Lambda}_+)$ and define
$$\widehat{H}_{y,v}\::\:t \in [s,\infty) \longmapsto \int_{t-s}^\infty \Theta(t,s,v)f(y-rv,v)\d r$$
and
$$\widehat{\Q}_{y,v} \::\:t \in [s,\infty) \longmapsto \int_{t-s}^\infty \Sigma(t,v)\Theta(t,s,v)f(y-rv,v)\d r.$$
Letting again $\mathcal{L}_{\Sigma(\cdot,v)}$ denote the set of Lebesgue points of $\Sigma(\cdot,v)$, one has as here above that, for any $t \in \mathcal{L}_{\Sigma(\cdot,v)} \cap [s,\infty)$, the application $\widehat{H}_{y,v}$ is derivable in $t$ with
$$\dfrac{\d}{\d t}\widehat{H}_{y,v}(t)=-\widehat{\Q}_{y,v}(t)-\Theta(t,s,v)f(y-(t-s)v,v).$$
Let $T > s$ be fixed. Integrating the above identity over $(s,T)$ one gets, since $f$ is nonnegative
$$\int_s^T \widehat{\Q}_{y,v}(t)\d t + \widehat{H}_{y,v}(T) \leq H(s)=\int_0^\infty f(y-rv,v)\d r.$$
Integrating now over $\mathbf{\Gamma}_{+,\infty}$ we get
\begin{multline*}
\int_s^T \d t \int_{\mathbf{\Gamma}_{+,\infty}}\widehat{\Q}_{y,v}(t)\d\nu_+(y,v) + \int_{\mathbf{\Gamma}_{+,\infty}}\widehat{H}_{y,v}(T)\d\nu_+(y,v)\\
\leq \int_{\mathbf{\Gamma}_{+,\infty}}\d\nu_{+}(y,v)\int_0^\infty f(y-rv,v)\d r.\end{multline*}
Recalling the expression of $\U(t,s)f$ and \eqref{10.49}, this reads
\begin{equation}\label{eq:cas2}\int_s^T \d t \int_{\mathbf{\Gamma}_{+,\infty}}\widehat{\Q}_{y,v}(t)\d\nu_+(y,v) +\int_{\mathbf{\Lambda}_{-,\infty} \cap \mathbf{\Lambda}_+}[\U(T,s)f](x,v)\d x\d\mu(v)
\leq \|f\|_{L^1(\mathbf{\Lambda}_{-,\infty} \cap \mathbf{\Lambda}_+)}.\end{equation}
Now, using the same argument as in the first case, one deduces that, for any $f \in L^1(\mathbf{\Lambda}_{-,\infty} \cap \mathbf{\Lambda}_+)$ and almost every $t \in [s,\infty)$ one has
$$\int_{\mathbf{\Lambda}_{-,\infty} \cap \mathbf{\Lambda}_+} \Sigma(t,v)[\U(t,s)f](x,v)\d x\d\mu(v) < \infty$$
which proves the result for $f \in L^1(\mathbf{\Lambda}_{-,\infty} \cap \mathbf{\Lambda}_+).$\\

\noindent \textit{Third case:}
Assume now that $f \in L^1(\mathbf{\Lambda}_{-,\infty} \cap \mathbf{\Lambda}_{+,\infty}).$ Notice that, since $\mathbf{\Lambda}_{+,\infty}$ is a Borel subset of $\mathbf{\Lambda}$, the associated  indicator function $\mathbbm{1}_{\mathbf{\Lambda}_{+,\infty} \cap \mathbf{\Lambda}_{-,\infty}}$ is a Borel mapping on the product space $\mathbf{\Lambda}=\mathbf{\Omega} \times V$. In particular, the partial applications (in all directions) are measurable, i.e. for any $v \in V$, the application
$$\chi_v\::\:x \in \mathbf{\Omega} \longmapsto \mathbbm{1}_{\mathbf{\Lambda}_{+,\infty} \cap \mathbf{\Lambda}_{-,\infty}}(x,v)$$
is a Borel mapping. In particular, for any $v \in V$,
$$\mathbf{\Omega}_v:=\left\{x \in \mathbf{\Omega}\,,\,\chi_v(x)=1\right\} $$
is a Borel subset of $\mathbf{\Omega}.$ Now, if one assumes that $f$ is nonnegative, one has by Fubini's Theorem
\begin{multline*}\int_{\mathbf{\Lambda}_{+,\infty} \cap \mathbf{\Lambda}_{-,\infty}} f(x,v)\d x\d\mu(v)=\int_{\mathbf{\Lambda}}\mathbbm{1}_{\mathbf{\Lambda}_{+,\infty} \cap \mathbf{\Lambda}_{-,\infty}}(x,v)f(x,v)\d x\d\mu(v)\\
=\int_{V}\left(\int_{\mathbf{\Omega}} \mathbbm{1}_{\mathbf{\Lambda}_{+,\infty} \cap \mathbf{\Lambda}_{-,\infty}}(x,v)f(x,v)\d x\right)\d\mu(v).\end{multline*}
Therefore, for any nonnegative $f \in  L^1(\mathbf{\Lambda}_{-,\infty} \cap \mathbf{\Lambda}_{+,\infty})$ one has
$$\int_{\mathbf{\Lambda}_{+,\infty} \cap \mathbf{\Lambda}_{-,\infty}} f(x,v)\d x\d\mu(v)
=\int_V \left(\int_{\mathbf{\Omega}_v} f(x,v)\d x\right)\d\mu(v).$$
Recalling now that
$$[\U(t,s)f](x,v)=\Theta(t,s,v)f(x-(t-s)v,v) \qquad \forall (x,v) \in \mathbf{\Lambda}_{-,\infty} \cap \mathbf{\Lambda}_{+,\infty}$$
one deduces that
$$\int_{\mathbf{\Lambda}_{+,\infty} \cap \mathbf{\Lambda}_{-,\infty}} [\U(t,s)f](x,v)\d x\d\mu(v)
=\int_V \Theta(t,s,v)\left(\int_{\mathbf{\Omega}_v} f(x-(t-s)v,v)\d x\right)\d\mu(v).$$
For any $v \in V$, the mapping $x \in \mathbf{\Omega}_v  \mapsto y=x-(t-s)v \in \mathbf{\Omega}_v $ is one-to-one and measure-preserving  so that
$$\int_{\mathbf{\Lambda}_{+,\infty} \cap \mathbf{\Lambda}_{-,\infty}} [\U(t,s)f](x,v)\d x\d\mu(v)
=\int_V  \Theta(t,s,v)\left(\int_{\mathbf{\Omega}_v}f(y,v)\d y\right)\d\mu(v).$$
Now, for fixed $v \in V$, according to \eqref{deriveeTheta} one has
$$\int_s^t \Sigma(\t,v) \Theta(\t,s,v)\d\t=-\int_s^t \frac{\partial }{\partial \t}\Theta(\t,s,v)\d \t=1-\Theta(t,s,v) \qquad \text{ for a. e. } t \geq s.$$
Thus,
\begin{multline*}
\int_V \Theta(t,s,v)\left(\int_{\mathbf{\Omega}_v} f(y,v)\d y\right)\d\mu(v)=\int_V\left(\int_{\mathbf{\Omega}_v} f(y,v)\d y\right)\d\mu(v)\\
-\int_V \left(\int_s^t \Sigma(\t,v) \Theta(\t,s,v)\d\t\int_{\mathbf{\Omega}_v}f(y,v)\d y\right)\d\mu(v).\end{multline*}
In other words,
\begin{multline}\label{integUt}
\|\U(t,s)f\|_{L^1(\mathbf{\Lambda}_{+,\infty} \cap \mathbf{\Lambda}_{-,\infty})}=\|f\|_{L^1(\mathbf{\Lambda}_{+,\infty} \cap \mathbf{\Lambda}_{-,\infty})}\\
-\int_s^t\left(\int_V \Sigma(\t,v) \Theta(\t,s,v)\left(\int_{\mathbf{\Omega}_v}f(y,v)\d y\right)\d\mu(v)\right)\d \t\end{multline}
One recognizes as above that, for any $\t \in (s,t)$
\begin{multline*}
\int_V \Sigma(\t,v) \Theta(\t,s,v)\left(\int_{\mathbf{\Omega}_v}f(y,v)\d y\right)\d\mu(v)\\
=\int_V \Sigma(\t,v)\Theta(\t,s,v)\left(\int_{\mathbf{\Omega}_v} f(x-(\t-s)v,v)\d x\right)\d\mu(v)\\
=\int_{\mathbf{\Lambda}_{+,\infty} \cap \mathbf{\Lambda}_{-,\infty}}\Sigma(\t,v)[\U(\t,s)f](x,v)\d x\d\mu(v)
\end{multline*}
so that \eqref{integUt} becomes
\begin{multline}\label{eq-cas3}
\int_s^t \left(\int_{\mathbf{\Lambda}_{+,\infty} \cap \mathbf{\Lambda}_{-,\infty}}\Sigma(\t,v)[\U(\t,s)f](x,v)\d x\d\mu(v)\right)\d\t\\
=\|f\|_{L^1(\mathbf{\Lambda}_{+,\infty} \cap \mathbf{\Lambda}_{-,\infty})}-\|\U(t,s)f\|_{L^1(\mathbf{\Lambda}_{+,\infty} \cap \mathbf{\Lambda}_{-,\infty})}.\end{multline}
In particular,
$$\int_{\mathbf{\Lambda}_{-,\infty} \cap \mathbf{\Lambda}_{+,\infty}} \Sigma(\t,v)[\U(\t,s)f](x,v)\d x \d\mu(v) < \infty \text{ for almost every } \t \in [s,\infty).$$
This achieves the proof.\end{proof}

The proof of Corollary \ref{cor-nonhomo} uses the above computations:

\begin{proof}[Proof of Corollary \ref{cor-nonhomo}] The fact that $\U(t,s)$ maps $\e_+$ into $\D(\mathbf{B}(t))$ for almost every $t\geq s$ and that the mapping $t \in [s,\infty)\mapsto \mathbf{B}(t)\U(t,s)u$ is measurable is a consequence of the previous Lemma exactly as for the proof of  Proposition \ref{propo:homo}. Let us now prove \eqref{dissiKn}. One uses once again the representation provided by Remark \ref{nbSplit} which allows us to distinguish between the three cases: $f\in L^1(\mathbf{\Lambda}_-)$, $f \in L^1(\mathbf{\Lambda_{-,\infty}} \cap \mathbf{\Lambda_+})$ or $f \in L^1(\mathbf{\Lambda_{+,\infty}}\cap \mathbf{\Lambda_{-,\infty}}).$ One uses the notations of Lemma \ref{lem-technint}. First, if $f \in L^1(\mathbf{\Lambda}_-)$, for any $(y,v) \in \Gamma_-$ integrating \eqref{eq:cas1} between with respect to $t$ over $(s,r)$ (for some fixed $r >s$), one gets
$$\int_s^r \Q_{y,v}(t)\d t \leq H_{y,v}(s) - H_{y,v}(r)$$
or, equivalently,
\begin{multline*}
 \int_s^r \left(\int_0^{0 \vee (\tau(y,-v) -(t-s))} \Sigma(t,v)\Theta(t,s,v)f(y+r_0v,v)\d r_0\right)\d t\\
\leq \int_0^{\tau(y,-v)} f(y+r_0v,v)\d r_0 - \int_0^{0 \vee (\tau(y,-v) -(r-s))}  \Theta(r,s,v)f(y+r_0 v,v)\d r_0.
\end{multline*}
Integrating now over $\Gamma_-$ and using \eqref{10.47} we get
$$\int_s^r \|\mathbf{\Sigma}(t)\U(t,s)f\|_{L^1(\mathbf{\Lambda}_-)} \leq \|f\|_{L^1(\mathbf{\Lambda_-})} - \|\U(r,s)f\|_{L^1(\mathbf{\Lambda_-})}.$$
Second, consider $f \in L^1(\mathbf{\Lambda_{-,\infty}} \cap \mathbf{\Lambda_+})$. According to \eqref{eq:cas2} (and recalling the expression of $\widehat{\Q}_{y,v}(t)$ we directly get
$$\int_s^r \|\mathbf{\Sigma}(t)\U(t,s)f\|_{L^1(\mathbf{\Lambda_{-,\infty}} \cap \mathbf{\Lambda_+})} \leq \|f\|_{L^1(\mathbf{\Lambda_{-,\infty}} \cap \mathbf{\Lambda_+})} - \|\U(r,s)f\|_{L^1(\mathbf{\Lambda_{-,\infty}} \cap \mathbf{\Lambda_+})} \quad \forall r >s.$$
Finally, for $f \in L^1(\mathbf{\Lambda_{+,\infty}}\cap \mathbf{\Lambda_{-,\infty}})$, \eqref{eq-cas3} exactly means that
$$\int_s^r \|\mathbf{\Sigma}(t)\U(t,s)f\|_{L^1(\mathbf{\Lambda_{-,\infty}} \cap \mathbf{\Lambda_{+,\infty}})} = \|f\|_{L^1(\mathbf{\Lambda_{-,\infty}} \cap \mathbf{\Lambda_{+,\infty}})} - \|\U(r,s)f\|_{L^1(\mathbf{\Lambda_{-,\infty}} \cap \mathbf{\Lambda_{+,\infty}})} \quad \forall r >s.$$
Summing up all these inequalities we obtain the desired result.
\end{proof}


\begin{thebibliography}{xx}






\bibitem{arlotti}
\textsc{L. Arlotti}, The Cauchy problem for the linear
Maxwell-Bolztmann equation, \textit{J. Differential Equations} {\bf
69} 166--184, 1987.

\bibitem{arlotti1}
\textsc{L. Arlotti}, A perturbation theorem for positive contraction semigroups
on $L^1$-spaces with applications to transport equation and
Kolmogorov's differential equations, \textit{Acta Appl. Math.}, {23} (1991), 129--144.

\bibitem{BA1}
\textsc{L. Arlotti \& J. Banasiak}, Strictly substochastic semigroups with application to conservative and shattering solution to fragmentation equation with mass loss, \textit{J. Math. Anal. Appl.},  {293} (2004), 673--720.


\bibitem{LB}
\textsc{L. Arlotti \& J. Banasiak}, Nonautonomous fragmentation equation via
evolution semigroups, \textit{Math. Meth. Appl. Sci.}  \textbf{33}, 1201--1210, 2010.

\bibitem{jacek}
\textsc{L. Arlotti, J. Banasiak \& B. Lods}, A new approach to transport equations
associated to a regular field: trace results and well-posedness, \textit{Mediterr. J. Math.} \textbf{6}, 367--402, 2009.

\bibitem{ALM}
\textsc{L. Arlotti, B. Lods \& M. Mokhtar--Kharroubi}, On perturbed substochastic semigroups in abstract state spaces, \textit{Z. Anal. Anwend.} \textbf{30}, 457--495, 2011.


\bibitem{arloban}
\textsc{J.  Banasiak \&  L. Arlotti}, \textbf{Perturbations of
positive semigroups with applications}, Springer-Verlag, 2005.


\bibitem{banlach}
\textsc{J.  Banasiak \& M. Lachowicz}, Around the Kato generation
theorem for semigroups, \emph{Studia Math}, {\bf 179}, 217--238,
2007.


\bibitem{batty}
\textsc{C. J. Batty \& D. W. Robinson}, Positive one-parameter
semigroups on ordered Banach spaces, \emph{Acta Appl. Math.} {\bf
1}, 221--296, 1984.

\bibitem{chicone}
\textsc{C. Chicone \& Yu. Latushkin,} \textbf{Evolution Semigroups in Dynamical Systems and Differential
Equations,} Mathematical surveys and monographs 70, AMS, 1999.

\bibitem{Da1}
\textsc{E.~B. Davies,}  \textit{Quantum Theory of Open Systems}, Academic Press, 1976.

\bibitem{Da2} \textsc{E.~B. Davies,}   Quantum dynamical semigroups
and the neutron diffusion equation, {\it Rep. Math. Phys.}, {\bf 11}, 169--188, 1977.


\bibitem{engel}
\textsc{K. J. Engel \& R. Nagel}, \textbf{One-parameter semigroups for
linear evolution equations}, Springer, New-York, 2000.


\bibitem{fvdm}
\textsc{G. Frosali, C. van der Mee \& a F. Mugelli,} A
characterization theorem for the evolution semigroup generated by
the sum of two unbounded operators, \textit{Math. Meth. Appl. Sci.}
27 (2004), 669--685.


\bibitem{gulisashvili}
\textsc{A. Gulisashvili \& J.~A. van Casteren} \textbf{Non-autonomous Kato classes and Feynman-Kac propagators}, World Scientific, Singapore, 2006.



\bibitem{kato}
\textsc{T. Kato,} On the semi-groups generated by Kolmogoroff's
differential equations, \emph{J. Math. Soc. Jap.}, \textbf{6},
1--15, 1954.

\bibitem{liske}
\textsc{V. Liskevich, H. Vogt, J. Voigt}, Gaussian bounds for
propagators perturbed by potentials,  \emph{J. Funct. Anal.} {\bf
238}, 245--277, 2006.





\bibitem{class}
\textsc{M. Mokhtar-Kharroubi,} On perturbed positive $C_0$-semigroups on the  Banach space of trace
class operators, \textit{Infinite Dim.
Anal. Quant. Prob. Related Topics}, {11} (2008), 1--21.


\bibitem{mkvo}
\textsc{M.  Mokhtar-Kharroubi \& J. Voigt}, On honesty of perturbed
substochastic $C_0$-semigroups in $L^1$-spaces, \emph{J. Operator Th} {\bf 64} 101--117 (2010).



\bibitem{musAfrika}
\textsc{M. Mokhtar-Kharroubi,}  New generation theorems in transport theory. \textit{Afr. Mat.} \textbf{22}  153--176 (2011).

\bibitem{monniaux}
\textsc{S. Monniaux \& A. Rhandi,} Semigroup methods to solve non-autonomous evolution equations. \emph{Semigroup Forum} \textbf{60} 122--134 (2000).

\bibitem{depagter}
\textsc{B. de Pagter}, Ordered Banach spaces, \textbf{One-parameter
semigroups}, (Ph. Cl\'{e}ment {\it et al.} Eds.), 265--279,
North-Holland, Amserdam, 1987.


\bibitem{rabiger}
\textsc{F. R\"{a}biger, A.  Rhandi \& R. Schnaubelt,} Perturbation and an abstract characterization of evolution semigroups. \emph{J. Math. Anal. Appl.} \textbf{198 } 516--533 (1996).


\bibitem{miya}
\textsc{F. R\"{a}biger, R. Schnaubelt, A.  Rhandi \& J. Voigt,}
Non-autonomous {M}iyadera perturbations, \textit{Differential
Integral Equations} \textbf{13}, 341--368, 2000.




\bibitem{thieme}
\textsc{H. Thieme \& J. Voigt,} Stochastic semigroups: their construction by perturbation and approximation. In: {\it Proceedings Positivity IV- Theory and Applications}, Dresden (Germany), (2006), pp.~135--146.


\bibitem{vdmee}
\textsc{C. van der Mee,} Time-dependent kinetic equations with
collision terms relatively bounded with respect to the collision
frequency, \textit{Transport Theory and Statistical Physics }
\textbf{30}, 63--90,  2001.

\bibitem{voigt}
\textsc{J. Voigt,} On the perturbation theory for strongly
continuous semigroups, \emph{Math. Ann.}, \textbf{229}, 163--171,
1977.

\bibitem{voigtHabi}
\textsc{J. Voigt,} Functional analytic treatment of the initial boundary value problem for
collisionless gases, M\"{u}nchen, Habilitationsschrift, 1981.


\bibitem{voigt1}
\textsc{J. Voigt}, On substochastic $C_0$-semigroups and their
generators, \emph{Transp. Theory. Stat. Phys}, \textbf{16},
453--466, 1987.

\bibitem{voigt2}
\textsc{J. Voigt}, On resolvent positive operators and positive
$C_0$-semigroups on $AL$-spaces. \emph{Semigroup Forum},\textbf{
38}, 263--266, 1989.

\end{thebibliography}
\end{document}